\documentclass[reqno]{amsart}
\addtolength{\topmargin}{-7ex}
\addtolength{\oddsidemargin}{-3em}
\addtolength{\evensidemargin}{-3em}
\addtolength{\textheight}{13ex}
\addtolength{\textwidth}{6.5em}
\usepackage[T1]{fontenc}
\usepackage[utf8]{inputenc}
\usepackage{amssymb}
\usepackage[mathscr]{eucal}
\usepackage{mathtools}
\usepackage{microtype}
\usepackage{enumerate}
\usepackage{xcolor}
\usepackage{graphicx}
\usepackage[percent]{overpic}
\usepackage[colorlinks=true]{hyperref}
\hypersetup{urlcolor=blue,citecolor=red,linkcolor=blue}
\usepackage[initials]{amsrefs}
\graphicspath{{genus3-views/}}
\numberwithin{equation}{section}
\setcounter{tocdepth}{1}
\theoremstyle{plain}
\newtheorem{theorem}{Theorem}[section]
\newtheorem{proposition}[theorem]{Proposition}
\newtheorem{lemma}[theorem]{Lemma}

\newtheorem{claim}{Claim}
\newtheorem*{theorem*}{Theorem}
\theoremstyle{remark}

\newtheorem{remark}[theorem]{Remark}
\newtheorem*{comment*}{Comment}
\newtheorem*{remark*}{Remark}
\theoremstyle{definition}

\newtheorem*{acknowledgements*}{Acknowledgements}
\newtheorem*{assumption*}{Assumption}
\newtheorem*{assumptions*}{Assumptions}
\newtheorem*{definition*}{Definition}
\newtheorem*{notation*}{Notation}
\newtheorem*{notations*}{Notations}
\newtheorem*{rh-pb*}{Basic RH-problem}

\providecommand{\BS}[1]{\boldsymbol{#1}}
\providecommand{\C}[1]{\mathcal{#1}}
\providecommand{\D}[1]{\mathbb{#1}}

\newcommand{\dd}{\mathrm{d}}
\newcommand{\eul}{\mathrm{e}}
\newcommand{\ii}{\mathrm{i}}
\newcommand{\vece}{\mathrm{e}}
\providecommand{\abs}[1]{\lvert#1\rvert}
\providecommand{\accol}[1]{\lbrace#1\rbrace}
\providecommand{\croch}[1]{\lbrack#1\rbrack}
\providecommand{\norm}[1]{\lVert#1\rVert}
\renewcommand{\Im}{\operatorname{Im}}
\newcommand{\model}{\operatorname{mod}}
\newcommand{\modulo}{\mathrm{mod}}
\newcommand{\appr}{\mathrm{app}}
\newcommand{\asympt}{\mathrm{as}}
\newcommand{\diff}{\mathrm{dif}}

\newcommand{\ord}{\mathrm{O}}
\newcommand{\osmall}{\mathrm{o}}
\renewcommand{\Re}{\operatorname{Re}}

\newcommand{\Airy}{\mathrm{Ai}}
\newcommand{\inv}{\mathrm{inv}}

\DeclareMathOperator{\GL}{GL}

\begin{document}
\title[NLS with step-like oscillating background: genus 3 sector]{The focusing NLS equation with step-like oscillating background: the genus 3 sector}
\author[A. Boutet de Monvel]{Anne Boutet de Monvel}
\address{AB: Institut de Math\'ematiques de Jussieu-Paris Rive Gauche, Universit\'e de Paris, 75205 Paris Cedex 13, France.}
\email{anne.boutet-de-monvel@imj-prg.fr}
\author[J. Lenells]{Jonatan Lenells}
\address{JL: Department of Mathematics, KTH Royal Institute of Technology, 100 44 Stockholm, Sweden.}
\email{jlenells@kth.se}
\author[D. Shepelsky]{Dmitry Shepelsky}
\address{DS: B.~Verkin Institute for Low Temperature Physics and Engineering, 61103 Kharkiv, Ukraine.} 
\email{shepelsky@yahoo.com}
\date{}
\begin{abstract}
We consider the Cauchy problem for the focusing nonlinear Schr\"odinger equation with initial data approaching different plane waves $A_j\eul^{\ii\phi_j}\eul^{-2\ii B_jx}$, $j=1,2$ as $x\to\pm\infty$. The goal is to determine the long-time asymptotics of the solution, according to the value of $\xi=x/t$. The general situation is analyzed in a recent paper where we develop the Riemann--Hilbert approach and detect different asymptotic scenarios, depending on the relationships between the parameters $A_1$, $A_2$, $B_1$, and $B_2$. In particular, in the shock case $B_1<B_2$, some scenarios include genus $3$ sectors, i.e., ranges of values of $\xi$ where the leading term of the asymptotics is given in terms of hyperelliptic functions attached to a Riemann surface $M(\xi)$ of genus three. The present paper is devoted to the complete asymptotic analysis in such a sector.
\end{abstract}
\maketitle
\tableofcontents
\section{Introduction}  \label{sec:intro}

We consider the Cauchy problem for the focusing nonlinear Schr\"odinger (NLS) equation
\begin{subequations}\label{nlsic}
\begin{alignat}{2}  \label{nls}
&\ii q_t+q_{xx}+2\abs{q}^2q=0,&\qquad&x\in\D{R},\quad t\geq 0,\\
\label{ic}
&q(x,0)=q_0(x),&&x\in\D{R},
\end{alignat}
\end{subequations}
when the initial data behave like oscillatory waves as $x\to\pm\infty$:
\begin{equation}\label{q0-limits}
q_0(x)\sim 
\begin{cases}
A_1\eul^{\ii\phi_1}\eul^{-2\ii B_1 x},&x\to -\infty,\\
A_2\eul^{\ii\phi_2}\eul^{-2\ii B_2 x},&x\to +\infty,
\end{cases}
\end{equation}
where $\accol{A_j,B_j,\phi_j}_1^2$ are real constants such that $A_j>0$. Our general objective is the study of the long-time behavior of the solution $q(x,t)$ of \eqref{nlsic}--\eqref{q0-limits}, according to the value of $\xi\coloneqq x/t$. The Cauchy problem \eqref{nlsic}-\eqref{q0-limits} has to be supplemented with boundary conditions for $t>0$. These boundary conditions are the natural extensions of \eqref{q0-limits} to $t>0$, namely
\begin{subequations}\label{cauchy-asbg}
\begin{equation}   \label{cauchy-asy}
\int_0^{(-1)^j\infty}\abs{q(x,t)-q_{0j}(x,t)}\dd x<\infty\quad\text{for all }t\geq 0,\qquad j=1,2,
\end{equation}
where $q_{0j}(x,t)$, $j=1,2$ are the plane wave solutions of the NLS equation satisfying the initial conditions $q_{0j}(x,0)=A_j\eul^{\ii\phi_j}\eul^{-2\ii B_jx}$, that is,
\begin{equation}  \label{bg}
q_{0j}(x,t)=A_j\eul^{\ii\phi_j}\eul^{-2\ii B_jx+2\ii\omega_jt},\quad\omega_j\coloneqq A_j^2-2B_j^2.
\end{equation}
\end{subequations}
\subsection{Step-like boundary conditions}

Nonlinear integrable equations with nonzero boundary (as $x\to\pm\infty$) conditions and, particularly, with conditions that are different at different infinities (the so-called step-like conditions) have attracted the interest of physicists and mathematicians since the 1970s \cites{Bik89,GP73,GP74,Khr75,Khr76,Ven86}, due to remarkable features of their solutions, in particular, a multitude of patterns for the large time behavior. The application of the inverse scattering transform method to the Cauchy problem for nonlinear integrable equations with step-like initial data rests on the analysis of direct and inverse scattering problems for associated linear equations (the Lax pair) with step-like potentials originally developed in \cite{BF62} with later contributions in \cite{CK85}. 

Recent developments for the NLS equation have been reported in 
\cites{DPV13,BP14,BFP16} (for the defocusing equation), in \cites{BK14,BM17} (for the focusing equation \eqref{nlsic}-\eqref{q0-limits} with $A_1=A_2\neq 0$ and $B_1=B_2=0$), in \cite{DPV14}
(for the focusing equation with $A_2>A_1>0$ and $B_1=B_2=0$), and in \cite{BKS11} (for the focusing equation with $A_1=0$, $A_2\neq 0$ and $B_2\neq 0$). In \cite{BLS20a} we have developed the Riemann--Hilbert approach for the focusing NLS equation in the case $B_1\neq B_2$ with $A_1\neq 0$ and $A_2\neq 0$, reported the possible long-time scenarios and proved the existence of a sector in the $(x,t)$-half plane where the asymptotics is given in terms of hyperelliptic functions of genus $2$. 

For other works on equations with nonzero boundary conditions we refer to \cites{EGK13,EMT18,GM20,LN08,LN14,Mi16,No05}.

\subsection{Rarefaction and shock}

Regarding the long time behavior of solutions, two cases are usually distinguished: expansive (rarefaction) \cites{AEL16,DKZ96,LN08} and compressive (shock) \cites{EMT18,LN14}. For the Korteweg--de Vries equation, the rarefaction and shock cases are associated with step up and step down boundary conditions, respectively, whereas in the case of the focusing NLS equation, the difference between these two cases is characterized by the relationship between $B_1$ and $B_2$: $B_1>B_2$ leads to rarefaction while $B_1<B_2$ leads to the development of dispersive shock waves \cite{BLS20a}. 

It turns out that the solutions of equations with shock step-like conditions exhibit a richer large-time behavior: depending on the asymptotic sector under consideration, one can see a number of different large time patterns \cite{BLS20a}. In some cases, they can be qualitatively caught using the Whitham modulated equations \cite{Bio18}.

\subsection{The focusing NLS equation}

In \cite{BV07}, the long-time asymptotic analysis was presented for the symmetric case where $A_1=A_2=1$ and $B_1=-B_2$ (which is not a restriction provided $B_1\neq B_2$). The analysis was based on the nonlinear steepest descent method introduced by Deift and Zhou \cite{DZ93} and extended by Deift, Venakides, and Zhou \cites{DVZ94,DVZ97}, which was applied to the representation of the solution of the problem \eqref{nlsic}--\eqref{q0-limits} in terms of an associated Riemann--Hilbert (RH) problem. The asymptotic picture presented there consists of five sectors in the $(x,t)$ half-plane ($t>0$) with different qualitative behaviors. Namely, the central sector (the \emph{residual} region, containing the half-axis $x=0$, $t>0$) was described as a modulated elliptic (genus $1$) wave \cite{BV07}*{Theorem 1.2}, two contiguous sectors (the \emph{transition} region) as modulated hyperelliptic (genus 2) waves \cite{BV07}*{Theorem 1.3}, and two sectors (the \emph{plane-wave} region, adjacent to the $x$-axis), as modulated plane (genus $0$) waves.

Two important features of \cite{BV07} are (i) the development of the $g$-function mechanism used in the transformations of the original RH problem (leading to explicitly solvable model problems) and (ii) rigorous error estimates based on the analysis of associated local (parametrix) RH problems.

A closer look at the conditions characterizing various asymptotic regions for the problem \eqref{nlsic}--\eqref{q0-limits} has revealed \cite{BLS20a} that the asymptotic scenario presented in \cite{BV07} is valid only for a particular range of the parameters involved in the boundary conditions \eqref{q0-limits}. In \cite{BLS20a} we analyzed the general situation, developing the RH formulation of the problem. This led us to detect several asymptotic scenarios, depending on the relationships between the parameters $A_1$, $A_2$, $B_1$, and $B_2$. Each scenario corresponds to a division of the $(x,t)$ half-plane into sectors $\xi_1<\xi<\xi_2$ where the long-time behavior of the solution $q$ is qualitatively different. Each sector is characterized by a $g$-function of a particular type, associated with a Riemann surface of genus $0$, $1$, $2$, or $3$. In such a sector the solution can be slowly decaying as $t\to+\infty$, or its long time behavior can be expressed in terms of functions attached to Riemann surfaces of different genera.

\subsection{Summary of results}

In the shock case, i.e., when $B_1<B_2$, some of these scenarios include genus $3$ sectors. More precisely, a genus $3$ sector arises for $\xi$ close (but not equal) to $0$ as stated in the first theorem below.

The asymptotic analysis in sectors of genus $1$ or $2$ has already been performed, see, e.g., \cite{BV07}.  Our goal here is to perform the long time asymptotic analysis of $q(x,t)$ in a sector of genus $3$.

Our main result gives precisely the leading and subleading terms of the long time asymptotics of $q$ in such a sector $\xi_1<\xi<\xi_2$. The leading term is expressed by means of hyperelliptic theta functions (attached to Riemann surfaces of genus $3$) while the subleading term is expressed in terms of parabolic cylinder and Airy functions. These two terms manifest the universality of the asymptotic behavior.

\subsection{Contents}

In Section~\ref{sec:main} we state the main result that gives the long time asymptotics of $q$ in a genus $3$ sector. We also give a statement on the existence of such a sector. In Section~\ref{sec:rhp} we present the RH formulation of the problem together with the Riemann surface and the $g$-function that are specific to the genus $3$ sector under consideration. Sections~\ref{sec:transfos} to \ref{sec:final} present the steps in the proof of the main result. In Section~\ref{sec:transfos} we introduce a series of transformations of the original RH problem that leads to a RH problem whose asymptotic analysis will be performed using a ``model problem'' supplemented by ``local problems''. In Section~\ref{sec:model} we give the solution of the ``model problem'' in terms of Riemann theta functions attached to a hyperelliptic Riemann surface $M(\xi)$. This solution contributes to the leading term of the asymptotics in the main theorem. In Section~\ref{sec:local} local problems are introduced to correct the non-uniform approximation by the model problem near some points of the complex $k$-plane. Their solutions contribute to the subleading term in the asymptotics. They are obtained from the solutions of standard models described and solved in appendices \ref{sec:A} and \ref{sec:B}, in terms of parabolic cylinder and Airy functions. Finally, in Section~\ref{sec:final} we paste together the solution of the model problem with those of the local problems to obtain an appropriate approximation giving us the main result.

\section{Main result}  \label{sec:main}

Henceforth, we assume we are in the shock case, i.e., $B_1<B_2$.

\subsection{Notation}

Let $E_j\coloneqq B_j+\ii A_j$ in the complex $k$-plane $\D{C}$. We denote by $\Sigma_j$, $j=1,2$ the vertical segment $\croch{\bar E_j,E_j}$ oriented upward. See Figure~\ref{fig:basic-contour}. 

Let $\D{C}^+=\accol{\Im k>0}$ and $\D{C}^-=\accol{\Im k<0}$ denote the open upper and lower halves of the complex $k$-plane. The Riemann sphere will be denoted by $\hat{\D{C}}=\D{C}\cup\accol{\infty}$. We write $\ln k$ for the logarithm with the principal branch, that is, $\ln k=\ln\abs{k}+\ii\arg k$ where $\arg k\in(-\pi,\pi\rbrack$. Unless specified otherwise, all complex powers will be defined using the principal branch, i.e., $k^{\alpha}=\eul^{\alpha\ln k}$. We let $f^*(k)\coloneqq\overline{f(\bar k)}$ denote the Schwarz conjugate of a complex-valued function $f(k)$.

Given an open subset $D\subset\hat{\D{C}}$ bounded by a piecewise smooth contour $\Sigma$, we let $\dot E^2(D)$ denote the Smirnoff class consisting of all functions $f(k)$ analytic in $D$ with the property that for each connected component $D_j$ of $D$ there exist curves $\accol{C_n}_1^{\infty}$ in $D_j$ such that the $C_n$ eventually surround each compact subset of $D_j$ and $\sup_{n\geq 1}\norm{f}_{L^2(C_n)}<\infty$. We let $E^{\infty}(D)$ denote the space of bounded analytic functions $D\to\D{C}$. RH problems in the paper are generally $2\times 2$ matrix-valued. They are formulated in the $L^2$-sense using Smirnoff classes (see \cites{Le17,Le18}): 
\begin{equation}   \label{rhp}
\begin{cases}
m\in I+\dot E^2(\D{C}\setminus\Sigma),&\\
m_+(k)=m_-(k)J(k)&\text{for a.e. }k\in\Sigma,
\end{cases}
\end{equation}
where $m_+$ and $m_-$ denote the boundary values of $m$ from the left and right sides of the contour $\Sigma$. Further on, the contours are invariant under complex conjugation and for all $2\times 2$ matrix-valued RH problems the jump matrix $J\equiv J(k)$ satisfies
\begin{equation}  \label{jump-symm}
J=
\begin{cases}
\sigma_3\sigma_1J^*\sigma_1\sigma_3,&k\in\Sigma\setminus\D{R},\\
\sigma_3\sigma_1(J^*)^{-1}\sigma_1\sigma_3,&k\in\Sigma\cap\D{R},
\end{cases}
\end{equation}
where $\sigma_1\coloneqq\left(\begin{smallmatrix}0&1\\1&0\end{smallmatrix}\right)$ and $\sigma_3\coloneqq\left(\begin{smallmatrix}1&0\\0&-1\end{smallmatrix}\right)$. Together with uniqueness of the solution of the RH problem \eqref{rhp}, this implies the symmetry
\begin{equation}  \label{m-symm}
m=\sigma_3\sigma_1m^*\sigma_1\sigma_3,\quad k\in\D{C}\setminus\Sigma.
\end{equation}

\subsection{Asymptotics in a genus 3 sector}  \label{sec:thm}

According to \cite{BLS20a}*{Section 2.2}, since $B_1<B_2$, we may assume, without loss of generality, that 
\[
B_2=-B_1=1,\quad\phi_2=0,\ \phi_1=\phi.
\]
We thus have $E_1=-1+\ii A_1$ and $E_2=1+\ii A_2$. 

\begin{assumptions*}[on $q_0$ and $q$]
In what follows, $q\colon\D{R}\times\lbrack 0,\infty)\to\D{C}$ is a smooth solution of \eqref{nls}-\eqref{ic} with initial data
\begin{equation}  \label{id}
q_0(x)=
\begin{cases}
A_1\eul^{\ii\phi}\eul^{2\ii x},&x<-C,\\
A_2\eul^{-2\ii x},&x>C,
\end{cases}
\end{equation}
where $C$, $A_1$, $A_2$ are positive constants and $\phi\in\D{R}$. We also assume that $q$ satisfies
\begin{equation}   \label{cauchy-as}
q(x,t)\sim q_{0j}(x,t)\quad\text{as }x\to (-1)^j\infty
\end{equation}
for all $t\geq 0$, where $q_{0j}(x,t)=A_j\eul^{\ii\phi_j}\eul^{-2\ii B_jx+2\ii\omega_jt}$, and $\omega_j=A_j^2-2B_j^2$.
\end{assumptions*}

The main result deals with the long time behavior of $q$ in a genus $3$ sector $\xi_1<\xi<\xi_2$, $\xi=x/t$ in the $(x,t)$ half plane. That means a sector where the leading term of the asymptotics can be expressed in terms of quantities associated to a genus $3$ Riemann surface $M\equiv M(\xi)$. 

To perform the long time asymptotics we first translate the Cauchy problem into a RH problem (see Section~\ref{sec:rhp-0}), then apply the nonlinear descent method which is the main tool to study the long time behavior of solutions of integrable equations \cites{DZ93,DVZ94}. Its application to problems with nonzero backgrounds goes through the so-called $g$-function mechanism. This mechanism is introduced in situations where the jump matrix of the RH problem has some exponentially growing or oscillating entries as $t\to+\infty$. Its core idea is to replace (for some ranges of values of $\xi=x/t$) the phase function $\theta(\xi,k)=2k^2+\xi k$ in the original jump \eqref{basic-jump} by an appropriate function $g(\xi,k)$ (analytic in $k$, up to jumps across some contour) in such a way that after series of transformations 
of the original RH problem we get RH problems with jumps that are constant or decay to the identity as $t\to+\infty$.

\begin{assumption*}[existence of a genus $3$ sector]
We henceforth assume that there exists a genus $3$ sector $\xi_1<\xi<\xi_2$, a sector where the system of equations \eqref{dg-conditions} has a unique solution $\dd g(\xi,k)$ and that $\xi$ lies in this sector.
\end{assumption*}

\begin{comment*}
By \eqref{dg}, the differential form $\dd g(\xi,k)$ is entirely characterized by its zeros, that is, by $\mu(\xi)\in\D{R}$, $\alpha(\xi)\in\D{C}^+$, and $\beta(\xi)\in\D{C}^+$, with $\alpha\neq\beta$ and $\alpha,\beta\neq E_1,E_2$. Thus, solving the system of equations \eqref{dg-conditions} means finding $\alpha(\xi)\in\D{C}^+$, $\beta(\xi)\in\D{C}^+$, and $\mu(\xi)\in\D{R}$ with the above constraints. See Figures~\ref{fig:Img-contour-plot}, \ref{fig:canonical-basis}, and \ref{fig:curves-aj-bj}.
\end{comment*}

According to \cite{BLS20a}*{Section 5.3} where scenarios for the symmetric case $A_1=A_2=A$ are listed this should happen when $\frac{A}{B}>1$ (here $B\coloneqq B_2=-B_1=1$). This condition corresponds to the 3rd, 4th, and 5th scenarios. In these cases a genus $3$ sector should exist for $\xi\neq 0$ close to $0$. This is indeed confirmed by the following existence result. This result can be proven following the ideas used in \cite{BLS20a}*{Section 6} to establish the existence of a genus $2$ sector.

\begin{theorem*}[existence result]
Suppose we are in the symmetric case with $\frac{A}{B}>1$. Then there exists a $\xi_m>0$ and a smooth curve 
\[
\xi\mapsto(\alpha(\xi),\beta(\xi),\mu(\xi))\in(\D{C}^+)^2\times\D{R}
\]
defined for $\xi\in(0,\xi_m)$ such that the following hold:
\begin{enumerate}[\rm(a)]
\item 
For each $\xi\in(0,\xi_m)$, $(\xi,\alpha(\xi),\beta(\xi),\mu(\xi))$ is a solution of the system of equations~\eqref{dg-conditions}, which means that a $g$-function appropriate for this range of $\xi$ is given by \eqref{dg}.
\item 
$\mu(\xi)$ is a smooth real-valued function of $\xi\in(0,\xi_m)$.
\item 
The curves $\xi\mapsto\alpha(\xi)$ and $\xi\mapsto\beta(\xi)$ are smooth maps $(0,\xi_m)\to\D{C}^+\setminus\{E_1,E_2\}$ and $\alpha(\xi)\neq\beta(\xi)$ for all $\xi\in(0,\xi_m)$.
\item 
As $\xi\downarrow 0$, we have
\begin{equation}\label{behavioratxiE1}
\alpha(\xi)\to\ii\alpha_0,\quad\beta(\xi)\to\ii\alpha_0,\quad\mu(\xi)\to 0,
\end{equation}
where $\alpha_0\coloneqq\sqrt{A^2-B^2}$. 
\item 
As $\xi\uparrow\xi_m$, at least one of the following occurs: 
\begin{enumerate}[\rm(i)]
\item 
$\beta(\xi)$ and $\overline{\beta(\xi)}$ merge at a point on the real axis.
\item 
$\alpha(\xi)$ hits $E_1$.
\item 
Two or more of the four branch points $\alpha(\xi)$, $\beta(\xi)$, $E_1$, $E_2$ collide. 
\item 
$\xi_m=+\infty$.
\end{enumerate}
\end{enumerate}
\end{theorem*}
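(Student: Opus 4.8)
The plan is to construct the curve as the orbit of a single, explicitly known degenerate solution under the implicit function theorem, and then to extend it to a maximal interval by a continuation argument, following the strategy used for the genus~$2$ sector in \cite{BLS20a}*{Section~6}. The first step is to turn \eqref{dg-conditions} into a square smooth equation in finitely many real variables. By \eqref{dg}, a candidate differential $\dd g(\xi,k)$ is completely determined by the triple $(\mu,\alpha,\beta)\in\D{R}\times(\D{C}^+)^2$, so \eqref{dg-conditions} becomes a square system $\Phi(\xi,\mu,\alpha,\beta)=0$ of real equations --- the normalization of $\dd g$ at $k=\infty$ together with the vanishing of the real parts of the periods of $\dd g$ --- in the real unknowns $\mu,\Re\alpha,\Im\alpha,\Re\beta,\Im\beta$. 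The delicate feature is that this parametrization degenerates where $\alpha=\beta$, which, by \eqref{behavioratxiE1}, is exactly the configuration reached as $\xi\downarrow 0$. I would desingularize by replacing the unordered pair $\{\alpha,\beta\}$ with its elementary symmetric functions $\rho_1\coloneqq\alpha+\beta$, $\rho_0\coloneqq\alpha\beta$: the moving quartic factor $(k-\alpha)(k-\bar\alpha)(k-\beta)(k-\bar\beta)=(k^2-\rho_1 k+\rho_0)(k^2-\bar\rho_1 k+\bar\rho_0)$ of the branch polynomial of $M(\xi)$, hence $\dd g$ and the whole system \eqref{dg-conditions}, then depend smoothly on $(\xi,\rho_1,\rho_0,\mu)$ in a full neighbourhood of the collision, giving a smooth map $\tilde\Phi(\xi,\rho_1,\rho_0,\mu)$ whose zeros with $\rho_1^2-4\rho_0\neq 0$ are exactly the genus~$3$ solutions of \eqref{dg-conditions}.

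The second step is to locate the anchor solution at $\xi=0$. At $\xi=0$ the system \eqref{dg-conditions} involves only the phase $\theta(0,k)=2k^2$ and the branch points $E_1,E_2$; since $E_1=-\bar E_2$ and $\theta(0,-\bar k)=\overline{\theta(0,k)}$, it is invariant under $k\mapsto-\bar k$, and the only configuration with $\alpha=\beta$, $\alpha\in\D{C}^+$, $\mu\in\D{R}$ fixed by this involution is $\mu=0$, $\alpha=\beta=\ii a$ with $a>0$. For such a configuration the branch polynomial acquires the square factor $(k^2+a^2)^2$, so $M(\xi)$ degenerates to a genus~$1$ surface and $\dd g$ collapses to the corresponding genus~$1$ differential; the genus~$1$ normalization and period conditions --- uniquely solvable, cf.\ \cites{BV07,BLS20a} --- then force $\mu=0$ and $a=\alpha_0=\sqrt{A^2-B^2}$. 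Thus $\tilde\Phi(0,2\ii\alpha_0,-\alpha_0^2,0)=0$.

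The heart of the argument is the non-degeneracy of the partial Jacobian $D_{(\rho_1,\rho_0,\mu)}\tilde\Phi$ at this point. As in \cite{BV07} and \cite{BLS20a}*{Section~6}, one differentiates the normalization and period conditions and identifies the Jacobian with a period matrix of $M(\xi)$; at $\xi=0$, where $M$ degenerates, its invertibility in the desingularized variables follows from the Riemann bilinear relations for the limiting genus~$1$ surface together with a residue computation controlling the cycle that collapses as $\xi\to 0$. I expect this Jacobian computation to be the main obstacle: one must unwind the implicit dependence of $\dd g$ on $(\rho_1,\rho_0,\mu)$ through the branch polynomial, track the contribution of the collapsing cycle, and check that the relevant determinant does not vanish at the degenerate point. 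Granting it, the implicit function theorem yields a unique smooth curve $\xi\mapsto(\rho_1(\xi),\rho_0(\xi),\mu(\xi))$ through the anchor point; since $\mu$ enters as a real unknown, $\mu(\xi)\in\D{R}$, which is~(b). The same linearization gives $\tfrac{\dd}{\dd\xi}(\rho_1^2-4\rho_0)|_{\xi=0}\neq 0$, so $(\alpha-\beta)^2$ is not identically zero and, for small $\xi>0$, $(\alpha(\xi),\beta(\xi))$ is a well-defined pair of distinct points which, by continuity from $\ii\alpha_0\in\D{C}^+\setminus\{E_1,E_2\}$, lies in $\D{C}^+\setminus\{E_1,E_2\}$; this gives~(a) and~(c) near $0$, while~(d) is exactly \eqref{behavioratxiE1}.

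To globalize, let $\xi_m$ be the supremum of those $\xi>0$ for which the curve extends to all of $(0,\xi)$ with $\alpha(\cdot),\beta(\cdot)$ distinct elements of $\D{C}^+\setminus\{E_1,E_2\}$, $\overline{\beta(\cdot)}\neq\beta(\cdot)$, and the sign inequalities characterizing a valid genus~$3$ $g$-function satisfied. On $(0,\xi_m)$ the implicit function theorem applies at every point --- the Jacobian being non-degenerate for every smooth $M(\xi)$, again by the Riemann bilinear relations --- so the solution there is smooth and locally unique, and an a priori bound keeps the branch points in a fixed compact set on $(0,\xi_m)$, as in \cite{BLS20a}*{Section~6}. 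If $\xi_m<\infty$, pick a limiting configuration along $\xi\uparrow\xi_m$: were it a smooth genus~$3$ configuration satisfying all the constraints, the implicit function theorem would extend the solution past $\xi_m$, a contradiction; hence it must violate one of the constraints or have $M(\xi_m)$ degenerate. A case analysis of these possibilities --- using the constraints, the sign inequalities, and the particular geometry of this sector to discard the others, as in \cite{BLS20a}*{Section~6} --- leaves precisely the alternatives~(i), (ii), (iii); and if no degeneration ever occurs, $\xi_m=+\infty$, which is~(iv). This completes the proof.
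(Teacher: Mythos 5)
The paper itself does not prove this theorem---it only records that the statement can be established following the genus-$2$ existence argument of \cite{BLS20a}*{Section~6}---so there is no detailed model proof to compare with; judged on its own terms, your skeleton (explicit degenerate anchor at $\xi=0$, implicit function theorem, continuation to a maximal interval with a list of breakdown alternatives) is the intended strategy, but the step on which everything hinges is missing. You ``grant'' the invertibility of $D_{(\rho_1,\rho_0,\mu)}\tilde\Phi$ at the anchor $(\xi,\rho_1,\rho_0,\mu)=(0,2\ii\alpha_0,-\alpha_0^2,0)$, and this is exactly where the standard argument breaks down. The usual nondegeneracy proof (a vanishing linear combination of the differentiated conditions would yield a holomorphic differential on $M(\xi)$ with all $a$-periods zero, hence zero---essentially your appeal to the Riemann bilinear relations) applies only where $M(\xi)$ is a smooth genus-$3$ surface; at the anchor the surface is pinched, $\alpha=\beta$. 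Passing to $\rho_1=\alpha+\beta$, $\rho_0=\alpha\beta$ does make the system analytic through the collision (a period over a cycle pinched between $\alpha$ and $\beta$ equals a fixed-contour period minus loop integrals around the shrinking cuts, and since $\dd g$ vanishes like $\sqrt{(k-\alpha)(k-\beta)}$ there, those loop integrals are $\mathrm{O}((\alpha-\beta)^2)$ and analytic in $(\rho_1,\rho_0)$), but the same computation shows that the condition carried by the vanishing cycle is divisible by $\rho_1^2-4\rho_0$: it vanishes identically on the collision locus, so the corresponding row of the Jacobian is supported only on the two transverse directions, and the determinant at the anchor rests on a delicate cancellation that you neither compute nor bound. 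Your ``residue computation controlling the collapsing cycle'' is a placeholder, not an argument, and no blow-up or bifurcation analysis is offered as a substitute.

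For the same reason the claim $\frac{\dd}{\dd\xi}\bigl(\rho_1^2-4\rho_0\bigr)\big|_{\xi=0}\neq 0$---which is what guarantees $\alpha(\xi)\neq\beta(\xi)$ for small $\xi>0$, i.e.\ that the curve actually leaves the genus-$1$ collision locus so that (a), (c), (d) hold---is pure assertion; if it failed, the curve produced by your implicit function theorem could simply be the genus-$1$ family continued in $\xi$, and no genus-$3$ sector would result. Finally, the globalization is only gestured at: the a priori compactness of $(\alpha,\beta,\mu)$ on $(0,\xi_m)$, the nondegeneracy of the Jacobian along the whole curve, and above all the exhaustiveness of alternatives (i)--(iv) (why $\mu$ or the branch points cannot escape to infinity, why the only real-axis degeneration is $\beta=\bar\beta$, why the remaining collisions are exactly those listed in (iii)) are delegated to ``as in \cite{BLS20a}*{Section~6}'' without argument. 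Since the paper likewise defers to that reference, this last point is venial; the anchor-Jacobian step, however, is a genuine gap: as written, the local existence of genuinely genus-$3$ solutions for small $\xi>0$, the heart of the theorem, is not established.
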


\begin{assumption*}[on $a(k)$ and $b(k)$]
In what follows we assume that the scattering coefficients $a(k)$ and $b(k)$ defined in \cite{BLS20a}*{Section 2.4} are nonzero for any $k\in\D{C}^+\cup\D{R}$.
\end{assumption*}

\begin{remark*}
The assumption that $a(k)$ is nonzero for any $k\in\D{C}^+\cup\D{R}$, $k\neq E_1, E_2$ means that we deal with the solitonless case. On the other hand, the assumption that $a(E_1)\neq 0$ and $a(E_2)\neq 0$ signifies that we focus, for simplicity, on the generic case 
(see the discussion of the behavior of $a(k)$ near $E_1$ and $E_2$ below, in Section~\ref{sec:rhp}).
\end{remark*}

The next theorem is our main result. In this theorem, where we return to the general (not necessarily symmetric) case, the long time asymptotics is expressed in terms of quantities associated to the genus $3$ Riemann surface $M\equiv M(\xi)$. This hyperelliptic Riemann surface is defined by \eqref{M-def} with distinct branch points $\alpha\equiv\alpha(\xi)$ and $\beta\equiv\beta(\xi)$ determined by the system of equations \eqref{dg-conditions}.

\begin{theorem}[asymptotics in a genus $3$ sector]  \label{main-thm}
Let $\C{I}$ denote a compact subset of a genus $3$ sector $(\xi_1,\xi_2)$. The asymptotics in this sector is given by
\begin{equation}  \label{main-asymptotics}
q(x,t)=Q_0(\xi,t)+\frac{Q_1(\xi,t)}{\sqrt{t}}+\ord\left(\frac{\ln t}{t}\right),\quad t\to\infty,\ \ \xi\in\C{I},
\end{equation}
where the error term is uniform with respect to $\xi\in\C{I}$, the leading order term $Q_0$ is given by
\begin{equation}  \label{leading}
Q_0(\xi,t)=\eul^{2\ii(tg^{(0)}+h(\infty))}\Im(E_1+E_2+\alpha+\beta)\frac{\Theta(\varphi(\infty^+)+d)\Theta(\varphi(\infty^+)-v(t)-d)}{\Theta(\varphi(\infty^+)+v(t)+d)\Theta(\varphi(\infty^+)-d)}\,,
\end{equation}
and the coefficient $Q_1$ of the subleading term is given by \begin{equation}  \label{subleading}
Q_1(\xi,t)=-2\ii\eul^{2\ii(tg^{(0)}+h(\infty))}\frac{(Y_{\mu}(x,t,\mu)m_1^XY_{\mu}(x,t,\mu)^{-1})_{12}}{\psi_{\mu}(\mu)}.
\end{equation}
$\Theta$ is the Riemann theta function associated with the genus $3$ Riemann surface $M$ and defined in \eqref{Theta-def}. The Abel map $\varphi$ is defined in \eqref{Abel-map}. The constants $g^{(0)}\equiv g^{(0)}(\xi)$ and $h(\infty)\equiv h(\xi,\infty)$ are defined in \eqref{g-asymptotics} and \eqref{h(infty)}, respectively. The vector-valued function $v(t)$ is defined in \eqref{v-def}. The vector $d\equiv d(\xi)$ is defined in \eqref{d-def}. The constant $\psi_{\mu}(\mu)\equiv\psi_{\mu}(\xi,\mu)>0$ is defined in \eqref{psimu-def}. The matrices $Y_{\mu}$ and $m_1^X\equiv m_1^X(\xi)$ are defined in \eqref{Ymu-def} and \eqref{m1X-def}, respectively.
\end{theorem}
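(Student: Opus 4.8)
The plan is to carry out a Deift--Zhou nonlinear steepest descent analysis of the Riemann--Hilbert problem \eqref{rhp}--\eqref{basic-jump} for the chosen genus $3$ $g$-function, following the architecture of \cite{BV07} and \cite{BLS20a}. First I would perform the chain of explicit transformations $m\mapsto m^{(1)}\mapsto m^{(2)}\mapsto\cdots$ described in Section~\ref{sec:transfos}: the $g$-function conjugation $m^{(1)}(k)=m(k)\eul^{-\ii t g(\xi,k)\sigma_3}$ (possibly together with a scalar $\delta$-type factor absorbing the reflection coefficient and opening lenses along $\Sigma_1,\Sigma_2$ and the bands of $M(\xi)$), followed by contour deformations that exploit the sign structure of $\Re(\ii g)$ dictated by the conditions \eqref{dg-conditions}. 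The outcome should be a RH problem whose jump matrix is exponentially close (as $t\to\infty$, uniformly for $\xi\in\C{I}$) to a piecewise-constant ``model'' jump on the union of the bands, together with localized non-decaying contributions near a finite set of special points: the stationary/coincidence points $\mu(\xi)$, $\alpha(\xi)$, $\beta(\xi)$, and the branch points $E_1,E_2$ (and their conjugates). The gap/band partition of $\Sigma$ and the associated signature table are the combinatorial heart of this step.

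Next I would solve the model problem. On the genus $3$ hyperelliptic surface $M(\xi)$ defined by \eqref{M-def} one builds the Baker--Akhiezer--type solution in terms of Riemann theta functions $\Theta$ via the Abel map \eqref{Abel-map}; this is exactly the content of Section~\ref{sec:model}, and the reconstruction formula $q=-2\ii\lim_{k\to\infty}k\,m_{12}(k)$ applied to the model yields the leading term \eqref{leading}, with the explicit phases $g^{(0)}$, $h(\infty)$, the lattice vector $v(t)$ linear in $t$, and the constant shift $d$. I would then introduce the local parametrices of Section~\ref{sec:local}: a parabolic-cylinder parametrix at the real stationary point $\mu$ (where the phase has a quadratic critical point meeting the real axis, so the relevant model is the one solved in Appendix~\ref{sec:A}) and Airy parametrices at the branch points $E_1,E_2$ (and at $\alpha,\beta$ if the local behavior there is of Airy type — this depends on the local order of vanishing of $\dd g$), built from the standard models of Appendices~\ref{sec:A}--\ref{sec:B}. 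Matching each local parametrix to the model solution on small circles around these points produces a correction of relative size $t^{-1/2}$; collecting the residue of the parabolic-cylinder mismatch and feeding it through the reconstruction formula gives precisely the subleading coefficient $Q_1$ in \eqref{subleading}, with $Y_\mu$, $m_1^X$, and $\psi_\mu(\mu)$ as defined in \eqref{Ymu-def}, \eqref{m1X-def}, \eqref{psimu-def}.

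Finally, in the spirit of Section~\ref{sec:final}, I would glue: set $m^{\mathrm{app}}$ equal to the model solution away from the special points and to the local parametrices inside the small disks, and show that the error matrix $R(k)=m^{(\mathrm{last})}(k)(m^{\mathrm{app}}(k))^{-1}$ solves a small-norm RH problem with jump $I+\ord(t^{-1/2})$ on the disk boundaries and $I+\ord(\eul^{-ct})$ elsewhere. Standard small-norm theory (in the $\dot E^2$/Smirnoff setting of \cites{Le17,Le18}) then gives $R=I+\ord(t^{-1/2})$ with a full asymptotic expansion, and extracting the $k^{-1}$ coefficient of $R$, combining it with the model and parametrix contributions, yields \eqref{main-asymptotics} with error $\ord(t^{-1}\ln t)$, uniformly on $\C{I}$; the $\ln t$ arises from the $t^{-1/2}\ln t$ corrections typical of the parabolic-cylinder parametrix when two stationary-type points are at bounded distance.

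I expect the main obstacle to be the first step: verifying that the genus $3$ $g$-function produced by \eqref{dg-conditions} has the correct global sign structure — i.e.\ that $\Re(\ii g)$ has the right inequalities on all gaps and that lenses can be opened with the correct decay on all bands, including near $E_1,E_2$ where $a(k)$ may vanish and near the moving points $\alpha(\xi),\beta(\xi)$ — uniformly for $\xi$ in the compact set $\C{I}$. This is where the genus $3$ case is genuinely harder than the genus $1$ or $2$ analyses: the moduli space of the surface is larger, there are more gaps to control, and one must rule out spurious sign changes. Establishing these inequalities (together with the non-degeneracy $\alpha\neq\beta$, $\alpha,\beta\neq E_1,E_2$ guaranteed by the existence theorem) is the crux; once the signature table is secured, the rest is a by-now-standard, if lengthy, application of the steepest descent machinery.
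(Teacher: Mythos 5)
Your overall route is the paper's route: $g$-function conjugation, a $\delta$-factor for the wrong factorization on $(-\infty,\mu)$, lens opening, a final scalar conjugation making the band jumps constant, an explicit theta-function model on $M(\xi)$, local parametrices, and small-norm gluing in the $\dot E^2$ setting. Two points in your plan, however, are off in a way that would matter if you carried it out. First, the placement of the local models: the paper needs Airy parametrices at $\alpha$, $\beta$ (and $\bar\alpha$, $\bar\beta$) and a parabolic-cylinder parametrix at $\mu$, and \emph{no} parametrix at $E_1,E_2$. Your hedge at $\alpha,\beta$ resolves affirmatively (since $\dd g$ has simple zeros there and $w$ has square-root branch points, $g-g(\alpha)\sim(k-\alpha)^{3/2}$, so the Airy model is forced and is essential: without it the lens jump near $\alpha$ does not decay in $L^\infty$ and small-norm theory fails). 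By contrast, at $E_1,E_2$ the phase only vanishes like $(k-E_j)^{1/2}$, so an Airy parametrix cannot match there; none is needed, because after the $\nu_1$-regularization and the $h$-conjugation the residual lens jumps near $E_j$ are exponentially small in $L^1\cap L^2$ and contribute beyond the orders in \eqref{main-asymptotics}. Handling $E_1,E_2$ correctly also requires the boundedness statements of Lemma~\ref{h-properties} (and the choice of $\omega_j$ via the nondegeneracy of $a$-periods of holomorphic differentials), which your sketch compresses into ``a scalar $\delta$-type factor''; this fifth transformation is where the constants $\Omega_j,\omega_j$ entering $v(t)$ actually come from.

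Second, two framing points. The $\ln t$ in the error does not come from two nearby stationary points; it comes from the logarithmic singularity of the $\delta$- and $\tilde\delta$-factors at $\mu$, which produces the $\ord(t^{-1}\ln t)$ bound on $\|\hat v^{(5)}-v^{\mu}\|_{L^1(\C{X})}$ in Lemma~\ref{mmu-properties}. And the ``crux'' you identify (establishing the global sign structure of $\Im g$ uniformly on $\C{I}$) is not part of the proof of Theorem~\ref{main-thm} in the paper: the theorem is proved under the standing assumption that $\xi$ lies in a genus $3$ sector, i.e.\ that \eqref{dg-conditions} is solvable and the level set $\Im g=0$ has the configuration of Figure~\ref{fig:Img-contour-plot}; the existence of such a sector is a separate statement (proved along the lines of the genus $2$ case in \cite{BLS20a}), not the content of this proof. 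With these corrections your plan coincides with the paper's proof.
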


This theorem deals with a region where the two branch points $\alpha(\xi)$ and $\beta(\xi)$ are distinct. In \cite{BLS20c} we perform the asymptotic analysis in a transition zone where $\xi=\xi(t)$ is asymptotically close (as $t\to+\infty$) to a critical value $\xi_0$ at which $\alpha(\xi)$ and $\beta(\xi)$ merge.

On the other hand, according to our analysis of asymptotic scenarios \cite{BLS20a}, the genus $3$ sector is also adjacent to a genus $2$ sector, or to a genus $1$ sector, or to a genus $0$ (plane wave) sector. In the last two cases, the transition between the sectors is associated with the situation when a characteristic curve (the infinite branch of $\Im g = 0$ for the corresponding ``$g$-function'' \cite{BLS20c}) hits the endpoints of a spectral arc. With this respect we notice that a transition mechanism has been recently reported \cites{BM19,KM19} that involves the so-called ``asymptotic solitons'' and is associated with endpoints of a spectral arc. Whether it (or a similar mechanism) is applicable to transitions from the genus $3$ sector is an open problem.

The remainder of the paper is devoted to proving Theorem~\ref{main-thm}. 

\section{RH problem, Riemann surface, and $g$-function}\label{sec:rhp}
\subsection{The basic RH problem}\label{sec:rhp-0}

We briefly recall the description of the basic RH problem introduced in \cite{BLS20a}*{Section~2.5}. Its formulation involves two spectral functions $a(k)$ and $b(k)$ which are defined through the scattering relation between the Jost solutions $\Phi_1$ and $\Phi_2$:
\begin{equation}  \label{scattering}
\Phi_2(x,t,k)=\Phi_1(x,t,k)
\begin{pmatrix}
a^*(k)&b(k)\\
-b^*(k)&a(k)
\end{pmatrix},\quad k\in\D{R},\quad k\neq B_1,B_2.
\end{equation}
See \cite{BLS20a}*{Section~2.4}. The Jost solutions $\Phi_1$ and $\Phi_2$ are defined through Volterra integral equations \cite{BLS20a}*{(2.15)} which, for $t=0$, are defined only from the initial data $q_0(x)$. In what follows $\mu^{(i)}$ denotes the $i$-th column of a matrix $\mu$. Then, under conditions \eqref{cauchy-asbg}, the column $\Phi_1^{(1)}$ is analytic in $\D{C}^+\setminus\Sigma_1$ with a jump across $\Sigma_1$, $\Phi_2^{(2)}$ is analytic in $\D{C}^+\setminus\Sigma_2$, $\Phi_1^{(2)}$ is analytic in $\D{C}^-\setminus\Sigma_1$,  and $\Phi_2^{(1)}$ is analytic in $\D{C}^-\setminus\Sigma_2$. See \cite{BLS20a}*{Proposition~2.1}. Setting $t=0$ in \eqref{scattering} it follows that $a(k)$ and $b(k)$ are uniquely determined by $q_0(x)$, and similarly for the reflection coefficient which is defined by
\begin{equation}   \label{reflec-anal}
r(k)\coloneqq\frac{b^*(k)}{a(k)}\,.
\end{equation}
If, in addition, the initial data $q_0(x)$ satisfy \eqref{id}, then $a(k)$ and $b(k)$ are both analytic functions of $k\in\hat{\D{C}}\setminus(\Sigma_1\cup\Sigma_2)$. See \cite{BLS20a}*{Section 2.5.3}.

The smoothness of $q_0$ implies that
\begin{equation}  \label{ab-at-infty}
a(k)=1+\ord(k^{-1}),\qquad b(k)=\ord(k^{-N}),\quad k\in\D{R},\ k\to\infty,
\end{equation}
for every $N\geq 1$. Moreover, under assumption \eqref{id} on $q_0$,
\begin{equation}  \label{reflec-infty}
a(k)=1+\ord\left(\tfrac{\eul^{4C\abs{\Im k}}+1}{k}\right),\qquad b(k)= \ord\left(\tfrac{\eul^{4C\abs{\Im k}}+1}{k}\right),\quad k\to\infty.
\end{equation}
See \cite{BLS20a}*{Lemma 2.4}.

On $\Sigma_1$ and $\Sigma_2$, the spectral functions $a(k)$ and $b(k)$ satisfy the relations 
\begin{subequations}  \label{ab12}
\begin{alignat}{2}   \label{ab1}
&\begin{cases}
a_+=-\ii\eul^{-\ii\phi_1}b_-,&\\
b_+=-\ii\eul^{\ii\phi_1}a_-,&
\end{cases}&\quad&k\in\Sigma_1,\\
\label{ab2}
&\begin{cases}
a_+=-\ii\eul^{\ii\phi_2}b_-^*,&\\
b_+=\ii\eul^{\ii\phi_2}a_-^*,&
\end{cases}&&k\in\Sigma_2.
\end{alignat}
\end{subequations}
See \cite{BLS20a}*{Section 2.5.3}. Their behavior near $E_1$, $E_2$, $\bar E_1$, and $\bar E_2$ is described in \cite{BLS20a}*{Section 2.5.5}. We have one of the two possibilities: 
\begin{enumerate}[(i)]
\item
(generically) $a(k)$ has singularities of order $\frac{1}{4}$ at $E_j$ and $\bar E_j$, for $j=1,2$; precisely, $a(k)\sim c_j(k-E_j)^{-\frac{1}{4}}$ near $E_j$ and $a(k)\sim c_j'(k-\bar E_j)^{-\frac{1}{4}}$ near $\bar E_j$ with nonzero complex constants $c_j$, $c_j'$. Simultaneously, $b(k)\sim c_1''(k-E_1)^{-\frac{1}{4}}$ near $E_1$ and $b^*(k)\sim c_2''(k-E_2)^{-\frac{1}{4}}$ near $E_2$ with nonzero complex constants $c_j''$. In this case $r(k)$ is bounded near $E_j$ and $\bar E_j$.
\item
(virtual level cases) $a(k)$ vanishes at $E_j$, for $j=1$ and/or $j=2$; precisely, $a(k)\sim\gamma_j(k-E_j)^{\frac{1}{4}}$ for $k$ near to $E_j$ with nonzero complex constants $\gamma_j$. Moreover, if this happens for $j=1$, then $b(k)\sim\gamma_1'(k-E_1)^{1/4}$ near $E_1$, and, if this happens for $j=2$, then $b^*(k)\sim\gamma_2'(k-E_2)^{1/4}$ near $E_2$, with nonzero complex constants $\gamma_j'$.
\end{enumerate}    
Moreover, the behavior of $\Phi_j^{(l)}$, $l=1,2$ at $k=E_j$ or $k=\bar E_j$, $j=1,2$ is $\ord((k-E_j)^{-\frac{1}{4}})$ or $\ord((k-\bar E_j)^{-\frac{1}{4}})$ (generically, they all have singularities of order $-1/4$).

In \cite{BLS20a}*{Section 2.5} we introduced the basic RH problem as follows. We first consider the $2\times 2$ matrix-valued function $m(x,t,k)$ by
\begin{equation}  \label{m}
m(x,t,k)\coloneqq\begin{cases}
\begin{pmatrix}\frac{\Phi_1^{(1)}}{a}&\Phi_2^{(2)}\end{pmatrix}\eul^{(\ii kx+2\ii k^2t)\sigma_3},& k\in\D{C}^+,\\
\begin{pmatrix}\Phi_2^{(1)}&\frac{\Phi_1^{(2)}}{a^*}\end{pmatrix}\eul^{(\ii kx+2\ii k^2t)\sigma_3}, 
&k\in\D{C}^-,
\end{cases}
\end{equation}
and then observe that $m(x,t,k)$ can be characterized as the solution of a RH problem whose data are uniquely determined by $q_0(x)$ through the associated scattering functions $a(k)$ and $b(k)$. This approach is a direct analogue of the case of Cauchy problems with decaying initial data. It turns out that for various reasons it is necessary to regularize this problem. We will do this by means of the function $\nu_1$ which is defined as in \cite{BLS20a}*{Section 2.3} by
\begin{equation}  \label{nu1}
\nu_1(k)\coloneqq\left(\frac{k-E_1}{k-\bar E_1}\right)^{\frac{1}{4}},\qquad\nu_1(\infty)=1,
\end{equation}
and has a jump $\nu_{1+}(k)=\ii\nu_{1-}(k)$ across $\Sigma_1$.

In the virtual level (non-generic) case with $a(E_1)=0$ the function $m$ can have a singularity of order $\frac{1}{2}$ at $k=E_1$. Then it is necessary to regularize $m$ to get a RH problem in the $L^2$ setting \eqref{rhp}, e.g., by replacing $m$ with
\begin{equation}  \label{tildem}
\tilde m(x,t,k)\coloneqq m(x,t,k)\nu_1^{\sigma_3}(k),
\end{equation}
so that the singularities (if any) of $\tilde m$ are all of order $\leq\frac{1}{4}$. This regularization aims to characterize $m(x,t,k)$
from the solution of a RH problem \eqref{rhp-0} whose data \eqref{basic-jump} are uniquely determined by $q_0(x)$ through the associated scattering functions $a(k)$ and $b(k)$.

In the generic case $m$ leads directly to a RH problem in the $L^2$ sense. However we must regularize $m$ to keep the $L^2$ setting \eqref{rhp-0} in the chain of subsequent deformations of the original RH problem. Lemmas \ref{rhp-deformation} and \ref{rhp-deformation-bis} indeed require that the transformation factors be bounded at $k=E_1$ and $k=\bar E_1$. To perform this regularization we replace $m$ with
\begin{equation}  \label{hat-m}
\hat m(x,t,k)\coloneqq m(x,t,k)\nu_1^{-\sigma_3}(k)
\end{equation}
and introduce related versions of $a$, $b$, and $r$:
\begin{equation}  \label{hat-abr}
\hat a\coloneqq a\nu_1,\quad\hat b\coloneqq b\nu_1,\quad\hat r\coloneqq\frac{\hat b^*}{\hat a}=r\nu_1^{-2}.
\end{equation}
Note that $\hat a\hat a^*=aa^*$, $\hat b\hat b^*=bb^*$, and $\hat r\hat r^*=rr^*=\abs{r}^2$. Note also that $\hat a$, $\hat b$, and $\frac{\hat r^*}{1+\hat r\hat r^*}=\hat a\hat b$ are bounded near $E_1$ whereas $\hat r$ is bounded near $E_2$.

From now on, for simplicity, we assume (see Assumption on $a(k)$ in Section~\ref{sec:thm}) that we are in the generic case, that is, that $a(k)$ actually has singularities at $E_1$ (which affects the construction of the RH problem) and $E_2$.

\begin{figure}[ht]
\centering\includegraphics[scale=1]{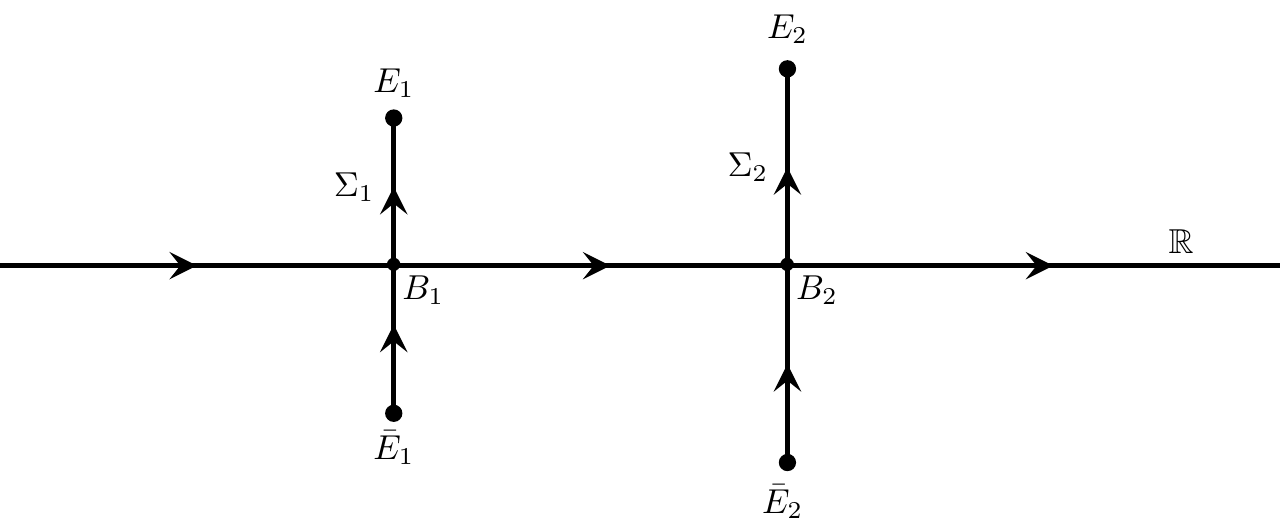}
\caption{The contour $\Sigma=\D{R}\cup\Sigma_1\cup\Sigma_2$ for the basic RH problem.} 
\label{fig:basic-contour}
\end{figure}
Let $\hat m$ be defined by \eqref{hat-m} and \eqref{m}. The RH problem satisfied by $\hat m$ derives from that satisfied by $m$, see \cite{BLS20a}*{Section 2.5}. Its contour is $\Sigma\coloneqq\D{R}\cup\Sigma_1\cup\Sigma_2$, see Figure~\ref{fig:basic-contour}. Its jump matrix is
\begin{subequations}  \label{basic-jump}
\begin{equation}  \label{jump}
\hat J(x,t,k)\coloneqq\eul^{-\ii t\theta(k)\sigma_3}\hat J_0(k)\eul^{\ii t\theta(k)\sigma_3},\quad k\in\Sigma,
\end{equation}
where the phase $\theta(k)$ is
\begin{equation}   \label{phase}
\theta(k)\equiv\theta(\xi,k)\coloneqq 2k^2+\xi k,\qquad\xi\coloneqq\frac{x}{t}\,.
\end{equation}
and 
\begin{equation}  \label{jump0}
\hat J_0(k)=\begin{cases}
\begin{pmatrix}1&\hat r^*\\
0&1\end{pmatrix}\begin{pmatrix}1&0\\
\hat r&1\end{pmatrix}=\begin{pmatrix}1+\hat r\hat r^*&\hat r^*\\
\hat r&1\end{pmatrix},&k\in\D{R},\\
\begin{pmatrix}-\ii&0\\\frac{\ii\eul^{-\ii\phi}}{\hat a_+\hat a_-}&\ii\end{pmatrix},&k\in\Sigma_1\cap\D{C}^+,\\
\begin{pmatrix}\frac{\hat a_-}{\hat a_+}&\ii\nu_1^2\\0&\frac{\hat a_+}{\hat a_-}\end{pmatrix},&k\in\Sigma_2\cap\D{C}^+,\\
\begin{pmatrix}-\ii&\frac{\ii\eul^{\ii\phi}}{\hat a_+^*\hat a_-^*}\\0&\ii\end{pmatrix},&k\in\Sigma_1\cap\D{C}^-,\\
\begin{pmatrix}\frac{\hat a_+^*}{\hat a_-^*}&0\\\ii\nu_1^{-2}&-\frac{\hat a_-^*}{\hat a_+^*}\end{pmatrix},&k\in\Sigma_2\cap\D{C}^-,
\end{cases}
\end{equation}
\end{subequations}
with $\hat a$ and $\hat r$ defined in \eqref{hat-abr}. Note that $\hat J_0=\nu_{1-}^{\sigma_3}J_0\nu_{1+}^{-\sigma_3}$ where $J_0$ is as in \cite{BLS20a}*{Section 2.5}.

\begin{rh-pb*}    \label{basic-rhp-0}
Given $r(k)$ for $k\in\D{R}$, $a_+(k)$ and $a_-(k)$ for $k\in(\Sigma_1\cup\Sigma_2)\cap\D{C}^+$, find $\hat m(x,t,k)$ analytic in $k\in\D{C}\setminus\Sigma$ that satisfies 
\begin{equation}   \label{rhp-0}
\begin{cases}
\hat m\in I+\dot E^2(\D{C}\setminus\Sigma),&\\
\hat m_+(k)=\hat m_-(k)\hat J(k)&\text{for a.e. }k\in\Sigma,
\end{cases}
\end{equation}
where $\Sigma=\D{R}\cup\Sigma_1\cup\Sigma_2$ and $\hat J$ is defined by \eqref{basic-jump}.
\end{rh-pb*}

Its unique solution $\hat m(x,t,k)$ provides a representation of the solution $q(x,t)$ of the Cauchy problem as follows:

\begin{proposition}  \label{prop:basic-rhp}
Let $\hat m(x,t,k)$ be the solution of the basic RH problem. Then, the solution $q(x,t)$ of the Cauchy problem \eqref{nlsic}-\eqref{q0-limits} is given by
\begin{equation}  \label{m-to-q}
q(x,t)=2\ii\lim_{k\to\infty}k\hat m_{12}(x,t,k).
\end{equation}
\end{proposition}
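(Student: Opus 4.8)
The plan is to read off \eqref{m-to-q} from the way $\hat m$ is assembled from the Jost solutions in \eqref{m}--\eqref{hat-m}, together with the classical recovery of the potential from the large-$k$ behaviour of a solution of the Zakharov--Shabat $x$-equation. The analytic properties and the large-$k$ asymptotics of $\Phi_1$ and $\Phi_2$ used below are established in \cite{BLS20a}*{Sections 2.4--2.5}, so most of the argument is a recollection; the one point that needs a separate (and trivial) check is that the regularizing factor $\nu_1^{-\sigma_3}$ does not alter the coefficient of $1/k$ in the $(1,2)$ entry at $k=\infty$, so that \eqref{m-to-q} holds with $\hat m$, and not only with the unregularized $m$.

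First I would recall the reconstruction for the unregularized matrix $m$ of \eqref{m}. Writing $\Phi_j(x,t,k)=\mu_j(x,t,k)\eul^{-(\ii kx+2\ii k^2t)\sigma_3}$, the matrix $\mu_j$ solves the spatial part of the Lax pair, which after this conjugation takes the form $(\mu_j)_x=-\ii k[\sigma_3,\mu_j]+Q\mu_j$ with $Q(x,t)$ off-diagonal and $Q_{12}=q(x,t)$ (the convention of \cite{BLS20a}), and, by the smoothness of $q$ together with the Cauchy asymptotics \eqref{cauchy-as}, it admits in the relevant half-plane a large-$k$ expansion $\mu_j=I+k^{-1}\C{M}_j(x,t)+\ord(k^{-2})$, uniformly on compact $(x,t)$-sets; this is the ingredient imported from \cite{BLS20a}. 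Substituting the expansion into the equation for $\mu_j$ and matching the $\ord(1)$ terms gives $\ii[\sigma_3,\C{M}_j]=Q$, whose $(1,2)$ entry reads $q(x,t)=2\ii(\C{M}_j)_{12}$. On the other hand, by \eqref{m} the second column of $m$ in $\D{C}^+$ coincides with the second column of $\mu_2$ (the scalar exponential factors cancel), while $a(k)\to 1$ forces the first column of $m$ to tend to the first column of $I$; hence $m=I+k^{-1}m_1+\ord(k^{-2})$ with $m_{12}=(\mu_2)_{12}$, so $\lim_{k\to\infty}k\,m_{12}=(\C{M}_2)_{12}=q/(2\ii)$, i.e.\ $q=2\ii\lim_{k\to\infty}k\,m_{12}$. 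The symmetry \eqref{m-symm} (equivalently, working with the first column in $\D{C}^-$) gives the same conclusion, which is a useful consistency check.

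Next I would pass from $m$ to $\hat m=m\,\nu_1^{-\sigma_3}$. Since $\nu_1^{-\sigma_3}=\diag(\nu_1^{-1},\nu_1)$ and, by \eqref{nu1}, $\nu_1(k)=1+\ord(k^{-1})$ as $k\to\infty$ (explicitly $\nu_1(k)=1-\tfrac{\ii A_1}{2k}+\ord(k^{-2})$), the $(1,2)$ entry obeys $\hat m_{12}(k)=m_{12}(k)\,\nu_1(k)=m_{12}(k)\bigl(1+\ord(k^{-1})\bigr)$; as $k\,m_{12}(k)$ already has a finite limit, this yields $\lim_{k\to\infty}k\,\hat m_{12}(k)=\lim_{k\to\infty}k\,m_{12}(k)=q/(2\ii)$, which is \eqref{m-to-q}. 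It remains to note that $\hat m$ in the statement is unambiguous: the basic RH problem \eqref{rhp-0} has a unique solution in the Smirnoff-class sense, and the matrix built from the Jost solutions via \eqref{m} and \eqref{hat-m} is precisely that solution, as recorded just before the proposition. I do not expect a serious difficulty here; the only technical ingredient, imported from \cite{BLS20a}, is the controlled $k\to\infty$ expansion of $\Phi_1$ and $\Phi_2$ — in particular the existence of the $1/k$ coefficient with enough uniformity in $x$ to legitimately read off $q$ — which relies on the smoothness of $q_0$ and on the integrability condition \eqref{cauchy-as}; everything else is bookkeeping, the only addition being the trivial observation that $\nu_1$ is harmless at $k=\infty$.
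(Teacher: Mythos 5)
Your argument is correct and is essentially the proof the paper relies on (the proposition is stated here without proof, the needed large-$k$ expansion of the Jost solutions being imported from \cite{BLS20a}): matching the $\ord(1)$ terms in the $x$-part of the Lax pair to get $q=2\ii\lim_{k\to\infty}k\,m_{12}$, and then observing that the regularization $\nu_1^{-\sigma_3}$ is harmless at $k=\infty$ since $\nu_1=1+\ord(k^{-1})$. No gaps; nothing further is needed.
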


The jump matrix $\hat J$ satisfies
\begin{equation}  \label{J-symmetry}
\hat J(x,t,k)=
\begin{cases}
\sigma_3\sigma_1\overline{\hat J(x,t,\bar k)}\sigma_1\sigma_3,&k\in\Sigma_1\cup\Sigma_2,\\
\sigma_3\sigma_1\overline{\hat J(x,t,\bar k)}^{-1}\sigma_1\sigma_3,&k\in\D{R}.
\end{cases}
\end{equation}
Together with uniqueness of the solution of the RH problem, this implies the important symmetry
\begin{equation}  \label{m-symmetry}
\hat m(x,t,k)=\sigma_3\sigma_1\overline{\hat m(x,t,\bar k)}\sigma_1\sigma_3,\quad k\in\D{C}\setminus\Sigma.
\end{equation}

\subsection{The Riemann surface}  \label{sec:riemann-surface}

A genus $3$ sector is characterized by the fact that the leading order asymptotics of $q(x,t)$ can be expressed in terms of theta functions defined on a family of genus $3$ Riemann surfaces parametrized by $\xi\coloneqq x/t$. The genus $3$ structure arises because the derivative of the $g$-function has five zeros: one real zero, which we denote by $\mu\equiv\mu(\xi)$, and two complex conjugate pairs of zeros, which we denote by $\accol{\alpha,\bar\alpha}$ and $\accol{\beta,\bar\beta}$, where $\alpha\equiv\alpha(\xi)$ and $\beta\equiv\beta(\xi)$ lie in $\D{C}^+\setminus(\Sigma_1\cup\Sigma_2)$. Assuming for the moment that the level set $\Im g=0$ connects the points $E_1$, $\bar E_1$, $E_2$, $\bar E_2$, $\alpha$, $\bar\alpha$, $\beta$, $\bar\beta$, $\mu$, and $\infty$ as in Figure~\ref{fig:Img-contour-plot} (see also Figure~\ref{fig:jump-contour-1}), we next introduce the Riemann surface $M\equiv M(\xi)$ associated to $\xi$; the precise definition of the $g$-function will be given in the following subsection.
\begin{figure}[ht]
\centering\includegraphics[scale=.75]{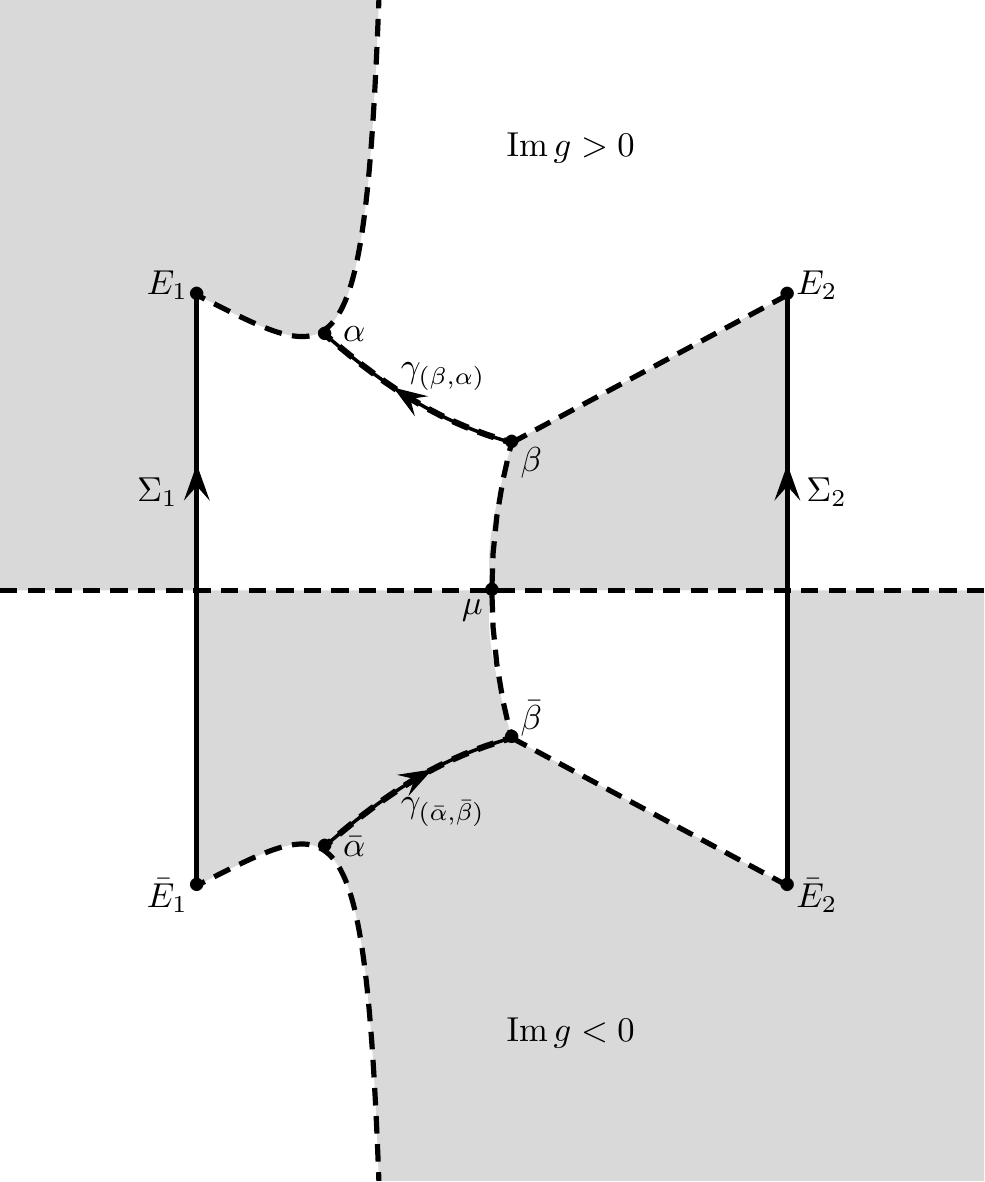}
\caption{The four branch cuts (solid black) and the level set $\Im g=0$ in the genus $3$ sector in the case of $A_1=A_2=3/2$ and $\xi=2$. The region where $\Im g<0$ is shaded and the region where $\Im g>0$ is white.} 
\label{fig:Img-contour-plot}
\end{figure}

Let $\gamma_{(\beta,\alpha)}$ and $\gamma_{(\bar\alpha,\bar\beta)}$ denote the oriented contours from $\beta$ to $\alpha$ and from $\bar\alpha$ to $\bar\beta$, respectively, on which $\Im g=0$, see Figure~\ref{fig:Img-contour-plot}. Similarly, let $\gamma_{(\bar\beta,\beta)}$ denote the oriented contour from $\bar\beta$ to $\beta$ on which $\Im g=0$.

We define the hyperelliptic surface $M\equiv M_{\alpha\beta}$ as the set of all points $P\coloneqq(w,k)\in\D{C}^2$ such that
\begin{equation}  \label{M-def}
w^2=(k-E_1)(k-\bar E_1)(k-E_2)(k-\bar E_2)(k-\alpha)(k-\bar\alpha)(k-\beta)(k-\bar\beta)
\end{equation}
together with two points $\infty^+$ and $\infty^-$ at infinity which make the surface compact. We view $M$ as a two-sheeted cover of the complex plane by introducing a set of branch cuts $\C{C}$ which connect the eight branch points. For $k\in\hat{\D{C}}\setminus\C{C}$, we let $k^+$ and $k^-$ denote the corresponding points on the upper and lower sheet of $M$, respectively. By definition, the upper (resp.\ lower) sheet is characterized by $w=k^4+\ord(k^3)$ (resp.\ $w=-k^4+\ord(k^3)$) as $k\to\infty$. For two complex numbers $z_1$ and $z_2$, we let $\croch{z_1,z_2}^+$ and $\croch{z_1,z_2}^-$ denote the covers of $\croch{z_1,z_2}$ in the upper and lower sheets, respectively.
\begin{figure}[ht]
\centering\includegraphics[scale=.8]{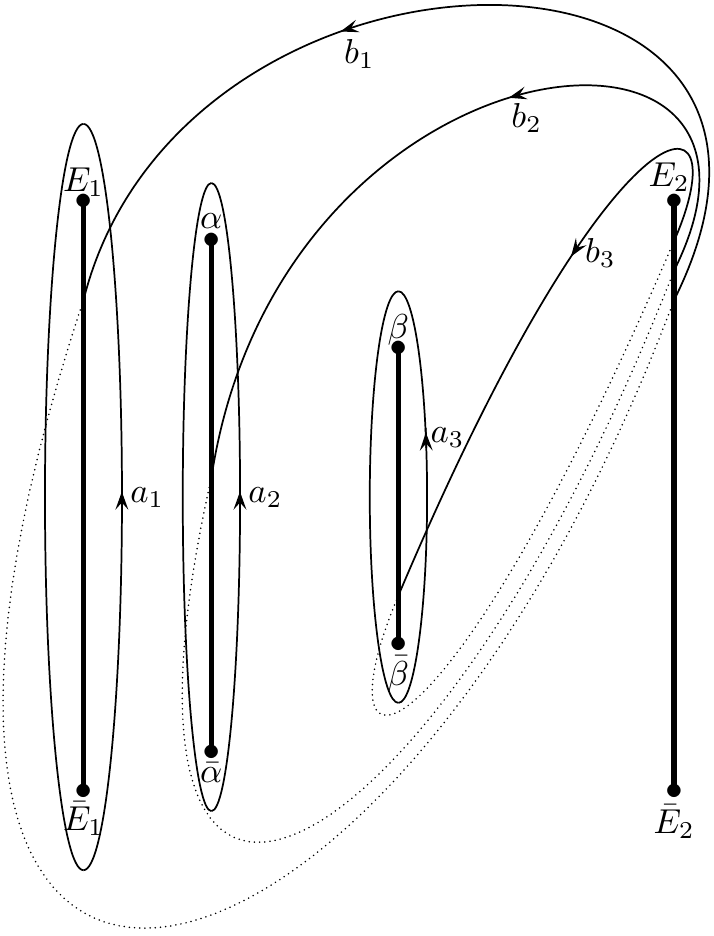}
\caption{A possible, but inappropriate, choice of branch cuts and canonical homology basis for the genus three surface $M$.} 
\label{fig:homology-basis}
\end{figure}

We next discuss the choice of branch cuts and the choice of a canonical homology basis $\accol{a_j,b_j}_1^3$ for $M$. One possibility is to let all branch cuts be vertical segments and adopt the standard choice of canonical homology basis $\accol{a_j,b_j}_1^3$ displayed in Figure~\ref{fig:homology-basis}. However, for the purposes of the nonlinear steepest descent method, the cuts connecting the branch points $\alpha$ and $\beta$ and their complex conjugates must lie in the level set where the imaginary part of the $g$-function vanishes. Thus, we deform the vertical cut from $\bar\beta$ to $\beta$ into the curve segment $\gamma_{(\bar\beta,\beta)}$ which passes through $\mu$. Similarly, we deform the vertical cut connecting $\bar\alpha$ to $\alpha$ into the curve segment $\gamma_{(\bar\alpha,\alpha)}\coloneqq\gamma_{(\bar\alpha,\bar\beta)}\cup\gamma_{(\bar\beta,\beta)}\cup\gamma_{(\beta,\alpha)}$. Since both cuts then proceed along the curve segment $\gamma_{(\bar\beta,\beta)}$, this segment ceases to be a cut and the homology basis of Figure~\ref{fig:homology-basis} turns into the canonical homology basis $\accol{a_j,b_j}_1^3$ displayed in Figure~\ref{fig:canonical-basis}. We will henceforth adopt the four branch cuts in the complex $k$-plane shown in Figure~\ref{fig:canonical-basis} and let $\C{C}\subset\D{C}$ denote the union of these cuts, i.e.,
\begin{equation}\label{cuts}
\C{C}\coloneqq\Sigma_1\cup\Sigma_2\cup\gamma_{(\beta,\alpha)}\cup\gamma_{(\bar\alpha,\bar\beta)}.
\end{equation}
\begin{figure}[ht]
\centering\includegraphics[scale=.8]{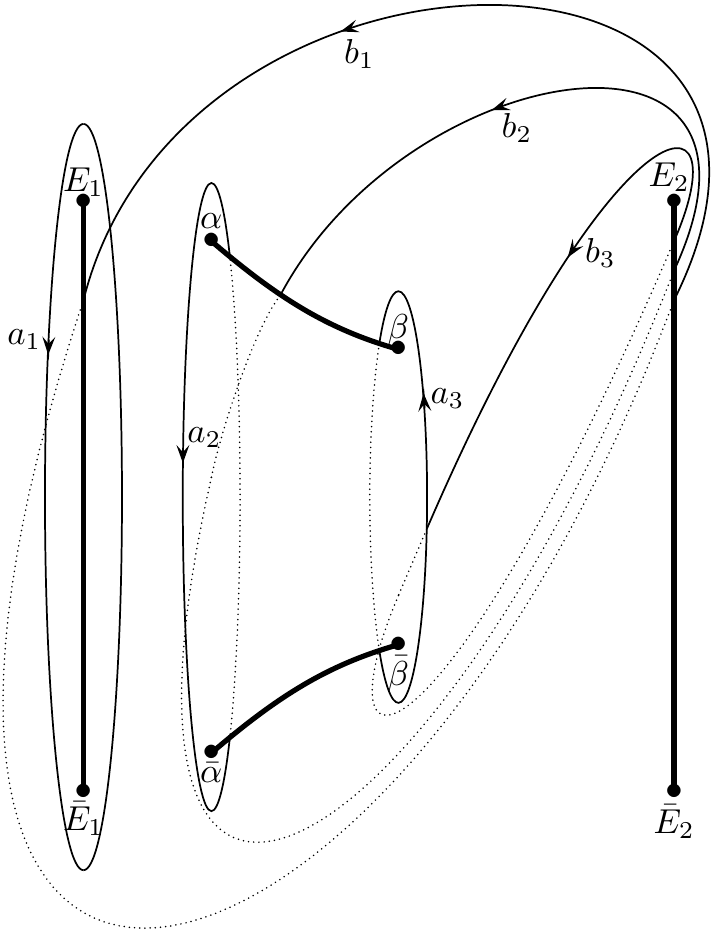}
\caption{The canonical homology basis $\accol{a_j,b_j}_1^3$ for the genus three surface $M$.} 
\label{fig:canonical-basis}
\end{figure}

We will also adopt the homology basis for $M$ shown in Figure~\ref{fig:canonical-basis}, but with the following important specification: Unless stated otherwise, all integrals along paths on $M$ for which only the endpoints are specified are assumed to lie within the fundamental polygon obtained by cutting the Riemann surface along the basis $\accol{a_j,b_j}_1^3$. This implies that some integrals will depend on the particular choices of the $a_j$ and $b_j$ within their respective homology classes. We therefore henceforth fix the curves $a_j$ and $b_j$ so that they are invariant under the involution $\varsigma\colon k^\pm\to k^\mp$, more precisely $\varsigma(a_j)=-a_j$ and $\varsigma(b_j)=-b_j$. For the present case, this is accomplished by letting $a_1$, $a_2$, $a_3$ be the paths in the homology classes specified by Figure~\ref{fig:canonical-basis} which as point sets consist of the points of $M$ lying directly above $\Sigma_1$, $\gamma_{(\bar\alpha,\alpha)}$, and $\gamma_{(\bar\beta,\beta)}$, respectively, and by letting $b_3$, $b_2$, $b_1$ be the paths in the homology classes specified by Figure~\ref{fig:canonical-basis} which as point sets consist of the points of $M$ lying directly above
\[
\croch{E_2,\bar\beta},\quad\croch{E_2,\bar\beta}\cup\gamma_{(\bar\alpha,\bar\beta)}\cup\gamma_{(\bar\beta,\beta)},\text{ and }\croch{E_2,\bar\beta}\cup\gamma_{(\bar\alpha,\bar\beta)}\cup\gamma_{(\bar\beta,\beta)}\cup\croch{\alpha,\bar E_1},
\]
respectively, see Figure~\ref{fig:curves-aj-bj}. 

\begin{figure}[ht]
\centering\includegraphics[scale=1]{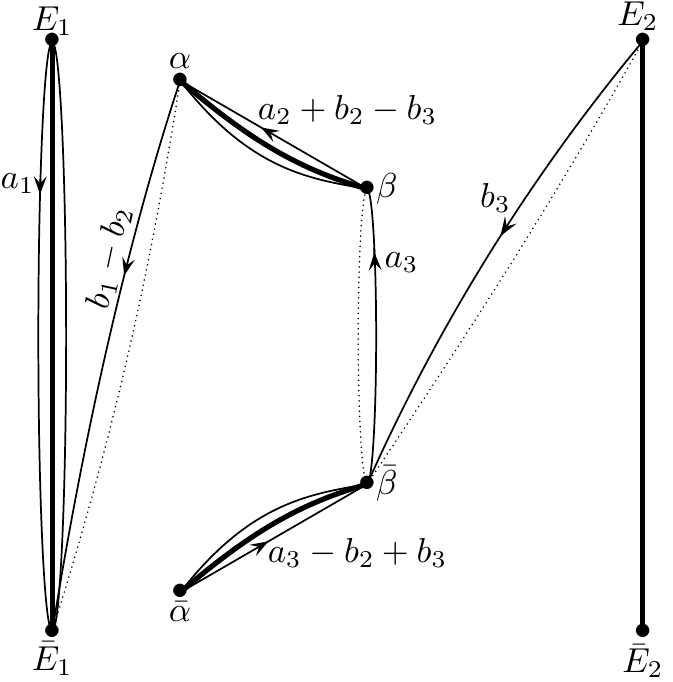}
\caption{The particular choices of the curves $a_j$ and $b_j$ within their homology classes.} 
\label{fig:curves-aj-bj}
\end{figure}

For $k\in\hat{\D{C}}\setminus\C{C}$, we let $w(k)\equiv w(\xi,k)$ denote the value of $w$ corresponding to the point $k^+\in M$, that is,
\[
w(k)=\sqrt{(k-E_1)(k-\bar E_1)(k-E_2)(k-\bar E_2)(k-\alpha)(k-\bar\alpha)(k-\beta)(k-\bar\beta)},\quad k\in\hat{\D{C}}\setminus\C{C},
\]
where the branch of the square root is such that $w(k)>0$ for $k\in\D{R}$, $k\gg 0$. We let $\zeta=(\zeta_1,\zeta_2,\zeta_3)$ be the normalized basis of $\C{H}^1(M)$ dual to the canonical homology basis $\accol{a_j,b_j}_1^3$, in the sense that $\accol{\zeta_j}_1^3$ are holomorphic differentials such that
\[
\int_{a_i}\zeta_j=\delta_{ij}.
\]
The basis $\accol{\zeta_j}_1^3$ is explicitly given by $\zeta_j=\sum_{l=1}^3A_{jl}\hat\zeta_l$, where $\hat\zeta_l\coloneqq\frac{k^{l-1}}{w}\,\dd k$ and $\left(A^{-1}\right)_{jl}=\int_{a_j}\hat\zeta_l$. Since $w$ satisfies $w(k^+)=-w(k^-)$, we have the same symmetry for $\zeta$:
\begin{equation}  \label{zeta-symmetry}
\zeta(k^+)=-\zeta(k^-).
\end{equation}
We let $\tau=\begin{pmatrix}\tau_{jl}\end{pmatrix}$ denote the $3\times 3$ period matrix, defined by
\[
\tau_{jl}\coloneqq\int_{b_j}\zeta_l.
\]
It is a standard result that $\tau$ is symmetric and has positive definite imaginary part, see \cite{FK92}*{Proposition III.2.8}. The symmetries $\varsigma(a_j)=-a_j$, $\varsigma(b_j)=-b_j$, together with \eqref{zeta-symmetry} imply
\[
\int_{a_j^+}\zeta=\frac{1}{2}\int_{a_j}\zeta=\frac{1}{2}\vece^{(j)}\quad\text{and}\quad\int_{b_j^+}\zeta=\frac{1}{2}\int_{b_j}\zeta=\frac{1}{2}\tau^{(j)}, 
\]
where $a_j^+$, $b_j^+$ denote the restrictions of $a_j$, $b_j$ to the upper sheet, $\vece^{(j)}$ denotes the $j$th column of the $3\times 3$ identity matrix $I$, and $\tau^{(j)}$ denotes the $j$th column of $\tau=(\tau_{jl})$.  The theta function $\Theta$ associated to $\tau$ is defined by
\begin{equation}  \label{Theta-def}
\Theta(z)\coloneqq\sum_{N\in\D{Z}^3}\eul^{2\pi\ii(\frac{1}{2}N^T\tau N+N^Tz)},\quad z\in\D{C}^3.
\end{equation}
We have
\begin{equation}  \label{Theta-relat}
\Theta(z+\vece^{(j)})=\Theta(z),\quad\Theta(z+\tau^{(j)})=\eul^{2\pi\ii(-z_j-\frac{\tau_{jj}}{2})}\Theta(z),\quad j=1,2,3,\ \ z\in\D{C}^3.
\end{equation}
We also have $\Theta(z)=\Theta(-z)$ for all $z\in\D{C}^3$. 

We let $\varphi\colon M\to\D{C}^3$ denote the Abel-type map with base point $\bar E_2$, i.e.,  
\begin{equation}  \label{Abel-map}
\varphi(P)=\int_{\bar E_2}^P\zeta,\quad P\in M,
\end{equation}
where the contour is chosen to lie within the fundamental polygon specified by $\accol{a_j,b_j}$. We see from equation \eqref{zeta-symmetry} and Figures~\ref{fig:canonical-basis} and \ref{fig:curves-aj-bj} that
\begin{equation}  \label{varphi-jump-1}
\varphi_+(k^+)+\varphi_-(k^+)=-\varphi_+(k^-)-\varphi_-(k^-)=\begin{cases}
\tau^{(1)},&k\in\Sigma_1,\\
\tau^{(2)},&k\in\gamma_{(\beta,\alpha)},\\
\tau^{(2)}+\vece^{(1)}+\vece^{(2)},&k\in\gamma_{(\bar\alpha,\bar\beta)},\\
0,&k\in\Sigma_2,
\end{cases}
\end{equation}
and
\begin{equation}  \label{varphi-jump-2}
\varphi_+(k^+)-\varphi_-(k^+)=\tau^{(2)}-\tau^{(3)}+\vece^{(1)}+\vece^{(2)},\quad k\in\gamma_{(\bar\beta,\beta)},
\end{equation}
where $\varphi_+(k^\pm)$ and $\varphi_-(k^\pm)$ denote the boundary values of $\varphi(l^\pm)$ as $l\in\D{C}$ approaches $k$ from the left and right sides of the contour, respectively.
    
\subsection{The $\BS{g}$-function}   \label{sec:g-function}

We let $\dd g(k)\equiv\dd g(\xi,k)$ denote the differential on $\D{C}\setminus\C{C}$ defined by
\begin{equation}  \label{dg}
\dd g\coloneqq 4\frac{(k-\mu)(k-\alpha)(k-\bar\alpha)(k-\beta)(k-\bar\beta)}{w(k)}\dd k,\quad k\in\D{C}\setminus\C{C},
\end{equation}
where $\mu\equiv\mu(\xi)$ is a real number. Let $\widehat{\dd g}$ denote the differential on $M$ given by $\dd g$ on the upper sheet and by $-\dd g$ on the lower sheet, i.e., $\widehat{\dd g}(k^\pm)=\pm\dd g(k)$ for $k\in\D{C}\setminus\C{C}$. Then $\widehat{\dd g}$ is a meromorphic differential on $M$ which is holomorphic everywhere on $M$ except for two poles at $\infty^\pm$.
  
The definition of $\dd g$ depends on the five real numbers $\mu$, $\alpha_1$, $\alpha_2$, $\beta_1$, $\beta_2$, where $\alpha_1$, $\beta_1$ and $\alpha_2$, $\beta_2$ denote the real and imaginary parts of $\alpha$ and $\beta$:
\[
\alpha=\alpha_1+\ii\alpha_2,\quad\beta=\beta_1+\ii\beta_2.
\]
These five real numbers are determined by the five conditions 
\begin{subequations}  \label{dg-conditions}
\begin{align}  \label{dg-first-conditions}
&\int_{a_1}\widehat{\dd g}=\int_{a_2}\widehat{\dd g}=\int_{a_3}\widehat{\dd g}=0,\\
\label{dg-last-conditions}
&\lim_{k\to\infty}\left(\frac{\dd g}{\dd k}-4k\right)=\xi,\quad       
\lim_{k\to\infty}k\left(\frac{\dd g}{\dd k}-4k-\xi\right)=0.
\end{align}
\end{subequations}
The solvability of this system of equations characterizes a genus $3$ sector. Since
\begin{equation}  \label{dg-symmetry}
\frac{\dd g}{\dd k}(k)=\overline{\frac{\dd g}{\dd k}(\bar k)},\quad k\in\D{C}\setminus\C{C},
\end{equation}
we have $\int_A^B\dd g=\overline{\int_{\bar A}^{\bar B}\dd g}$ where the contour in the second integral is the complex conjugate of the contour in the first integral. This implies that
\[
\int_{a_j}\widehat{\dd g}\in\ii\,\D{R},\quad j=1,2,3,
\]
so the conditions in \eqref{dg-first-conditions} are three real conditions. 

Let $\Sigma^{\model}$ denote the union of the four branch cuts and the curve $\gamma_{(\bar\beta,\beta)}$, oriented as in Figure~\ref{fig:jump-contour-model}:
\[
\Sigma^{\model}\coloneqq\C{C}\cup\gamma_{(\bar\beta,\beta)}=\Sigma_1\cup\Sigma_2\cup\gamma_{(\beta,\alpha)}\cup\gamma_{(\bar\alpha,\bar\beta)}\cup\gamma_{(\bar\beta,\beta)}.
\]
The $g$-function $g(k)\equiv g(\xi,k)$ is defined by
\begin{equation}  \label{g-def}
g(k)\coloneqq\int_{\bar E_2}^k\dd g,\quad k\in\D{C}\setminus\Sigma^{\model},
\end{equation}
where the contour is required to lie in $\D{C}\setminus\Sigma^{\model}$. For convenience, we choose the same base point for the integral in \eqref{g-def} as for the integral in \eqref{Abel-map} defining $\varphi$.

The two conditions in \eqref{dg-last-conditions} ensure that
\[
g'(k)=4k+\xi+\ord(k^{-2}),\quad k\to\infty,
\]
and hence
\begin{equation}  \label{g-asymptotics}
g(k)=\theta(k)+g^{(0)}+\ord(k^{-1}),\quad k\to\infty,
\end{equation}
uniformly with respect to $\arg k\in\croch{0,2\pi}$, where $g^{(0)}\equiv g^{(0)}(\xi)\in\D{R}$ is a finite real number independent of $k$. Since the $\ord(k^{-1})$ term vanishes in the expansion of $\dd g/\dd k$, the meromorphic form $\widehat{\dd g}$ has no residue at $\infty$. Hence we can deform contours freely through infinity. By deforming the contour through $\infty$, it follows from \eqref{dg-first-conditions} that the loop integral of $\widehat{\dd g}$ around the cut $\croch{\bar E_2,E_2}$ also vanishes. Thus the conditions in \eqref{dg-conditions} imply that the value of $g$ is independent of the choice of contour in $\D{C}\setminus\Sigma^{\model}$.

Let $g_+$ and $g_-$ denote the boundary values of $g$ from the left and right sides of the different pieces of $\Sigma^{\model}$.

\begin{lemma}   \label{g-properties}
The function $g(k)$ defined in \eqref{g-def} has the following properties:
\begin{enumerate}[\rm(a)]
\item
$g(k)-2k^2-\xi k$ is an analytic and bounded function of $k\in\hat{\D{C}}\setminus\Sigma^{\model}$ with continuous boundary values on $\Sigma^{\model}$.
\item
$g(k)$ obeys the symmetry
\begin{equation}  \label{g-symmetry}
g=g^*,\quad k\in\D{C}\setminus\Sigma^{\model}.
\end{equation}
\item
$g(k)$ satisfies the following jump conditions across $\Sigma^{\model}$:
\begin{subequations}  \label{g-jump}
\begin{equation}\label{g-jump-a}
g_+(k)+g_-(k)=
\begin{cases}
2\Omega_1,&k\in\Sigma_1,\\
2\Omega_2,&k\in\gamma_{(\bar\alpha,\bar\beta)}\cup\gamma_{(\beta,\alpha)},\\
0,&k\in\Sigma_2,
\end{cases}
\end{equation}
and
\begin{equation}
g_+(k)-g_-(k)=2\Omega_3,\quad k\in\gamma_{(\bar\beta,\beta)},
\end{equation}
\end{subequations}
where the three real constants $\Omega_j\equiv\Omega_j(\xi)$, $j= 1,2,3$, are given by
\begin{equation}   \label{Omegaj-def}
\begin{split}
\Omega_1&=g(E_1)=g(\bar E_1),\quad\Omega_2=g(\alpha)=g(\bar\alpha),\\
\Omega_3&=\frac{g_+(\beta)-g_-(\beta)}{2}=\frac{g_+(\bar\beta)-g_-(\bar\beta)}{2}.
\end{split}
\end{equation}
\end{enumerate}
\end{lemma}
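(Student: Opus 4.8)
The plan is to extract all three statements from the explicit formula \eqref{dg} for $\dd g$, the Schwarz symmetry \eqref{dg-symmetry}, the normalizations \eqref{dg-first-conditions} and \eqref{dg-last-conditions}, and the standing geometric hypothesis that the level set $\accol{\Im g=0}$ has the connectivity of Figure~\ref{fig:Img-contour-plot}, with the eight branch points and $\mu$ pairwise distinct and the arcs of $\Sigma^{\model}$ meeting only at the indicated points; throughout, I would use that this whole configuration is invariant under $k\mapsto\bar k$, which follows from \eqref{dg-symmetry}. For part (a): the numerator of \eqref{dg} is a degree-five polynomial vanishing simply at $\mu,\alpha,\bar\alpha,\beta,\bar\beta$, so $\dd g$ vanishes at $\mu$, is $\ord((k-\cdot)^{1/2})\,\dd k$ and hence bounded near each of $\alpha,\bar\alpha,\beta,\bar\beta$, and is $\ord((k-\cdot)^{-1/2})\,\dd k$ and hence integrable near each of $E_1,\bar E_1,E_2,\bar E_2$, where the numerator is nonzero. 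Consequently $g=\int_{\bar E_2}^{k}\dd g$, which is contour-independent by \eqref{dg-first-conditions} as already noted, is analytic on $\D{C}\setminus\Sigma^{\model}$ and has finite continuous boundary values from each side of every arc of $\Sigma^{\model}$. Subtracting the entire function $2k^2+\xi k$ changes neither the domain nor the jumps, and by \eqref{g-asymptotics} the difference tends to the finite constant $g^{(0)}$ as $k\to\infty$; combined with the boundedness near the branch points, this gives boundedness on $\hat{\D{C}}\setminus\Sigma^{\model}$ and analyticity at $\infty$.

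For part (b), \eqref{dg-symmetry} gives $(g-g^*)'(k)=g'(k)-\overline{g'(\bar k)}=0$, and since $\Sigma^{\model}=\Sigma_1\cup\Sigma_2\cup\gamma_{(\bar\alpha,\alpha)}$ is a union of disjoint simple arcs, $\D{C}\setminus\Sigma^{\model}$ is connected and $g-g^*$ is a constant. To see that this constant vanishes, I would use the base point together with one ingredient of (c): since $w$ changes sign across the branch cut $\Sigma_2$, one has $\dd g_+=-\dd g_-$ there, so $g_++g_-$ has zero derivative along $\Sigma_2$ and is constant; evaluating at $\bar E_2$, which is the base point (hence $g(\bar E_2)=0$) and an endpoint of the slit $\Sigma_2$ near which $\dd g$ has only an integrable singularity and $g$ is therefore single-valued, shows this constant is $2g(\bar E_2)=0$, and hence also $g(E_2)=0$. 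Then $g^*(\bar E_2)=\overline{g(E_2)}=0=g(\bar E_2)$, so $g\equiv g^*$; in particular $g^{(0)}=\overline{g^{(0)}}$ is real, which justifies the reality asserted after \eqref{g-asymptotics}.

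For part (c), I would argue arc by arc. Across each of the four genuine branch cuts $\Sigma_1,\Sigma_2,\gamma_{(\beta,\alpha)},\gamma_{(\bar\alpha,\bar\beta)}$ one has $w_+=-w_-$, hence $\dd g_+=-\dd g_-$, hence $g_++g_-$ is locally constant; across $\gamma_{(\bar\beta,\beta)}$, which is a cut of $g$ but not of $w$ — it is a cut only because contours defining $g$ may not cross it — the form $\dd g$ extends analytically, so $\dd g_+=\dd g_-$ and $g_+-g_-$ is locally constant. I would then pin down these constants by evaluating at slit endpoints where $g$ is single-valued: along $\Sigma_2$ the constant is $2g(\bar E_2)=0$; along $\Sigma_1$ it equals both $2g(E_1)$ and $2g(\bar E_1)$, forcing $g(E_1)=g(\bar E_1)=:\Omega_1$, which is real by (b); along $\gamma_{(\beta,\alpha)}$ it equals $2g(\alpha)$ and along $\gamma_{(\bar\alpha,\bar\beta)}$ it equals $2g(\bar\alpha)$, and since $\alpha,\bar\alpha$ lie on $\accol{\Im g=0}$ the numbers $g(\alpha),g(\bar\alpha)$ are real, hence equal by (b), so both constants equal $2\Omega_2$ with $\Omega_2:=g(\alpha)=g(\bar\alpha)\in\D{R}$; finally the constant $g_+-g_-$ on $\gamma_{(\bar\beta,\beta)}$ is $2\Omega_3$ with $\Omega_3$ as in \eqref{Omegaj-def}, whose reality I would get by evaluating at $\mu$: there $\accol{\Im g=0}$ and $g'(\mu)=0$ force the level set to be locally vertical, so $k\mapsto\bar k$ fixes the curve near $\mu$ together with each of its two sides, and (b) then yields $g_\pm(\mu)\in\D{R}$.

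The step I expect to be the main obstacle is the bookkeeping in (c): keeping straight which arcs of $\Sigma^{\model}$ are branch cuts of $w$ (on which $g_++g_-$ is the controlled combination) and which is merely a cut of $g$ (on which $g_+-g_-$ is controlled); orienting the arcs and correctly matching left and right boundary values under $k\mapsto\bar k$ when deriving the reality of $\Omega_3$; and, above all, invoking the hypothesis that $\alpha,\bar\alpha,\beta,\bar\beta,\mu$ lie on $\accol{\Im g=0}$, which is precisely what upgrades the a priori complex quantities $\Omega_2$ and $\Omega_3$ (and the constant $g-g^*$) to real numbers — without it, $\Omega_2$ would be determined only up to complex conjugation. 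A preliminary genericity check (the eight branch points and $\mu$ pairwise distinct, $\mu\notin\accol{\alpha,\bar\alpha,\beta,\bar\beta}$, the arcs of $\Sigma^{\model}$ meeting only at the stated points) is part of the standing assumption that $\xi$ lies in a genus $3$ sector.
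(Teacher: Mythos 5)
Your parts (a) and (b), and the constancy-plus-endpoint-evaluation part of (c), follow the paper's proof essentially verbatim: the paper also obtains (a) from \eqref{g-asymptotics}, obtains (b) by noting $g(E_2)=g(\bar E_2)=0$ (constancy of $g_++g_-$ on $\Sigma_2$ evaluated at the tips) together with \eqref{dg-symmetry}, and obtains \eqref{g-jump} from $(\dd g/\dd k)_+=-(\dd g/\dd k)_-$ on the branch cuts and $(\dd g/\dd k)_+=(\dd g/\dd k)_-$ on $\gamma_{(\bar\beta,\beta)}$, fixing the constants at the endpoints. Your reality argument for $\Omega_1$ (the constant on $\Sigma_1$ equals both $2g(E_1)$ and $2g(\bar E_1)$, and these are conjugate by (b)) is correct, and your argument for $\Omega_3$ is also fine, since it only uses that $\gamma_{(\bar\beta,\beta)}$ is conjugation-invariant and crosses $\D{R}$ at $\mu$, so that $k\mapsto\bar k$ preserves its two sides and (b) gives $g_\pm(\mu)\in\D{R}$.

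The one step I would not accept as written is the reality of $\Omega_2$. There you invoke the hypothesis that $\alpha,\bar\alpha$ lie on $\{\Im g=0\}$, i.e.\ the level-set picture of Figure~\ref{fig:Img-contour-plot}. But that picture, with the cuts $\gamma_{(\beta,\alpha)}$ and $\gamma_{(\bar\alpha,\bar\beta)}$ lying on $\{\Im g=0\}$, already encodes $\Im(g_++g_-)=0$ on those arcs, which is exactly the statement to be proved; the paper introduces the picture only ``for the moment'', and part (c) of this lemma is part of its justification. The paper instead derives $\Im g(E_1)=\Im g(\alpha)=\Im g_\pm(\beta)=0$ from the period conditions \eqref{dg-first-conditions} together with \eqref{dg-symmetry}. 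Your argument can be repaired in the same spirit without the picture: the $a_3$-condition in \eqref{dg-first-conditions} amounts to $\int_{\gamma_{(\bar\beta,\beta)}}\dd g=0$ (the cycle $a_3$ lies over $\gamma_{(\bar\beta,\beta)}$, where $\dd g$ does not jump), hence $g_\pm(\beta)=g_\pm(\bar\beta)$; combined with the boundary-value symmetry $g_\pm(\bar\beta)=\overline{g_\pm(\beta)}$ this gives $g_\pm(\beta)\in\D{R}$. Since only the two arcs $\gamma_{(\bar\beta,\beta)}$ and $\gamma_{(\beta,\alpha)}$ meet at $\beta$, the constant carried by $\gamma_{(\beta,\alpha)}$ equals $g_+(\beta)+g_-(\beta)\in\D{R}$, whence $2g(\alpha)=g_+(\beta)+g_-(\beta)$ is real, and then $g(\bar\alpha)=\overline{g(\alpha)}=g(\alpha)$, so both arcs carry the same real constant $2\Omega_2$ as claimed in \eqref{Omegaj-def}. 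With that replacement your proof becomes self-contained and equivalent to the paper's; as it stands, the $\Omega_2$ step assumes a geometric statement that this lemma is meant to substantiate.
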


\begin{proof}
It is clear that $g(k)$ is an analytic function of $k\in\D{C}\setminus\Sigma^{\model}$ with bounded and continuous boundary values on $\Sigma^{\model}$. Since equation \eqref{g-asymptotics} implies that $g(k)-2k^2-\xi k$ is analytic also at infinity, part (a) follows.

Using that
\[
\left(\frac{\dd g}{\dd k}\right)_+=\left(\frac{\dd g}{\dd k}\right)_-\!\times
\begin{cases}
-1,&k\in\C{C},\\
1,&k\in\gamma_{(\bar\beta,\beta)},
\end{cases}
\]
we find that $g_++g_-$ is constant on each of the curves in \eqref{g-jump-a} and that $g_+-g_-$ is constant on $\gamma_{(\bar\beta,\beta)}$. These constant values are fixed by evaluation at the endpoints. This proves \eqref{g-jump}. In particular, $g(E_2)=g(\bar E_2)=0$, and, using \eqref{dg-symmetry}, we conclude that (b) follows.

The three conditions in \eqref{dg-first-conditions} together with \eqref{dg-symmetry} ensure that $\Im g=0$ on $\D{R}$ and that $\Im g(E_1)=\Im g(\alpha)=\Im g_+(\beta)=\Im g_-(\beta)=0$. Hence the constants $\Omega_j$ defined in \eqref{Omegaj-def} are real. This completes the proof of (c).
\end{proof} 
        
\subsection{Contour deformations}

On several occasions we will need to transform RH problems by deforming the contour. In this subsection, we give assumptions under which these deformations are allowed.

\begin{figure}[ht]
\centering\includegraphics[scale=.95]{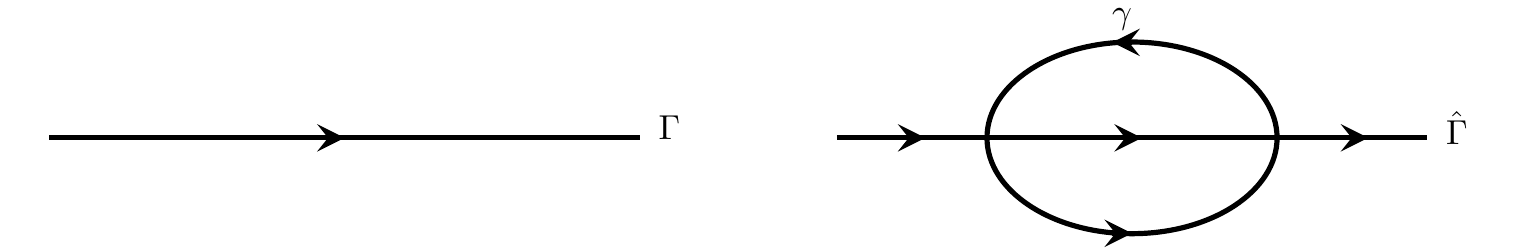}
\caption{The contour $\Gamma$ (left) and the contour $\hat\Gamma=\Gamma\cup\gamma$ (right).} 
\label{fig:Gamma-hatGamma}
\end{figure}

Let $\Gamma$ be a piecewise smooth contour and let $\hat\Gamma= \Gamma\cup\gamma$ denote $\Gamma$ with a lens added as in Figure~\ref{fig:Gamma-hatGamma}. Consider the $2\times 2$-matrix RH problem
\begin{equation}   \label{rhp-lens}
\begin{cases}
m\in I+\dot E^2(\D{C}\setminus\Gamma),&\\
m_+(k)=m_-(k)v(k)&\text{for a.e. }k\in\Gamma,
\end{cases}
\end{equation}
where $v\colon\Gamma\to\GL(2,\D{C})$ is a jump matrix defined on $\Gamma$.

\begin{lemma}[contour deformation]  \label{rhp-deformation}
Let $u$ be a $2\times 2$-matrix valued function such that $u,u^{-1}\in E^{\infty}(\D{C}\setminus\hat\Gamma)$. Then $m$ satisfies the RH problem \eqref{rhp-lens} if and only if the function $\hat m$ defined by
\[
\hat m(k)\coloneqq m(k)u(k),\quad k\in\D{C}\setminus\hat\Gamma,
\]
satisfies the RH problem
\begin{equation}  \label{rhp-deform}
\begin{cases}
\hat m\in I+\dot E^2(\D{C}\setminus\hat\Gamma),&\\
\hat m_+(k)=\hat m_-(k)\hat v(k)&\text{for a.e. }k\in\hat\Gamma,
\end{cases}
\end{equation}
with
\begin{equation}  \label{jump-deformed}
\hat v=
\begin{cases}
u_-^{-1}vu_+,&k\in\Gamma,\\
u_-^{-1}u_+,&k\in\gamma,
\end{cases}
\end{equation}
If $\gamma$ passes through infinity as in Figure~\ref{fig:hatGamma-infty}, then the same conclusion holds provided that $u,u^{-1}\in I+(\dot E^2\cap E^\infty)(\D{C}\setminus\hat\Gamma)$.
\end{lemma}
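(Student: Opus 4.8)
The plan is to verify that the transformation $\hat m = mu$ is an equivalence of RH problems by checking three things: (i) the new jump relation, (ii) the normalization/analyticity class $I+\dot E^2$, and (iii) the equivalence (both directions). First I would address the jump. On $\Gamma$, the original relation $m_+ = m_- v$ combined with the analyticity of $u$ across $\Gamma$ (since $u,u^{-1}\in E^\infty(\D{C}\setminus\hat\Gamma)$ and $\Gamma\subset\hat\Gamma$ means $u$ has boundary values $u_\pm$ there) gives $\hat m_+ = m_+u_+ = m_-vu_+ = m_-u_-u_-^{-1}vu_+ = \hat m_- (u_-^{-1}vu_+)$. On the newly added lens $\gamma$, $m$ is analytic (since $\gamma\not\subset\Gamma$), so $m_+ = m_-$, whence $\hat m_+ = m u_+ = mu_-(u_-^{-1}u_+) = \hat m_-(u_-^{-1}u_+)$. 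This reproduces \eqref{jump-deformed}.

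Next I would handle the analyticity class. Away from $\hat\Gamma$, $\hat m = mu$ is a product of analytic functions, hence analytic. For the Smirnoff-class membership $\hat m\in I+\dot E^2(\D{C}\setminus\hat\Gamma)$ in the case $\gamma$ does not pass through infinity: $u$ is bounded and analytic near $\infty$, and $u(\infty)$ is a well-defined invertible matrix (it need not be $I$, but boundedness suffices together with $\hat m\to$ a limit to conclude $\hat m - I\in\dot E^2$). More carefully, since $m\in I+\dot E^2$ and $u\in E^\infty$, the product $mu - I = (m-I)u + (u - I\cdot u(\infty)) + \dots$ — here I would simply invoke the standard fact (as in \cite{Le17,Le18}) that $\dot E^2\cdot E^\infty\subset\dot E^2$ and that $E^\infty$ functions have boundary values compatible with the Smirnoff framework, so multiplication by such $u$ preserves the class; the surrounding curves $C_n$ used to define $\dot E^2$ on each component are the same for $\hat m$ as for $m$ after possibly intersecting with the new lens region, and $\sup_n\norm{\hat m}_{L^2(C_n)}\leq \norm{u}_{L^\infty}\sup_n\norm{m}_{L^2(C_n)}<\infty$. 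For the case where $\gamma$ passes through infinity, the hypothesis is strengthened to $u,u^{-1}\in I+(\dot E^2\cap E^\infty)$, precisely so that near $\infty$ the perturbation $u - I$ is itself $\dot E^2$-small along the curves $C_n$ approaching $\infty$ from both lens-sides, and then $\hat m - I = (m-I)u + (u-I)$ lies in $\dot E^2(\D{C}\setminus\hat\Gamma)$ by the same product rule.

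Finally, the equivalence: the argument is symmetric. Given $\hat m$ solving \eqref{rhp-deform}, define $m\coloneqq \hat m u^{-1}$; since $u^{-1}\in E^\infty(\D{C}\setminus\hat\Gamma)$ (resp.\ $I+(\dot E^2\cap E^\infty)$) as well, the same computations run in reverse — on $\gamma$ one gets $m_+ = \hat m_+ u_+^{-1} = \hat m_-(u_-^{-1}u_+)u_+^{-1} = \hat m_- u_-^{-1} = m_-$, so $m$ is analytic across $\gamma$ and hence lives on $\D{C}\setminus\Gamma$; and on $\Gamma$ the jump $v = u_- \hat v u_+^{-1} = u_-(u_-^{-1}vu_+)u_+^{-1}$ is recovered. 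Thus $m$ and $\hat m$ correspond bijectively.

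The only genuinely delicate point is the behavior at infinity when $\gamma$ passes through $\infty$: one must be careful that ``$I+\dot E^2$'' is the correct normalization — a priori $mu$ might approach $u(\infty)\neq I$, which would violate the normalization. This is exactly why the stronger hypothesis $u-I\in\dot E^2\cap E^\infty$ (forcing $u(\infty) = I$ and controlling the decay of $u-I$ in $L^2$ along the approach curves) is imposed in that case. I expect the bulk of the proof to be a careful but routine bookkeeping of Smirnoff-class membership under multiplication, with the infinity case being the one place where the hypotheses must be invoked precisely; I would cite \cite{Le17} and \cite{Le18} for the underlying $\dot E^2$ multiplication lemmas rather than reproving them.
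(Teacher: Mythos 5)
The paper states this lemma without proof---it belongs to the standard $L^2$/Smirnoff-class Riemann--Hilbert toolbox of \cite{Le17} and \cite{Le18}---so your proposal can only be compared with that standard argument, and in outline it matches it: the jump computations on $\Gamma$ and on the added lens $\gamma$, the use of $\dot E^2\cdot E^{\infty}\subset\dot E^2$, and the symmetric reverse direction (where, as you implicitly do, one invokes the $L^2$-version of analytic continuation across $\gamma$ once $m_+=m_-$ holds there) are exactly the ingredients needed.

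One assertion in your second paragraph is wrong, however, and it concerns the only nontrivial point. In the compact-lens case you claim that $u(\infty)$ ``need not be $I$'' and that boundedness suffices to conclude $\hat m\in I+\dot E^2(\D{C}\setminus\hat\Gamma)$. It does not: writing $\hat m-I=(m-I)u+(u-I)$, the first term is handled by the product rule, but $u-I$ must itself belong to $\dot E^2$ of each component, and on the unbounded component boundedness is not enough---a nonzero constant is bounded yet fails to be in $\dot E^2$ there, because the curves $C_n$ surrounding ever larger compact sets have unbounded length. If $u(\infty)\neq I$ the normalization genuinely breaks (take $v=I$, $m=I$, $u\equiv A\neq I$ constant: then $\hat v=I$ but $\hat m=A\notin I+\dot E^2$), and this is precisely the situation Lemma~\ref{rhp-deformation-bis} is designed to handle via the renormalization $u(\infty)^{-1}mu$. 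In the setting of Figure~\ref{fig:Gamma-hatGamma} the lemma is applied (and should be read) with $u\equiv I$ outside the bounded lens, so that $u-I$ is a bounded analytic function on a bounded component and hence lies in $\dot E^2$; correspondingly, your displayed estimate should be an estimate for $\hat m-I$ via the decomposition above, not for $\hat m$ itself (note that $\sup_n\norm{m}_{L^2(C_n)}$ is not finite on unbounded components). With this correction---which you essentially supply yourself in the final paragraph for the through-infinity case, where the hypothesis $u-I\in\dot E^2\cap E^{\infty}$ forces the correct behavior at infinity---the argument is complete.
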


\begin{figure}[ht]
\centering\includegraphics[scale=1]{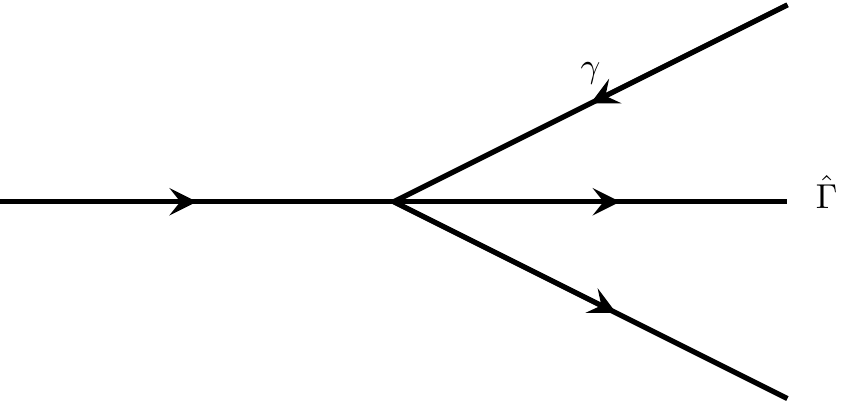}
\caption{The contour $\hat\Gamma=\Gamma\cup\gamma$ in the case when $\gamma$ passes through infinity.} 
\label{fig:hatGamma-infty}
\end{figure}

We will also come across situations where $\gamma$ passes through infinity, but $u$ does not approach the identity matrix as $k\to\infty$, see Figure~\ref{fig:hatGamma-infty}. In this case, the following result is useful.

\begin{lemma}[contour deformation when $u(\infty)\neq I$] \label{rhp-deformation-bis}
Let $u$ be a $2\times 2$-matrix valued function such that $u,u^{-1}\in E^{\infty}(\D{C}\setminus\hat\Gamma)$ and $u,u^{-1}$ are analytic at $k=\infty$. Then $m$ satisfies the RH problem \eqref{rhp-lens} if and only if the function $\hat m$ defined by
\[
\hat m(k)\coloneqq u(\infty)^{-1}m(k)u(k),\quad k\in\D{C}\setminus\hat\Gamma,
\]
satisfies the RH problem~\eqref{rhp-deform} with $\hat v$ given by \eqref{jump-deformed}.
\end{lemma}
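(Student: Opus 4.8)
The plan is to prove both implications at once by direct verification, using that the map $m\mapsto\hat m\coloneqq u(\infty)^{-1}mu$ is invertible, with inverse $\hat m\mapsto u(\infty)\hat m u^{-1}$ of exactly the same type (since $u^{-1}$ is, by hypothesis, also bounded and analytic at $k=\infty$). In fact this is a variant of the proof of Lemma~\ref{rhp-deformation}: the only new ingredient is the left multiplication by the constant matrix $u(\infty)^{-1}$, inserted precisely to restore the normalization $\hat m\to I$ at $k=\infty$ that is lost when $u(\infty)\neq I$.

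First I would check the jump relations on $\hat\Gamma=\Gamma\cup\gamma$. Away from the (finitely many) points where $\gamma$ meets $\Gamma$, the solution $m$ of \eqref{rhp-lens} is analytic across $\gamma$, so there $m_+=m_-$ and hence $\hat m_+=u(\infty)^{-1}m\,u_+=\bigl(u(\infty)^{-1}m\,u_-\bigr)\,u_-^{-1}u_+=\hat m_-\,u_-^{-1}u_+$. On $\Gamma$ one uses $m_+=m_-v$ to obtain $\hat m_+=u(\infty)^{-1}m_-v\,u_+=\hat m_-\,u_-^{-1}vu_+$. These are exactly the two cases of the jump matrix $\hat v$ in \eqref{jump-deformed}. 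The reverse jump computation, starting from $\hat m$ and writing $m=u(\infty)\hat m u^{-1}$, is identical word for word.

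Next I would verify the analytic/normalization condition $\hat m\in I+\dot E^2(\D{C}\setminus\hat\Gamma)$. Analyticity of $\hat m$ on $\D{C}\setminus\hat\Gamma$ is clear since $m$ and $u$ are analytic there and $u(\infty)^{-1}$ is a constant matrix. For the behavior near infinity, write
\[
\hat m-I=u(\infty)^{-1}(m-I)u+u(\infty)^{-1}\bigl(u-u(\infty)\bigr).
\]
In the first term, $m-I\in\dot E^2(\D{C}\setminus\Gamma)\subset\dot E^2(\D{C}\setminus\hat\Gamma)$ while $u,u(\infty)^{-1}\in E^\infty$, so the product remains in $\dot E^2(\D{C}\setminus\hat\Gamma)$, the Smirnoff class being stable under multiplication by bounded analytic factors. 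In the second term, analyticity of $u$ at $k=\infty$ gives $u(k)-u(\infty)=\ord(k^{-1})$ as $k\to\infty$ together with boundedness on $\D{C}\setminus\hat\Gamma$; testing on circles $\abs{k}=R$ shows the $L^2$-norm there is $\ord(R^{-1/2})$, so $u-u(\infty)$ lies in $\dot E^2$ on the component of $\D{C}\setminus\hat\Gamma$ meeting $\infty$ and is bounded (hence in $\dot E^2$) on the bounded components. Thus $\hat m-I\in\dot E^2(\D{C}\setminus\hat\Gamma)$. The converse follows from the same estimate applied to $m-I=u(\infty)(\hat m-I)u^{-1}+\bigl(u(\infty)u^{-1}-I\bigr)$, using that $u^{-1}$ is likewise bounded and analytic at $\infty$.

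The main technical point — the only place where the hypotheses are genuinely used — is this last step: conjugation by the non-identity limit $u(\infty)$ must not destroy membership in the Smirnoff class near infinity. This is exactly why $u$ and $u^{-1}$ are required to be both bounded \emph{and} analytic at $k=\infty$: boundedness preserves $\dot E^2$ under multiplication, while analyticity at $\infty$ supplies the $\ord(k^{-1})$ decay of $u-u(\infty)$ (and of $u(\infty)u^{-1}-I$) that places the correction term in $\dot E^2$. Everything else is routine bookkeeping, and the case where $\gamma$ passes through infinity is handled by the same circle estimates, the contour $\hat\Gamma$ already being split near $\infty$ by $\gamma$.
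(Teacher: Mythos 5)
Your argument is sound, and it is worth noting that the paper itself gives no proof of this lemma: both contour-deformation lemmas are stated as standard facts of the $L^2$ Riemann--Hilbert framework, with the underlying Smirnoff-class technology delegated to \cite{Le17} and \cite{Le18}. Your direct verification supplies precisely what such a proof needs: the jump computations on $\Gamma$ and on $\gamma$ (in both directions, using that $u^{-1}$ satisfies the same hypotheses), the decomposition $\hat m-I=u(\infty)^{-1}(m-I)u+u(\infty)^{-1}\bigl(u-u(\infty)\bigr)$, and the circle estimate showing that analyticity of $u$ at $k=\infty$ gives $u-u(\infty)=\ord(k^{-1})$ and hence an $L^2$-norm of order $R^{-1/2}$ on $\abs{k}=R$, which is exactly where the extra hypothesis (absent from Lemma~\ref{rhp-deformation}) is consumed. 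Two steps that you label routine are the ones that genuinely require the cited machinery rather than bookkeeping: (i) the inclusion $\dot E^2(\D{C}\setminus\Gamma)\subset\dot E^2(\D{C}\setminus\hat\Gamma)$, i.e.\ that restriction to the subdomains created by inserting the lens $\gamma$ preserves the Smirnoff class, and (ii) in the converse direction, the fact that a function in $I+\dot E^2(\D{C}\setminus\hat\Gamma)$ whose nontangential boundary values agree a.e.\ across $\gamma$ continues analytically across $\gamma$ and belongs to $I+\dot E^2(\D{C}\setminus\Gamma)$. Both are established for the relevant class of contours in \cite{Le17} and \cite{Le18}; with those results invoked explicitly, your proof is complete and is the natural argument for this statement.
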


\section{Transformations of the RH problem}  \label{sec:transfos}

In order to determine the long-time asymptotics of the solution $\hat m$ of the RH problem \eqref{rhp-0}, we will perform a series of transformations of the RH problem. More precisely, starting with $\hat m$, we will define functions $\hat m^{(j)}(x,t,k)$, $j=1,\dots,5$, such that each $\hat m^{(j)}$ satisfies an RH problem which is equivalent to the original RH problem \eqref{rhp-0}. The RH problem for $\hat m^{(j)}$ has the form
\begin{equation}  \label{rhp-j}
\begin{cases}
\hat m^{(j)}(x,t,\,\cdot\,)\in I+\dot E^2(\D{C}\setminus\Sigma^{(j)}),&\\
\hat m_+^{(j)}(x,t,k)=\hat m_-^{(j)}(x,t,k)\hat v^{(j)}(x,t,k)&\text{for a.e. }k\in\Sigma^{(j)},
\end{cases}
\end{equation}
where the contours $\Sigma^{(j)}$ and jump matrices $\hat v^{(j)}$ are specified below.

The jump matrix $\hat v^{(5)}$ obtained after the fifth transformation has the property that it is a constant off-diagonal matrix on each of the four branch cuts and a constant diagonal matrix on $\gamma_{(\bar\beta,\beta)}$. Moreover, on all other parts of the contour, it approaches the identity matrix as $t\to\infty$. The leading order asymptotics as $t\to\infty$ can therefore be obtained by solving the model RH problem with jump across $\Sigma^{\model}$
obtained by replacing $\hat v^{(5)}$ with its large $t$ limit.

The symmetries \eqref{J-symmetry} and \eqref{m-symmetry} will be preserved at each stage of the transformations, so that
\begin{equation}  \label{vj-sym}
\hat v^{(j)}(x,t,k)=\begin{cases}
\sigma_3\sigma_1\overline{\hat v^{(j)}(x,t,\bar k)}\sigma_1\sigma_3,&k\in\Sigma^{(j)}\setminus\D{R},\\
\sigma_3\sigma_1\overline{\hat v^{(j)}(x,t,\bar k)}^{-1}\sigma_1\sigma_3,&k\in\Sigma^{(j)}\cap\D{R},
\end{cases}
\quad j=1,\dots,5,
\end{equation}
and
\begin{equation}   \label{mj-sym}
\hat m^{(j)}(x,t,k)=\sigma_3\sigma_1\overline{\hat m^{(j)}(x,t,\bar k)}\sigma_1\sigma_3,\quad k\in\D{C}\setminus\Sigma^{(j)},\quad j=1,\dots,5.
\end{equation}

\subsection{First transformation}

Define $\hat m^{(1)}$ by
\[
\hat m^{(1)}(x,t,k)\coloneqq\eul^{-\ii tg^{(0)}\sigma_3}\hat m(x,t,k)\eul^{\ii t(g(k)-\theta(k))\sigma_3},
\]
where $g^{(0)}\equiv g^{(0)}(\xi)=(g-\theta)(\xi,\infty)$ is defined in \eqref{g-asymptotics}. Part (a) of Lemma~\ref{g-properties} implies that
\[
\eul^{\ii t(g-\theta)\sigma_3}\in E^{\infty}(\hat{\D{C}}\setminus\Sigma^{\model}).
\]
Hence, by Lemma~\ref{rhp-deformation-bis}, $\hat m$ satisfies the RH problem \eqref{rhp-0} iff $\hat m^{(1)}$ satisfies the RH problem \eqref{rhp-j} with $j=1$, where the contour $\Sigma^{(1)}$ is defined by (see Figure~\ref{fig:jump-contour-1})
\[
\Sigma^{(1)}\coloneqq\D{R}\cup\Sigma^{\model}=\D{R}\cup\Sigma_1\cup\Sigma_2\cup\gamma_{(\beta,\alpha)}\cup\gamma_{(\bar\alpha,\bar\beta)}\cup\gamma_{(\bar\beta,\beta)},
\]
and the jump matrix $\hat v^{(1)}$ is given by
\[
\hat v^{(1)}(x,t,k)\coloneqq\eul^{-\ii tg_-(k)\sigma_3}\hat J_0(k)\eul^{\ii tg_+(k)\sigma_3},
\]
with $\hat J_0(k)$ as in Section~\ref{sec:rhp-0} for $k\in\D{R}\cup\Sigma_1\cup\Sigma_2$ and $\hat J_0(k)=I$ elsewhere, that is,
\[
\hat v^{(1)}=
\begin{cases}
\begin{pmatrix}
1&\hat r^*\eul^{-2\ii tg}\\
0&1\end{pmatrix}
\begin{pmatrix}
1&0\\
\hat r\eul^{2\ii tg}&1\end{pmatrix},&k\in\D{R},\\
\begin{pmatrix} 
-\ii\eul^{\ii t(g_+-g_-)}&0\\
\frac{\ii\eul^{-\ii\phi}}{\hat a_+\hat a_-}\eul^{2\ii t\Omega_1}&\ii\eul^{-\ii t(g_+-g_-)}\end{pmatrix},&k\in\Sigma_1\cap\D{C}^+,\\[1mm]
\begin{pmatrix} 
\frac{\hat a_-}{\hat a_+}\eul^{\ii t(g_+-g_-)}&\ii\nu_1^2\\
0&\frac{\hat a_+}{\hat a_-}\eul^{-\ii t(g_+-g_-)}\end{pmatrix},&k\in\Sigma_2\cap\D{C}^+,\\[1mm]
\begin{pmatrix} 
-\ii\eul^{\ii t(g_+-g_-)}&\frac{\ii\eul^{\ii\phi}}{\hat a_+^*\hat a_-^*}\eul^{-2\ii t\Omega_1}\\
0&\ii\eul^{-\ii t(g_+-g_-)}\end{pmatrix},&k\in\Sigma_1\cap\D{C}^-,\\[1mm]
\begin{pmatrix} 
\frac{\hat a_+^*}{\hat a_-^*}\eul^{\ii t(g_+-g_-)}&0\\
\ii\nu_1^{-2}&\frac{\hat a_-^*}{\hat a_+^*}\eul^{-\ii t(g_+-g_-)}\end{pmatrix},&k\in\Sigma_2\cap\D{C}^-,\\
\begin{pmatrix}
\eul^{\ii t(g_+-g_-)}&0\\
0&\eul^{-\ii t(g_+-g_-)}\end{pmatrix},&k\in\gamma_{(\beta,\alpha)}\cup\gamma_{(\bar\alpha,\bar\beta)}\cup\gamma_{(\bar\beta,\beta)}.
\end{cases}\]
\begin{figure}[t]
\centering\includegraphics[scale=.7]{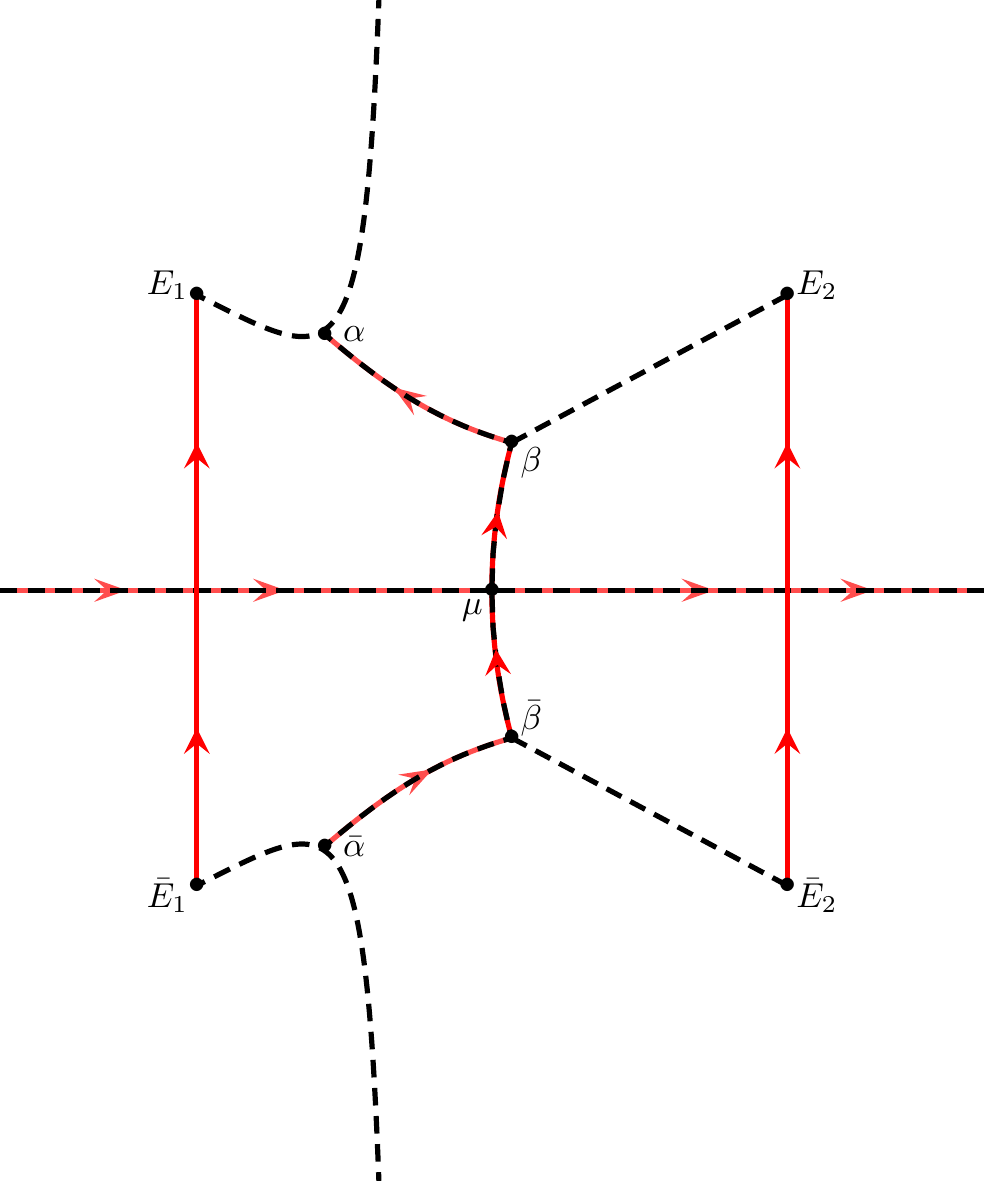}
\caption{The jump contour $\Sigma^{(1)}$ (red with arrows indicating orientation) together with the set where $\Im g=0$ (dashed black).} 
\label{fig:jump-contour-1}
\end{figure}

\subsection{Second transformation}

The jump matrix $\hat v^{(1)}$ has the wrong factorization for $k\in(-\infty,\mu)$. Hence we introduce $\hat m^{(2)}$ by
\[
\hat m^{(2)}(x,t,k)\coloneqq\hat m^{(1)}(x,t,k)\delta(k)^{-\sigma_3},
\]
where the complex-valued function $\delta(k)\equiv\delta(\xi,k)$ is defined by
\[
\delta(k)\coloneqq\eul^{\frac{1}{2\pi\ii}\int_{-\infty}^{\mu}\frac{\ln(1+\abs{r(s)}^2)}{s-k}\,\dd s},\quad k\in\D{C}\setminus(-\infty,\mu\rbrack.
\]
Note that $\abs{r(s)}^2=\abs{\hat r(s)}^2$.

\begin{lemma}   \label{delta-properties}
The function $\delta(k)$ has the following properties:
\begin{enumerate}[\rm(a)]
\item
$\delta(k)$ and $\delta(k)^{-1}$ are bounded and analytic functions of $k\in\D{C}\setminus(-\infty,\mu\rbrack$ with continuous boundary values on $(-\infty,-1)\cup(-1,\mu)$.
\item
$\delta$ obeys the symmetry
\[
\delta=(\delta^*)^{-1},\quad k\in\D{C}\setminus(-\infty,\mu\rbrack.
\]
\item
$\delta$ satisfies the jump condition
\[
\delta_+=\delta_-(1+\abs{r}^2),\quad k\in(-\infty,-1)\cup(-1,\mu).
\]
\item
As $k$ goes to infinity,
\[
\delta(k)=1+\ord(k^{-1}),\quad k\to\infty, 
\]
uniformly with respect to $\arg k\in\croch{0,2\pi}$.
\end{enumerate}
\end{lemma}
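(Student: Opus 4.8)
The plan is to treat $\delta$ as a standard Szeg\H{o}-type function and to read off (a)--(d) from the classical theory of Cauchy integrals, so the argument will be essentially the one used in \cite{BV07} and in Deift--Zhou type analyses. I would begin by writing $\delta(k)=\eul^{h(k)}$ with $h(k)\coloneqq\frac{1}{2\pi\ii}\int_{-\infty}^{\mu}\frac{\rho(s)}{s-k}\,\dd s$ and $\rho(s)\coloneqq\ln(1+\abs{r(s)}^2)$, and first record the properties of the density $\rho$ that the rest of the proof needs. From \eqref{ab-at-infty} (and \eqref{reflec-infty}) one has $r(s)=\ord(s^{-N})$ for every $N$ as $\abs{s}\to\infty$ along $\D{R}$, so $\rho$ decays rapidly at $-\infty$; combined with the description of $r$ on $\D{R}$ from Section~\ref{sec:rhp} and \cite{BLS20a}, this gives that $\rho$ is real-valued, nonnegative, bounded on $(-\infty,\mu\rbrack$, lies in $L^1((-\infty,\mu\rbrack)$ with all moments finite, and is locally H\"older continuous on $(-\infty,-1)\cup(-1,\mu)$ with at most a jump discontinuity at the spectral point $k=B_1=-1$. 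This last feature is what forces $-1$ to be excluded in (a) and (c).

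Granting these input facts, the four parts follow in order. For (a): $h$ is the Cauchy transform of an $L^1$ density, hence analytic on $\D{C}\setminus(-\infty,\mu\rbrack$, and so are $\delta=\eul^h$ and $\delta^{-1}=\eul^{-h}$; for boundedness I would use $\ln\abs{\delta(k)}=\Re h(k)=\frac{1}{2\pi}\int_{-\infty}^{\mu}\frac{(\Im k)\rho(s)}{\abs{s-k}^2}\,\dd s$, which, since $\rho\geq 0$ and $\int_{\D{R}}\frac{\abs{\Im k}}{\abs{s-k}^2}\,\dd s=\pi$, lies in $\croch{0,\tfrac12\norm{\rho}_{L^{\infty}}}$ for $k\in\D{C}^+$ and in the reversed interval for $k\in\D{C}^-$, so $\delta$ and $\delta^{-1}$ are bounded, uniformly up to the cut. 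The continuous boundary values on $(-\infty,-1)\cup(-1,\mu)$ then come from the Plemelj--Privalov formula $h_\pm(k)=\pm\tfrac12\rho(k)+\frac{1}{2\pi\ii}\,\mathrm{p.v.}\!\int_{-\infty}^{\mu}\frac{\rho(s)}{s-k}\,\dd s$, valid there because $\rho$ is locally H\"older. This same formula immediately yields (c): with $(-\infty,\mu\rbrack$ oriented from $-\infty$ to $\mu$ (so that the $+$ side is the upper one) one gets $\delta_+/\delta_-=\eul^{h_+-h_-}=\eul^{\rho}=1+\abs{r}^2$ on that set.

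For the symmetry (b) I would use that $\rho$ is real and supported on $\D{R}$: then $\overline{h(\bar k)}=\overline{\frac{1}{2\pi\ii}\int_{-\infty}^{\mu}\frac{\rho(s)}{s-\bar k}\,\dd s}=-\frac{1}{2\pi\ii}\int_{-\infty}^{\mu}\frac{\rho(s)}{s-k}\,\dd s=-h(k)$, hence $\delta^*(k)=\eul^{\overline{h(\bar k)}}=\eul^{-h(k)}=\delta(k)^{-1}$, i.e.\ $\delta=(\delta^*)^{-1}$. For (d) I would expand $\frac{1}{s-k}=-\frac1k+\ord(sk^{-2})$ and use $\int_{-\infty}^{\mu}(1+\abs{s})\rho(s)\,\dd s<\infty$ to get $h(k)=-\frac{1}{2\pi\ii k}\int_{-\infty}^{\mu}\rho(s)\,\dd s+\ord(k^{-2})=\ord(k^{-1})$ as $k\to\infty$ (uniformly in $\arg k$, since $\rho$ decays rapidly at the one unbounded end of its support), whence $\delta(k)=1+\ord(k^{-1})$.

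I expect the only genuinely delicate step to be the analysis of the density $\rho$ near the spectral point $k=B_1=-1$: one has to invoke the fine structure of the scattering coefficients $a$, $b$, $r$ established in \cite{BLS20a} to confirm that $\rho=\ln(1+\abs{r}^2)$ stays bounded and integrable there, which is what keeps $\delta$ and $\delta^{-1}$ bounded near $-1$, while $\rho$ (equivalently $\abs{r}$) is allowed to jump across $-1$, which is exactly why continuity of the boundary values is lost at that single point. Once those input properties of $r$ are in hand, everything else is the standard Cauchy-integral and Szeg\H{o}-function toolkit and involves no new difficulty.
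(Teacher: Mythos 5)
Your proposal is correct, and parts (b), (c), (d) are handled exactly as in the paper (smoothness of $\ln(1+\abs{r}^2)$ away from $-1$, rapid decay at $-\infty$, Plemelj/Schwarz-reflection, moment expansion). The one place where you genuinely diverge is the delicate point of (a), namely boundedness of $\delta^{\pm1}$ near $k=-1$ and $k=\mu$, where the exponent has logarithmic singularities. The paper handles this with its endpoint lemma (Lemma~\ref{lem-C}, a Muskhelishvili-type statement): since the one-sided values $\ln(1+\abs{r_\pm(-1)}^2)$ and $\ln(1+\abs{q}^2)$ are \emph{real}, the factor $\eul^{\frac{c}{2\pi\ii}\ln(k-k_0)}=(k-k_0)^{\frac{c}{2\pi\ii}}$ is purely oscillatory and hence bounded. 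You instead exploit the \emph{positivity} of the density: $\ln\abs{\delta(k)}=\Re h(k)=\frac{1}{2\pi}\int_{-\infty}^{\mu}\frac{(\Im k)\,\rho(s)}{\abs{s-k}^2}\,\dd s$ with $0\leq\rho\in L^{\infty}$ gives the global two-sided bound $0\leq\ln\abs{\delta}\leq\tfrac12\norm{\rho}_{L^{\infty}}$ in $\D{C}^+$ (reversed in $\D{C}^-$), which controls $\delta$ and $\delta^{-1}$ uniformly up to the cut, including near $-1$ and $\mu$, without any endpoint analysis. Both arguments are sound; yours is more economical here but relies on the sign $\ln(1+\abs{r}^2)\geq0$ (special to the focusing problem and to this particular density), whereas the paper's Lemma~\ref{lem-C} only needs realness of the endpoint values and is reused verbatim later for $h$ and $\tilde\delta$ (Lemmas~\ref{h-properties} and \ref{tilde-delta-properties}), where no positivity is available — which is presumably why the paper routes the argument that way. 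One small caveat: your Poisson bound also requires $\rho$ bounded near $-1$, i.e.\ that $r_\pm(-1)$ are finite; this is guaranteed by the standing assumption that $a$ is nonvanishing on $\D{C}^+\cup\D{R}$, and you correctly flag this as the scattering-theoretic input.
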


\begin{proof}
The function $\ln(1+\abs{r(s)}^2)$ is a smooth function of $s\in(-\infty,\mu\rbrack$ except for a possible jump discontinuity at $s=-1$ where the real axis intersects the branch cut $\Sigma_1$. Since we also have $\ln(1+\abs{r(s)}^2)=\ord(\abs{s}^{-N})$ for every $N\geq 1$ as $s\to -\infty$, parts (b), (c), and (d) follow. Since the constants $\ln(1+\abs{r_-(-1)}^2)$, $\ln(1+\abs{r_+(-1)}^2)$, and $\ln(1+\abs{q}^2)$ are real, Lemma~\ref{lem-C} implies that $\delta$ is bounded near the points $-1$ and $\mu$. This shows that (a) also holds.
\end{proof}

\begin{comment*}
Lemma~\ref{delta-properties} assumes implicitly $\mu>-1$, as in Figures~\ref{fig:Img-contour-plot}, \ref{fig:canonical-basis}, and \ref{fig:curves-aj-bj} where $\mu\in(-1,1)$. This is indeed the case in the framework of the existence theorem (Section~\ref{sec:thm}) since $\mu(\xi)$ is close to $0$ for $\xi>0$ small enough. In cases where $\mu(\xi)$ leaves the range $(-1,1)$ the cuts joining $E_j$ and $\bar E_j$ must be deformed into new cuts so that $\mu(\xi)\in(\hat B_1,\hat B_2)$ where $\hat B_j$ are the intersections of these cuts with the real axis.
\end{comment*}

Lemma~\ref{delta-properties} implies that
\[
\delta(\xi,\,\cdot\,)^{\sigma_3}\in I+(\dot E^2\cap E^{\infty})(\D{C}\setminus(-\infty,\mu\rbrack).
\]
Hence it follows from Lemma~\ref{rhp-deformation} that $\hat m$ satisfies the RH problem \eqref{rhp-0} iff $\hat m^{(2)}$ satisfies the RH problem \eqref{rhp-j} with $j=2$, where $\Sigma^{(2)}=\Sigma^{(1)}$ and the jump matrix $\hat v^{(2)}$ is given by
\[
\hat v^{(2)}=\begin{pmatrix}
\delta_-&0\\0&\delta_-^{-1}\end{pmatrix}
\hat v^{(1)}\begin{pmatrix}
\delta_+^{-1}&0\\0&\delta_+\end{pmatrix}.
\]
We can write
\[
\hat v^{(2)}=
\begin{cases}
\begin{pmatrix}
1&0\\\frac{\hat r}{1+\hat r\hat r^*}\delta_-^{-2}\eul^{2\ii tg}&1\end{pmatrix}
\begin{pmatrix}
1&\frac{\hat r^*}{1+\hat r\hat r^*}\delta_+^2\eul^{-2\ii tg}\\0&1\end{pmatrix},
&k\in(-\infty,\mu),\\
\begin{pmatrix}
1&\hat r^*\delta^2\eul^{-2\ii tg}\\0&1\end{pmatrix}
\begin{pmatrix}
1&0\\\hat r\delta^{-2}\eul^{2\ii tg}&1\end{pmatrix},
&k\in(\mu,+\infty),\\
\begin{pmatrix}
-\ii\eul^{\ii t(g_+-g_-)}&0\\\frac{\ii\eul^{-\ii\phi}}{\hat a_+\hat a_-}\delta^{-2}\eul^{\ii t(g_++g_-)}&\ii\eul^{-\ii t(g_+-g_-)}\end{pmatrix},
&k\in\Sigma_1\cap\D{C}^+,\\
\begin{pmatrix}
\frac{\hat a_-}{\hat a_+}\eul^{\ii t(g_+-g_-)}&\ii\nu_1^2\delta^2\eul^{-\ii t(g_++g_-)}\\0&\frac{\hat a_+}{\hat a_-}\eul^{-\ii t(g_+-g_-)}\end{pmatrix},
&k\in\Sigma_2\cap\D{C}^+,\\
\begin{pmatrix}
\ii\eul^{\ii t(g_+-g_-)}&\frac{\ii\eul^{\ii\phi}}{\hat a_+^*\hat a_-^*}\delta^2\eul^{-\ii t(g_++g_-)}\\0&\ii\eul^{-\ii t(g_+-g_-)}\end{pmatrix},
&k\in\Sigma_1\cap\D{C}^-,\\
\begin{pmatrix}
\frac{\hat a_+^*}{\hat a_-^*}\eul^{\ii t(g_+-g_-)}&0\\\ii\nu_1^{-2}\delta^{-2}\eul^{\ii t(g_++g_-)}&\frac{\hat a_-^*}{\hat a_+^*}\eul^{-\ii t(g_+-g_-)}\end{pmatrix},
&k\in\Sigma_2\cap\D{C}^-,\\
\begin{pmatrix}
\eul^{\ii t(g_+-g_-)}&0\\0&\eul^{-\ii t(g_+-g_-)}\end{pmatrix},
&k\in\gamma_{(\bar\alpha,\bar\beta)}\cup\gamma_{(\bar\beta,\beta)}\cup\gamma_{(\beta,\alpha)}.
\end{cases}
\]
\subsection{Third transformation}

\begin{figure}[ht]
\centering\includegraphics[scale=.75]{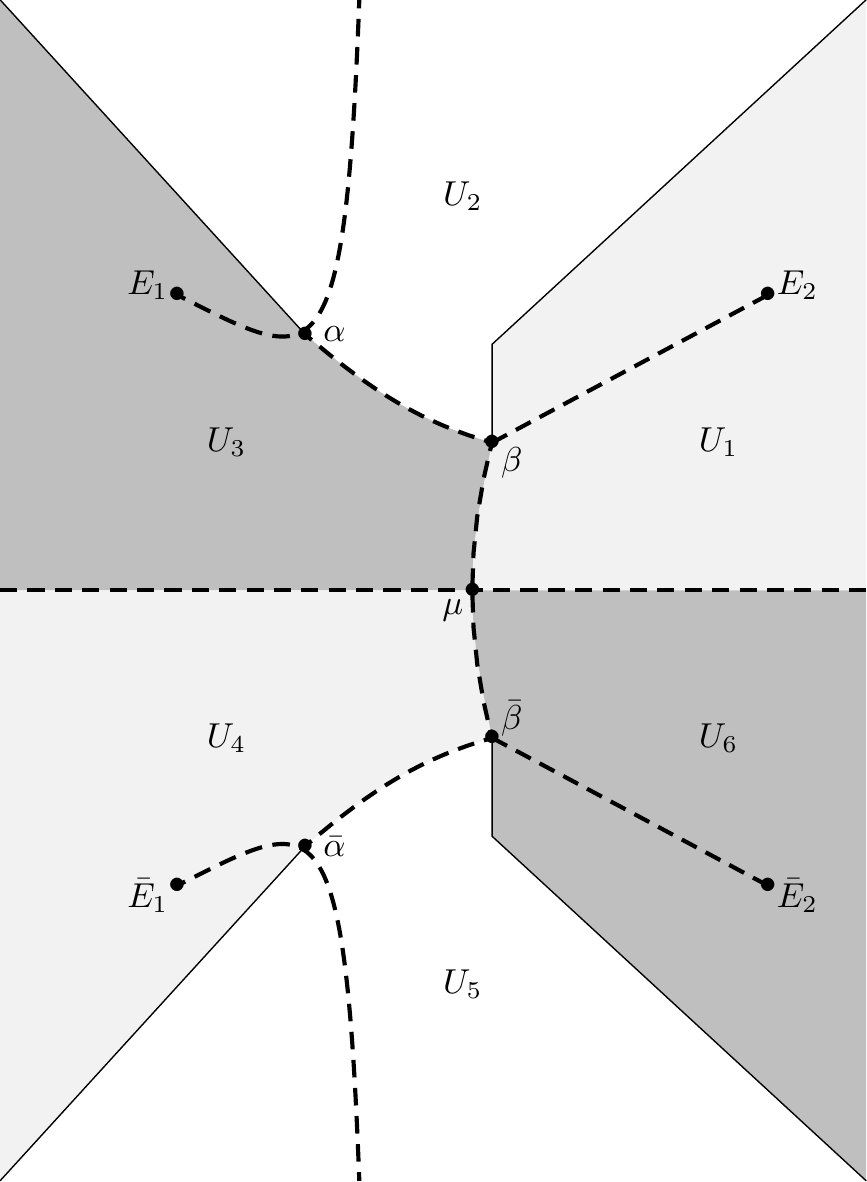}
\caption{The sets $\accol{U_j}_1^6$ together with the set where $\Im g=0$ (dashed black).} 
\label{fig:Uj}
\end{figure}

Let $\accol{U_j}_1^6$ denote open sets of the form displayed in Figure~\ref{fig:Uj} and define $\hat m^{(3)}(x,t,k)$ by
\[
\hat m^{(3)}\coloneqq\hat m^{(2)}\times
\begin{cases}
\begin{pmatrix}
1&0\\
-\hat r\delta^{-2}\eul^{2\ii tg}&1\end{pmatrix},&k\in U_1,\\
\begin{pmatrix}
1&-\frac{\hat r^*}{1+\hat r\hat r^*}\delta^2\eul^{-2\ii tg}\\
0&1\end{pmatrix},&k\in U_3,\\
\begin{pmatrix}
1&0\\
\frac{\hat r}{1+\hat r\hat r^*}\delta^{-2}\eul^{2\ii tg}&1\end{pmatrix},&k\in U_4,\\
\begin{pmatrix}
1&\hat r^*\delta^2\eul^{-2\ii tg}\\
0&1\end{pmatrix},&k\in U_6,\\
\,I,&k\in U_2\cup U_5.
\end{cases}
\]
Then, there is no more jump across the real axis. Note also that $\hat r$ is always bounded at $E_2\in U_1$ and that, in the generic case, $\frac{\hat r^*}{1+\hat r\hat r^*}=\hat a\hat b$ is bounded at $E_1\in U_3$.

Across the curve $\Sigma_1\cap\D{C}^+$ which lies in $U_3$, we find the jump
\begin{align*}
\hat v^{(3)}&=\begin{pmatrix}
1&-\frac{\hat r_-^*}{1+\hat r_-\hat r_-^*}\delta^2\eul^{-2\ii tg_-}\\0&1\end{pmatrix}^{-1}\hat v^{(2)}\begin{pmatrix}
1&-\frac{\hat r_+^*}{1+\hat r_+\hat r_+^*}\delta^2\eul^{-2\ii tg_+}\\0&1\end{pmatrix}\\
&=\begin{pmatrix}1&\frac{\hat r_-^*}{1+\hat r_-\hat r_-^*}\delta^2\eul^{-2\ii tg_-}\\0&1\end{pmatrix}\begin{pmatrix}-\ii\eul^{\ii t(g_+-g_-)}&0\\-\frac{\eul^{-\ii\phi}\nu_{1+}^{-2}}{a_+a_-}\delta^{-2}\eul^{\ii t(g_++g_-)}&\ii\eul^{-\ii t(g_+-g_-)}\end{pmatrix}\begin{pmatrix}1&-\frac{\hat r_+^*}{1+\hat r_+\hat r_+^*}\delta^2\eul^{-2\ii tg_+}\\0&1\end{pmatrix}\\
&=\begin{pmatrix}0&\ii\hat a_+\hat a_-\eul^{\ii\phi}\delta^2\eul^{-\ii t(g_++g_-)}\\\frac{\ii\eul^{-\ii\phi}}{\hat a_+\hat a_-}\delta^{-2}\eul^{\ii t(g_++g_-)}&0\end{pmatrix},\quad k\in\Sigma_1\cap\D{C}^+,
\end{align*}  
where we have used \eqref{ab1} in the last step, together with $aa^*+bb^*=1$. Note that $\hat a_+\hat a_-\eul^{\ii\phi}=\hat a_+\hat b_+$. 

Similarly, across the curve $\Sigma_2\cap\D{C}^+$ which lies in $U_1$, we find the jump
\begin{align*}
\hat v^{(3)}&=\begin{pmatrix}
1&0\\-\hat r_-\delta^{-2}\eul^{2\ii tg_-}&1\end{pmatrix}^{-1}\hat v^{(2)}\begin{pmatrix}
1&0\\-\hat r_+\delta^{-2}\eul^{2\ii tg_+}&1\end{pmatrix}\\
&=\begin{pmatrix}
1&0\\\hat r_-\delta^{-2}\eul^{2\ii tg_-}&1\end{pmatrix}\begin{pmatrix}
\frac{a_-}{a_+}\eul^{\ii t(g_+-g_-)}&\ii\nu_1^2\delta^2\eul^{-\ii t(g_++g_-)}\\0&\frac{a_+}{a_-}\eul^{-\ii t(g_+-g_-)}\end{pmatrix}\begin{pmatrix}1&0\\-\hat r_+\delta^{-2}\eul^{2\ii tg_+}&1\end{pmatrix}\\
&=\begin{pmatrix}0&\ii\nu_1^2\delta^2\eul^{-\ii t(g_++g_-)}\\\ii\nu_1^{-2}\delta^{-2}\eul^{\ii t(g_++g_-)}&0\end{pmatrix},\quad k\in\Sigma_2\cap\D{C}^+,
\end{align*}  
where we have used \eqref{ab2} in the last step. Note that $\hat r_-\frac{a_-}{a_+}=\ii\nu_1^{-2}$.

Across the cut $\gamma_{(\beta,\alpha)}$ which lies on the boundary between $U_2$ and $U_3$ with $U_3$ to the left, we find the jump
\begin{align*}
\hat v^{(3)}&=\hat v^{(2)}\begin{pmatrix}
1&-\frac{\hat r^*}{1+\hat r\hat r^*}\delta^2\eul^{-2\ii tg_+}\\0&1\end{pmatrix}\\
&=\begin{pmatrix}\eul^{\ii t(g_+-g_-)}&0\\0&\eul^{-\ii t(g_+-g_-)}\end{pmatrix}\begin{pmatrix}
1&-\frac{\hat r^*}{1+\hat r\hat r^*}\delta^2\eul^{-2\ii tg_+}\\0&1\end{pmatrix}\\
&=\eul^{-\ii tg_-\sigma_3}\begin{pmatrix}1&-\hat a\hat b\delta^2\\0&1\end{pmatrix}\eul^{\ii tg_+\sigma_3},\quad k\in\gamma_{(\beta,\alpha)}.
\end{align*}
Across the curve $\gamma_{(\mu,\beta)}$ which lies on the boundary between $U_1$ and $U_3$ with $U_3$ to the left, we find the jump
\begin{align*}
\hat v^{(3)}&=\begin{pmatrix}
1&0\\-\hat r\delta^{-2}\eul^{2\ii tg_-}&1\end{pmatrix}^{-1}\hat v^{(2)}\begin{pmatrix}
1&-\frac{\hat r^*}{1+\hat r\hat r^*}\delta^2\eul^{-2\ii tg_+}\\0&1\end{pmatrix}\\
&=\begin{pmatrix}1&0\\-\hat r\delta^{-2}\eul^{2\ii tg_-}&1\end{pmatrix}^{-1}\begin{pmatrix}\eul^{\ii t(g_+-g_-)}&0\\0&\eul^{-\ii t(g_+-g_-)}\end{pmatrix}\begin{pmatrix}1&-\frac{\hat r^*}{1+\hat r\hat r^*}\delta^2\eul^{-2\ii tg_+}\\0&1\end{pmatrix}\\
&=\eul^{-\ii tg_-\sigma_3}\begin{pmatrix}1&-\hat a\hat b\delta^2\\\hat r\delta^{-2}&\hat a\hat a^*\end{pmatrix}\eul^{\ii tg_+\sigma_3},\quad k\in\gamma_{(\mu,\beta)}.
\end{align*}

Estimates \eqref{reflec-infty} together with Lemma~\ref{g-properties}, Lemma~\ref{delta-properties}, and the genericity assumption (on the behavior of $a(k)$ at $E_1$) imply that
\begin{alignat*}{2}
&\hat r\delta^{-2}\eul^{2\ii tg}\in(\dot E^2\cap E^{\infty})(U_1\setminus\Sigma_2),&\qquad&\frac{\hat r^*}{1+\hat r\hat r^*}\delta^2\eul^{-2\ii tg}\in(\dot E^2\cap E^{\infty})(U_3\setminus\Sigma_1),\\
&\frac{\hat r}{1+\hat r\hat r^*}\delta^{-2}\eul^{2\ii tg}\in(\dot E^2\cap E^{\infty})(U_4\setminus\Sigma_1),&&\hat r^*\delta^2\eul^{-2\ii tg}\in(\dot E^2\cap E^{\infty})(U_6\setminus\Sigma_2).
\end{alignat*}
Notice that $\hat r(k)$ is always bounded at $E_2$ whereas $\frac{\hat r^*(k)}{1+\hat r(k)\hat r^*(k)}=\hat a(k)\hat b(k)$ is bounded at $E_1$ in the generic case.

Since $g$ is quadratic in $k$ the decay of $\eul^{2\ii tg}$ in regions where $\Im g>0$ indeed prevails over the growth of $r(k)$ provided by \eqref{reflec-infty}.
\begin{figure}[ht]
\centering\includegraphics[scale=.75]{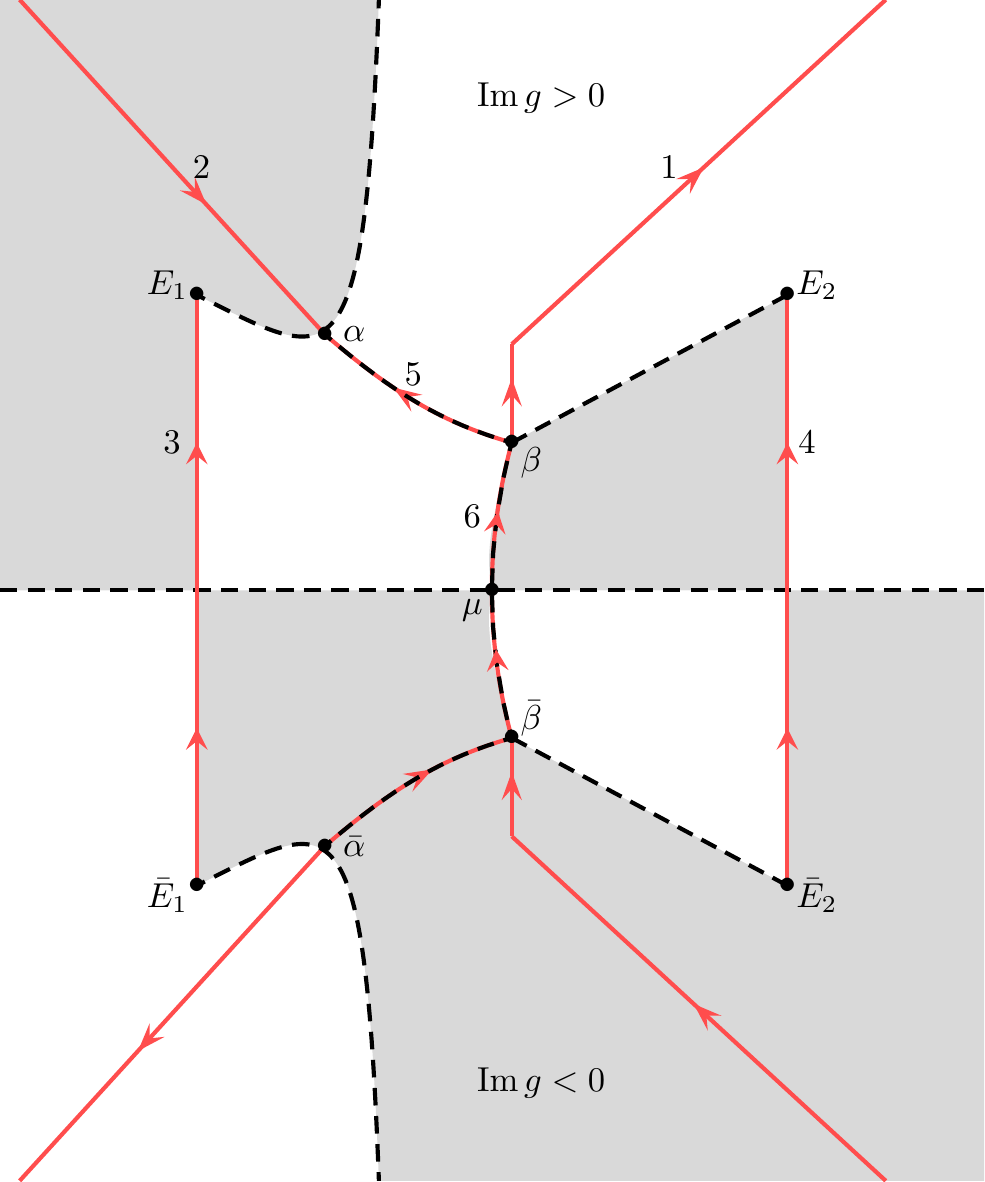}
\caption{The jump contour $\Sigma^{(3)}$ (red with arrows indicating orientation) together with the set where $\Im g=0$ (dashed black). The region where $\Im g<0$ is shaded and the region where $\Im g>0$ is white.} 
\label{fig:jump-contour-3}
\end{figure}
Hence it follows from Lemma~\ref{rhp-deformation} that $\hat m$ satisfies the RH problem \eqref{rhp-0} iff $\hat m^{(3)}$ satisfies the RH problem \eqref{rhp-j} with $j=3$, where $\Sigma^{(3)}$ is the jump contour displayed in Figure~\ref{fig:jump-contour-3} and the jump matrix $\hat v^{(3)}$ is given in the upper half-plane by ($\hat v_l^{(3)}$ denotes the restriction of $\hat v^{(3)}$ to the contour labeled by $l$ in Figure~\ref{fig:jump-contour-3}) 
\begin{alignat*}{2}
&\hat v_1^{(3)}=
\begin{pmatrix}
1&0\\
\hat r\delta^{-2}\eul^{2\ii tg}&1\end{pmatrix},&&
\hat v_2^{(3)}=
\begin{pmatrix}
1&\hat a\hat b\delta^2\eul^{-2\ii tg}\\
0&1\end{pmatrix},\\
&\hat v_3^{(3)}=
\begin{pmatrix}0&\ii\hat a_+\hat a_-\eul^{\ii\phi}\delta^2\eul^{-\ii t(g_++g_-)}\\\frac{\ii\eul^{-\ii\phi}}{\hat a_+\hat a_-}\delta^{-2}\eul^{\ii t(g_++g_-)}&0\end{pmatrix},&&\\
&\hat v_4^{(3)}=
\begin{pmatrix}
0&\ii\nu_1^2\delta^2\eul^{-\ii t(g_++g_-)}\\
\ii\nu_1^{-2}\delta^{-2}\eul^{\ii t(g_++g_-)}&0\end{pmatrix},&&\\
&\hat v_5^{(3)}=
\eul^{-\ii tg_-\sigma_3}\begin{pmatrix}
1&-\hat a\hat b\delta^2\\
0&1\end{pmatrix}\eul^{\ii tg_+\sigma_3},&&
\hat v_6^{(3)}=
\eul^{-\ii tg_-\sigma_3}\begin{pmatrix}
1&-\hat a\hat b\delta^2\\
\hat r\delta^{-2}&\hat a\hat a^*\end{pmatrix}\eul^{\ii tg_+\sigma_3},
\end{alignat*}
and extended to the lower half-plane by means of the symmetry \eqref{vj-sym}.
\subsection{Fourth transformation}

The purpose of the fourth transformation is to make the jumps across $\gamma_{(\beta,\alpha)}\cup\gamma_{(\bar\alpha,\bar\beta)}$ and $\gamma_{(\bar\beta,\beta)}$ off-diagonal and diagonal, respectively.

Using the factorization
\[
\begin{pmatrix}1&P^{-1}\\0&1\end{pmatrix}=\begin{pmatrix}1&0\\P&1\end{pmatrix}\begin{pmatrix}0&P^{-1}\\-P&0\end{pmatrix}\begin{pmatrix}1&0\\P&1\end{pmatrix}
\]
we can write the jump matrix $\hat v^{(3)}$ for $k\in\gamma_{(\beta,\alpha)}$ as
\[
\hat v_5^{(3)}=\eul^{-\ii tg_-\sigma_3}\begin{pmatrix}1&0\\-\frac{1}{\hat a\hat b\delta^2}&1\end{pmatrix}\begin{pmatrix}0&-\hat a\hat b\delta^2\\\frac{1}{\hat a\hat b\delta^2}&0\end{pmatrix}\begin{pmatrix}1&0\\-\frac{1}{\hat a\hat b\delta^2}&1\end{pmatrix}\eul^{\ii tg_+\sigma_3},\quad k\in\gamma_{(\beta,\alpha)}.
\]
Similarly, using the factorization
\[
\begin{pmatrix}1&P\\Q&1+PQ\end{pmatrix}=\begin{pmatrix}1&\frac{P}{1+PQ}\\0&1\end{pmatrix}\begin{pmatrix}\frac{1}{1+PQ}&0\\0&1+PQ\end{pmatrix}\begin{pmatrix}1&0\\\frac{Q}{1+PQ}&1\end{pmatrix}
\]
the jump matrix $\hat v^{(3)}$ for $k\in\gamma_{(\mu,\beta)}$ can be written as
\[
\hat v_6^{(3)}=\eul^{-\ii tg_-\sigma_3}\begin{pmatrix}1&-\frac{\hat b}{\hat a^*}\delta^2\\0&1\end{pmatrix}\begin{pmatrix}\frac{1}{\hat a\hat a^*}&0\\0&\hat a\hat a^*\end{pmatrix}\begin{pmatrix}1&0\\\frac{\hat b^*}{\hat a^2\hat a^*}\delta^{-2}&1\end{pmatrix}\eul^{\ii tg_+\sigma_3},\quad k\in\gamma_{(\mu,\beta)}.
\]
\begin{figure}[ht]
\centering\includegraphics[scale=.8]{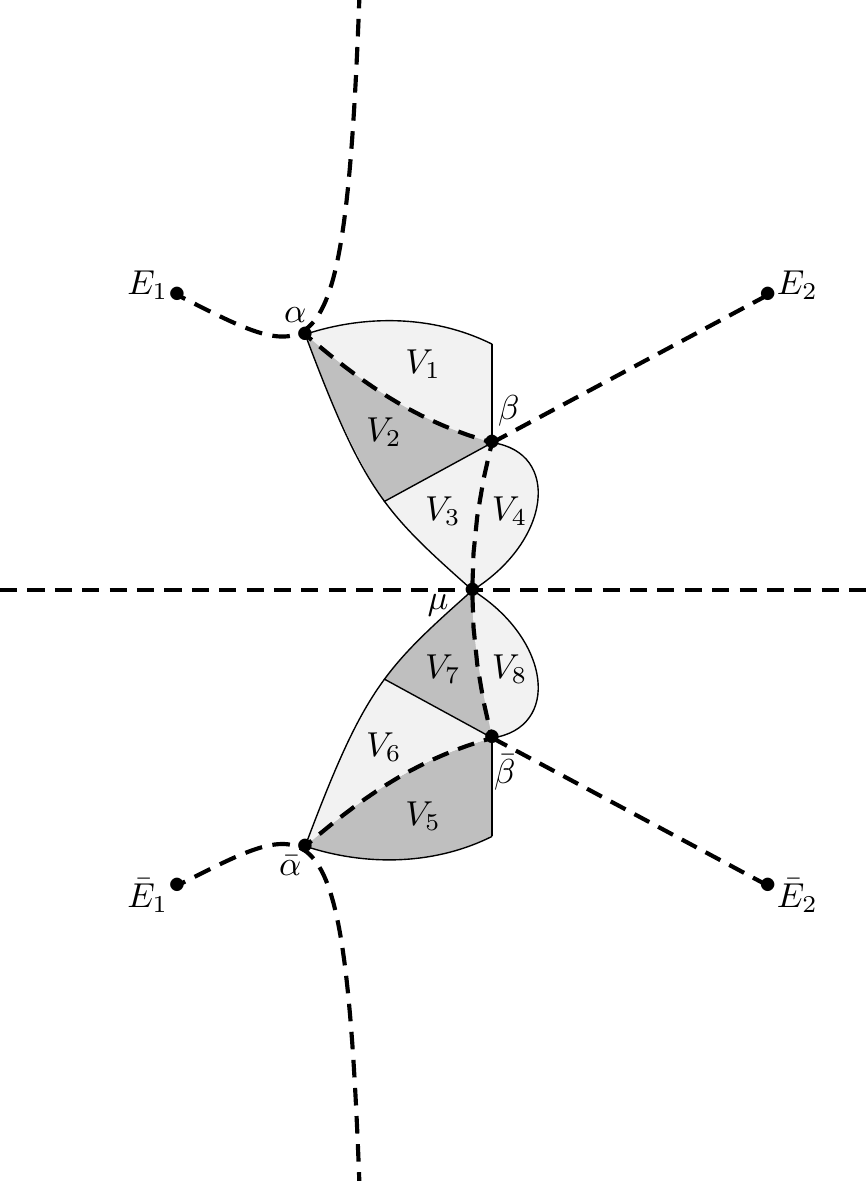}
\caption{The sets $\accol{V_j}_1^8$ together with the level set $\Im g=0$ (dashed).} 
\label{fig:Vj}
\end{figure}

Let $\accol{V_j}_1^8$ denote the open sets displayed in Figure~\ref{fig:Vj}. Define $\hat m^{(4)}$ for $k$ in the upper half-plane by
\[
\hat m^{(4)}\coloneqq\hat m^{(3)}\times
\begin{cases}
\begin{pmatrix}
1&0\\-\frac{\eul^{2\ii tg}}{\hat a\hat b\delta^2}&1\end{pmatrix},&k\in V_1,\\
\begin{pmatrix}
1&0\\\frac{\eul^{2\ii tg}}{\hat a\hat b\delta^2}&1\end{pmatrix},&k\in V_2,\\
\begin{pmatrix}
1&0\\-\frac{\hat b^*}{\hat a^2\hat a^*}\delta^{-2}\eul^{2\ii tg}&1\end{pmatrix},&k\in V_3,\\
\begin{pmatrix}
1&-\frac{\hat b}{\hat a^*}\delta^2\eul^{-2\ii tg}\\0&1\end{pmatrix},&k\in V_4,\\
I,&\text{elsewhere in }\D{C}^+,
\end{cases}
\]
and extend the definition to the lower half-plane by means of the symmetry \eqref{mj-sym}.

Lemma~\ref{g-properties} and Lemma~\ref{delta-properties} imply
\begin{alignat*}{2}
&\frac{\eul^{2\ii tg}}{\hat a\hat b\delta^2}\in E^{\infty}(V_1),&\qquad&\frac{\eul^{2\ii tg}}{\hat a\hat b\delta^2}\in E^{\infty}(V_2),\\
&\frac{\hat b^*}{\hat a^2\hat a^*}\delta^{-2}\eul^{2\ii tg}\in E^{\infty}(V_3),&&\frac{\hat b}{\hat a^*}\delta^2\eul^{-2\ii tg}\in E^{\infty}(V_4).
\end{alignat*}

\begin{figure}[ht]
\centering\includegraphics[scale=.8]{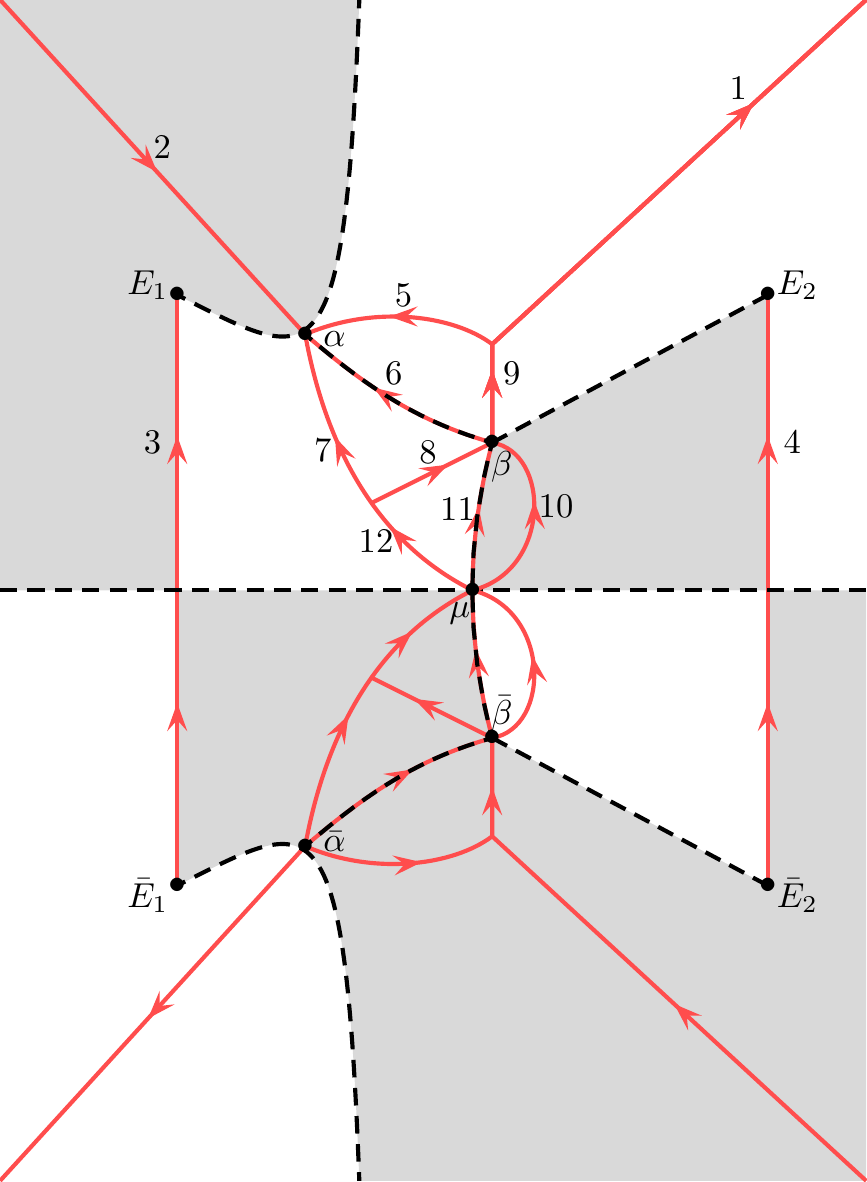}
\caption{The jump contour $\Sigma^{(4)}=\Sigma^{(5)}$ (red with arrows indicating orientation) together with the set where $\Im g=0$ (dashed black). The region where $\Im g<0$ is shaded and the region where $\Im g>0$ is white.} 
\label{fig:jump-contour-4}
\end{figure}

Hence it follows from Lemma~\ref{rhp-deformation} that $\hat m$ satisfies the RH problem \eqref{rhp-0} iff $\hat m^{(4)}$ satisfies the RH problem \eqref{rhp-j} with $j=4$, where $\Sigma^{(4)}$ is the jump contour displayed in Figure~\ref{fig:jump-contour-4} and the jump matrix $\hat v^{(4)}$ is given on the part of $\Sigma^{(4)}$ that lies in the upper half-plane by ($\hat v_l^{(4)}$ denotes the restriction of $\hat v^{(4)}$ to the contour labeled by $l$ in Figure~\ref{fig:jump-contour-4}):
\begin{alignat*}{2}
&\hat v_1^{(4)}=\begin{pmatrix}1&0\\\frac{\hat b^*}{\hat a}\delta^{-2}\eul^{2\ii tg}&1\end{pmatrix},&&\hat v_2^{(4)}=\begin{pmatrix}1&\hat a\hat b\delta^2\eul^{-2\ii tg}\\0&1\end{pmatrix},\\
&\hat v_3^{(4)}=\begin{pmatrix}0&\ii\hat a_+\hat a_-\eul^{\ii\phi}\delta^2\eul^{-\ii t(g_++g_-)}\\\frac{\ii\eul^{-\ii\phi}}{\hat a_+\hat a_-}\delta^{-2}\eul^{\ii t(g_++g_-)}&0\end{pmatrix},&&\\
&\hat v_4^{(4)}=\begin{pmatrix}0&\ii\nu_1^2\delta^2\eul^{-\ii t(g_++g_-)}\\\ii\nu_1^{-2}\delta^{-2}\eul^{\ii t(g_++g_-)}&0\end{pmatrix},&&\hat v_5^{(4)}=\hat v_7^{(4)}=\begin{pmatrix}1&0\\-\frac{\eul^{2\ii tg}}{\hat a\hat b\delta^2}&1\end{pmatrix},\\
&\hat v_6^{(4)}=\begin{pmatrix}0&-\hat a\hat b\delta^2\eul^{-\ii t(g_++g_-)}\\\frac{1}{\hat a\hat b\delta^2}\eul^{\ii t(g_++g_-)}&0\end{pmatrix},&&\hat v_8^{(4)}=\begin{pmatrix}1&0\\\frac{1}{\hat a^2\hat a^*\hat b}\delta^{-2}\eul^{2\ii tg}&1\end{pmatrix},\\
&\hat v_9^{(4)}=\begin{pmatrix}1&0\\-\frac{\hat a^*}{\hat b}\delta^{-2}\eul^{2\ii tg}&1\end{pmatrix},&&\hat v_{10}^{(4)}=\begin{pmatrix}1&-\frac{\hat b}{\hat a^*}\delta^2\eul^{-2\ii tg}\\0&1\end{pmatrix},\\
&\hat v_{11}^{(4)}=\begin{pmatrix}\frac{\eul^{\ii t(g_+-g_-)}}{\hat a\hat a^*}&0\\0&\hat a\hat a^*\eul^{-\ii t(g_+-g_-)}\end{pmatrix},&&\hat v_{12}^{(4)}=\begin{pmatrix}1&0\\\frac{\hat b^*}{\hat a^2\hat a^*}\delta^{-2}\eul^{2\ii tg}&1\end{pmatrix},
\end{alignat*}
and is extended to the part of $\Sigma^{(4)}$ that lies in the lower half-plane by means of the symmetry \eqref{vj-sym}.

\subsection{Fifth transformation}

The purpose of the fifth transformation is to make the jumps across the four branch cuts and across the curve $\gamma_{(\bar\beta,\beta)}$ constant in $k$.

We define $\hat m^{(5)}$ by
\[
\hat m^{(5)}(x,t,k)\coloneqq\eul^{-\ii h(\infty)\sigma_3}\hat m^{(4)}(x,t,k)\eul^{\ii h(k)\sigma_3},
\]
where $h(k)\equiv h(\xi,k)$ is a function yet to be determined. Suppose $h(k)$ is an analytic function of $k\in\hat{\D{C}}\setminus\Sigma^{\model}$ such that $h=h^*$ and
\begin{equation}
\eul^{\ii h\sigma_3}\in E^{\infty}(\hat{\D{C}}\setminus\Sigma^{\model}).
\end{equation}
Then Lemma~\ref{rhp-deformation-bis} implies that $\hat m$ satisfies the RH problem \eqref{rhp-0} iff $\hat m^{(5)}$ satisfies the RH problem \eqref{rhp-j} with $j=5$, where $\Sigma^{(5)}=\Sigma^{(4)}$ and
\[
\hat v^{(5)}=\eul^{-\ii h_-\sigma_3}\hat v^{(4)}\eul^{\ii h_+\sigma_3}.
\]
Using the relations \eqref{g-jump} we find that the jump matrix $\hat v^{(5)}$ is given in the upper half-plane by ($\hat v_l^{(5)}$ denotes the restriction of $\hat v^{(5)}$ to the contour labeled by $l$ in Figure~\ref{fig:jump-contour-4})
\begin{equation}   \label{jump-v5}
\begin{split}
&\hat v_1^{(5)}=\begin{pmatrix}1&0\\\frac{\hat b^*}{\hat a}\delta^{-2}\eul^{2\ii tg}\eul^{2\ii h}&1\end{pmatrix},\qquad\qquad\qquad\qquad\qquad\hat v_2^{(5)}=\begin{pmatrix}1&\hat a\hat b\delta^2\eul^{-2\ii tg}\eul^{-2\ii h}\\0&1\end{pmatrix},\\
&\hat v_3^{(5)}=\begin{pmatrix}0&\ii\hat a_+\hat a_-\eul^{\ii\phi}\delta^2\eul^{-2\ii t\Omega_1}\eul^{-\ii(h_++h_-)}\\\frac{\ii\eul^{-\ii\phi}}{\hat a_+\hat a_-}\delta^{-2}\eul^{2\ii t\Omega_1}\eul^{\ii(h_++h_-)}&0\end{pmatrix},\\
&\hat v_4^{(5)}=\begin{pmatrix}0&\ii\nu_1^2\delta^2\eul^{-\ii(h_++h_-)}\\\ii\nu_1^{-2}\delta^{-2}\eul^{\ii(h_++h_-)}&0\end{pmatrix},\qquad\qquad\quad\hat v_5^{(5)}=\hat v_7^{(5)}=\begin{pmatrix}1&0\\-\frac{\eul^{2\ii tg}}{\hat a\hat b\delta^2}\eul^{2\ii h}&1\end{pmatrix},\\
&\hat v_6^{(5)}=\begin{pmatrix}0&-\hat a\hat b\delta^2\eul^{-2\ii t\Omega_2}\eul^{-\ii(h_++h_-)}\\\frac{1}{\hat a\hat b\delta^2}\eul^{2\ii t\Omega_2}\eul^{\ii(h_++h_-)}&0\end{pmatrix},\\
&\hat v_8^{(5)}=\begin{pmatrix}1&0\\\frac{1}{\hat a^2\hat a^*\hat b}\delta^{-2}\eul^{2\ii tg}\eul^{2\ii h}&1\end{pmatrix},\qquad\qquad\qquad\qquad\quad\hat v_9^{(5)}=\begin{pmatrix}1&0\\-\frac{\hat a^*}{\hat b}\delta^{-2}\eul^{2\ii tg}\eul^{2\ii h}&1\end{pmatrix},\\
&\hat v_{10}^{(5)}=\begin{pmatrix}1&-\frac{\hat b}{\hat a^*}\delta^2\eul^{-2\ii tg}\eul^{-2\ii h}\\0&1\end{pmatrix},\\
&\hat v_{11}^{(5)}=\begin{pmatrix}\frac{\eul^{2\ii t\Omega_3}\eul^{\ii(h_+-h_-)}}{\hat a\hat a^*}&0\\0&\hat a\hat a^*\eul^{-2\ii t\Omega_3}\eul^{-\ii(h_+-h_-)}\end{pmatrix},\quad\ \ \hat v_{12}^{(5)}=\begin{pmatrix}1&0\\\frac{\hat b^*}{\hat a^2\hat a^*}\delta^{-2}\eul^{2\ii tg}\eul^{2\ii h}&1\end{pmatrix},
\end{split}
\end{equation}
and is extended to the lower half-plane by means of the symmetry \eqref{vj-sym}. We can make the jumps across the four cuts and across $\gamma_{(\bar\beta,\beta)}$ constant in $k$ by choosing the function $h$ so that it obeys the symmetry $h=h^*$ and satisfies the jump conditions
\begin{subequations}   \label{h-jump}
\begin{equation}  \label{h-jump-a}
h_++h_-=
\begin{cases}
2\omega_1-\ii\ln(\hat a_+\hat a_-\delta^2\eul^{\ii\phi}),&k\in\Sigma_1\cap\D{C}^+,\\
2\omega_1+\ii\ln(\hat a_+^*\hat a_-^*\delta^{-2}\eul^{-\ii\phi}),&k\in\Sigma_1\cap\D{C}^-,\\
2\omega_2-\ii\ln(\ii\hat a\hat b\delta^2),&k\in\gamma_{(\beta,\alpha)},\\
2\omega_2+\ii\ln(-\ii\hat a^*\hat b^*\delta^{-2}),&k\in\gamma_{(\bar\alpha,\bar\beta)},\\
-\ii\ln(\nu_1^2\delta^2),&k\in\Sigma_2,
\end{cases}
\end{equation}
and
\begin{equation}  \label{h-jump-b}
h_+-h_-=
\begin{cases}
2\omega_3-\ii\ln(aa^*),&k\in\gamma_{(\mu,\beta)},\\
2\omega_3+\ii\ln(aa^*),&k\in\gamma_{(\bar\beta,\mu)},
\end{cases}
\end{equation}
\end{subequations}
where $\accol{\omega_j}_1^3$ are real constants yet to be determined. The branches of the logarithms in \eqref{h-jump-a} can be chosen arbitrarily as long as the function $h_s\coloneqq h_++h_-$ satisfies the following two conditions:
\begin{enumerate}[(i)]
\item
$h_s$ is continuous on each of the five segments $\Sigma_1\cap\D{C}^+$, $\Sigma_1\cap\D{C}^-$, $\gamma_{(\beta,\alpha)}$, $\gamma_{(\bar\alpha,\bar\beta)}$, and $\Sigma_2$.
\item
$h_s$ obeys the symmetry $h_s=h_s^*$ for all $k\in\C{C}=\Sigma_1\cup\Sigma_2\cup\gamma_{(\beta,\alpha)}\cup\gamma_{(\bar\alpha,\bar\beta)}$.
\end{enumerate}
Since $\hat a(k)$ and $\hat b(k)$ are assumed to be nonzero, these conditions can easily be fulfilled. The branches of the logarithms in \eqref{h-jump-b} are fixed by requiring that $\ln(aa^*)=\ln(\abs{a(\mu)}^2)\in\D{R}$ for $k=\mu$ and extended in such a way that $h_+-h_-$ is continuous on each of the contours $\gamma_{(\mu,\beta)}$ and $\gamma_{(\bar\beta,\mu)}$.
 
If the conditions in \eqref{h-jump} are satisfied, then the jump matrix takes the form
\begin{alignat*}{2}
&\hat v_1^{(5)}=\begin{pmatrix}1&0\\\frac{\hat b^*}{\hat a}\delta^{-2}\eul^{2\ii tg}\eul^{2\ii h}&1\end{pmatrix},&\qquad&\hat v_2^{(5)}=\begin{pmatrix}1&\hat a\hat b\delta^2\eul^{-2\ii tg}\eul^{-2\ii h}\\0&1\end{pmatrix},\\
&\hat v_3^{(5)}=\begin{pmatrix}0&\ii\eul^{-2\ii(t\Omega_1+\omega_1)}\\\ii\eul^{2\ii(t\Omega_1+\omega_1)}&0\end{pmatrix},&&\hat v_4^{(5)}=\begin{pmatrix}0&\ii\\\ii&0\end{pmatrix},\\
&\hat v_5^{(5)}=\hat v_7^{(5)}=\begin{pmatrix}1&0\\-\frac{\eul^{2\ii tg}}{\hat a\hat b\delta^2}\eul^{2\ii h}&1\end{pmatrix},&&\hat v_6^{(5)}=\begin{pmatrix}0&\ii\eul^{-2\ii(t\Omega_2+\omega_2)}\\\ii\eul^{2\ii(t\Omega_2+\omega_2)}&0\end{pmatrix},\\
&\hat v_8^{(5)}=\begin{pmatrix}1&0\\\frac{1}{\hat a^2\hat a^*\hat b}\delta^{-2}\eul^{2\ii tg}\eul^{2\ii h}&1\end{pmatrix},&&\hat v_9^{(5)}=\begin{pmatrix}1&0\\-\frac{\hat a^*}{\hat b}\delta^{-2}\eul^{2\ii tg}\eul^{2\ii h}&1\end{pmatrix},\\
&\hat v_{10}^{(5)}=\begin{pmatrix}1&-\frac{\hat b}{\hat a^*}\delta^2\eul^{-2\ii tg}\eul^{-2\ii h}\\0&1\end{pmatrix},&&\hat v_{11}^{(5)}=\begin{pmatrix}\eul^{2\ii(t\Omega_3+\omega_3)}&0\\0&\eul^{-2\ii(t\Omega_3+\omega_3)}\end{pmatrix},\\
&\hat v_{12}^{(5)}=\begin{pmatrix}1&0\\\frac{\hat b^*}{\hat a^2\hat a^*}\delta^{-2}\eul^{2\ii tg}\eul^{2\ii h}&1\end{pmatrix}.&&
\end{alignat*}

\subsection{Construction of $\BS{h}$}

In order to construct a function $h(k)$ satisfying \eqref{h-jump}, we define the function $\C{H}(k)\equiv\C{H}(\xi,k)$ for $k\in\Sigma^{\model}\cap\D{C}^+$ by
\begin{subequations}     \label{calH-def}
\begin{equation}   \label{calH}
\C{H}(k)\coloneqq\begin{cases}
\frac{2\omega_1+h_1}{w_+},&k\in\Sigma_1,\\
\frac{2\omega_2+h_2}{w_+},&k\in\gamma_{(\beta,\alpha)}\cup\gamma_{(\bar\alpha,\bar\beta)},\\
\frac{2\omega_3+h_3}{w},&k\in\gamma_{(\bar\beta,\beta)},\\
\frac{h_4}{w_+},&k\in\Sigma_2,
\end{cases}
\end{equation}
with
\begin{equation}    \label{hj-defa}
\begin{cases}
h_1=-\ii\ln(\hat a_+\hat a_-\delta^2\eul^{\ii\phi}),&k\in\Sigma_1\cap\D{C}^+,\\
h_2=-\ii\ln(\ii\hat a\hat b\delta^2)&k\in\gamma_{(\beta,\alpha)},\\
h_3=-\ii\ln(aa^*),&k\in\gamma_{(\mu,\beta)},\\
h_4=-\ii\ln(\nu_1^2\delta^2),&k\in\Sigma_2\cap\D{C}^+,
\end{cases}
\end{equation}
and extend it to $\Sigma^{\model}\cap\D{C}^-$ by means of the symmetries \begin{equation}    \label{hj-defb}
h_j=h_j^*,\ \ j=1,\dots,4.
\end{equation}
\end{subequations}
If the $\omega_j$ are real, these symmetries are equivalent to the symmetry
\begin{equation}\label{calH-symmetry}
\C{H}=\C{H}^*.
\end{equation}
Then we define $h$ by
\begin{equation}  \label{h-def}
h(k)\coloneqq\frac{w(k)}{2\pi\ii}\int_{\Sigma^{\model}}\frac{\C{H}(s)}{s-k}\,\dd s,\quad k\in\D{C}\setminus\Sigma^{\model}.
\end{equation}  
In general, the function $h$ has a pole at $\infty$ and is unbounded as $k$ approaches $\mu$ and $-1$. The following lemma shows that by choosing the constants $\omega_j$ appropriately, the pole at $\infty$ can be removed. Even for this choice of $\omega_j$, the function $h(k)$ is, in general, unbounded as $k$ approaches $\mu$ and $-1$; however, as the lemma shows, $\eul^{\ii h\sigma_3}$ is bounded and analytic for $k\in\hat{\D{C}}\setminus\Sigma^{\model}$. 

For $r>0$ and $z\in\D{C}$, we let denote $D_r(z)$ the open disk of radius $r$ centered at $z$.

\begin{lemma}   \label{h-properties}
There is a unique choice of the real constants $\omega_j\equiv\omega_j(\xi)$, $j=1,2,3$, such that the function $h(k)$ defined in \eqref{h-def} has the following properties:
\begin{enumerate}[\rm(a)]
\item
$h$ obeys the symmetry
\begin{equation}   \label{h-symmetry}
h=h^*,\quad k\in\hat{\D{C}}\setminus\Sigma^{\model}.
\end{equation}
\item
As $k$ goes to infinity,
\begin{equation}  \label{h-at-infty}
h(k)=h(\infty)+\ord(k^{-1}),\quad k\to\infty,
\end{equation}
where $h(\infty)\equiv h(\xi,\infty)$ is a finite real number given by
\begin{equation}  \label{h(infty)}
h(\infty)=-\frac{1}{2\pi\ii}\int_{\Sigma^{\model}}s^3\C{H}(s)\dd s.
\end{equation}
\item
$\eul^{\ii h\sigma_3}$ is a bounded and analytic function of $k\in\hat{\D{C}}\setminus\Sigma^{\model}$.
\item
$h$ satisfies the jump conditions in \eqref{h-jump} across $\Sigma^{\model}$.
\end{enumerate}
\end{lemma}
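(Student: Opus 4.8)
The plan is to read all four properties off the explicit Cauchy-type formula \eqref{h-def}, using the three real parameters $\omega_j$ solely to cancel the pole of $h$ at $k=\infty$. Parts (d) and (a) hold for \emph{any} choice of the $\omega_j$. For the jump conditions (d), I would apply the Plemelj--Sokhotski formula to \eqref{h-def}: for $k_0$ on a component of $\Sigma^{\model}$, $h_{\pm}(k_0)=w_{\pm}(k_0)\bigl(\pm\tfrac{1}{2}\C{H}(k_0)+\tfrac{1}{2\pi\ii}\,\mathrm{p.v.}\!\int_{\Sigma^{\model}}\tfrac{\C{H}(s)}{s-k_0}\,\dd s\bigr)$. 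Since $w_+=-w_-$ on the four branch cuts $\C{C}$ while $w$ is continuous across $\gamma_{(\bar\beta,\beta)}$, the principal-value term drops out of $h_++h_-$ on $\C{C}$ and out of $h_+-h_-$ on $\gamma_{(\bar\beta,\beta)}$, leaving $h_++h_-=w_+\C{H}$ on $\C{C}$ and $h_+-h_-=w\C{H}$ on $\gamma_{(\bar\beta,\beta)}$; inserting the definition \eqref{calH-def} of $\C{H}$ reproduces \eqref{h-jump} line by line. For the symmetry (a), I would use $w=w^*$ (the eight branch points form conjugate pairs and the branch of $w$ is normalized to be positive for $k\gg 0$), the identity \eqref{calH-symmetry} $\C{H}=\C{H}^*$ (valid exactly because the $\omega_j$ are real), and the fact that complex conjugation maps $\Sigma^{\model}$ onto itself, reversing the orientation of each component and swapping $\gamma_{(\beta,\alpha)}$ with $\gamma_{(\bar\alpha,\bar\beta)}$ compatibly with \eqref{calH-def}; the substitution $s\mapsto\bar s$ in \eqref{h-def} then yields $h^*=h$.

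Part (b) and the determination of the $\omega_j$ form the heart of the argument. Expanding \eqref{h-def} as $k\to\infty$ with $\tfrac{1}{s-k}=-\sum_{n\ge0}s^nk^{-n-1}$ and $w(k)=k^4+\ord(k^3)$ shows that $h$ has at worst a pole of order three at infinity, and that $h$ is bounded at infinity precisely when the three moments $c_n\coloneqq\int_{\Sigma^{\model}}s^n\C{H}(s)\,\dd s$ vanish for $n=0,1,2$; when they do, the constant term of the expansion is $-\tfrac{c_3}{2\pi\ii}$, which is exactly \eqref{h(infty)}. The orientation-reversal symmetry already used for (a) forces $\overline{c_n}=-c_n$, so each $c_n$ is purely imaginary; hence the three complex equations $c_0=c_1=c_2=0$ are really three real linear equations in $(\omega_1,\omega_2,\omega_3)$, and when they hold $h(\infty)=-c_3/(2\pi\ii)\in\D{R}$. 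The step I expect to be the main obstacle is showing that this $3\times 3$ real linear system is nonsingular. To this end I would differentiate $c_n$ in $\omega_l$, using that the $\omega_1$-, $\omega_2$-, $\omega_3$-parts of $\C{H}$ are supported on $\Sigma_1$, $\gamma_{(\beta,\alpha)}\cup\gamma_{(\bar\alpha,\bar\beta)}$, and $\gamma_{(\bar\beta,\beta)}$ with densities $2/w_+$, $2/w_+$, $2/w$; a short computation then identifies the Jacobian $(\partial c_n/\partial\omega_l)$, up to signs and a unimodular change of basis recording how the cycles $a_1,a_2,a_3$ of Figure~\ref{fig:curves-aj-bj} decompose along $\Sigma^{\model}$, with the matrix of $a$-periods $\bigl(\int_{a_l}\hat\zeta_{n+1}\bigr)_{0\le n\le 2,\,1\le l\le 3}$ of the differentials $\hat\zeta_m=k^{m-1}w^{-1}\,\dd k$. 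That matrix is invertible --- it is, up to transposition, the matrix $A^{-1}$ whose existence defines the normalized basis $\{\zeta_j\}$ of holomorphic differentials in Section~\ref{sec:riemann-surface}. Hence the system has a unique real solution $(\omega_1,\omega_2,\omega_3)$, which is also the uniqueness asserted in the lemma, since for any other real $\omega_j$ the function $h$, and thus $\eul^{\ii h\sigma_3}$, is unbounded at $\infty$.

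For part (c), fix the $\omega_j$ as above. Then $h$ is analytic on $\hat{\D{C}}\setminus\Sigma^{\model}$ and, by (b), analytic and finite at $\infty$; near each of the eight branch points $h$ is bounded because the factor $w(k)\to 0$ offsets the $(k-\text{branch point})^{-1/2}$ growth of the integrand. The only remaining points are $\mu$ and $-1$, where $\C{H}$ genuinely jumps --- the branch of $\ln(aa^*)$ in $h_3$ versus its Schwarz reflection at $\mu$, and the prescribed branches of $h_1$ and $h_1^*$ at $-1$. Near such a point $p$ one finds $h(k)=-\tfrac{\Delta_p}{2\pi\ii}\ln(k-p)+\ord(1)$, where $\Delta_p$ is, up to sign, the jump of the numerator of $\C{H}$ at $p$, namely $-2\ii$ times one of the real logarithms $\ln\abs{a(\mu)}^2$, $\ln\abs{\hat a_+\hat a_-\delta^2}(-1)$ (finite and nonzero under the genericity hypothesis). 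Thus $\Delta_p\in\ii\D{R}$, so $\eul^{\ii h(k)}\sim(k-p)^{\rho}$ with $\rho=-\Delta_p/(2\pi)\in\ii\D{R}$; a purely imaginary exponent gives a bounded oscillatory singularity, so $\eul^{\ii h\sigma_3}$ stays bounded at $p$ --- the same mechanism that controls $\delta$ in the proof of Lemma~\ref{delta-properties}. Altogether $\eul^{\ii h\sigma_3}\in E^{\infty}(\hat{\D{C}}\setminus\Sigma^{\model})$, and all four parts are established.
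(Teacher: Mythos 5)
Most of your argument runs along the same lines as the paper: the Plemelj computation for (d), the conjugation symmetry for (a), the three moment conditions $\int_{\Sigma^{\model}}s^n\C{H}\,\dd s=0$, $n=0,1,2$, whose coefficient matrix is (up to an unimodular change of cycles) the $a$-period matrix of the basis $k^{n}w^{-1}\dd k$ and hence invertible, the purely imaginary nature of all the integrals forcing real $\omega_j$, the square-root endpoint estimate at the eight branch points, and the Lemma~\ref{lem-C} mechanism at $\mu$. All of that is sound and essentially the paper's proof.

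The genuine gap is your treatment of the point $k=-1$ in part (c). You treat $-1$ exactly like $\mu$, i.e.\ as a point where the numerator of $\C{H}$ has a \emph{finite} jump whose value is ($-2\ii$ times) a real logarithm, namely $\ln\abs{\hat a_+\hat a_-\delta^2}(-1)$. That premise fails: on $\Sigma_1$ near $-1$ the density involves $h_1=-\ii\ln(\hat a_+\hat a_-\delta^2\eul^{\ii\phi})$, and since $r(k)$ is generically discontinuous at $-1$, the function $\delta$ behaves there like $(k+1)^{\ii\kappa}$ times a bounded factor, with $\kappa=\frac{1}{2\pi}\ln\frac{1+\abs{r_+(-1)}^2}{1+\abs{r_-(-1)}^2}$ in general nonzero; consequently $\ln\delta(s)^2$, and with it $h_1(s)$, diverges logarithmically as $s\to-1$ along $\Sigma_1$, so the numerator of $\C{H}$ is not even bounded at $-1$ and the quantity $\ln\abs{\hat a_+\hat a_-\delta^2}(-1)$ you invoke is not well defined. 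The Cauchy transform of such a logarithmically divergent density produces $\ln^2(k+1)$-type terms in $h$, and $\eul^{\pm\ii c\ln^2(k+1)}$ is not bounded in a punctured neighbourhood of $-1$ for a generic constant $c$, so the ``purely imaginary log coefficient'' mechanism does not apply as stated. What is needed — and what the paper supplies — is a cancellation between the upper and lower halves of $\Sigma_1$: write the $\Sigma_1$-contribution as $F_{E_1}-F_{\bar E_1}$, eliminate $\hat a_-$ using $\hat a_-=\eul^{-\ii\phi}\hat b_+$ (from \eqref{ab1}), deform both integrals onto a common segment $(-1-\epsilon,-1)$, and use $\delta_+=\delta_-(1+\abs{r}^2)$ together with $aa^*+bb^*=1$ so that the two divergent $\delta$-contributions combine into the bounded density $\ln\abs{r(s)}^2$, whose value at $s=-1$ is real and finite (nonzero because $b(-1)\neq0$ by assumption); only after this reduction does Lemma~\ref{lem-C} give boundedness of $\eul^{\pm\ii h}$ near $-1$. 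Without this step your proof of (c) fails at $-1$.
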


\begin{proof}
For any choice of the $\omega_j$, $h(k)$ is an analytic function of $k\in\D{C}\setminus\Sigma^{\model}$ which satisfies the jump conditions in \eqref{h-jump}. Moreover, assuming the $\omega_j$ are real (which is proven below), the symmetry \eqref{calH-symmetry} and the general identity
\begin{equation}   \label{int-sym}
\overline{\int_{\gamma}f(s)\dd s}=\int_{\bar\gamma}\overline{f(\bar s)}\dd s
\end{equation}
valid for functions $f\colon\gamma\to\D{C}$, imply that $h$ obeys the symmetry \eqref{h-symmetry}.

We next show that there is a unique choice of the $\omega_j$ such that $h$ is bounded at infinity. We have $w(k)=k^4+\ord(k^3)$ as $k\to\infty$ and
\begin{align*}
\int_{\Sigma^{\model}}\frac{\C{H}(s)}{s-k}\,\dd s
&=-\frac{1}{k}\int_{\Sigma^{\model}}\C{H}(s)\dd s-\frac{1}{k^2}\int_{\Sigma^{\model}}s\C{H}(s)\dd s\\
&\quad-\frac{1}{k^3}\int_{\Sigma^{\model}}s^2\C{H}(s)\dd s+\ord(k^{-4}),\quad k\to\infty.
\end{align*} 
Hence $h$ is bounded as $k\to\infty$ provided that
\begin{equation}  \label{h-bounded}
\int_{\Sigma^{\model}}s^n\C{H}(s)\dd s=0,\quad n=0,1,2.
\end{equation}
Define the holomorphic differentials $\accol{\tilde\zeta_n}_0^2$ on $M$ by
\[
\tilde\zeta_n\coloneqq\frac{2s^n}{w}\,\dd s,\quad n=0,1,2.
\]
Isolating the dependence on the $\omega_j$, we can write the conditions in \eqref{h-bounded} as
\begin{equation}\label{h-bounded-bis}
\omega_1\int_{\Sigma_1}(\tilde\zeta_n)_++\omega_2\int_{\gamma_{(\beta,\alpha)}\cup\gamma_{(\bar\alpha,\bar\beta)}}(\tilde\zeta_n)_++\omega_3\int_{\gamma_{(\bar\beta,\beta)}}\tilde\zeta_n=F_n,\quad n=0,1,2,
\end{equation}
where $F_n\equiv F_n(\xi)$, $n=0,1,2$, are some complex numbers. The relations
\[
\int_{\Sigma_1}(\tilde\zeta_n)_+=-\frac{1}{2}\int_{a_1}\tilde\zeta_n,\qquad\int_{\gamma_{(\beta,\alpha)}\cup\gamma_{(\bar\alpha,\bar\beta)}}(\tilde\zeta_n)_+=-\frac{1}{2}\int_{a_2+a_3}\tilde\zeta_n,\qquad\int_{\gamma_{(\bar\beta,\beta)}}\tilde\zeta_n=\frac{1}{2}\int_{a_3}\tilde\zeta_n,
\]
show that equation \eqref{h-bounded-bis} imposes linear conditions on the periods of $\tilde\zeta_n$ along $a_1$, $a_2$, and $a_3$:
\[
\omega_1\int_{a_1}\tilde\zeta_n+\omega_2\int_{a_2}\tilde\zeta_n+(\omega_2-\omega_3)\int_{a_3}\tilde\zeta_n=-2F_n,\quad n=0,1,2.
\]
Since the $\tilde\zeta_n$ form a basis for the space $\C{H}^1(M)$ of holomorphic differentials on $M$ the system $\omega_1\int_{a_1}\tilde\zeta_n+\omega_2\int_{a_2}\tilde\zeta_n+(\omega_2-\omega_3)\int_{a_3}\tilde\zeta_n=0$ means that $\omega_1\int_{a_1}\eta+\omega_2\int_{a_2}\eta+(\omega_2-\omega_3)\int_{a_3}\eta=0$ for $\eta\in\C{H}^1(M)$. But this linear condition is necessarily trivial, since a holomorphic differential form on $M$ whose all the $a$-periods vanish is trivial, see e.g.~\cite{FK92}*{Proposition III.3.3}. Hence there is a unique choice of $\omega_j$ for which the conditions in \eqref{h-bounded} are satisfied. For this choice, $h(k)$ is analytic at $k=\infty$, so equation \eqref{h-at-infty} follows. 

Moreover, the $\omega_j$ are real, because \eqref{int-sym} with $f=f^*$ and $\bar\gamma=-\gamma$ shows that the values of the three integrals on the left-hand side of \eqref{h-bounded-bis} are all pure imaginary, and similarly for the constants $F_n$ which are sums of integrals of the same type:
\[
2F_n=\int_{\Sigma_1}h_1(\tilde\zeta_n)_++\int_{\gamma_{(\beta,\alpha)}\cup\gamma_{(\bar\alpha,\bar\beta)}}h_2(\tilde\zeta_n)_++\int_{\gamma_{(\bar\beta,\beta)}}h_3\tilde\zeta_n+\int_{\Sigma_2}h_4(\tilde\zeta_n)_+.
\]

We henceforth assume that the $\omega_j$ are chosen so that \eqref{h-at-infty} holds. The proof of the lemma will be complete if we can show that $\eul^{\pm\ii h}$ is bounded as $k$ approaches the points $E_1$, $E_2$, $\alpha$, $\beta$, $\mu$, and $-1$.

We first show that $h$ is bounded near $\alpha$. By the definition \eqref{h-def} of $h$, we have
\[
h(k)=w(k)\left(\frac{1}{2\pi\ii}\int_{\beta}^{\alpha}\frac{f_1(s)}{(\sqrt{s-\alpha})_+}\,\frac{\dd s}{s-k}+f_2(k)\right),
\]
where $\sqrt{s-\alpha}$ has the branch cut along the curve $\gamma_{(\beta,\alpha)}$ and the functions $f_1$ and $f_2$ are analytic at $\alpha$. Applying formula (29.5) from \cite{Mu92}*{Chapter 4, \S29} for $\gamma=\frac{1}{2}$ we get that the function
\[
f(k)\coloneqq\frac{1}{2\pi\ii}\int_{\beta}^{\alpha}\frac{f_1(s)}{(\sqrt{s-\alpha})_+}\,\frac{\dd s}{s-k}
\]
has the following expansion at $k=\alpha$:
\[
f(k)=\frac{1}{2}\frac{f_1(k)}{\sqrt{k-\alpha}}+\osmall\left(\frac{1}{\sqrt{k-\alpha}}\right).
\]
It follows that $\abs{f}\leq C\abs{k-\alpha}^{-1/2}$ for $k$ near $\alpha$. Hence $h$ is bounded near $\alpha$. The boundedness near $E_1$, $E_2$, and $\beta$ follows in a similar way.

We next show that the functions $\eul^{\pm\ii h(k)}$ are bounded near $\mu$. In this case, we write \eqref{h-def} as
\begin{equation}   \label{h-def-bis}
h(k)=\frac{w(k)}{2\pi\ii}\int_{\gamma_{(\bar\beta,\beta)}}\frac{2\omega_3}{w(s)(s-k)}\dd s+F_{\beta}(s)+F_{\bar\beta}(s)+f_3(k),\quad k\in\D{C}\setminus\Sigma^{\model},
\end{equation}
where $f_3$ is analytic at $\mu$,
\[
F_{\beta}(k)\coloneqq\frac{w(k)}{2\pi\ii}\int_{\gamma_{(\mu,\beta)}}\frac{f_1(s)}{s-k}\dd s,\qquad F_{\bar\beta}(k)\coloneqq\frac{w(k)}{2\pi\ii}\int_{\gamma_{(\bar\beta,\mu)}}\frac{f_2(s)}{s-k}\dd s,
\]
and
\[
f_1(s)\coloneqq-\frac{\ii\ln(a(s)a^*(s))}{w(s)},\qquad f_2(s)\coloneqq\frac{\ii\ln(a(s)a^*(s))}{w(s)}.
\]
Let $\epsilon>0$ be small. Since the function $w(s)$, $s\in\gamma_{(\bar\beta,\beta)}$, is smooth and nonzero at $s=\mu$, the first term on the right-hand side of \eqref{h-def-bis} is bounded for $k\in D_{\epsilon}(\mu)\setminus\gamma_{(\bar\beta,\beta)}$. Moreover, since $w(\mu)f_j(\mu)\in\ii\,\D{R}$ for $j=1,2$, Lemma~\ref{lem-C} implies that $\eul^{\pm\ii F_{\beta}}$ and $\eul^{\pm\ii F_{\bar\beta}}$ are bounded as $k\to\mu$. It follows that $\eul^{\pm\ii h}$ is bounded near $\mu$.

It only remains to show that the functions $\eul^{\pm\ii h(k)}$ are bounded near $-1$. In this case, we write \eqref{h-def} as 
\begin{equation}  \label{h-def-ter}
h(k)=\frac{w(k)}{2\pi\ii}\int_{\Sigma_1}\frac{2\omega_1}{w_+(s)(s-k)}\dd s-w(k)\left(F_{E_1}(k)-F_{\bar E_1}(k)\right)+f_1(k),\end{equation}
where
\begin{subequations}   \label{FE1s-def}
\begin{align}     \label{FE1-def}
F_{E_1}(k)&\coloneqq\frac{1}{2\pi}\int_{\Sigma_1\cap\D{C}^+}\frac{\ln\bigl(\hat a_+(s)\hat a_-(s)\delta(s)^2\eul^{\ii\phi}\bigr)}{w_+(s)(s-k)}\dd s,\\  \label{FbarE1-def}
F_{\bar E_1}(k)&\coloneqq\frac{1}{2\pi}\int_{\Sigma_1\cap\D{C}^-}\frac{\ln\bigl(\hat a_+^*(s)\hat a_-^*(s)\delta(s)^{-2}\eul^{-\ii\phi}\bigr)}{w_+(s)(s-k)}\dd s,
\end{align}
\end{subequations}
and $f_1(k)$ is analytic at $k=-1$. Since the function $w_+(s)$, $s\in\Sigma_1$, is smooth and nonzero at $s=-1$, the first term on the right-hand side of \eqref{h-def-ter} is bounded for $k\in D_{\epsilon}(-1)\setminus\Sigma_1$. In general, the function $r(k)$ is discontinuous at $k=-1$, so that the function $\delta(s)$ oscillates rapidly as $s\in\Sigma_1$ approaches $-1$. Therefore, in order to analyze $F_{E_1}(k)-F_{\bar E_1}(k)$ for $k$ near $-1$, we first deform the contours in \eqref{FE1s-def}. Assume first that $k\in D_{\epsilon}(-1)$ with $\Re k>-1$. We know from \eqref{ab1} that $a_-=\ii\eul^{-\ii\phi}b_+$ on $\Sigma_1$ which, together with $\nu_{1-}=-\ii\nu_{1+}$, gives $\hat a_-=\eul^{-\ii\phi}\hat b_+$. We use this relation to eliminate $\hat a_-$ from \eqref{FE1s-def}. Then, deforming the contours so that they pass through $-1-\epsilon$, we find
\begin{align}
F_{E_1}(k)&=-\frac{1}{2\pi}\int_{-1-\epsilon}^{-1}\frac{\ln\bigl(\hat a(s)\hat b(s)\delta_+(s)^2\bigr)}{w(s)(s-k)}\dd s+f_2(k),\\
F_{\bar E_1}(k)&=\frac{1}{2\pi}\int_{-1-\epsilon}^{-1}\frac{\ln\bigl(\hat a^*(s)\hat b^*(s)\delta_-(s)^{-2}\bigr)}{w(s)(s-k)}\dd s+f_3(k),
\end{align}
where the functions $f_2(k)$ and $f_3(k)$ are analytic near $k=-1$. By assumption, $b$ is everywhere nonzero; in particular, $b_{\pm}(-1)\neq 0$. Hence, shrinking $\epsilon$ if necessary, we may assume that $r(s)\neq 0$ for $s\in(-1-\epsilon,-1)$. We find
\begin{align*}
F_{E_1}(k)-F_{\bar E_1}(k)
&=-\frac{1}{2\pi}\int_{-1-\epsilon}^{-1}\frac{\ln\bigl(\hat a(s)\hat b(s)\delta_+(s)^2\bigr)+\ln\bigl(\hat a^*(s)\hat b^*(s)\delta_-(s)^{-2}\bigr)}{w(s)(s-k)}\dd s+f_4(k)\\
&=-\frac{1}{2\pi}\int_{-1-\epsilon}^{-1}\frac{\ln\bigl(\abs{a(s)b(s)}^2(1+\abs{r(s)}^2)^2\bigr)}{w(s)(s-k)}\dd s+f_4(k)\\
&=-\frac{1}{2\pi}\int_{-1-\epsilon}^{-1}\frac{\ln(\abs{r(s)}^2)}{w(s)(s-k)}\dd s+f_4(k),
\end{align*}
where $f_4(k)$ is analytic at $k=-1$. Since $\ln(\abs{r(-1)}^2)$ is real, Lemma~\ref{lem-C} implies that the functions $\eul^{\pm\ii w(k)\left(F_{E_1}(k)-F_{\bar E_1}(k)\right)}$ are bounded as $k\to -1$. This shows that $\eul^{\pm\ii h}$ is bounded for $k\in D_{\epsilon}(-1)$ with $\Re k>-1$; the boundedness for $k\in D_{\epsilon}(-1)$ with $\Re k<-1$ can be proved in a similar way by deforming the contours so that they pass through $-1+\epsilon$. This completes the proof of the lemma.
\end{proof}

\section{Solution of the model problem}  \label{sec:model}

\begin{figure}[ht]
\centering\includegraphics[scale=.75]{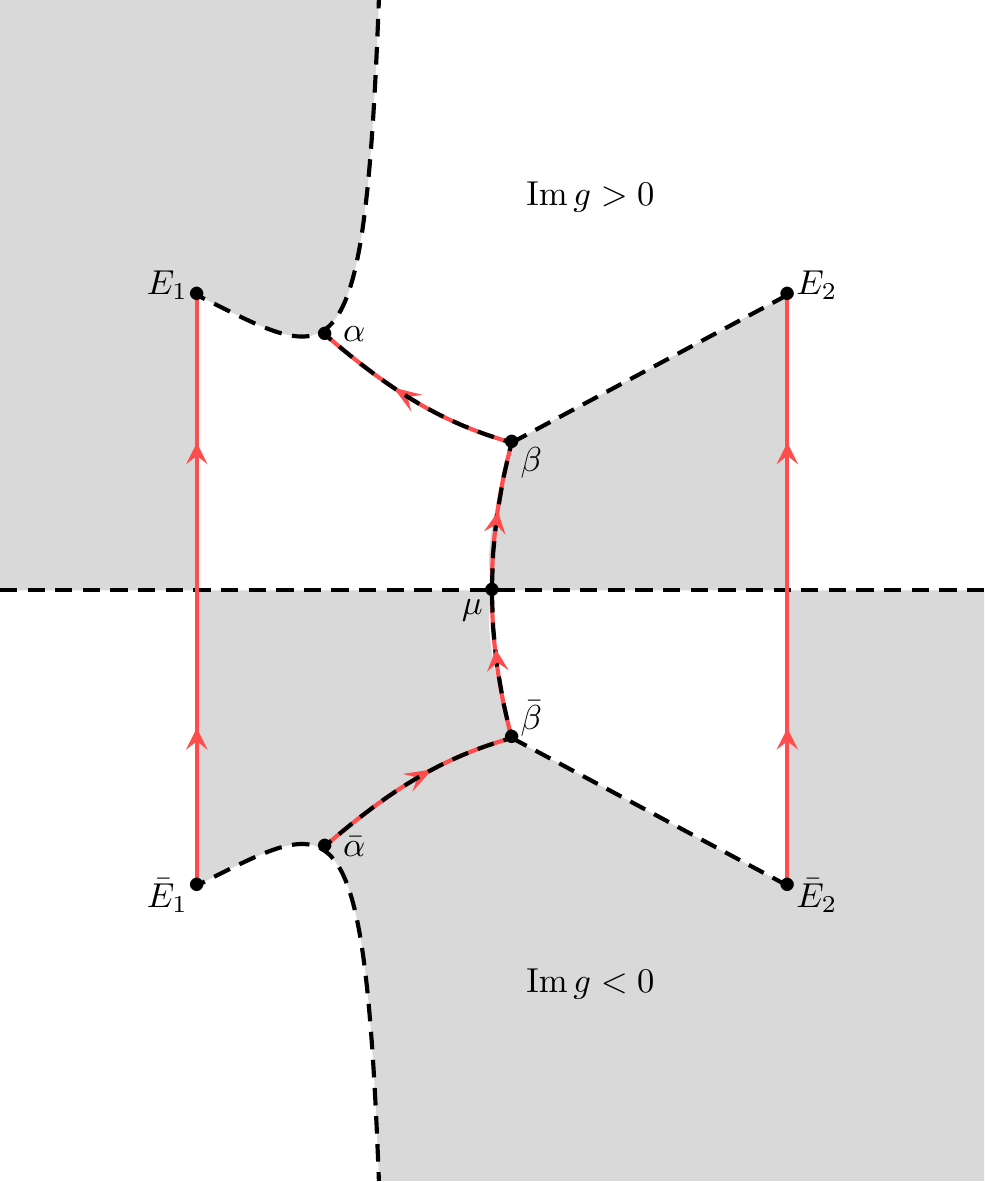}
\caption{The jump contour $\Sigma^{\model}$ (red with arrows indicating orientation) together with the set where $\Im g=0$ (dashed black).} 
\label{fig:jump-contour-model}
\end{figure}

Recall that $\Sigma^{\model}\coloneqq\C{C}\cup\gamma_{(\bar\beta,\beta)}$ denotes the union of the four branch cuts $\Sigma_1$, $\Sigma_2$, $\gamma_{(\beta,\alpha)}$, $\gamma_{(\bar\alpha,\bar\beta)}$ and the curve $\gamma_{(\bar\beta,\beta)}$, oriented as in Figure~\ref{fig:jump-contour-model}. Away from $\Sigma^{\model}$, i.e., on $\Sigma^{(5)}\setminus\Sigma^{\model}$, the jump matrix $\hat v^{(5)}$ approaches the identity matrix as $t\to\infty$. This leads us to expect that in the limit $t\to\infty$, the solution $\hat m^{(5)}$ approaches the solution $m^{\model}$ of the RH problem
\begin{equation}   \label{rhp-model}
\begin{cases}
m^{\model}(x,t,\,\cdot\,)\in I+\dot E^2(\hat{\D{C}}\setminus\Sigma^{\model}),&\\
m_+^{\model}(x,t,k)=m_-^{\model}(x,t,k)v^{\model}(x,t,k)&\text{for a.e. }k\in\Sigma^{\model},
\end{cases}
\end{equation}
where $v^{\model}$ denotes the restriction of $\hat v^{(5)}$ to $\Sigma^{\model}$, i.e.,
\begin{equation}   \label{vmodel-def}
v^{\model}\coloneqq\begin{cases}
\begin{pmatrix}
0&\ii\eul^{-2\ii(t\Omega_1+\omega_1)}\\
\ii\eul^{2\ii(t\Omega_1+\omega_1)}&0\end{pmatrix},&k\in\Sigma_1,\\
\begin{pmatrix}
0&\ii\eul^{-2\ii(t\Omega_2+\omega_2)}\\
\ii\eul^{2\ii(t\Omega_2+\omega_2)}&0\end{pmatrix},&k\in\gamma_{(\beta,\alpha)}\cup\gamma_{(\bar\alpha,\bar\beta)},\\
\begin{pmatrix}
\eul^{2\ii(t\Omega_3+\omega_3)}&0\\0&\eul^{-2\ii(t\Omega_3+\omega_3)}\end{pmatrix},&k\in\gamma_{(\bar\beta,\beta)},\\
\begin{pmatrix}
0&\ii\\
\ii&0\end{pmatrix},&k\in\Sigma_2.
\end{cases}
\end{equation}
The jump matrix $v^{\model}$ is off-diagonal and independent of $k$ on each of the four branch cuts and it is diagonal and independent of $k$ on $\gamma_{(\bar\beta,\beta)}$. This implies that we can write down an explicit solution of the RH problem \eqref{rhp-model}-\eqref{vmodel-def} in terms of theta functions.

Define the function $\nu(k)\equiv\nu(\xi,k)$ for $k\in\D{C}\setminus\Sigma^{\model}$ by
\[
\nu(k)\coloneqq\left(\frac{(k-E_1)(k-\alpha)(k-\beta)(k-E_2)}{(k-\bar E_1)(k-\bar\alpha)(k-\bar\beta)(k-\bar E_2)}\right)^{\frac{1}{4}},\quad k\in\D{C}\setminus\Sigma^{\model},
\]
where the branch is fixed by requiring that $\nu=1+\ord(k^{-1})$ as $k\to\infty$. Let $\hat\nu\equiv\hat\nu(\xi,\,\cdot\,)$ denote the function $M\to\hat{\D{C}}$ which is given by $\nu^2$ on the upper sheet and by $-\nu^2$ on the lower sheet of $M$, that is, $\hat\nu(k^\pm)=\pm\nu(k)^2$ for $k\in\D{C}\setminus\Sigma^{\model}$. Then $\hat\nu$ is a meromorphic function on $M$. Noting, for example, that $\hat\nu$ has four simple zeros at $E_1$, $\alpha$, $\beta$, $E_2$, we see that $\hat\nu$ has degree four. Hence the function $\hat\nu-1$ has four zeros on $M$ counting multiplicity; we denote these zeros by $\infty^+,\,P_1,\,P_2,\,P_3\in M$. Let $D$ denote the divisor
\begin{equation}  \label{D-def}
D\coloneqq P_1P_2P_3
\end{equation}
on $M$. We define the vector $\C{K}\equiv\C{K}(\xi)\in\D{C}^3$ by
\begin{equation}
\C{K}\coloneqq\frac{1}{2}\left(\vece^{(1)}+\vece^{(3)}+\tau^{(1)}+\tau^{(2)}+\tau^{(3)}\right).
\end{equation}
We will show below that the projection of $\C{K}$ in the Jacobian variety is the vector of Riemann constants.

We define the complex vector $d\equiv d(\xi)\in\D{C}^3$ by
\begin{equation}  \label{d-def}
d\coloneqq\varphi(D)+\C{K},
\end{equation}
where $\varphi(D)\coloneqq\sum_1^3\varphi(P_j)$; if a zero $P_j$ has projection in $\Sigma^{\model}$, then we fix the value of $d$ by letting $\varphi(P_j)$ denote the boundary value $\varphi_+(P_j)$ from the left say. We also define the vector-valued function $v(t)\equiv v(\xi,t)$ by
\begin{equation}    \label{v-def}
v(t)\coloneqq -\frac{1}{\pi}\left(t\Omega_1+\omega_1,t\Omega_2+\omega_2,t(\Omega_2-\Omega_3)+\omega_2-\omega_3+\frac{\pi}{2}\right).
\end{equation}

\begin{theorem}[Solution of the model RH problem] \label{rhp-thm}
For each choice of the six real constants $\accol{\Omega_j,\omega_j}_1^3$ and for each $t\geq 0$, the RH problem \eqref{rhp-model} has a unique solution $m^{\model}(x,t,k)$. Moreover, this solution satisfies
\begin{align}  \label{mmodel-at-infty}
\lim_{k\to\infty}k(m^{\model}(x,t,k))_{12}
&=-\frac{\ii}{2}\Im(E_1+E_2+\alpha+\beta)\notag\\
&\quad\times\frac{\Theta\left(\varphi(\infty^+)+d\right)\Theta\left(\varphi(\infty^+)-v(t)-d\right)}{\Theta\left(\varphi(\infty^+)-d\right)\Theta\left(\varphi(\infty^+)+v(t)+d\right)},
\end{align}
where the limit is uniform with respect to $\arg k\in\croch{0,2\pi}$.
\end{theorem}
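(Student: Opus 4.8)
The statement has three parts — uniqueness, existence, and the formula \eqref{mmodel-at-infty} — and the plan is to get existence together with \eqref{mmodel-at-infty} by writing down an explicit solution in terms of the theta function of $M$, and to dispatch uniqueness by a separate Liouville-type argument. For uniqueness, observe that every block of $v^{\model}$ in \eqref{vmodel-def} has determinant $1$ (for the off-diagonal blocks because $-(\ii)(\ii)=1$), so $\det m^{\model}$ has no jump across $\Sigma^{\model}$; since the construction below shows that the entries of $m^{\model}$ have at worst quarter-integer singularities at the eight branch points, $\det m^{\model}$ extends to an entire function, and as $m^{\model}\to I$ at infinity it is identically $1$. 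Given two solutions $m_1,m_2$, the quotient $m_1m_2^{-1}$ then has no jump across $\Sigma^{\model}$, has singularities $\osmall(|k-e|^{-1})$ at the branch points (hence removable), and tends to $I$ at infinity, so $m_1\equiv m_2$; alternatively one invokes the $\dot E^2$-uniqueness theory of \cites{Le17,Le18}.

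For existence I would follow the classical Its construction. The scalar function $\nu(k)$ is designed to carry the constant factor $\ii$ on the four branch cuts: from its product formula and the deformed cut system $\C{C}$ one checks $\nu_+=\ii\nu_-$ on $\Sigma_1$, $\Sigma_2$, $\gamma_{(\beta,\alpha)}$, $\gamma_{(\bar\alpha,\bar\beta)}$ and $\nu_+=-\nu_-$ on $\gamma_{(\bar\beta,\beta)}$. The leftover sign $-1$ on $\gamma_{(\bar\beta,\beta)}$, together with the $\xi$- and $t$-dependent phases $\eul^{\pm 2\ii(t\Omega_j+\omega_j)}$, are then absorbed into ratios of theta functions with arguments affine in the Abel map $\varphi(k^+)$: I would set the off-diagonal entries of $m^{\model}$ equal to $\tfrac12(\nu-\nu^{-1})$ times a quotient of four theta functions with arguments of the form $\varphi(k^+)\pm d$ and $\varphi(k^+)-v(t)\pm d$, and the diagonal entries equal to $\tfrac12(\nu+\nu^{-1})$ times an analogous quotient normalized to equal $1$ at $k=\infty$, the precise arguments being forced by the Abel-map jumps \eqref{varphi-jump-1}, \eqref{varphi-jump-2} and the quasi-periodicity \eqref{Theta-relat}; the half-period vector $\C{K}$ is exactly what is needed to match the $-1$ of $\nu$ on $\gamma_{(\bar\beta,\beta)}$. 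Verifying $m^{\model}_+=m^{\model}_-v^{\model}$ on each of the five pieces of $\Sigma^{\model}$ is then a direct computation combining the jump of $\nu$, \eqref{varphi-jump-1}--\eqref{varphi-jump-2}, and \eqref{Theta-relat}, in which the reality of $\Omega_j,\omega_j$ enters through $v(t)\in\D{R}^3$. That $m^{\model}\in I+\dot E^2(\hat{\D{C}}\setminus\Sigma^{\model})$ follows once the poles of the theta quotients — located, by Riemann's vanishing theorem, at a translate of the divisor $D+\infty^+$ from \eqref{D-def} — are checked to cancel the zeros of the $\nu$-prefactors; here one uses the non-speciality of $D+\infty^+$ and the identification of the projection of $\C{K}$ with the vector of Riemann constants based at $\bar E_2$, which I would establish by matching \eqref{varphi-jump-1} against the standard hyperelliptic formula for the Riemann constants, cf.\ \cite{FK92}.

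The normalization $m^{\model}(\infty^+)=I$ is then automatic: at $k=\infty$ one has $\nu=1$, so $\tfrac12(\nu-\nu^{-1})=0$ removes the off-diagonal entries while $\tfrac12(\nu+\nu^{-1})=1$ leaves the normalized diagonal theta quotients, equal to $1$ at $\varphi(\infty^+)$. To obtain \eqref{mmodel-at-infty} I would expand $\nu$ from $\log\nu^4=-\tfrac{2\ii}{k}\Im(E_1+E_2+\alpha+\beta)+\ord(k^{-2})$ (summing $\log(k-e)-\log(k-\bar e)$ over the four branch points $E_1,E_2,\alpha,\beta$), which gives $\nu(k)=1-\tfrac{\ii}{2k}\Im(E_1+E_2+\alpha+\beta)+\ord(k^{-2})$ uniformly in $\arg k$, hence $\tfrac12(\nu-\nu^{-1})=-\tfrac{\ii}{2k}\Im(E_1+E_2+\alpha+\beta)+\ord(k^{-2})$. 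Multiplying by the $(1,2)$ theta quotient, whose value at $k=\infty$, i.e.\ at $\varphi(\infty^+)$, is $\dfrac{\Theta(\varphi(\infty^+)+d)\Theta(\varphi(\infty^+)-v(t)-d)}{\Theta(\varphi(\infty^+)-d)\Theta(\varphi(\infty^+)+v(t)+d)}$, yields precisely \eqref{mmodel-at-infty}, with the uniformity in $\arg k$ inherited from that of the $\nu$-expansion.

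The main obstacle I anticipate is twofold. First, pinning down the exact theta arguments so that \emph{all five} jump relations hold simultaneously is a delicate piece of bookkeeping in which the particular choices of homology basis (Figures~\ref{fig:canonical-basis}--\ref{fig:curves-aj-bj}), of base point $\bar E_2$ for $\varphi$, and of the vectors $\C{K}$, $d$, $v(t)$ must all conspire; in particular the extra $\tfrac\pi2$ and the combination $\Omega_2-\Omega_3$ appearing in \eqref{v-def} are dictated by the discontinuity \eqref{varphi-jump-2} of $\varphi$ across $\gamma_{(\bar\beta,\beta)}$ and the sign of $\nu$ there. Second, and more essentially, one must show that the denominator theta functions — in particular $\Theta(\varphi(\infty^+)+v(t)+d)$ — do not vanish for \emph{any} $t\ge 0$, so that $m^{\model}$ exists for all $t$: this is the one genuinely non-formal point, and I would derive it from the reality structure of $M$ (its branch points occur in complex-conjugate pairs, which confines the relevant theta argument to a real sub-torus of the Jacobian that Riemann's theta divisor does not meet), equivalently from the solvability of the RH problem itself once uniqueness has been established.
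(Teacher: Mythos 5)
Your construction is essentially the paper's own proof (its Claims 1--7): the same $\tfrac12(\nu\pm\nu^{-1})$-weighted theta quotients with arguments $\varphi(k^+)\pm(v(t)+d)$, the same identification of $\C{K}$ with the vector of Riemann constants, the same non-speciality and Abel-map inputs giving $\Theta(\varphi(\infty^+)\pm d)\neq 0$ and $\varphi(\infty^+)+d\equiv 0$ so that non-vanishing for all $t\geq 0$ reduces to $\Theta\neq 0$ on the real torus (Fay), the same determinant-based uniqueness, and the same expansion of $\nu$ yielding \eqref{mmodel-at-infty}. The two points you defer --- proving $i(D)=0$ (the paper does this by showing the zeros of $\hat\nu-1$ are simple with distinct, non-conjugate projections) and the lattice identity $\varphi(\infty^+)+d\equiv 0$ (the paper applies Abel's theorem to $\hat\nu-1$) --- are exactly where the paper's Claims 4 and 5 do the real work, and the tools you indicate for them are the right ones.
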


\begin{proof}
Define the vector-valued function $\C{M}(k,u,e)\equiv\C{M}(x,t,k,u,e)$ by
\[
\C{M}(k,u,e)\coloneqq\left(\frac{\Theta\left(\varphi(k^+)+u+e\right)}{\Theta\left(\varphi(k^+)+e\right)},\frac{\Theta\left(\varphi(k^+)-u-e\right)}{\Theta\left(\varphi(k^+)-e\right)}\right),\quad k\in\D{C}\setminus\Sigma^{\model},\ \ u,e\in\D{C}^3.
\]
The function $P\mapsto\Theta(\varphi(P)-e)$ either vanishes identically on $M$ or has exactly three zeros counting multiplicity \cite{FK92}*{Theorem VI.2.4, p.~309}. In the following, we assume that $u\in\D{C}^3$ is arbitrary and that $e\in\D{C}^3$ is such that the functions $\Theta(\varphi(\,\cdot\,)\pm e)$ are not identically zero. Since the function $P\mapsto\Theta(\varphi(P)-c)$, $c\in\D{C}^3$, is invariant under continuation along the curve $a_j$ as a consequence of \eqref{Theta-relat}, we see that $\C{M}$ is a well-defined and analytic function of $k\in\D{C}\setminus\Sigma^{\model}$ except for possible singularities at the zeros of $\Theta(\varphi(k^+)\pm e)$.

\begin{claim}   \label{claim-one}
The function $\C{M}$ satisfies the following jump conditions across $\Sigma^{\model}$:
\begin{equation}  \label{calM-jump}
\C{M}_+(k,u,e)=\C{M}_-(k,u,e)\times
\begin{cases}
\begin{pmatrix}
0&\eul^{2\pi\ii u_1}\\
\eul^{-2\pi\ii u_1}&0\end{pmatrix},&k\in\Sigma_1,\\
\begin{pmatrix}
0&\eul^{2\pi\ii u_2}\\
\eul^{-2\pi\ii u_2}&0\end{pmatrix},&k\in\gamma_{(\beta,\alpha)}\cup\gamma_{(\bar\alpha,\bar\beta)},\\
\begin{pmatrix}
0&1\\
1&0\end{pmatrix},&k\in\Sigma_2,\\
\begin{pmatrix}
\eul^{-2\pi\ii(u_2-u_3)}&0\\
0&\eul^{2\pi\ii(u_2-u_3)}\end{pmatrix},&k\in\gamma_{(\bar\beta,\beta)}.
\end{cases}
\end{equation}
\end{claim}

\begin{proof}[Proof of Claim~\ref{claim-one}]
It follows from \eqref{varphi-jump-1} and \eqref{Theta-relat} that
\begin{align*}
\C{M}_{1+}(k,u,e)
&\coloneqq\frac{\Theta\left(\varphi_+(k^+)+u+e\right)}{\Theta\left(\varphi_+(k^+)+e\right)}=\frac{\Theta\left(-\varphi_-(k^+)+\tau^{(1)}+u+e\right)}{\Theta\left(-\varphi_-(k^+)+\tau^{(1)}+e\right)}\\
&=\frac{\eul^{2\pi\ii(-(-\varphi_-(k^+)+u+e)_1-\frac{\tau_{11}}{2})}\Theta\left(-\varphi_-(k^+)+u+e\right)}{\eul^{2\pi\ii(-(-\varphi_-(k^+)+e)_1-\frac{\tau_{11}}{2})}\Theta\left(-\varphi_-(k^+)+e\right)}\\
&=\frac{\eul^{-2\pi\ii u_1}\Theta\left(\varphi_-(k^+)-u-e\right)}{\Theta\left(\varphi_-(k^+)-e\right)}\\
&=\eul^{-2\pi\ii u_1}\C{M}_{2-}(k,u,e),\qquad k\in\Sigma_1.
\end{align*}
Since $\C{M}_2(k,u,e)=\C{M}_1(k,-u,-e)$ it follows that
\[
\C{M}_{2+}(k,u,e)=\eul^{2\pi\ii u_1}\C{M}_{1-}(k,u,e),\qquad k\in\Sigma_1.
\]
This establishes \eqref{calM-jump} in the case of $k\in\Sigma_1$. The other cases are established in a similar way, the only difference being that the case of $k\in\gamma_{(\bar\beta,\beta)}$ utilizes \eqref{varphi-jump-2} instead of \eqref{varphi-jump-1}.
\end{proof}

Define the $2\times 2$-matrix valued function $\C{N}(k,u,e)\equiv\C{N}(x,t,k,u,e)$ by 
\begin{equation}  \label{calN-def}
\C{N}(k,u,e)\coloneqq\frac{1}{2}\begin{pmatrix}
(\nu+\nu^{-1})\C{M}_1(k,u,e)&(\nu-\nu^{-1})\C{M}_2(k,u,e)\\
(\nu-\nu^{-1})\C{M}_1(k,u,-e)&(\nu+\nu^{-1})\C{M}_2(k,u,-e)\end{pmatrix},\quad k\in\D{C}\setminus\Sigma^{\model},\ \ u,e\in\D{C}^3.
\end{equation}

\begin{claim}   \label{claim-two}
The function $\C{N}$ satisfies the following jump conditions across $\Sigma^{\model}$:
\begin{equation}   \label{calN-jump}
\C{N}_+(k,u,e)=\C{N}_-(k,u,e)\times
\begin{cases}
\begin{pmatrix}
0&\ii\eul^{2\pi\ii u_1}\\
\ii\eul^{-2\pi\ii u_1}&0\end{pmatrix},&k\in\Sigma_1,\\
\begin{pmatrix}
0&\ii\eul^{2\pi\ii u_2}\\
\ii\eul^{-2\pi\ii u_2}&0\end{pmatrix},&k\in\gamma_{(\beta,\alpha)}\cup\gamma_{(\bar\alpha,\bar\beta)},\\
\begin{pmatrix}
-\eul^{-2\pi\ii(u_2-u_3)}&0\\
0&-\eul^{2\pi\ii(u_2-u_3)}\end{pmatrix},&k\in\gamma_{(\bar\beta,\beta)},\\
\begin{pmatrix}
0&\ii\\
\ii&0\end{pmatrix},&k\in\Sigma_2.
\end{cases}
\end{equation}
\end{claim}

\begin{proof}[Proof of Claim 2]
The function $\nu(k)$ is analytic for $k\in\D{C}\setminus\Sigma^{\model}$ and satisfies the following jump condition across $\Sigma^{\model}$:
\[
\nu_+(k)=\nu_-(k)\times
\begin{cases}
\ii,&k\in\C{C},\\
-1,&k\in\gamma_{(\bar\beta,\beta)}.
\end{cases}
\]
The claim now follows from \eqref{calM-jump} and straightforward computation. For example,
\begin{align*}
\C{N}_+(k,u,e)&=\frac{1}{2}\begin{pmatrix}
\ii(\nu_--\nu_-^{-1})\eul^{-2\pi\ii u_1}\C{M}_{2-}(k,u,e)&\ii(\nu_-+\nu_-^{-1})\eul^{2\pi\ii u_1}\C{M}_{1-}(k,u,e)\\
\ii(\nu_-+\nu_-^{-1})\eul^{-2\pi\ii u_1}\C{M}_{2-}(k,u,-e)&\ii(\nu_--\nu_-^{-1})\eul^{2\pi\ii u_1}\C{M}_{1-}(k,u,-e)\end{pmatrix}\\
&=\C{N}_-(k,u,e)\begin{pmatrix}
0&\ii\eul^{2\pi\ii u_1}\\
\ii\eul^{-2\pi\ii u_1}&0\end{pmatrix},\quad k\in\Sigma_1,
\end{align*}
which establishes the jump across $\Sigma_1$.
\end{proof}

Taking $u=v(t)$ in \eqref{calN-jump}, where $v(t)$ is defined in \eqref{v-def}, we see that the function $\C{N}(k,v(t),e)$ satisfies the correct jump condition $\C{N}_+=\C{N}_-v^{\model}$ across $\Sigma^{\model}$.

Let $L(M)\subset\D{C}^3$ be the lattice generated by the six vectors $\accol{\vece^{(j)},\tau^{(j)}}_1^3$, so that $\D{C}^3/L(M)$ is the Jacobian variety of $M$.

\begin{claim}   \label{claim-three}
The projection of $\C{K}\in\D{C}^3$ in $\D{C}^3/L(M)$ is the vector of Riemann constants.
\end{claim}

\begin{proof}[Proof of Claim~\ref{claim-three}]
We infer from Figures~\ref{fig:canonical-basis} and \ref{fig:curves-aj-bj} that the following relations are valid in $\D{C}^3/L(M)$:
\begin{alignat*}{2}
&\varphi(\bar E_2)=0,&\quad&\varphi(E_2)=\frac{1}{2}\left(\vece^{(1)}+\vece^{(2)}+\vece^{(3)}\right),\\
&\varphi_\pm(\bar\beta)=\frac{1}{2}\left(\vece^{(1)}+\vece^{(2)}+\vece^{(3)}+\tau^{(3)}\right),&&\varphi_\pm(\beta)=\frac{1}{2}\left(\vece^{(1)}+\vece^{(2)}+\tau^{(3)}\right),\\
&\varphi(\bar\alpha)=\frac{1}{2}\left(\vece^{(1)}+\vece^{(2)}+\tau^{(2)}\right),&&\varphi(\alpha)=\frac{1}{2}\left(\vece^{(1)}+\tau^{(2)}\right),\\
&\varphi(\bar E_1)=\frac{1}{2}\left(\vece^{(1)}+\tau^{(1)}\right),&&\varphi(E_1)=\frac{1}{2}\tau^{(1)}.
\end{alignat*}
We see that $\varphi(\bar E_1)$, $\varphi(\bar\alpha)$, and $\varphi_\pm(\bar\beta)$ are odd half-periods. Hence $\Theta\circ\varphi$ has the zero divisor $\bar E_1\bar\alpha\bar\beta$ and the vector of Riemann constants is given by (see \cite{FK92}*{VII.1.2, p.~325})
\begin{equation}  \label{Riemann-constants}
\varphi(\bar E_1\bar\alpha\bar\beta)=\frac{1}{2}\left(\vece^{(1)}+\vece^{(3)}+\tau^{(1)}+\tau^{(2)}+\tau^{(3)}\right)\eqqcolon\C{K}\ \modulo\ L(M).
\end{equation}
This proves the claim.
\end{proof}

According to the next claim, the assumption that $\Theta(\varphi(\,\cdot\,)\pm e)$ is not identically zero automatically holds for $e=d$,  where $d$ is defined in \eqref{d-def}.

\begin{claim}   \label{claim-four}
The divisor $D$ defined in \eqref{D-def} is the zero divisor of the multivalued holomorphic function $\psi$ on $M$ defined by
\[
\psi\colon P\mapsto\Theta(\varphi(P)-d). 
\]
In particular, $\Theta(\varphi(\infty^+)\pm d)\neq 0$.
\end{claim}

\begin{proof}[Proof of Claim~\ref{claim-four}]
Let $\infty,k_1,k_2,k_3$ denote the natural projections of the zeros $\infty^+,P_1,P_2,P_3$ of $\hat\nu-1$ in $\hat{\D{C}}$. We first show that the points $\infty,k_1,k_2,k_3$ are all distinct. Since
\begin{equation}  \label{nu-at-infty}
\nu(k)=1-\frac{\ii}{2k}\Im(E_1+E_2+\alpha+\beta)+\ord(k^{-2}),\quad k\to\infty,
\end{equation}
the zero $\infty^+$ of $\hat\nu-1$ is simple. Since $\hat\nu(\infty^-)=-1$, this shows that the $k_j$ are all finite numbers. To show that the $k_j$ are distinct, we note that $\hat\nu\neq 1$ at the branch points; hence no $k_j$ is a branch point. Furthermore, the function $\hat\nu$ obeys the symmetries $\hat\nu(k^+)=\overline{\hat\nu(\bar k^+)^{-1}}$ and $\hat\nu(k^+)=-\hat\nu(k^-)$ for $k\in\D{C}\setminus\Sigma^{\model}$. Hence the divisor $D$ is invariant under complex conjugation. Since $D$ has order three, it follows that if $P_0$ is a double or triple zero of $\hat\nu-1$, then the natural projection $k_0=k(P_0)$ must be real. However, writing $E_3\coloneqq\alpha$ and $E_4\coloneqq\beta$, we have
\[
\hat\nu(k^\pm)=\pm\sqrt{\prod_{j=1}^4\frac{k-E_j}{k-\bar E_j}}\,,
\]
and hence
\[
\frac{\dd}{\dd k}\hat\nu(k^\pm)=\frac{\hat\nu(k^\pm)}{2}\sum_{j=1}^4\left(\frac{1}{k-E_j}-\frac{1}{k-\bar E_j}\right).
\]
For $k$ real, this gives
\[
\frac{\dd}{\dd k}\hat\nu(k^\pm)=\frac{\hat\nu(k^\pm)}{2}\sum_{j=1}^4\frac{2\ii\Im E_j}{\abs{k-E_j}^2},\quad k\in\D{R},
\]
where $\abs{\hat\nu(k^\pm)}=1$. Since all $E_j$ have strictly positive imaginary part, we conclude that $\frac{\dd}{\dd k}\hat\nu(k^\pm)\neq 0$ for all $k\in\D{R}$. We conclude that $\hat\nu-1$ has no double or triple zero on $M$. If $\hat\nu-1$ has a zero at $P_j$, then the symmetry $\hat\nu(k^+)=-\hat\nu(k^-)$ shows that $\varsigma(P_j)$ is not a zero, where $\varsigma(k^\pm)\coloneqq k^\mp$ denotes the sheet-changing involution. This shows that the points $\infty,k_1,k_2,k_3\in\hat{\D{C}}$ are all distinct.

Let $i(D)$ denote the index of specialty of the divisor $D$. The function $\psi(P)=\Theta(\varphi(P)-\varphi(D)-\C{K})$ is not identically zero on $M$ iff $i(D)=0$, and if this is the case then $D$ is the divisor of zeros of $\psi$ (cf.~\cite{FK92}*{Theorem VI.3.3 b, p.~313}). Proceeding as in \cite{FK92}*{VII.1.1, p.~324}, we see that $i(D)=0$ provided that the matrix
\[
\begin{pmatrix}
1&k_1&k_1^2\\
1&k_2&k_2^2\\
1&k_3&k_3^2
\end{pmatrix}
\]
has full rank. But the determinant of this matrix equals $-(k_1-k_2)(k_1-k_3)(k_2-k_3)$, which is nonzero because no two of the $k_j$ are equal. This shows that $i(D)=0$; hence $D$ is the zero divisor of $\psi$.

We have shown that $\Theta(\varphi(P)-d)$ has zero divisor $D$. Since $\Theta$ is even, the symmetry \eqref{zeta-symmetry} implies that $\Theta(\varphi(P)+d)=\Theta(\varphi(\varsigma(P))-d)$ has zero divisor $\varsigma(D)$. Since $D$ contains neither of the points $\infty^\pm$ as a factor, we conclude that $\Theta(\varphi(\infty^+)\pm d)\neq 0$.
\end{proof}

\begin{claim}   \label{claim-five}
We have $\varphi(\infty^+)+d=0$ in $\D{C}^3/L(M)$. Hence,
\[
\Theta(\varphi(\infty^+)\pm v(t)+d)\neq 0,\quad t\geq 0.
\]
\end{claim}

\begin{proof}[Proof of Claim~\ref{claim-five}]
By \eqref{d-def}, we have $\varphi(\infty^+)+d=\varphi(\infty^+D)+\C{K}$, where $\infty^+D$ is the zero divisor of the meromorphic function $\hat\nu-1$ on $M$. Now $\hat\nu$ has pole divisor $\bar E_1\bar\alpha\bar\beta\bar E_2$. Hence Abel's theorem and equation \eqref{Riemann-constants} together with $\varphi(\bar E_2)=0$ give
\[
\varphi(\infty^+D)=\varphi(\bar E_1\bar\alpha\bar\beta\bar E_2)=\C{K}=-\C{K}\ \modulo\ L(M).
\]
Thus $\varphi(\infty^+)+d=0$ in $\D{C}^3/L(M)$, and hence $\Theta$ vanishes at $\varphi(\infty^+)\pm v(t)+d$ iff $\Theta(\pm v(t))=\Theta(v(t))$ vanishes. But $\Theta(v(t))$ does not vanish for any $t\geq 0$ because $v(t)\in\D{R}^3$ and $\Theta(u)\neq 0$ for any $u\in\D{R}^3$ (see \cite{F73}*{Corollary 6.13, p.~122}).
\end{proof}

\begin{claim}   \label{claim-six}
For each choice of the six real constants $\accol{\Omega_j,\omega_j}_1^3$ and for each $t\geq 0$, the function $m^{\model}(x,t,k)$ defined by
\begin{equation}   \label{mmodel-def}
m^{\model}(x,t,k)\coloneqq\C{N}(\infty,v(t),d)^{-1}\C{N}(k,v(t),d),\quad k\in\hat{\D{C}}\setminus\Sigma^{\model},
\end{equation}
is the unique solution of the RH problem \eqref{rhp-model}.
\end{claim}

\begin{proof}[Proof of Claim~\ref{claim-six}]
The solution is unique because $\det v^{\model}=1$. We will show that the function $m^{\model}(x,t,\,\cdot\,)$ is well-defined and belongs to $I+\dot E^2(\hat{\D{C}}\setminus\Sigma^{\model})$. Since we have already seen that $\C{N}(k,v(t),d)$ has the correct jump across $\Sigma^{\model}$, this will complete the proof of the claim.

Define the multivalued meromorphic functions $\C{P}_j(P)\equiv\C{P}_j(x,t,P)$, $j=1,2$, on $M$ by
\begin{align*}
\C{P}_1(P)&\coloneqq(\hat\nu(P)-1)\frac{\Theta(\varphi(P)-v(t)-d)}{\Theta(\varphi(P)-d)},\\
\C{P}_2(P)&\coloneqq(\hat\nu(P)-1)\frac{\Theta(\varphi(P)+v(t)-d)}{\Theta(\varphi(P)-d)}\,.
\end{align*}
Then, using the symmetry $\varphi(k^+)=-\varphi(k^-)$,
\[
\C{N}(k,v(t),d)=\frac{1}{2\nu(k)}\begin{pmatrix}-\C{P}_1(k^-)&\C{P}_1(k^+)\\\C{P}_2(k^+)&-\C{P}_2(k^-)\end{pmatrix},\quad k\in\hat{\D{C}}\setminus\Sigma^{\model}.
\]
The holomorphic functions $k\mapsto\Theta(\varphi(k^\pm)\pm v(t)-d)$ are bounded on $\hat{\D{C}}\setminus\Sigma^{\model}$. Also, by Claim~\ref{claim-four}, the denominator $\Theta(\varphi(P)-d)$ has zero divisor $D$ on $M$, which is a factor of that of $\hat\nu-1$. It follows that
\begin{equation}  \label{calN-estimate}
\abs{\C{N}(k,v(t),d)}\leq C(\abs{\nu(k)}+\abs{\nu(k)^{-1}}),\quad k\in\hat{\D{C}}\setminus\Sigma^{\model},
\end{equation}
where $C\equiv C(x,t)$ is independent of $k$. In particular, $\C{N}(k,v(t),d)$ is an analytic function of $k\in\hat{\D{C}}\setminus\Sigma^{\model}$ which is bounded away from the eight branch points. If $k_0$ denotes one of the eight branch points, then \eqref{calN-estimate} shows that
\begin{equation}
\abs{\C{N}(k,v(t),d)}\leq C\abs{k-k_0}^{-1/4}
\end{equation}
as $k\in\hat{\D{C}}\setminus\Sigma^{\model}$ approaches $k_0$. Letting $k\to\infty$ in the definition~\eqref{calN-def} of $\C{N}$, we obtain
\begin{align*}
\C{N}(\infty,v(t),d)&=\begin{pmatrix}
\C{M}_1(\infty,v(t),d)&0\\
0&\C{M}_2(\infty,v(t),-d)\end{pmatrix}\notag\\
&=\begin{pmatrix}
\frac{\Theta(\varphi(\infty^+)+v(t)+d)}{\Theta(\varphi(\infty^+)+d)}&0\\
0&\frac{\Theta(\varphi(\infty^+)-v(t)+d)}{\Theta(\varphi(\infty^+)+d)}\end{pmatrix}.
\end{align*}
By Claims~\ref{claim-four} and \ref{claim-five}, the values of $\Theta(\varphi(\infty^+)+d)$ and $\Theta(\varphi(\infty^+)\pm v(t)+d)$ are finite and nonzero. It follows that $\C{N}(\infty,v(t),d)$ is invertible. This shows that $m^{\model}(x,t,\,\cdot\,)$ is well-defined by \eqref{mmodel-def} and belongs to $I+\dot E^2(\hat{\D{C}}\setminus\Sigma^{\model})$.
\end{proof}

\begin{claim}   \label{claim-seven}
The solution $m^{\model}(x,t,k)$ defined in \eqref{mmodel-def} satisfies \eqref{mmodel-at-infty}.
\end{claim}

\begin{proof}[Proof of Claim~\ref{claim-seven}]
We have
\begin{align*}
&\lim_{k\to\infty}km_{12}^{\model}(x,t,k)=\frac{1}{\C{M}_1(\infty,v(t),d)}\lim_{k\to\infty}k\C{N}_{12}(k,v(t),d)\\
&=\frac{\Theta(\varphi(\infty^+)+d)}{\Theta(\varphi(\infty^+)+v(t)+d)}\lim_{k\to\infty}\frac{k(\nu(k)-\nu(k)^{-1})}{2}\,\frac{\Theta(\varphi(k^+)-v(t)-d)}{\Theta(\varphi(k^+)-d)}.
\end{align*}
Since, by \eqref{nu-at-infty},
\[
\nu(k)-\frac{1}{\nu(k)}=-\frac{\ii}{k}\Im(E_1+E_2+\alpha+\beta)+\ord(k^{-2}),\quad\text{uniformly as }k\to\infty,
\]
equation \eqref{mmodel-at-infty} follows.
\end{proof}
\renewcommand{\qed}{}
\end{proof}

\begin{remark}
Although the function $\C{N}$ depends on the value of $d$ in $\D{C}^3$, the function $m^{\model}$ defined in \eqref{mmodel-def} only depends on the projection of $d$ in $\D{C}^3/L(M)$. Indeed, $\C{N}$ satisfies
\[
\C{N}(k,u,d+\vece^{(j)})=\C{N}(k,u,d),\quad\C{N}(k,u,d+\tau^{(j)})=\eul^{-2\ii\pi u_j\sigma_3}\C{N}(k,u,d),\quad j=1,2,3.
\]
\end{remark}

\section{Local models}  \label{sec:local}

The many transformations presented in Section~\ref{sec:main} resulted in a RH problem for $\hat m^{(5)}$ with the property that the matrix $\hat v^{(5)}-I$ decays to zero for $k\in\Sigma^{(5)}\setminus\Sigma^{\model}$ as $t\to\infty$. However, this decay is not uniform with respect to $k$ as $k$ approaches $\Sigma^{\model}$. Thus, on the parts of the contour $\Sigma^{(5)}$ that lie near $\Sigma^{\model}$, we need to introduce local solutions that are better approximations of $\hat m^{(5)}$ than $m^{\model}$ is. Using these local approximations, we can derive appropriate error estimates as well as higher order asymptotics beyond the $\ord(1)$ term.

Recall that $\alpha$, $\beta$, $\bar\alpha$, $\bar\beta$, and $\mu$ are distinct. Let $\epsilon\equiv\epsilon(\xi)$ be so small that the five disks
\begin{equation}   \label{disks}
D_{\epsilon}(\alpha),\ \ D_{\epsilon}(\beta),\ \ D_{\epsilon}(\mu),\ \ D_{\epsilon}(\bar\alpha),\text{ and }\ D_{\epsilon}(\bar\beta)
\end{equation}
are disjoint from each other and from the cuts $\Sigma_1$ and $\Sigma_2$. In the following three subsections, we define three local solutions, denoted by $m^{\alpha}(x,t,k)$, $m^{\beta}(x,t,k)$, and $m^{\mu}(x,t,k)$, which are good approximations of $\hat m^{(5)}$ for $k$ in the three disks $D_{\epsilon}(\alpha)$, $D_{\epsilon}(\beta)$, and $D_{\epsilon}(\mu)$, respectively.

\subsection{Local model near $\BS{\alpha}$}

We define the function $m^{(\alpha 0)}$ for $k$ near $\alpha$ by 
\begin{equation}   \label{malpha-0}
m^{(\alpha 0)}(x,t,k)\coloneqq\hat m^{(5)}(x,t,k)\eul^{-\ii(\frac{\ii}{2}\ln(-\delta(k)^2\hat a(k)\hat b(k))+tg(\alpha)+h(k))\sigma_3},\quad k\in D_{\epsilon}(\alpha)\setminus\Sigma^{(5)}.
\end{equation}
Lemmas~\ref{delta-properties} and \ref{h-properties} imply that the exponential in \eqref{malpha-0} is bounded and analytic for $k\in D_{\epsilon}(\alpha)\setminus\Sigma^{(5)}$. We also define the function $g_{\alpha}(k)\equiv g_{\alpha}(\xi,k)$ for $k$ near $\alpha$ by
\[
g_{\alpha}(k)\coloneqq g(k)-g(\alpha),\quad k\in D_{\epsilon}(\alpha)\setminus\gamma_{(\beta,\alpha)}.
\]
Let $\accol{\C{S}_j}_1^4$ denote the open subsets of $D_{\epsilon}(\alpha)$ shown in Figure~\ref{fig:contour-near-alpha}. Let $\C{Y}_j=\bar{\C{S}}_{j-1}\cap\bar{\C{S}}_j$, $j=1,\dots,4$, $\bar{\C{S}}_0\equiv\bar{\C{S}}_4$, denote the curves separating the $\C{S}_j$ oriented toward $\alpha$ as in Figure~\ref{fig:contour-near-alpha}.

It follows from the expression \eqref{jump-v5} for the jump matrix $\hat v^{(5)}$ that $m^{(\alpha 0)}$ satisfies the following jump condition on the part of $\Sigma^{(5)}$ that lies in $D_{\epsilon}(\alpha)$:
\[
m_+^{(\alpha 0)}(x,t,k)=m_-^{(\alpha 0)}(x,t,k)v^{(\alpha 0)}(x,t,k),\quad k\in\Sigma^{(5)}\cap D_{\epsilon}(\alpha),
\]
where 
\[
v^{(\alpha 0)}=
\begin{cases}
\begin{pmatrix}
1&-\eul^{-2\ii tg_{\alpha}}\\0&1\end{pmatrix},&k\in\C{Y}_1,\\
\begin{pmatrix}
1&0\\\eul^{2\ii tg_{\alpha}}&1\end{pmatrix},&k\in\C{Y}_2\cup\C{Y}_4,\\
\begin{pmatrix}
0&1\\-1&0\end{pmatrix},&k\in\C{Y}_3,
\end{cases}
\]
with
\[
g_{\alpha}(k)\coloneqq g(k)-g(\alpha),\quad k\in D_{\epsilon}(\alpha)\setminus\gamma_{(\beta,\alpha)}.
\]
\begin{figure}[ht]
\centering\includegraphics[scale=.9]{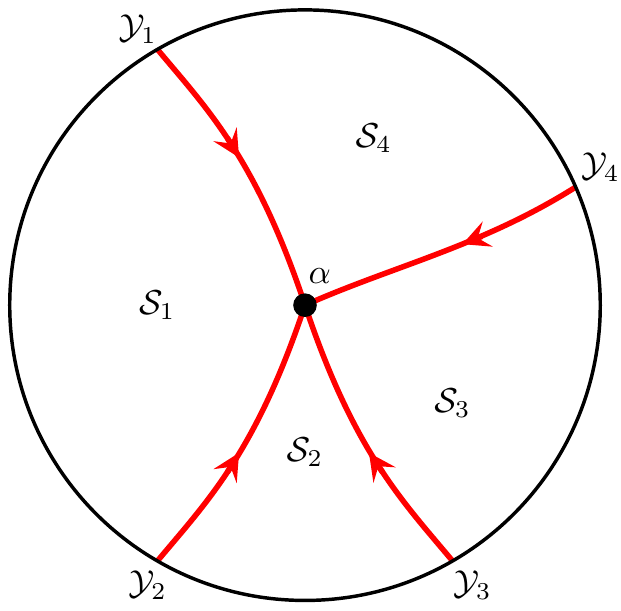}
\caption{The contour in the $\epsilon$-neighborhood $D_{\epsilon}(\alpha)$ of $\alpha$ and the sets $\accol{\C{S}_j}_1^4$.} 
\label{fig:contour-near-alpha}
\end{figure}

In order to relate $m^{(\alpha 0)}$ to the Airy solution $m^{\Airy}$ of Appendix~\ref{sec:B}, we make a local change of variables for $k$ near $\alpha$ and introduce the new variable $\zeta\equiv\zeta(k)$ by
\[
\zeta\coloneqq\left(\frac{3\ii t}{2}g_{\alpha}(k)\right)^{2/3}.
\]
Since $g_{\alpha}(k)=\int_{\alpha}^k\sqrt{s-\alpha}\,f(s)\dd s$ where $f(s)$ is analytic and nonzero at $s=\alpha$, we can write (shrinking $\epsilon$ if necessary)
\begin{equation}  \label{zeta-k-near-alpha}
\zeta\coloneqq\left(\frac{3t}{2}\right)^{2/3}(k-\alpha)\psi_{\alpha}(k),
\end{equation}
where the function $\psi_{\alpha}(k)\equiv\psi_{\alpha}(\xi,k)$ is analytic for $k\in D_{\epsilon}(\alpha)$ and $\psi_{\alpha}(\alpha)\neq 0$. Equation \eqref{zeta-k-near-alpha} shows that the map $k\mapsto\zeta$ is a biholomorphism from the disk $D_{\epsilon}(\alpha)$ onto a neighborhood of the origin. Since $\Im g_{\alpha}=0$ on $\gamma_{(\beta,\alpha)}$, by choosing a different branch in the definition of $\zeta$ if necessary, we may assume that the map $k\mapsto\zeta$ sends $\C{Y}_3$ into $\D{R}_-$. Actually, by deforming the contours $\C{Y}_1$, $\C{Y}_2$, and $\C{Y}_4$ if necessary, we may assume that it maps $\C{Y}_j$ and $\C{S}_j$ into $Y_j$ and $S_j$, respectively, for all $j$, where $Y_j$ and $S_j$ are as in Figure~\ref{fig:rays-Y}.

Since $\zeta$ satisfies
\[
\frac{4}{3}\zeta^{3/2}=2\ii tg_{\alpha}(k),
\]
we see that $m^{(\alpha 0)}(k)$ has the same jumps as $m^{\Airy}(\zeta(k))$ near $\alpha$. Hence we seek a parametrix $m^{\alpha}$ for $\hat m^{(5)}$ near $\alpha$ of the form
\begin{equation}  \label{malpha-def}
m^{\alpha}(x,t,k)=Y_{\alpha}(x,t,k)m^{\Airy}(\zeta(k))\eul^{\ii(\frac{\ii}{2}\ln(-\delta^2\hat a\hat b)+tg(\alpha)+h(k))\sigma_3},\quad k\in D_{\epsilon}(\alpha)\setminus\Sigma^{(5)},
\end{equation}
where $Y_{\alpha}(x,t,k)$ is analytic in $D_{\epsilon}(\alpha)$. To ensure that $m^{\alpha}$ is a good approximation of $\hat m^{(5)}$, we want to choose $Y_{\alpha}(x,t,k)$ so that $m^{\alpha}(m^{\model})^{-1}\to I$ on $\partial D_{\epsilon}(\alpha)$ as $t\to\infty$. Hence we choose
\begin{equation}  \label{Yalpha-def}
Y_{\alpha}(x,t,k)\coloneqq m^{\model}\eul^{-\ii(\frac{\ii}{2}\ln(-\delta^2\hat a\hat b)+tg(\alpha)+h(k))\sigma_3}(m_{\asympt,N}^{\Airy,\inv}(\zeta(k))), 
\end{equation}
where $N\geq 0$ is some integer and $m_{\asympt,N}^{\Airy,\inv}$ is the approximation of $(m^\Airy)^{-1}$ defined in \eqref{mAiry-approx-defb}. Equation \eqref{mAiry-approx-jumpb} implies that $m_{\asympt,N}^{\Airy,\inv}(\zeta(k))$ is analytic near $\alpha$ except for a jump across $\C{Y}_3$ given by
\begin{equation}  \label{mAiry-approx-jump-bis}
(m_{\asympt,N}^{\Airy,\inv}(\zeta(k)))_+=\begin{pmatrix}0&-1\\1&0\end{pmatrix}(m_{\asympt,N}^{\Airy,\inv}(\zeta(k)))_-,\quad k\in\C{Y}_3.
\end{equation}
Equation \eqref{vmodel-def} shows that the jump of $m^{\model}(k)\eul^{-\ii(\frac{\ii}{2}\ln(-\delta^2\hat a\hat b)+tg(\alpha)+h(k))\sigma_3}$ across $\C{Y}_3$ cancels the jump in \eqref{mAiry-approx-jump-bis}, so that the function $Y_{\alpha}$ is analytic in $D_{\epsilon}(\alpha)$.

The functions $m^{\model}$, $\eul^{-\ii(\frac{\ii}{2}\ln(-\delta^2\hat a\hat b)+tg(\alpha)+h)\sigma_3}$, and their inverses are bounded on the circle $\partial D_{\epsilon}(\alpha)$ as a consequence of Lemmas~\ref{delta-properties} and \ref{h-properties}. Moreover, shrinking $\epsilon$ if necessary, we may assume that
\[
\inf_{k\in\partial D_{\epsilon}(\alpha)}\abs{g_{\alpha}(k)}>0. 
\]
The asymptotic formula \eqref{mAiry-approx-mAiryb} then implies
\begin{equation}   \label{malpha-mmodel-t-infty}
m^{\alpha}(k)m^{\model}(k)^{-1}=I+\ord(t^{-N-1}),\quad t\to\infty,\quad k\in\partial D_{\epsilon}(\alpha),
\end{equation}
uniformly with respect to $k$ in the given range. We have proved the following lemma.

\begin{lemma}
The function $m^{\alpha}(x,t,k)$ defined in \eqref{malpha-def} is an analytic and bounded function of $k\in D_{\epsilon}(\alpha)\setminus\Sigma^{(5)}$, which satisfies the same jump conditions as $\hat m^{(5)}$ across $\Sigma^{(5)}\cap D_{\epsilon}(\alpha)$. The function $Y_{\alpha}(x,t,k)$ defined in \eqref{Yalpha-def} is an analytic and bounded function of $k\in D_{\epsilon}(\alpha)$. As $t\to\infty$, $m^{\alpha}$ satisfies \eqref{malpha-mmodel-t-infty}.
\end{lemma}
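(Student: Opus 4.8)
The plan is to verify each assertion of the lemma in turn, using the already-established properties of the ingredients in \eqref{malpha-def} and \eqref{Yalpha-def}. First I would check that $Y_{\alpha}$ is analytic and bounded on $D_{\epsilon}(\alpha)$. Analyticity away from $\C{Y}_3$ is immediate: $m^{\model}$ is analytic on $D_{\epsilon}(\alpha)\setminus\Sigma^{(5)}$ (and $D_{\epsilon}(\alpha)$ meets $\Sigma^{(5)}$ only along the $\C{Y}_j$, which near $\alpha$ reduce to the cut $\gamma_{(\beta,\alpha)}$ plus the lens boundaries), the scalar exponential $\eul^{-\ii(\frac{\ii}{2}\ln(-\delta^2\hat a\hat b)+tg(\alpha)+h(k))\sigma_3}$ is analytic and bounded by Lemmas~\ref{delta-properties} and \ref{h-properties} together with the genericity assumption (so that $\hat a\hat b$ is bounded and nonzero near $E_1$ but the relevant factor here is regular at $\alpha$), and $m_{\asympt,N}^{\Airy,\inv}(\zeta(k))$ is analytic off $\C{Y}_3$ by \eqref{mAiry-approx-jumpb}. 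The key point is that the jump of $m^{\model}$ across $\C{Y}_3=\gamma_{(\beta,\alpha)}\cap D_{\epsilon}(\alpha)$, which by \eqref{vmodel-def} is the off-diagonal constant matrix $\bigl(\begin{smallmatrix}0&\ii\eul^{-2\ii(t\Omega_2+\omega_2)}\\\ii\eul^{2\ii(t\Omega_2+\omega_2)}&0\end{smallmatrix}\bigr)$, is conjugated by the scalar exponential into exactly $\bigl(\begin{smallmatrix}0&1\\-1&0\end{smallmatrix}\bigr)$ once one checks that the boundary values of $\tfrac{\ii}{2}\ln(-\delta^2\hat a\hat b)+tg(\alpha)+h(k)$ differ across $\C{Y}_3$ in the right way — this uses the jump relation \eqref{h-jump-a} for $h$ on $\gamma_{(\beta,\alpha)}$, the jump of $\delta$, the relation $g_+(\alpha)=g_-(\alpha)=\Omega_2$, and the symmetry $\hat v_6^{(5)}$ from \eqref{jump-v5}. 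This constant jump then cancels the jump \eqref{mAiry-approx-jump-bis} of $m_{\asympt,N}^{\Airy,\inv}$, as claimed in the paragraph preceding the lemma, so $Y_{\alpha}$ extends analytically across $\C{Y}_3$, hence is analytic on all of $D_{\epsilon}(\alpha)$. Boundedness then follows from boundedness of each factor: $m^{\model}$ has at worst a $\abs{k-\alpha}^{-1/4}$ singularity at $\alpha$ by \eqref{calN-estimate}, while $m_{\asympt,N}^{\Airy,\inv}(\zeta(k))$, being built from the asymptotic expansion of the Airy parametrix evaluated at $\zeta=(\tfrac{3t}{2})^{2/3}(k-\alpha)\psi_{\alpha}(k)$, blows up at worst like $\abs{\zeta}^{1/4}\sim\abs{k-\alpha}^{1/4}$ there (the $\zeta^{-\sigma_3/4}$ prefactor in the Airy solution), so the product is bounded near $\alpha$; away from $\alpha$ all factors are manifestly bounded on the compact disk.

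Next I would verify that $m^{\alpha}$ defined by \eqref{malpha-def} is analytic and bounded on $D_{\epsilon}(\alpha)\setminus\Sigma^{(5)}$ and has the same jumps as $\hat m^{(5)}$ across $\Sigma^{(5)}\cap D_{\epsilon}(\alpha)$. Analyticity and boundedness are inherited from the three factors: $Y_{\alpha}$ by the previous step, $m^{\Airy}(\zeta(k))$ by the properties of the Airy parametrix recalled in Appendix~\ref{sec:B} together with the fact that $k\mapsto\zeta(k)$ is a biholomorphism from $D_{\epsilon}(\alpha)$ onto a neighborhood of $0$ by \eqref{zeta-k-near-alpha} (so $m^{\Airy}(\zeta(k))$ has only a harmless $\abs{k-\alpha}^{-1/4}$-type singularity at $\alpha$, matching what is allowed), and the scalar exponential again by Lemmas~\ref{delta-properties} and \ref{h-properties}. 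For the jump condition: since $Y_{\alpha}$ is analytic across every $\C{Y}_j$ it contributes no jump, and the scalar exponential factor multiplies on the right, so the jump of $m^{\alpha}$ across $\C{Y}_j\cap D_{\epsilon}(\alpha)$ equals $\eul^{-\ii(\cdots)_-\sigma_3}\,v^{\Airy}\,\eul^{\ii(\cdots)_+\sigma_3}$ where $v^{\Airy}$ is the jump of $m^{\Airy}(\zeta(k))$ on $Y_j$. Using $\tfrac{4}{3}\zeta^{3/2}=2\ii tg_{\alpha}(k)$ (established just before the change of variables), the Airy jumps translate into jumps with exponents $\pm 2\ii tg_{\alpha}$, and a direct comparison with $v^{(\alpha 0)}$ — equivalently, with $\hat v^{(5)}$ restricted to $D_{\epsilon}(\alpha)$ via the definition \eqref{malpha-0} of $m^{(\alpha 0)}$ — shows the jumps agree on $\C{Y}_1$, $\C{Y}_2\cup\C{Y}_4$, and $\C{Y}_3$ (on $\C{Y}_3$ the continuity of the scalar exponential, which we verified in the first step, is what is needed). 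This is essentially the content of the observation "$m^{(\alpha 0)}(k)$ has the same jumps as $m^{\Airy}(\zeta(k))$ near $\alpha$" made in the text, read backwards.

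Finally, the matching estimate \eqref{malpha-mmodel-t-infty} on $\partial D_{\epsilon}(\alpha)$ follows by a short computation: from \eqref{malpha-def} and \eqref{Yalpha-def},
\[
m^{\alpha}(k)m^{\model}(k)^{-1}=m^{\model}\,\eul^{-\ii(\cdots)\sigma_3}\,m_{\asympt,N}^{\Airy,\inv}(\zeta(k))\,m^{\Airy}(\zeta(k))\,\eul^{\ii(\cdots)\sigma_3}\,(m^{\model})^{-1},
\]
so it suffices to show $m_{\asympt,N}^{\Airy,\inv}(\zeta(k))\,m^{\Airy}(\zeta(k))=I+\ord(t^{-N-1})$ uniformly on $\partial D_{\epsilon}(\alpha)$ and then conjugate by the bounded factors $m^{\model}\eul^{-\ii(\cdots)\sigma_3}$ and its inverse. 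That in turn is exactly the asymptotic formula \eqref{mAiry-approx-mAiryb} for the Airy parametrix, applied with the argument $\zeta(k)=(\tfrac{3t}{2})^{2/3}(k-\alpha)\psi_{\alpha}(k)$, which tends to infinity like $t^{2/3}$ uniformly for $k\in\partial D_{\epsilon}(\alpha)$ because $\inf_{k\in\partial D_{\epsilon}(\alpha)}\abs{g_{\alpha}(k)}>0$ (arranged by shrinking $\epsilon$); each correction term in the asymptotic series carries an extra power of $\zeta^{-3/2}\sim t^{-1}$, giving the stated $\ord(t^{-N-1})$ remainder. I expect the only genuinely delicate point to be the first step — the verification that the scalar exponential in \eqref{malpha-0}–\eqref{malpha-def} is continuous across $\C{Y}_3$ and conjugates the constant off-diagonal jump of $m^{\model}$ into the precise constant $\bigl(\begin{smallmatrix}0&1\\-1&0\end{smallmatrix}\bigr)$ needed to cancel \eqref{mAiry-approx-jump-bis}; this is where the jump relations for $h$, $\delta$, $\hat a$, $\hat b$ on $\gamma_{(\beta,\alpha)}$ all have to be combined with the correct branch choices, and it is the source of the somewhat unusual shift $\tfrac{\ii}{2}\ln(-\delta^2\hat a\hat b)$ in the definition of $m^{(\alpha 0)}$. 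Everything else is bookkeeping with already-proved boundedness statements and the standard Airy asymptotics.
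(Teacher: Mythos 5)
Your argument is essentially the paper's own proof: analyticity of $Y_{\alpha}$ via cancellation of the conjugated model jump on $\C{Y}_3$ against \eqref{mAiry-approx-jump-bis} (using \eqref{h-jump-a}, $g(\alpha)=\Omega_2$ from \eqref{Omegaj-def}, and \eqref{vmodel-def}), boundedness and analyticity of the scalar exponential from Lemmas~\ref{delta-properties} and \ref{h-properties}, the identification of the jumps of $m^{(\alpha 0)}$ with those of $m^{\Airy}(\zeta(k))$ through $\frac{4}{3}\zeta^{3/2}=2\ii tg_{\alpha}(k)$, and the matching estimate \eqref{malpha-mmodel-t-infty} from \eqref{mAiry-approx-mAiryb} combined with $\inf_{k\in\partial D_{\epsilon}(\alpha)}\abs{g_{\alpha}(k)}>0$ and the boundedness of $m^{\model}$, the exponential, and their inverses on the circle. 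The only imprecision is your bookkeeping of $m_{\asympt,N}^{\Airy,\inv}(\zeta(k))$ near $\alpha$: as $\zeta\to 0$ its entries behave like $\zeta^{-3j/2\pm 1/4}$ (not like $\abs{\zeta}^{1/4}$), so boundedness of $Y_{\alpha}$ at $\alpha$ rests on removability of the isolated singularity after the jumps cancel (immediate for $N=0$, where the product with the $\abs{k-\alpha}^{-1/4}$ singularity of $m^{\model}$ is $\ord(\abs{k-\alpha}^{-1/2})$), a point the paper itself also leaves implicit, rather than on a pointwise bound on the product.
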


\subsection{Local model near $\BS{\beta}$}

We define the function $m^{(\beta 0)}$ for $k$ near $\beta$ by
\begin{equation}  \label{mbeta-0-def}
m^{(\beta 0)}(x,t,k)\coloneqq\hat m^{(5)}(x,t,k)\eul^{-\ii(\frac{\ii}{2}\ln(\delta(k)^2\hat a(k)\hat b(k))+h(k))\sigma_3},\quad k\in D_{\epsilon}(\beta)\setminus\Sigma^{(5)}.
\end{equation}
Lemmas~\ref{delta-properties} and \ref{h-properties} imply that the exponential in \eqref{mbeta-0-def} is bounded and analytic for $k\in D_{\epsilon}(\beta)\setminus\Sigma^{(5)}$.

\begin{figure}[ht]
\centering\includegraphics[scale=.9]{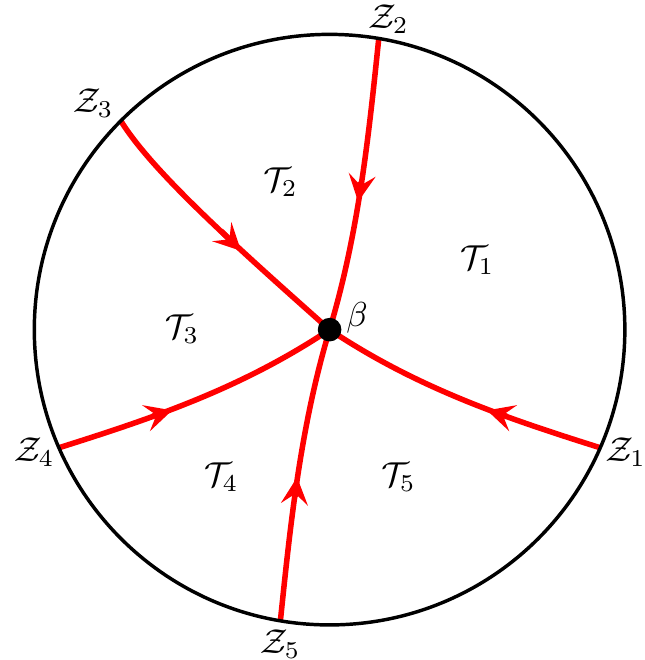}
\caption{The contour in the $\epsilon$-neighborhood $D_{\epsilon}(\beta)$ of $\beta$ and the sets $\accol{\C{T}_j}_1^5$.} 
\label{fig:contour-near-beta}
\end{figure}

Let $\accol{\C{T}_j}_1^5$ denote the open subsets of $D_{\epsilon}(\beta)$ shown in Figure~\ref{fig:contour-near-beta}. Let $\C{Z}_j\coloneqq\bar{\C{T}}_{j-1}\cap\bar{\C{T}}_j$, $j=1,\dots,5$, $\bar{\C{T}}_0\equiv\bar{\C{T}}_5$, denote the curves separating the $\C{T}_j$ oriented toward $\beta$ as in Figure~\ref{fig:contour-near-beta}.

It follows from the expression \eqref{jump-v5} for the jump matrix $\hat v^{(5)}$ that $m^{(\beta 0)}$ satisfies the following jump condition on the part of $\Sigma^{(5)}$ that lies in $D_{\epsilon}(\beta)$:
\[
m_+^{(\beta 0)}(x,t,k)=m_-^{(\beta 0)}(x,t,k)v^{(\beta 0)}(x,t,k),\quad k\in\Sigma^{(5)}\cap D_{\epsilon}(\beta),
\]
where
\[
v^{(\beta 0)}\coloneqq\begin{cases}
\begin{pmatrix}1&-\frac{1}{aa^*}\eul^{-2\ii tg}\\0&1\end{pmatrix},&k\in\C{Z}_1,\\
\begin{pmatrix}1&0\\aa^*\eul^{2\ii tg}&1\end{pmatrix},&k\in\C{Z}_2,\\
\begin{pmatrix}0&\eul^{-\ii t(g_++g_-)}\\-\eul^{\ii t(g_++g_-)}&0\end{pmatrix},&k\in\C{Z}_3,\\
\begin{pmatrix}1&0\\\frac{1}{aa^*}\eul^{2\ii tg}&1\end{pmatrix},&k\in\C{Z}_4,\\
\begin{pmatrix}\frac{\eul^{\ii t(g_+-g_-)}}{aa^*}&0\\0&aa^*\eul^{-\ii t(g_+-g_-)}\end{pmatrix},&k\in\C{Z}_5.
\end{cases}
\]

We define the function $g_{\beta}(k)\equiv g_{\beta}(\xi,k)$ for $k$ near $\beta$ by
\[
g_{\beta}(k)\coloneqq\int_{\beta}^k\dd g=
\begin{cases}
g(k)-g_-(\beta),&k\in\C{T}_1\cup\C{T}_2\cup\C{T}_5,\\
g(k)-g_+(\beta),&k\in\C{T}_3\cup\C{T}_4,
\end{cases}
\]
where the integration contour lies in $D_{\epsilon}(\beta)\setminus(\gamma_{(\bar\beta,\beta)}\cup\gamma_{(\beta,\alpha)})$. Let
\[
m^{(\beta 1)}(x,t,k)\coloneqq m^{(\beta 0)}(x,t,k)A(k),\quad k\in D_{\epsilon}(\beta)\setminus\Sigma^{(5)},
\]
where $A(k)\equiv A(\xi,k)$ denotes the sectionally holomorphic function
\[
A(k)\coloneqq
\begin{cases}
(aa^*)^{-\sigma_3/2}\eul^{-\ii tg_-(\beta)\sigma_3},&k\in\C{T}_1\cup\C{T}_2\cup\C{T}_5,\\
(aa^*)^{\sigma_3/2}\eul^{-\ii tg_+(\beta)\sigma_3},&k\in\C{T}_3\cup\C{T}_4.
\end{cases}
\]
Using that $g_++g_-=g_+(\beta)+g_-(\beta)$ on $\gamma_{(\beta,\alpha)}$ and $g_+-g_-=g_+(\beta)-g_-(\beta)$ on $\gamma_{(\mu,\beta)}$, we find that $m^{(\beta 1)}$ satisfies the jump condition $m_+^{(\beta 1)}=m_-^{(\beta 1)}v^{(\beta 1)}$ with
\[
v^{(\beta 1)}\coloneqq\begin{cases}
\begin{pmatrix}1&-\eul^{-2\ii tg_{\beta}}\\0&1\end{pmatrix},&k\in\C{Z}_1,\\
\begin{pmatrix}1&0\\\eul^{2\ii tg_{\beta}}&1\end{pmatrix},&k\in\C{Z}_2\cup\C{Z}_4,\\
\begin{pmatrix}0&1\\-1&0\end{pmatrix},&k\in\C{Z}_3,\\
\,I,&k\in\C{Z}_5.
\end{cases}
\]
In order to relate $m^{(\beta 0)}$ to the solution $m^{\Airy}$ of Appendix~\ref{sec:B}, we make a local change of variables for $k$ near $\beta$ and introduce the new variable $\zeta\equiv\zeta(k)$ by
\[
\zeta\coloneqq\left(\frac{3\ii t}{2}g_{\beta}(k)\right)^{2/3}.
\]
As in \eqref{zeta-k-near-alpha}, we can write
\begin{equation}   \label{zeta-k-near-beta}
\zeta=\left(\frac{3t}{2}\right)^{2/3}(k-\beta)\psi_{\beta}(k),
\end{equation}
where the function $\psi_{\beta}(k)\equiv\psi_{\beta}(\xi,k)$ is analytic for $k\in D_{\epsilon}(\beta)$ and $\psi_{\beta}(\beta)\neq 0$. Replacing $\psi_{\beta}$ by $\eul^{\frac{2\pi\ii}{3}}\psi_{\beta}$ or $\eul^{\frac{4\pi\ii}{3}}\psi_{\beta}$ if necessary, we may assume that the map $k\mapsto\zeta$ maps $\C{Z}_3$ into $\D{R}_-$. Furthermore, deforming the contours $\C{Z}_1$, $\C{Z}_2$, and $\C{Z}_4$ if necessary, we may assume that it maps $\C{Z}_j$ into $Y_j$ for $j=1,\dots,4$, where $Y_j$ are the rays in Figure~\ref{fig:rays-Y}.

Since $\zeta$ satisfies
\[
\frac{4}{3}\zeta^{3/2}=2\ii tg_{\beta}(k),
\]
we see that $m^{(\beta 1)}(k)$ has the same jumps as $m^{\Airy}(\zeta(k))$ near $\beta$. Hence we seek a parametrix $m^{\beta}$ for $\hat m^{(5)}$ near $\beta$ of the form
\begin{equation}  \label{mbeta-def}
m^{\beta}(x,t,k)=Y_{\beta}(x,t,k)m^{\Airy}(\zeta(k))A(k)^{-1}\eul^{\ii(\frac{\ii}{2}\ln(\hat a\hat b\delta^2)+tg(\beta)+h(k))\sigma_3},\quad k\in D_\epsilon(\beta)\setminus\Sigma^{(5)},
\end{equation}
where $Y_{\beta}(x,t,k)$ is analytic in $D_{\epsilon}(\beta)$. To ensure that $m^{\beta}$ is a good approximation of $\hat m^{(5)}$, we want to choose $Y_{\beta}(x,t,k)$ so that $m^{\beta}(m^{\model})^{-1}\to I$ on $\partial D_{\epsilon}(\beta)$ as $t\to\infty$. Hence we choose
\begin{equation}  \label{Ybeta-def}
Y_{\beta}(x,t,k)\coloneqq m^{\model}\eul^{-\ii(\frac{\ii}{2}\ln(\hat a\hat b\delta^2)+tg(\beta)+h)\sigma_3}A(k)m_{\asympt,N}^{\Airy,\inv}(\zeta(k)),
\end{equation}
where $N\geq 0$ is some integer and $m_{\asympt,N}^{\Airy,\inv}$ is defined in \eqref{mAiry-approx-defb}. Equation \eqref{mAiry-approx-jumpb} implies that $m_{\asympt,N}^{\Airy,\inv}(\zeta(k))$ is analytic near $\beta$ except for a jump across $\C{Z}_3$ given by
\[
(m_{\asympt,N}^{\Airy,\inv}(\zeta(k)))_+=\begin{pmatrix}0&-1\\ 1&0\end{pmatrix}(m_{\asympt,N}^{\Airy,\inv}(\zeta(k)))_-,\quad k\in\C{Z}_3.
\]
Equation \eqref{vmodel-def} then shows that the jumps of $m^{\model}\eul^{-\ii(\frac{\ii}{2}\ln(\hat a\hat b\delta^2)+tg(\beta)+h)\sigma_3}A(k)$ near $\beta$ are such that the function $Y_{\beta}$ is analytic in $D_{\epsilon}(\beta)$.

The matrices $m^{\model}$, $\eul^{-\ii(\frac{\ii}{2}\ln(\hat a\hat b\delta^2)+tg(\beta)+h)\sigma_3}$, $A(k)$, and their inverses are uniformly bounded for $k\in\partial D_{\epsilon}(\beta)$. Shrinking $\epsilon$ if necessary, we may assume that 
\[
\inf_{k\in\partial D_{\epsilon}(\beta)}\abs{g_{\beta}(k)}>0.
\]
The asymptotic formula \eqref{mAiry-approx-mAiryb} and the boundedness of $m^{\model}$ then imply
\begin{equation}   \label{mbeta-mmodel-t-infty}
m^{\beta}(k)m^{\model}(k)^{-1}=I+\ord(t^{-N-1}),\quad t\to\infty,\quad k\in\partial D_{\epsilon}(\beta),
\end{equation}
uniformly with respect to $k$ in the given range. We have proved the following lemma.

\begin{lemma}
The function $m^{\beta}(x,t,k)$ defined in \eqref{mbeta-def} is an analytic and bounded function of $k\in D_{\epsilon}(\beta)\setminus\Sigma^{(5)}$, which satisfies the same jump conditions as $\hat m^{(5)}$ across $\Sigma^{(5)}\cap D_{\epsilon}(\beta)$. The function $Y_{\beta}(x,t,k)$ defined in \eqref{Ybeta-def} is an analytic and bounded function of $k\in D_{\epsilon}(\beta)$. As $t\to+\infty$, $m^{\beta}$ satisfies \eqref{mbeta-mmodel-t-infty}.
\end{lemma}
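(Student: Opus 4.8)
The plan is to assemble the ingredients prepared in the preceding discussion: the local biholomorphism $k\mapsto\zeta(k)$ of \eqref{zeta-k-near-beta}, which satisfies $\frac{4}{3}\zeta^{3/2}=2\ii tg_{\beta}(k)$ and carries the curves $\C{Z}_j$ onto the Airy rays; the sectionally holomorphic exponential factors, bounded and analytic by Lemmas~\ref{delta-properties} and \ref{h-properties} together with the genericity and nonvanishing assumptions; the piecewise-constant matrix $A(k)$; and the Airy parametrix $m^{\Airy}$ with its $N$-th order model $m_{\asympt,N}^{\Airy,\inv}$ from Appendix~\ref{sec:B}.

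First I would verify that $m^{\beta}$ has the stated jumps. Since $\frac{4}{3}\zeta^{3/2}=2\ii tg_{\beta}$ and $\zeta$ sends the $\C{Z}_j$ onto the Airy rays, the matrix $m^{\Airy}(\zeta(k))$ has exactly the jumps $v^{(\beta1)}$ on $\Sigma^{(5)}\cap D_{\epsilon}(\beta)$, so $m^{(\beta1)}$ and $m^{\Airy}(\zeta(k))$ differ by a jump-free left factor. Unwinding $m^{(\beta1)}=m^{(\beta0)}A(k)$ and $m^{(\beta0)}=\hat m^{(5)}\eul^{-\ii(\frac{\ii}{2}\ln(\delta^2\hat a\hat b)+h)\sigma_3}$, and using $g_{\beta}=g-g_{\pm}(\beta)$ to restore the constant $tg(\beta)$, one sees that $m^{\beta}$ of \eqref{mbeta-def} is obtained from $\hat m^{(5)}$ precisely by replacing this left factor with $Y_{\beta}$. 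Granting that $Y_{\beta}$ is analytic in the whole disk (next step), it follows that $m^{\beta}$ and $\hat m^{(5)}$ satisfy the same jump relation across $\Sigma^{(5)}\cap D_{\epsilon}(\beta)$.

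The crux, and the step I expect to be the main obstacle, is to show that $Y_{\beta}$ extends analytically to all of $D_{\epsilon}(\beta)$. Each factor in \eqref{Ybeta-def} is analytic off the arcs $\gamma_{(\beta,\alpha)}\cup\gamma_{(\bar\beta,\beta)}$, and $m_{\asympt,N}^{\Airy,\inv}(\zeta(k))$ has, by \eqref{mAiry-approx-jumpb}, only the constant jump $\left(\begin{smallmatrix}0&-1\\1&0\end{smallmatrix}\right)$ across $\C{Z}_3$. The content is a direct computation — from the explicit constant off-diagonal and diagonal values of $v^{\model}$ on $\gamma_{(\beta,\alpha)}$ and $\gamma_{(\bar\beta,\beta)}$ in \eqref{vmodel-def}, the jump relations \eqref{g-jump}, and the definition of $A(k)$ — that the jumps of $m^{\model}\eul^{-\ii(\frac{\ii}{2}\ln(\hat a\hat b\delta^2)+tg(\beta)+h)\sigma_3}A(k)$ across these two arcs cancel that of $m_{\asympt,N}^{\Airy,\inv}(\zeta(k))$, so that $Y_{\beta}$ is single-valued and analytic on $D_{\epsilon}(\beta)\setminus\{\beta\}$; and that at the branch point $\beta$ the potential singularity is cancelled as well — this last part being built into the construction of $m^{\model}$, whose behaviour at $\beta$ (through the function $\nu$ and the theta quotients $\C{M}_j$ in \eqref{calN-def}) is precisely calibrated to that of the Airy model $m_{\asympt,N}^{\Airy}(\zeta(k))$. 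Thus $Y_{\beta}$ is analytic, hence bounded, on $D_{\epsilon}(\beta)$. Verifying this compatibility — that the genus-$3$ model solution and the local Airy model are glued consistently at $\beta$ — is the delicate point; everything else is bounded-function bookkeeping.

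Finally I would read off the remaining assertions. With $Y_{\beta}$ bounded, $m^{\beta}=Y_{\beta}m^{\Airy}(\zeta(k))A(k)^{-1}\eul^{\ii(\frac{\ii}{2}\ln(\hat a\hat b\delta^2)+tg(\beta)+h)\sigma_3}$ is bounded on $D_{\epsilon}(\beta)\setminus\Sigma^{(5)}$, since $m^{\Airy}(\zeta(k))$ is bounded (it is built from entire Airy functions and $\zeta(k)$ stays in a bounded set), $A(k)^{-1}$ is bounded by the nonvanishing of $a$, and the exponential is bounded by Lemmas~\ref{delta-properties} and \ref{h-properties}. For \eqref{mbeta-mmodel-t-infty}, substituting \eqref{Ybeta-def} into $m^{\beta}(m^{\model})^{-1}$ on $\partial D_{\epsilon}(\beta)$ gives $m^{\model}E\,m_{\asympt,N}^{\Airy,\inv}(\zeta(k))\,m^{\Airy}(\zeta(k))\,E^{-1}(m^{\model})^{-1}$ with $E\coloneqq\eul^{-\ii(\frac{\ii}{2}\ln(\hat a\hat b\delta^2)+tg(\beta)+h)\sigma_3}A(k)$ uniformly bounded there; since $\inf_{\partial D_{\epsilon}(\beta)}\abs{g_{\beta}}>0$ forces $\abs{\zeta}\geq ct^{2/3}$ uniformly for some $c>0$, the asymptotic formula \eqref{mAiry-approx-mAiryb} gives $m_{\asympt,N}^{\Airy,\inv}(\zeta(k))m^{\Airy}(\zeta(k))=I+\ord(t^{-N-1})$, and conjugation by the bounded matrix $m^{\model}E$ preserves this estimate, which is \eqref{mbeta-mmodel-t-infty}.
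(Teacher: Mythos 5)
Your proposal is correct and follows essentially the same route as the paper: the same chain $\hat m^{(5)}\to m^{(\beta 0)}\to m^{(\beta 1)}$, the same choice \eqref{Ybeta-def} of $Y_{\beta}$ with analyticity obtained from the cancellation of the jump \eqref{mAiry-approx-jumpb} against that of $m^{\model}\eul^{-\ii(\frac{\ii}{2}\ln(\hat a\hat b\delta^2)+tg(\beta)+h)\sigma_3}A(k)$ via \eqref{vmodel-def}, and the same derivation of \eqref{mbeta-mmodel-t-infty} from $\inf_{\partial D_{\epsilon}(\beta)}\abs{g_{\beta}}>0$ and \eqref{mAiry-approx-mAiryb} conjugated by bounded factors. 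The point you flag as delicate --- analyticity of $Y_{\beta}$ at $\beta$ itself --- is treated just as tersely in the paper, the implicit argument being that, once the jumps cancel, the at worst $\ord(\abs{k-\beta}^{-1/2})$ growth coming from the $\abs{k-\beta}^{-1/4}$ behaviour of $m^{\model}$ and the $\zeta^{\pm 1/4}$ factors in $m_{\asympt,N}^{\Airy,\inv}$ makes the isolated singularity removable.
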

 
\subsection{Local model near $\BS{\mu}$}

Let $\accol{\C{R}_j}_1^6$ denote the open subsets of $D_{\epsilon}(\mu)$ displayed in Figure~\ref{fig:contour-near-mu}. Let $\C{X}_j=\C{R}_{j-1}\cap\C{R}_j$, $j=1,\dots,6$, $\C{R}_0\equiv\C{R}_6$, denote the rays separating the $\C{R}_j$. Define $g_{\mu}(k)$ for $k$ near $\mu$ by
\[
g_{\mu}(k)\coloneqq\int_{\mu}^k\dd g=
\begin{cases}
g(k)-g_-(\mu),&k\in\C{R}_1\cup\C{R}_2\cup\C{R}_6,\\
g(k)-g_+(\mu),&k\in\C{R}_3\cup\C{R}_4\cup\C{R}_5,
\end{cases},\quad
k\in D_{\epsilon}(\mu),
\]
where the integration contour lies on $D_{\epsilon}(\mu)\setminus\gamma_{(\bar\beta,\beta)}$.
\begin{figure}[ht]
\centering\includegraphics[scale=.9]{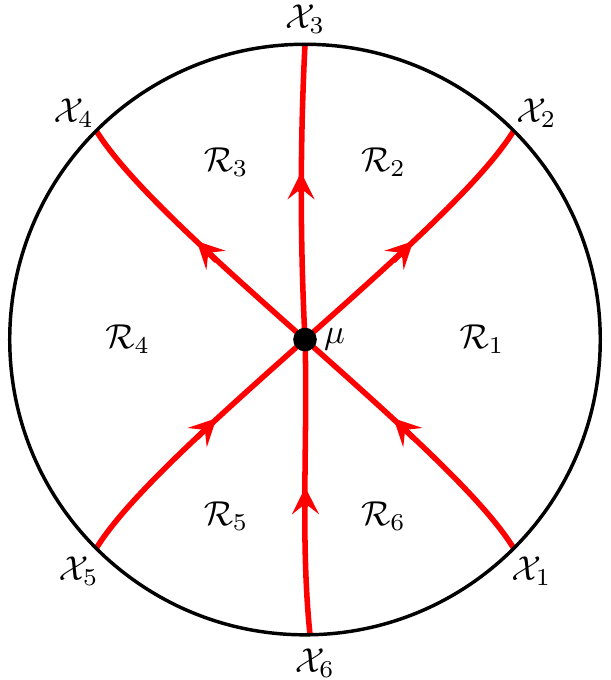}
\caption{The contour in the disk of radius $\epsilon$ centered at $\mu$ and the sets $\accol{\C{R}_j}_1^6$.} 
\label{fig:contour-near-mu}
\end{figure}

Define the piecewise constant function $B(k)\equiv B(\xi,k)$ by
\begin{equation}  \label{B-def}
B(k)\coloneqq\begin{cases}
\eul^{-\ii tg_-(\mu)\sigma_3},&k\in\C{R}_1\cup\C{R}_2\cup\C{R}_6,\\  
\eul^{-\ii tg_+(\mu)\sigma_3},&k\in\C{R}_3\cup\C{R}_4\cup\C{R}_5,  
\end{cases}
\end{equation}
and define the function $m^{(\mu 0)}$ for $k$ near $\mu$ by
\begin{equation}
m^{(\mu 0)}(x,t,k)\coloneqq\hat m^{(5)}(x,t,k)\eul^{-\ii h\sigma_3}B(k),\quad k\in D_{\epsilon}(\mu)\setminus\Sigma^{(5)}.
\end{equation}
Across the part of $\Sigma^{(5)}$ that lies in $D_{\epsilon}(\mu)$, $m^{(\mu 0)}$ satisfies the jump condition $m_+^{(\mu 0)}=m_-^{(\mu 0)}v^{(\mu 0)}$ where
\[
v^{(\mu 0)}\coloneqq\begin{cases}
\begin{pmatrix}1&0\\\frac{\hat b^*}{\hat a}\delta^{-2}\eul^{2\ii tg_{\mu}}&1\end{pmatrix},&k\in\C{X}_1,\\
\begin{pmatrix}1&-\frac{\hat b}{\hat a^*}\delta^2\eul^{-2\ii tg_{\mu}}\\0&1\end{pmatrix},&k\in\C{X}_2,\\
\begin{pmatrix}\frac{1}{aa^*}&0\\0&aa^*\end{pmatrix},&k\in\C{X}_3,\\
\begin{pmatrix}1&0\\\frac{\hat b^*}{\hat a^2\hat a^*}\delta^{-2}\eul^{2\ii tg_{\mu}}&1\end{pmatrix},&k\in\C{X}_4,\\
\begin{pmatrix}1&-\frac{\hat b}{\hat a(\hat a^*)^2}\delta^2\eul^{-2\ii tg_{\mu}}\\0&1\end{pmatrix},&k\in\C{X}_5,\\
\begin{pmatrix}aa^*&0\\0&\frac{1}{aa^*}\end{pmatrix},&k\in\C{X}_6,
\end{cases}
\]
and all contours are oriented upward as in Figure~\ref{fig:contour-near-mu}.

We want to eliminate the jumps across $\gamma_{(\bar\beta,\beta)}$. Hence we define the complex-valued function $\tilde\delta(k)\equiv\tilde\delta(\xi,k)$ by
\begin{equation}  \label{tilde-delta-def}
\tilde\delta(k)\coloneqq\eul^{-\frac{1}{2\pi\ii}\int_{\gamma_{(\mu,\beta)}}\frac{\ln(a(s)a^*(s))}{s-k}\dd s}\eul^{\frac{1}{2\pi\ii}\int_{\gamma_{(\bar\beta,\mu)}}\frac{\ln(a(s)a^*(s))}{s-k}\dd s},\quad k\in\D{C}\setminus\gamma_{(\bar\beta,\beta)},
\end{equation}
where the branch of the logarithm is chosen so that $\ln(a(s)a^*(s))$ is continuous on each contour and real for $s=\mu$. Since $a(k)$ has no zeros or poles, the function $aa^*$ is nonzero and finite everywhere on the contours. The function $\tilde\delta$ is in general singular at $\beta$ and $\bar\beta$, but near $\mu$ (the only region we are concerned about) it is bounded according to the following lemma.

\begin{lemma}   \label{tilde-delta-properties}
The function $\tilde\delta(k)$ has the following properties:
\begin{enumerate}[\rm(a)]
\item
$\tilde\delta(k)$ and $\tilde\delta(k)^{-1}$ are bounded and analytic functions of $k\in D_{\epsilon}(\mu)\setminus\gamma_{(\bar\beta,\beta)}$.
\item
$\tilde\delta$ obeys the symmetry
\[
\tilde\delta=(\tilde\delta^*)^{-1},\quad k\in\D{C}\setminus\gamma_{(\bar\beta,\beta)}.
\]
\item
$\tilde\delta$ satisfies the jump condition
\begin{equation}  \label{tilde-delta-jump}
\tilde\delta_+=\tilde\delta_-\times
\begin{cases}
\frac{1}{aa^*},&k\in\gamma_{(\mu,\beta)},\\[2mm]
aa^*,&k\in\gamma_{(\bar\beta,\mu)}.
\end{cases}
\end{equation}
\end{enumerate}
\end{lemma}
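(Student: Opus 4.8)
The plan is to mirror the proof of Lemma~\ref{delta-properties}, to which Lemma~\ref{tilde-delta-properties} is a close analogue: $\tilde\delta$ is a Cauchy-type integral whose density $\ln(aa^*)$ is smooth on each of the arcs $\gamma_{(\mu,\beta)}$ and $\gamma_{(\bar\beta,\mu)}$ --- because $a$ has no zeros or poles there, $aa^*$ is finite and nonzero --- and takes the \emph{real} value $\ln(\abs{a(\mu)}^2)$ at the junction point $\mu$. Writing $\tilde\delta=\eul^{\psi}$ with
\[
\psi(k)\coloneqq\frac{1}{2\pi\ii}\left(-\int_{\gamma_{(\mu,\beta)}}+\int_{\gamma_{(\bar\beta,\mu)}}\right)\frac{\ln(a(s)a^*(s))}{s-k}\,\dd s,\quad k\in\D{C}\setminus\gamma_{(\bar\beta,\beta)},
\]
one sees at once that $\psi$ is analytic off $\gamma_{(\bar\beta,\beta)}$ with continuous boundary values away from the three points $\mu$, $\beta$, $\bar\beta$; this yields the analyticity part of (a) and the continuity underlying (c). The jump conditions \eqref{tilde-delta-jump} then follow from the Sokhotski--Plemelj formula: on $\gamma_{(\mu,\beta)}$ only the first integral jumps, so $\psi_+-\psi_-=-\ln(aa^*)$ and hence $\tilde\delta_+/\tilde\delta_-=(aa^*)^{-1}$; on $\gamma_{(\bar\beta,\mu)}$ only the second jumps, giving $\psi_+-\psi_-=\ln(aa^*)$ and $\tilde\delta_+/\tilde\delta_-=aa^*$. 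This settles (c).

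For the symmetry (b) I would use that $aa^*$ is Schwarz-symmetric, $(aa^*)^*=aa^*$; since $\mu\in\D{R}$ and the branch of $\ln(aa^*)$ is chosen real at $\mu$ and continued continuously along each arc, the continuations satisfy $(\ln(aa^*))^*=\ln(aa^*)$, the conjugate of $\gamma_{(\mu,\beta)}$ being $-\gamma_{(\bar\beta,\mu)}$ and vice versa. Inserting this into the identity \eqref{int-sym} and tracking the orientation reversal of the conjugate contours gives $\psi^*=-\psi$, i.e.\ $\tilde\delta\tilde\delta^*=1$, which is (b).

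There remains the boundedness part of (a). By the choice of $\epsilon$ in \eqref{disks} the disk $D_{\epsilon}(\mu)$ excludes $\beta$ and $\bar\beta$, and on $D_{\epsilon}(\mu)\cap\gamma_{(\bar\beta,\beta)}$ away from $\mu$ the boundary values of $\tilde\delta^{\pm1}$ are continuous, hence bounded. Near $\mu$ I would split $\psi=\psi_1+\psi_2$ with $\psi_1\coloneqq-\frac{1}{2\pi\ii}\int_{\gamma_{(\mu,\beta)}}\frac{\ln(aa^*)}{s-k}\,\dd s$ and $\psi_2\coloneqq\frac{1}{2\pi\ii}\int_{\gamma_{(\bar\beta,\mu)}}\frac{\ln(aa^*)}{s-k}\,\dd s$: each is a Cauchy integral over an arc having one endpoint at $\mu$ with density real at $\mu$, so Lemma~\ref{lem-C} gives that $\eul^{\pm\psi_1}$ and $\eul^{\pm\psi_2}$ are bounded as $k\to\mu$, whence $\tilde\delta=\eul^{\psi_1}\eul^{\psi_2}$ and $\tilde\delta^{-1}$ are bounded near $\mu$, completing (a). The only real work here is bookkeeping --- keeping track of the orientation reversal of the conjugate contours and the Schwarz symmetry of the chosen branch of $\ln(aa^*)$ in (b), and verifying that the real-endpoint-value hypothesis of Lemma~\ref{lem-C} applies to each of $\psi_1$ and $\psi_2$ separately in (a); no idea beyond those already used for $\delta$ is needed.
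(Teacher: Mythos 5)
Your proposal is correct and follows essentially the same route as the paper, whose proof consists of exactly the two ingredients you spell out: the boundedness near $\mu$ via Lemma~\ref{lem-C} (using that $\ln(\abs{a(\mu)}^2)\in\D{R}$), with the jump relation \eqref{tilde-delta-jump} and the symmetry following directly from the definition \eqref{tilde-delta-def} by Plemelj and \eqref{int-sym}. Your write-up merely makes explicit the bookkeeping (splitting into the two arcs, orientation of the conjugate contours) that the paper leaves to the reader.
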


\begin{proof}
Since $\ln(\abs{a(\mu)}^2)\in\D{R}$, Lemma~\ref{lem-C} shows that $\tilde\delta(k)$ is bounded for $k\in D_{\epsilon}(\mu)\setminus\gamma_{(\bar\beta,\beta)}$. The other properties follow easily from the definition~\eqref{tilde-delta-def}.
\end{proof}

Let
\[
m^{(\mu 1)}(x,t,k)\coloneqq m^{(\mu 0)}(x,t,k)\tilde\delta(k)^{-\sigma_3},\quad k\in D_{\epsilon}(\mu).
\]
Then $m^{(\mu 1)}$ satisfies the jump condition $m_+^{(\mu 1)}=m_-^{(\mu 1)}v^{(\mu 1)}$ where
\[
v^{(\mu 1)}\coloneqq\tilde\delta_-^{\sigma_3}v^{(\mu 0)}\tilde\delta_+^{-\sigma_3}.
\]
Defining $\delta_2(k)\equiv\delta_2(\xi,k)$ by $\delta_2\coloneqq\delta\tilde\delta$, we find   
\[
v^{(\mu 1)}= 
\begin{cases}
\begin{pmatrix}1&0\\\frac{\hat b^*}{\hat a}\delta_2^{-2}\eul^{2\ii tg_{\mu}}&1\end{pmatrix},&k\in\C{X}_1,\\ 
\begin{pmatrix}1&-\frac{\hat b}{\hat a^*}\delta_2^2\eul^{-2\ii tg_{\mu}}\\0&1\end{pmatrix},&k\in\C{X}_2,\\ 
\begin{pmatrix}1&0\\\frac{\hat b^*}{\hat a^2\hat a^*}\delta_2^{-2}\eul^{2\ii tg_{\mu}}&1\end{pmatrix},&k\in\C{X}_4,\\ 
\begin{pmatrix}1&-\frac{\hat b}{\hat a(\hat a^*)^2}\delta_2^2\eul^{-2\ii tg_{\mu}}\\0&1\end{pmatrix},&k\in\C{X}_5,\\ 
\,I,&k\in\C{X}_3\cup\C{X}_6,
\end{cases}
\]
or, in terms of $\hat r$,
\[
v^{(\mu 1)}= 
\begin{cases}
\begin{pmatrix}1&0\\\hat r\delta_2^{-2}\eul^{2\ii tg_{\mu}}&1\end{pmatrix},&k\in\C{X}_1,\\ 
\begin{pmatrix}1&-\hat r^*\delta_2^2\eul^{-2\ii tg_{\mu}}\\0&1\end{pmatrix},&k\in\C{X}_2,\\ 
\begin{pmatrix}1&0\\\hat r(1+\hat r\hat r^*)\delta_2^{-2}\eul^{2\ii tg_{\mu}}&1\end{pmatrix},&k\in\C{X}_4,\\ 
\begin{pmatrix}1&-\hat r^*(1+\hat r\hat r^*)\delta_2^2\eul^{-2\ii tg_{\mu}}\\0&1\end{pmatrix},&k\in\C{X}_5,\\ 
\,I,&k\in\C{X}_3\cup\C{X}_6.
\end{cases}
\]
\begin{figure}[ht]
\centering\includegraphics[scale=.95]{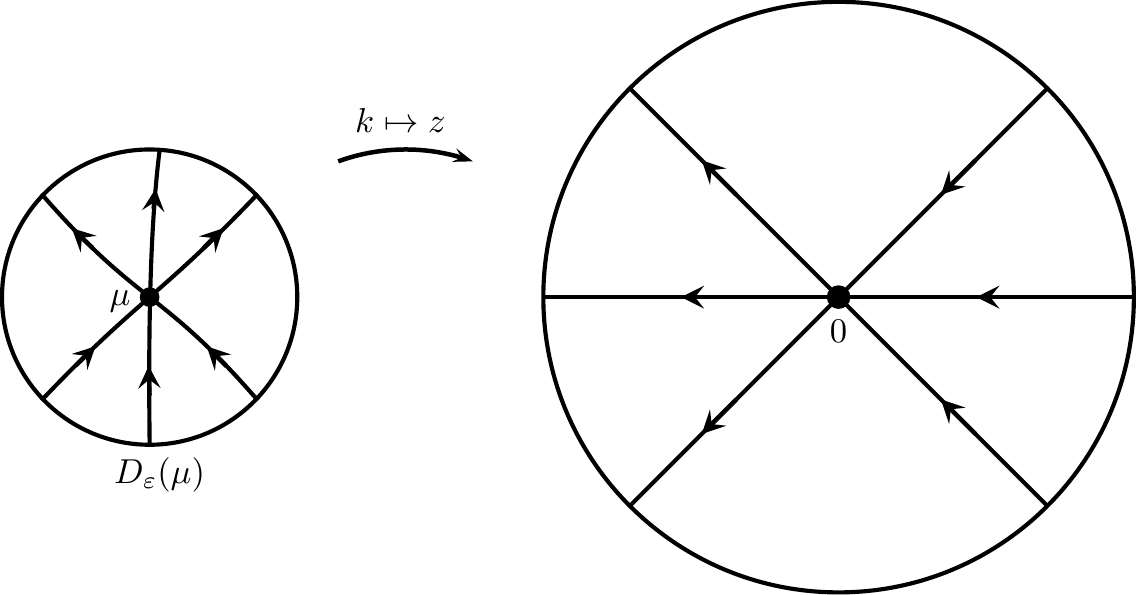}
\caption{The local change of variables $k\mapsto z$ is a biholomorphism of the disk $D_{\epsilon}(\mu)$ onto a neighborhood of the origin in the $z$-plane which straightens out the contour.} 
\label{fig:k-to-z}
\end{figure}

In order to relate $m^{(\mu 1)}$ to the solution $m^X$ of Appendix~\ref{sec:A}, we make a local change of variables for $k$ near $\mu$ and introduce the new variable $z\equiv z(k)$ by (see Figure~\ref{fig:k-to-z})
\[
z\coloneqq\sqrt{tg_{\mu}(k)},
\]
where the branch of the square root is fixed as follows: Since $g_{\mu}(k)$ has a double zero at $k=\mu$, we can write
\begin{equation}  \label{psimu-def}
z=\ii\sqrt{t}(k-\mu)\psi_{\mu}(k),
\end{equation} 
where the function $\psi_{\mu}(k)\equiv\psi_{\mu}(\xi,k)$ is analytic for $k\in D_{\epsilon}(\mu)$. A computation using the definition \eqref{dg} of $\dd g$ shows that $\frac{\dd^2g}{\dd k^2}(\mu)<0$, hence we can fix the branch by requiring that $\psi_{\mu}(\mu)>0$. Shrinking $\epsilon>0$ if necessary, we may assume that $\Re\psi_{\mu}(k)>0$ for $k\in\overline{D_{\epsilon}(\mu)}$. We note that $z$ satisfies 
\[
2\ii tg_{\mu}(k)=2\ii z^2.
\]

The map $k\mapsto z$ is a biholomorphism from the disk $D_{\epsilon}(\mu)$ onto a neighborhood of the origin, which maps $\C{X}_3$ and $\C{X}_6$ into $\D{R}_-$ and $\D{R}_+$, respectively. By deforming the contours $\C{X}_1$, $\C{X}_2$, $\C{X}_4$, and $\C{X}_5$ if necessary, we may assume that they are mapped into the straight rays of the complex $z$-plane for which $\arg z$ equals $\frac{\pi}{4}$, $\frac{3\pi}{4}$, $-\frac{3\pi}{4}$, and $-\frac{\pi}{4}$, respectively.

We next consider the behavior of $\tilde\delta$ as $k$ approaches $\mu$. Let $\ln_{\beta}(k-\mu)$ and $\ln_{\bar\beta}(k-\mu)$ denote the function $\ln(k-\mu)$ with branch cut along $\gamma_{(\mu,\beta)}$ and $\gamma_{(\mu,\bar\beta)}$, respectively. More precisely,
\begin{alignat*}{2}
&\ln_{\beta}(k-\mu)=\ln(k-\mu),&\qquad&k\in D_{\epsilon}\setminus\gamma_{(\mu,\beta)},\\
&\ln_{\bar\beta}(k-\mu)=\ln(k-\mu),&&k\in D_{\epsilon}\setminus\gamma_{(\bar\beta,\mu)},
\end{alignat*}
where the branches are fixed by requiring that $\ln_{\beta}(k-\mu)$ and $\ln_{\bar\beta}(k-\mu)$ are strictly positive for $k>\mu$. We also define two functions $L_{\beta}$ and $L_{\bar\beta}$ by
\begin{alignat*}{3}
&L_{\beta}(s,k)\coloneqq\ln(k-s),&\quad&s\in\gamma_{(\mu,\beta)},&\quad&k\in D_{\epsilon}\setminus\gamma_{(\mu,\beta)},\\
&L_{\bar\beta}(s,k)\coloneqq\ln(k-s),&&s\in\gamma_{(\bar\beta,\mu)},&&k\in D_{\epsilon}\setminus\gamma_{(\bar\beta,\mu)}, 
\end{alignat*}
where the branches are fixed by requiring that:
\begin{enumerate}[(i)]
\item
$L_{\beta}(s,k)$ is a continuous function of $s\in\gamma_{(\mu,\beta)}$ for each $k\in D_{\epsilon}\setminus\gamma_{(\mu,\beta)}$.
\item
$L_{\bar\beta}(s,k)$ is a continuous function of $s\in\gamma_{(\bar\beta,\mu)}$ for each $k\in D_{\epsilon}\setminus\gamma_{(\bar\beta,\mu)}$.
\item
For $s=\mu$, we have $L_{\beta}(\mu,k)=\ln_{\beta}(k-\mu)$ and $L_{\bar\beta}(\mu,k)=\ln_{\bar\beta}(k-\mu)$.
\end{enumerate}
Let $q\equiv q(\xi)$ denote the constant $q\coloneqq\hat r(\mu)$. Integrating by parts, we find
\begin{align*}
\int_{\gamma_{(\mu,\beta)}}\frac{\ln(1+r(s)r^*(s))}{s-k}\dd s
&=L_{\beta}(\beta,k)\ln(1+r(\beta)r^*(\beta))-\ln_{\beta}(k-\mu)\ln(1+\abs{q}^2)\\
&\quad-\int_{\gamma_{(\mu,\beta)}}L_{\beta}(s,k)\dd\ln(1+r(s)r^*(s)) 
\end{align*}
and
\begin{align*}
\int_{\gamma_{(\bar\beta,\mu)}}\frac{\ln(1+r(s)r^*(s))}{s-k}\dd s
&=-L_{\bar\beta}(\bar\beta,k)\ln(1+r(\bar\beta)r^*(\bar\beta))+\ln_{\bar\beta}(k-\mu)\ln(1+\abs{q}^2)\\
&\quad-\int_{\gamma_{(\bar\beta,\mu)}}L_{\bar\beta}(s,k)\dd\ln(1+r(s)r^*(s)).
\end{align*}
Hence we can write
\begin{equation}  \label{tilde-delta-bis}
\tilde\delta(k)=\eul^{\ii\nu\croch{\ln_{\beta}(k-\mu)+\ln_{\bar\beta}(k-\mu)}+\tilde\chi(k)},\quad k\in\D{C}\setminus\gamma_{(\bar\beta,\beta)},
\end{equation}
where the constant $\nu\equiv\nu(\xi)>0$ is defined by
\begin{equation}  \label{nu-def}
\nu\coloneqq\frac{1}{2\pi}\ln(1+\abs{q}^2),
\end{equation}
and the function $\tilde\chi(k)\equiv\tilde\chi(\xi,k)$ is defined by
\begin{align}
\tilde\chi(k)&\coloneqq\frac{1}{2\pi\ii}L_{\beta}(\beta,k)\ln(1+r(\beta)r^*(\beta))+\frac{1}{2\pi\ii}L_{\bar\beta}(\bar\beta,k)\ln(1+r(\bar\beta)r^*(\bar\beta))\notag\\
&\quad-\frac{1}{2\pi\ii}\int_{\gamma_{(\mu,\beta)}}L_{\beta}(s,k)\dd\ln(1+r(s)r^*(s))\notag\\
&\quad+\frac{1}{2\pi\ii}\int_{\gamma_{(\bar\beta,\mu)}}L_{\bar\beta}(s,k)\dd\ln(1+r(s)r^*(s)).
\end{align}

We also consider the behavior of $\delta(k)$ as $k$ approaches $\mu$. Integration by parts gives
\begin{align*}
\int_{-\infty}^{\mu}\frac{\ln(1+\abs{r(s)}^2)}{s-k}\dd s&=\ln(k-s)\ln(1+\abs{r(s)}^2)\vert_{s=-\infty}^{-1}\\
&\quad +\ln(k-s)\ln(1+\abs{r(s)}^2)\vert_{s=-1}^{\mu}\\
&\quad -\left(\int_{-\infty}^{-1}+\int_{-1}^{\mu}\right)\ln(k-s)\dd\ln(1+\abs{r(s)}^2),\quad k\in\D{C}\setminus(-\infty,\mu\rbrack.
\end{align*}
Hence we can write $\delta$ as
\begin{equation}  \label{delta-bis}
\delta(k)=\eul^{-\ii\nu\ln(k-\mu)+\chi(k)},\quad k\in\D{C}\setminus(-\infty,\mu\rbrack,
\end{equation}
where the function $\chi(k)\equiv\chi(\xi,k)$ is defined by
\begin{align}
\chi(k)&\coloneqq\frac{1}{2\pi\ii}\ln(k+1)\ln\frac{1+\abs{r_+(-1)}^2}{1+\abs{r_-(-1)}^2}\notag\\
&\quad-\frac{1}{2\pi\ii}\left(\int_{-\infty}^{-1}+\int_{-1}^{\mu}\right)\ln(k-s)\dd\ln(1+\abs{r(s)}^2),\quad k\in\D{C}\setminus(-\infty,\mu\rbrack,
\end{align}
with $r_+(-1)$ and $r_-(-1)$ denoting the values of $r(k)$ on the left and right sides of $\Sigma_1$, respectively. 

For a real number $a$, let $\ln_a(k)$ denote the logarithm of $k$ with branch cut along $\arg k=a$, i.e., $\ln_ak=\ln\abs{k}+\ii\arg k$ with $\arg k\in(a,a+2\pi\rbrack$. Then $\ln_{-\pi}k=\ln k$. 

Define the function $p(z)\equiv p(\xi,z)$ by
\[
p(z)\coloneqq\eul^{-\ii\nu\croch{\ln_{-\pi/2}(z)-\ln z-\ln_0(z)}},\quad z\in\D{C}\setminus(\D{R}\cup\ii\,\D{R}_-).
\]
Moreover, define the functions $\delta_0(t)\equiv\delta_0(\xi,t)$ and $\delta_1(k)\equiv\delta_1(\xi,k)$ by
\begin{alignat*}{2}
\delta_0(t)&\coloneqq\eul^{\frac{\pi\nu}{2}}t^{-\frac{\ii\nu}{2}}\eul^{-\ii\nu\ln\psi_{\mu}(\mu)}\eul^{\chi(\mu)+\tilde\chi(\mu)},&\qquad&t>0,\\
\delta_1(k)&\coloneqq\eul^{-\ii\nu\ln\frac{\psi_{\mu}(k)}{\psi_{\mu}(\mu)}}\eul^{\chi(k)-\chi(\mu)+\tilde\chi(k)-\tilde\chi(\mu)},&&k\in D_{\epsilon}(\mu).
\end{alignat*}

\begin{lemma}\label{delta-two-properties}
We have
\[
\delta_2(k)=p(z(k))\delta_0(t)\delta_1(k),\quad k\in D_{\epsilon}(\mu) \setminus\bigl((-\infty,\mu\rbrack\cup\gamma_{(\bar\beta,\beta)}\bigr).
\]
\end{lemma}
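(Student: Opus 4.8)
The plan is to show that the ratio $R(k)\coloneqq\delta_2(k)\bigl/\bigl(p(z(k))\delta_0(t)\delta_1(k)\bigr)$ is identically $1$ on the stated domain. Writing $\delta_2=\delta\tilde\delta$ and inserting the representations \eqref{delta-bis} and \eqref{tilde-delta-bis} together with the definitions of $\delta_0$, $\delta_1$, and $p$, the contributions of $\chi$ and $\tilde\chi$ cancel (since $\delta_0$ carries their values at $\mu$ and $\delta_1$ the ratios to those values, so $\delta_0\delta_1$ contributes exactly $\eul^{\chi(k)+\tilde\chi(k)}$, which is the $\chi,\tilde\chi$–part of $\delta\tilde\delta$), and the $\psi_\mu$–factors in $\delta_0$ and $\delta_1$ combine into $\eul^{-\ii\nu\ln\psi_{\mu}(k)}$ because $\psi_{\mu}(\mu)>0$ and $\Re\psi_{\mu}>0$ on $\overline{D_{\epsilon}(\mu)}$ make the principal logarithm additive there. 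The same sign condition, via \eqref{psimu-def}, gives $\ln\bigl(\ii\sqrt{t}\,\psi_{\mu}(k)\bigr)=\tfrac12\ln t+\tfrac{\ii\pi}{2}+\ln\psi_{\mu}(k)$ with all branches principal (since $\arg(\ii\sqrt t\,\psi_\mu)=\tfrac\pi2+\arg\psi_\mu\in(0,\pi)$); this is precisely what the factor $\eul^{\pi\nu/2}t^{-\ii\nu/2}$ in $\delta_0$ encodes. After these cancellations the claim $R\equiv1$ is equivalent to the branch‑of‑logarithm identity
\[
\ln_{\beta}(k-\mu)+\ln_{\bar\beta}(k-\mu)-\ln(k-\mu)=\ln z+\ln_0(z)-\ln_{-\pi/2}(z)-\ln\bigl(\ii\sqrt{t}\,\psi_{\mu}(k)\bigr),\qquad z=z(k).
\]

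To prove this identity I would argue as follows. First evaluate both sides at a point $k_0\in(\mu,\mu+\epsilon)$: there $z(k_0)=\ii\sqrt{t}\,(k_0-\mu)\psi_{\mu}(k_0)$ is positive imaginary with $\psi_{\mu}(k_0)>0$, so $\ln z$, $\ln_0(z)$, $\ln_{-\pi/2}(z)$ all equal $\ln|z|+\tfrac{\ii\pi}{2}$ while $\ln_{\beta}(k_0-\mu)=\ln_{\bar\beta}(k_0-\mu)=\ln(k_0-\mu)$, and the identity holds by inspection; equivalently $R(k_0)=1$. It then remains to check that $R$ has no jump across any of the three arcs $(-\infty,\mu]$, $\gamma_{(\mu,\beta)}$, $\gamma_{(\mu,\bar\beta)}$ issuing from $\mu$. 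Under $k\mapsto z(k)$ these map onto $\ii\,\D{R}_-$, $\D{R}_-$, $\D{R}_+$ respectively, which are exactly the cuts of $\ln_{-\pi/2}(z)$, $\ln z$, $\ln_0(z)$ and of the corresponding terms of $p$; matching the $2\pi\ii$–jumps of those terms (together with the jumps of $\chi$ inside $\delta_1$ across $(-\infty,\mu]$, and of $\tilde\chi$ inside $\delta_1$ across $\gamma_{(\mu,\beta)}$ and $\gamma_{(\mu,\bar\beta)}$) against the multiplicative jumps $\delta_{2+}/\delta_{2-}=(aa^*)^{\mp1}$ supplied by Lemma~\ref{delta-properties}(c) and Lemma~\ref{tilde-delta-properties}(c) shows that $R$ is jump‑free. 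Hence $R$ extends to a single‑valued analytic function on $D_{\epsilon}(\mu)\setminus\{\mu\}$; since $\delta_2$ and $p(z)\delta_0\delta_1$ both have the same mild $(k-\mu)^{\ii\nu}$–type behavior as $k\to\mu$ (Lemmas~\ref{delta-properties}, \ref{tilde-delta-properties}, and \ref{lem-C} for the left side, the explicit formula for the right side), $R$ is bounded near $\mu$, hence analytic on all of $D_{\epsilon}(\mu)$; being $\equiv1$ on the arc $(\mu,\mu+\epsilon)$ it is $\equiv1$, which is the assertion.

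The one genuinely delicate part is the branch bookkeeping in this last step: one must match the three cuts $(-\infty,\mu]$, $\gamma_{(\mu,\beta)}$, $\gamma_{(\mu,\bar\beta)}$ in the $k$–plane with the cuts $\ii\,\D{R}_-$, $\D{R}_-$, $\D{R}_+$ of the $z$–logarithms under the conformal map \eqref{psimu-def}, and verify that the various $2\pi\ii$ jumps and the constant $\tfrac{\ii\pi}{2}$ align so that $R$ carries no monodromy around $\mu$. The orientation of $\psi_{\mu}$ (so that $\arg\psi_{\mu}\in(-\tfrac\pi2,\tfrac\pi2)$ and the arcs $\gamma_{(\mu,\beta)}$, $\gamma_{(\mu,\bar\beta)}$, $(-\infty,\mu]$ are sent to the negative real, positive real, and negative imaginary rays, respectively) is exactly what makes this match work. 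Everything else is a direct substitution, and anchoring the computation at $k_0\in(\mu,\mu+\epsilon)$ removes any ambiguity in the overall normalization.
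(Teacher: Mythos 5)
Your argument is correct, but it follows a genuinely different route from the paper's. The paper proves the lemma by rewriting each of the three $k$-plane logarithms separately: using that $k\mapsto z$ sends $(-\infty,\mu)$, $\gamma_{(\mu,\beta)}$, $\gamma_{(\bar\beta,\mu)}$ to $\ii\,\D{R}_-$, $\D{R}_-$, $\D{R}_+$, it writes $\ln(k-\mu)=-\tfrac{\pi\ii}{2}-\tfrac{\ln t}{2}+\ln_{-\pi/2}(z)-\ln\psi_{\mu}(k)$ and the analogous formulas for $\ln_{\beta}(k-\mu)$ and $\ln_{\bar\beta}(k-\mu)$ in terms of $\ln z$ and $\ln_0 z$, substitutes these into \eqref{delta-bis} and \eqref{tilde-delta-bis} to get \eqref{delta-ter} and \eqref{tilde-delta-ter}, and multiplies; since each of those identities lives on $D_{\epsilon}(\mu)$ minus a \emph{single} cut, which is connected, no propagation of branch information across cuts is ever needed. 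You instead form the ratio $R$, anchor its value at a point of $(\mu,\mu+\epsilon)$ (tacitly using $\psi_{\mu}(k_0)>0$ there, which is fine since $g_{\mu}$ is real and negative on that segment and $\Re\psi_{\mu}>0$), and then cross the three cuts by jump matching — and this, as you note, is where the real work sits; you only sketch it, but the check does go through with the stated cut images and orientations. Two remarks on your route: first, it can be streamlined, because the factors $\eul^{\chi+\tilde\chi}$ occur identically in $\delta_2$ and in $\delta_0\delta_1$, so in $R$ they cancel together with all the $aa^*$ and $1+\abs{r}^2$ jump data, and the jump of $R$ across each cut is a pure factor $\eul^{-2\pi\nu n}$, $n\in\D{Z}$; hence the only thing to verify is that each $k$-plane logarithm and its $z$-plane counterpart jump by the same $2\pi\ii$ across the same cut, which follows from the orientation-preserving conformality of $k\mapsto z$ together with the cut assignments — invoking Lemmas~\ref{delta-properties}(c) and \ref{tilde-delta-properties}(c) is not actually needed. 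Second, once $R$ is jump-free it is analytic on the connected punctured disk $D_{\epsilon}(\mu)\setminus\{\mu\}$, so the identity theorem there already gives $R\equiv1$; the boundedness-near-$\mu$ step is superfluous. In exchange for being longer and requiring this bookkeeping on a disconnected domain, your approach makes transparent that the lemma is exactly the vanishing of a locally constant integer (the monodromy mismatch of the chosen logarithm branches), whereas the paper's substitution argument is shorter and sidesteps the issue entirely.
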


\begin{proof}
Using that the map $k\mapsto z$ takes $(-\infty,\mu)\cap D_{\epsilon}(\mu)$ into $\ii\,\D{R}_-$ and that $\Re\psi_{\mu}>0$ in $D_{\epsilon}(\mu)$, we find
\[
\ln(k-\mu)=\ln\frac{-\ii z}{\sqrt{t}\psi_{\mu}(k)}=-\frac{\pi\ii}{2}-\frac{\ln t}{2}+\ln_{-\pi/2}(z)-\ln\psi_{\mu}(k),\quad k\in D_{\epsilon}(\mu)\setminus(-\infty,\mu\rbrack,
\]      
where $z\equiv z(k)$. Thus equation \eqref{delta-bis} can be written as
\begin{equation}   \label{delta-ter}
\delta(k)=\eul^{-\frac{\pi\nu}{2}}t^{\frac{\ii\nu}{2}}\eul^{-\ii\nu\croch{\ln_{-\pi/2}(z)-\ln\psi_{\mu}(k)}+\chi(k)},\quad k\in\D{C}\setminus(-\infty,\mu\rbrack. 
\end{equation}
Similarly, since the map $k\mapsto z$ takes $\C{X}_3$ into $\D{R}_-$ and $\C{X}_6$ into $\D{R}_+$,
\begin{alignat*}{2}
&\ln_{\beta}(k-\mu)=\ln_{\beta}\frac{-\ii z}{\sqrt{t}\psi_{\mu}(k)}=-\frac{\ii\pi}{2}-\frac{\ln t}{2}+\ln z-\ln\psi_{\mu}(k),&\quad&k\in D_\epsilon\setminus\gamma_{(\mu,\beta)},\\
&\ln_{\bar\beta}(k-\mu)=\ln_{\bar\beta}\frac{-\ii z}{\sqrt{t}\psi_{\mu}(k)}=-\frac{\ii\pi}{2}-\frac{\ln t}{2}+\ln_0z-\ln\psi_{\mu}(k),&&k\in D_\epsilon\setminus\gamma_{(\bar\beta,\mu)}.
\end{alignat*}
Substituting these expressions into \eqref{tilde-delta-bis}, we obtain
\begin{equation}  \label{tilde-delta-ter}
\tilde\delta(k)=\eul^{\pi\nu}t^{-\ii\nu}\eul^{\ii\nu\croch{\ln z+\ln_0z-2\ln\psi_{\mu}(k)}+\tilde\chi(k)},\quad k\in\D{C}\setminus\gamma_{(\bar\beta,\beta)}.
\end{equation}
Equations~\eqref{delta-ter} and \eqref{tilde-delta-ter} yield
\[
\delta_2=\eul^{\frac{\pi\nu}{2}}t^{-\frac{\ii\nu}{2}}\eul^{\ii\nu\croch{-\ln_{-\pi/2}z+\ln z+\ln_0z-\ln\psi_{\mu}(k)}}\eul^{\chi(k)+\tilde\chi(k)},
\]
so the lemma follows.
\end{proof}

Define $m^{(\mu 2)}(x,t,z)$ by
\[
m^{(\mu 2)}(x,t,z(k))\coloneqq m^{(\mu 1)}(x,t,k)\delta_0(t)^{\sigma_3},\quad k\in D_{\epsilon}(\mu)\setminus\Sigma^{(5)}.
\]
Then $m^{(\mu 2)}$ is an analytic function of $z$ except for a jump across the cross $X$ displayed in Figure~\ref{fig:contour-X}. Across $X$, $m^{(\mu 2)}$ satisfies the jump condition $m_+^{(\mu 2)}=m_-^{(\mu 2)}v^{(\mu 2)}$ where
\[
v^{(\mu 2)}\coloneqq
\begin{cases}
\begin{pmatrix}1&0\\\hat r\delta_1^{-2}p(z)^{-2}\eul^{2\ii z^2}&1\end{pmatrix},&\arg z=\frac{\pi}{4},\\ 
\begin{pmatrix}1&\hat r^*\delta_1^2p(z)^2\eul^{-2\ii z^2}\\0&1\end{pmatrix},&\arg z=\frac{3\pi}{4},\\ 
\begin{pmatrix}1&0\\-\hat r(1+\hat r\hat r^*)\delta_1^{-2}p(z)^{-2}\eul^{2\ii z^2}&1\end{pmatrix},&\arg z=\frac{5\pi}{4},\\ 
\begin{pmatrix}1&-\hat r^*(1+\hat r\hat r^*)\delta_1^2p(z)^2\eul^{-2\ii z^2}\\0&1\end{pmatrix},&\arg z=\frac{7\pi}{4}, 
\end{cases}
\] 
and all contours are oriented toward the origin as in Figure~\ref{fig:contour-X}. Define the function $\rho(z)\equiv\rho(\xi,z)$ by
\[
\rho(z)\coloneqq\eul^{\ii\nu\ln_{-\pi/2}(z)},
\]
that is, $\rho(z)$ equals the function $z^{\ii\nu}$ with branch cut along the negative imaginary axis. Since
\[
p(z)=\rho(z)\times
\begin{cases}
1,&\arg z\in(0,\pi),\\
\eul^{2\pi\nu},&\arg z\in(-\pi,-\pi/2),\\
\eul^{-2\pi\nu},&\arg z\in(-\pi/2,0),
\end{cases}
\]
and $\eul^{2\pi\nu}=1+\abs{q}^2$, we can write
\[
v^{(\mu2)}=
\begin{cases}
\begin{pmatrix}1&0\\\hat r\delta_1^{-2}\rho^{-2}\eul^{2\ii z^2}&1\end{pmatrix},&\arg z=\frac{\pi}{4},\\ 
\begin{pmatrix}1&\hat r^*\delta_1^2\rho^2\eul^{-2\ii z^2}\\0&1\end{pmatrix},&\arg z=\frac{3\pi}{4},\\ 
\begin{pmatrix}1&0\\-\frac{\hat r(1+\hat r\hat r^*)}{(1+\abs{q}^2)^2}\delta_1^{-2}\rho^{-2}\eul^{2\ii z^2}&1\end{pmatrix},&\arg z=\frac{5\pi}{4},\\[1mm]
\begin{pmatrix}1&-\frac{\hat r^*(1+\hat r\hat r^*)}{(1+\abs{q}^2)^2}\delta_1^2\rho^2\eul^{-2\ii z^2}\\0&1\end{pmatrix},&\arg z=\frac{7\pi}{4}. 
\end{cases}
\]

For a fixed $z$, $\hat r(k(z))\to q$ and $\delta_1(k(z))\to 1$ as $t\to\infty$. This suggests that $v^{(\mu 2)}$ tends to the jump matrix $v^X$ defined in \eqref{jump-A} for large $t$. In other words, it suggests that the jumps of $\hat m^{(5)}$ for $k$ near $\mu$ approach those of the function $m^X\delta_0^{-\sigma_3}\tilde\delta^{\sigma_3}B(k)^{-1}\eul^{\ii h\sigma_3}$ as $t\to\infty$, where $m^X$ is the solution of the RH problem \eqref{rhp-A}. This suggests that we approximate $\hat m^{(5)}$ near $\mu$ with a $2\times 2$-matrix valued function $m^{\mu}$ of the form
\begin{equation}    \label{mmu-def}
m^{\mu}(x,t,k)=Y_{\mu}(x,t,k)m^X(q,z(k))\delta_0(t)^{-\sigma_3}\tilde\delta(k)^{\sigma_3}B(k)^{-1}\eul^{\ii h(k)\sigma_3},
\end{equation}
where $Y_{\mu}(x,t,k)$ is a function which is analytic for $k\in D_{\epsilon}(\mu)$. To ensure that $m^{\mu}$ is a good approximation of $\hat m^{(5)}$ for large $t$, we want to choose $Y_{\mu}(k)$ so that $m^{\mu}(m^{\model})^{-1}\to I$ on $\partial D_{\epsilon}(\mu)$ as $t\to\infty$. Hence we choose
\begin{equation}    \label{Ymu-def}
Y_{\mu}(x,t,k)\coloneqq m^{\model}(x,t,k)\eul^{-\ii h(k)\sigma_3}B(k)\tilde\delta(k)^{-\sigma_3}\delta_0(t)^{\sigma_3}.
\end{equation}
The function $f(k)\coloneqq m^{\model}(x,t,k)\eul^{-\ii h(k)\sigma_3}$ is analytic in $D_{\epsilon}(\mu)$ except for a jump
across $\C{X}_3\cup\C{X}_6$ given by
\[
f_+=f_-\times
\begin{cases}
\begin{pmatrix}\frac{\eul^{\ii t(g_+-g_-)}}{aa^*}&0\\0&aa^*\eul^{-\ii t(g_+-g_-)}\end{pmatrix},&k\in\C{X}_3,\\
\begin{pmatrix}aa^*\eul^{\ii t(g_+-g_-)}&0\\0&\frac{\eul^{-\ii t(g_+-g_-)}}{aa^*}\end{pmatrix},&k\in\C{X}_6.
\end{cases}
\]
But equations \eqref{B-def} and \eqref{tilde-delta-jump} show that $\tilde\delta(k)^{\sigma_3}B(k)^{-1}$ has the same jump across $\C{X}_3\cup\C{X}_6$. Thus $Y_{\mu}$ is analytic in $D_{\epsilon}(\mu)$.

The part of $\Sigma^{(5)}$ that lies in $D_{\epsilon}(\mu)$ is the union of the cross
\begin{equation}   \label{cross}
\C{X}\coloneqq\C{X}_1\cup\C{X}_2\cup\C{X}_4\cup\C{X}_5
\end{equation}
and the segment $\gamma_{(\bar\beta,\beta)}\cap D_{\epsilon}(\mu)=\C{X}_3\cup\C{X}_6$.

\begin{lemma}     \label{mmu-properties}
The function $m^{\mu}(x,t,k)$ defined in \eqref{mmu-def} is an analytic and bounded function of $k\in D_{\epsilon}(\mu)\setminus\Sigma^{(5)}$. The function $Y_{\mu}(x,t,k)$ defined in \eqref{Ymu-def} is an analytic and bounded function of $k\in D_{\epsilon}(\mu)$. Across $\Sigma^{(5)}\cap D_{\epsilon}(\mu)$, $m^{\mu}$ satisfies the jump condition $m_+^{\mu}=m_-^{\mu}v^{\mu}$, where the jump matrix $v^{\mu}$ satisfies
\[
v^{\mu}=\hat v^{(5)},\quad k\in\gamma_{(\bar\beta,\beta)}\cap D_{\epsilon}(\mu),
\]
and
\begin{equation}  \label{v5-vmu-norms}
\begin{cases}
\norm{\hat v^{(5)}-v^{\mu}}_{L^1(\C{X})}=\ord(t^{-1}\ln t),&\\
\norm{\hat v^{(5)}-v^{\mu}}_{L^2(\C{X})}=\ord(t^{-3/4}\ln t),&\\
\norm{\hat v^{(5)}-v^{\mu}}_{L^{\infty}(\C{X})}=\ord(t^{-1/2}\ln t),&
\end{cases}t\to\infty. 
\end{equation} 
Furthermore, on $\partial D_{\epsilon}(\mu)$ the quotient $m^{\model}(m^{\mu})^{-1}$ satisfies
\begin{equation}  \label{mmu-mmodel-sup-t-infty}
\norm{m^{\model}(m^{\mu})^{-1}-I}_{L^{\infty}(\partial D_{\epsilon}(\mu))}=\ord(t^{-1/2}),
\end{equation}
and
\begin{equation}  \label{mmu-mmodel-1-t-infty}
\frac{1}{2\pi\ii}\int_{\partial D_{\epsilon}(\mu)}\left(m^{\model}(m^{\mu})^{-1}-I\right)\dd k=\frac{Y_{\mu}(x,t,\mu)m_1^XY_{\mu}(x,t,\mu)^{-1}}{\sqrt{t}\psi_{\mu}(\mu)}+\ord(t^{-1}),
\end{equation}
as $t\to\infty$, where $m_1^X\equiv m_1^X(\xi)$ is defined by
\begin{equation}  \label{m1X-def}
m_1^X\coloneqq\begin{pmatrix}0&-\eul^{-\pi\nu}\beta^X(q)\\\eul^{\pi\nu}\overline{\beta^X(q)}&0\end{pmatrix}
\end{equation}
with $\beta^X(q)$ given by \eqref{betaX-def}.
\end{lemma}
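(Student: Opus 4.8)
The plan is to establish, in order, the analyticity and boundedness of $Y_\mu$ and $m^\mu$, the jump relation for $m^\mu$ together with the estimates \eqref{v5-vmu-norms} on the cross $\C{X}$, and finally the two statements \eqref{mmu-mmodel-sup-t-infty}--\eqref{mmu-mmodel-1-t-infty} on $\partial D_\epsilon(\mu)$, exploiting throughout that every factor appearing in \eqref{mmu-def} and \eqref{Ymu-def} other than $m^{\model}$ and $m^X$ is an explicit diagonal exponential. For $Y_\mu$, the text has already checked that $m^{\model}\eul^{-\ii h\sigma_3}$ is analytic in $D_\epsilon(\mu)$ except for a jump across $\C{X}_3\cup\C{X}_6$ which is exactly cancelled by that of $\tilde\delta^{\sigma_3}B^{-1}$ (from \eqref{B-def} and \eqref{tilde-delta-jump}), so that $Y_\mu$ is analytic in $D_\epsilon(\mu)$; boundedness then follows by assembling known bounds. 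Indeed $m^{\model}$ and $(m^{\model})^{-1}$ are bounded on $\overline{D_\epsilon(\mu)}$ uniformly in $t$ — the eight branch points lie outside the disk, the theta quotients of Theorem~\ref{rhp-thm} are bounded, and the denominators $\Theta(\varphi(\infty^+)\pm v(t)+d)$ stay bounded away from $0$ because $v(t)\in\D{R}^3$ while $\Theta$ is $\D{Z}^3$-periodic and nonvanishing on $\D{R}^3$; $\eul^{\pm\ii h\sigma_3}$ is bounded by Lemma~\ref{h-properties}(c); $\tilde\delta^{\pm\sigma_3}$ by Lemma~\ref{tilde-delta-properties}(a); $B^{\pm1}$ is piecewise constant with $\ord(1)$ entries uniformly in $t$ (the exponents $g_\pm(\mu)$ being real); and $\abs{\delta_0(t)^{\pm1}}$ is bounded above and below uniformly in $t$ since $\nu\in\D{R}$ forces $\abs{t^{\mp\ii\nu/2}}=1$. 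Multiplying gives the boundedness of $Y_\mu$; since $m^X,(m^X)^{-1}$ are bounded and analytic off the four-ray cross in the $z$-variable (Appendix~\ref{sec:A}), the same reasoning gives the analyticity and boundedness of $m^\mu$ on $D_\epsilon(\mu)\setminus\Sigma^{(5)}$.

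For the jump relation, across $\gamma_{(\bar\beta,\beta)}\cap D_\epsilon(\mu)=\C{X}_3\cup\C{X}_6$ neither $Y_\mu$ nor $m^X(q,z(\cdot))$ jumps (the jumps of $m^X$ sit only on $\arg z\in\{\pi/4,3\pi/4,5\pi/4,7\pi/4\}$), so the jump of $m^\mu$ there is produced by the diagonal factor $\delta_0^{-\sigma_3}\tilde\delta^{\sigma_3}B^{-1}\eul^{\ii h\sigma_3}$ alone; combining the jumps of $\tilde\delta^{\sigma_3}$ (from \eqref{tilde-delta-jump}), of $B^{-1}$ (from \eqref{B-def}, using $g_+(\mu)-g_-(\mu)=2\Omega_3$), and of $\eul^{\ii h\sigma_3}$ (from \eqref{h-jump-b}) reproduces precisely the restriction of $\hat v^{(5)}$ to that contour, i.e. $v^\mu=\hat v^{(5)}$ there. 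On the cross $\C{X}$, unwinding the substitutions $m^{(\mu0)}\to m^{(\mu1)}\to m^{(\mu2)}$ (all effected by diagonal factors with no jump on $\C{X}$) shows that $\hat v^{(5)}$, written in the $z$-variable, equals $v^{(\mu2)}$, while $v^\mu$ is the same expression with $\hat r(k(z))$, $\delta_1(k(z))$, and $p(z)$ replaced by $q$, $1$, and $\rho(z)$ via Lemma~\ref{delta-two-properties}; hence on each ray $\hat v^{(5)}-v^\mu$ is a single off-diagonal entry equal to $\eul^{\pm2\ii z^2}$ times a bounded factor times $(\hat r(k(z))\delta_1(k(z))^{\mp2}-q)$ (with an additional bounded factor $1+\hat r\hat r^*\to 1+\abs q^2$ on two of the rays).

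The estimates \eqref{v5-vmu-norms} then follow quantitatively. Since $\mu\in(-1,1)$ lies off $\Sigma_1\cup\Sigma_2$ and $a$ is nonvanishing there, $\hat r=r\nu_1^{-2}$ is analytic near $\mu$, so $\hat r(k)-q=\ord(k-\mu)$; and from the integration-by-parts representations of $\chi$ and $\tilde\chi$ preceding Lemma~\ref{delta-two-properties}, the remaining Cauchy integrals have densities smooth at $\mu$, so $\chi(k)-\chi(\mu)$ and $\tilde\chi(k)-\tilde\chi(\mu)$ are $\ord(\abs{k-\mu}(1+\abs{\ln\abs{k-\mu}}))$, whence $\delta_1(k)-1$ is of the same order. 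Inserting $k-\mu=z/(\ii\sqrt t\,\psi_\mu(k))$ from \eqref{psimu-def} and using $\abs{\eul^{\pm2\ii z^2}}=\eul^{-2\abs z^2}$ on the four rays gives, in the $z$-variable, $\abs{\hat v^{(5)}-v^\mu}\le C\,\eul^{-2\abs z^2}\,\abs z\,t^{-1/2}(1+\abs{\ln\abs z}+\ln t)$ on $\C{X}$; the change of variables $k\mapsto z$ contributes a Jacobian of size $\ord(t^{-1/2})$, and the Gaussian moments $\int_0^\infty\eul^{-4s^2}s^n(1+\abs{\ln s}+\ln t)^2\,\dd s=\ord((\ln t)^2)$ then yield the claimed $\ord(t^{-1}\ln t)$, $\ord(t^{-3/4}\ln t)$, $\ord(t^{-1/2}\ln t)$ for the $L^1$, $L^2$, $L^\infty$ norms. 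I expect this bookkeeping — showing that the logarithmic singularities of $\chi$ and $\tilde\chi$ at $\mu$ cost only a factor $\ln t$ rather than a power of $t$, uniformly across all three norms after the $t$-dependent rescaling — to be the main obstacle.

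Finally, on $\partial D_\epsilon(\mu)$ we have $\abs{z(k)}\sim\epsilon\sqrt t\to\infty$. Because all the diagonal exponentials in \eqref{mmu-def}--\eqref{Ymu-def} commute, they cancel telescopically and one obtains the identity $m^{\model}(k)\,m^\mu(k)^{-1}=Y_\mu(k)\,m^X(q,z(k))^{-1}\,Y_\mu(k)^{-1}$ on $\partial D_\epsilon(\mu)$. Feeding in the large-$z$ expansion $m^X(q,z)=I-\ii\,m_1^X/z+\ord(z^{-2})$ of Appendix~\ref{sec:A}, with $m_1^X$ as in \eqref{m1X-def} and \eqref{betaX-def}, and using the uniform boundedness of $Y_\mu^{\pm1}$, gives $\norm{m^{\model}(m^\mu)^{-1}-I}_{L^\infty(\partial D_\epsilon(\mu))}=\ord(t^{-1/2})$, which is \eqref{mmu-mmodel-sup-t-infty}. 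Substituting $z(k)=\ii\sqrt t\,(k-\mu)\psi_\mu(k)$ turns this into $m^{\model}(m^\mu)^{-1}-I=\dfrac{Y_\mu(k)\,m_1^X\,Y_\mu(k)^{-1}}{\sqrt t\,(k-\mu)\,\psi_\mu(k)}+\ord(t^{-1})$ uniformly on $\partial D_\epsilon(\mu)$; since $Y_\mu m_1^X Y_\mu^{-1}$ and $1/\psi_\mu$ are analytic in $D_\epsilon(\mu)$, Cauchy's integral formula picks up the value at the simple pole $k=\mu$ and produces $\frac{1}{2\pi\ii}\int_{\partial D_\epsilon(\mu)}(m^{\model}(m^\mu)^{-1}-I)\,\dd k=\dfrac{Y_\mu(x,t,\mu)\,m_1^X\,Y_\mu(x,t,\mu)^{-1}}{\sqrt t\,\psi_\mu(\mu)}+\ord(t^{-1})$, which is \eqref{mmu-mmodel-1-t-infty}.
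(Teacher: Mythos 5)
Your proposal follows essentially the same route as the paper: analyticity of $Y_\mu$ via the jump cancellation between $m^{\model}\eul^{-\ii h\sigma_3}$ and $\tilde\delta^{\sigma_3}B^{-1}$, the reduction of $\hat v^{(5)}-v^{\mu}$ on $\C{X}$ to $v^{(\mu2)}-v^X$ conjugated by bounded diagonal factors, the bound $\abs{\delta_1-1}\le C\abs{k-\mu}(1+\abs{\ln\abs{k-\mu}})$ together with $\hat r(k)-q=\ord(k-\mu)$ and the Gaussian factor $\eul^{-ct\abs{k-\mu}^2}$ to get \eqref{v5-vmu-norms} (the paper integrates in the $k$-variable rather than rescaling to $z$, but the bookkeeping is equivalent), and finally the large-$z$ expansion of $m^X$ plus Cauchy's formula on $\partial D_\epsilon(\mu)$; your extra care about uniform-in-$t$ boundedness of $m^{\model}$ (via $v(t)\in\D{R}^3$ and nonvanishing of $\Theta$ on $\D{R}^3$) is a welcome refinement of what the paper merely asserts.

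One bookkeeping point to settle: you quote the expansion of Theorem~\ref{thm-A} as $m^X(q,z)=I-\ii m_1^X/z+\ord(z^{-2})$, whereas \eqref{mX-at-z-infty} as printed reads $I+\ii m_1^X/z+\ord(z^{-2})$. Your algebraic identity $m^{\model}(m^\mu)^{-1}=Y_\mu\,(m^X)^{-1}\,Y_\mu^{-1}$ is the correct consequence of \eqref{mmu-def}--\eqref{Ymu-def} (the paper's own proof instead writes $Y_\mu m^X Y_\mu^{-1}$, which by the same computation equals $m^\mu(m^{\model})^{-1}$), so your two sign choices compensate and you land on \eqref{mmu-mmodel-1-t-infty} exactly as stated; inserting \eqref{mX-at-z-infty} as printed into your identity would instead flip the sign of the subleading coefficient. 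Since this sign tension is already present between the paper's proof and its Appendix A, it is not a gap in your argument, but you should state explicitly which convention you are using so that \eqref{mmu-mmodel-1-t-infty} and \eqref{mX-at-z-infty} are consistent; it has no effect on \eqref{mmu-mmodel-sup-t-infty} or on any of the order estimates.
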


\begin{proof}
It only remains to prove \eqref{v5-vmu-norms}-\eqref{m1X-def}. In the following $C\equiv C(\xi)$ denotes a constant independent of $k$ which may change within a computation.

Standard estimates show that
\[
\abs{\chi(k)-\chi(\mu)}\leq C\abs{k-\mu}(1+\abs{\ln\abs{k-\mu}}),\quad k\in\C{X},
\]
and
\[
\abs{\tilde\chi(k)-\tilde\chi(\mu)}\leq C\abs{k-\mu}(1+\abs{\ln\abs{k-\mu}}),\quad k\in\C{X}.
\]
Using that $\eul^{-\ii\nu\ln\frac{\psi_{\mu}(k)}{\psi_{\mu}(0)}}-1=\ord(k-\mu)$, this yields
\begin{align}  \label{delta-one-near-mu}
\abs{\delta_1(k)-1}
&=\bigl\lvert\eul^{-\ii\nu\ln\frac{\psi_{\mu}(k)}{\psi_{\mu}(\mu)}}\eul^{\chi(k)-\chi(\mu)}\eul^{\tilde\chi(k)-\tilde\chi(\mu)}-1\bigr\rvert\notag\\
&\leq C\abs{k-\mu}(1+\abs{\ln\abs{k-\mu}}),\quad k\in\C{X}.
\end{align}
On the other hand, the smoothness of $\hat r(k)$ implies that 
\begin{equation}   \label{r-near-mu}
\hat r(k)-q=\ord(k-\mu),\quad k\in\C{X}.
\end{equation}
Since
\[
v^{(\mu 2)}-v^X=
\begin{cases}
\begin{pmatrix}0&0\\(\hat r\delta_1^{-2}-q)\rho(z)^{-2}\eul^{2\ii z^2}&0\end{pmatrix},&\arg z=\frac{\pi}{4},\\ 
\begin{pmatrix}0&(\hat r^*\delta_1^2-\bar q)\rho(z)^2\eul^{-2\ii z^2}\\0&0\end{pmatrix},&\arg z=\frac{3\pi}{4},\\ 
\begin{pmatrix}0&0\\-\frac{\hat r(1+\hat r\hat r^*)\delta_1^{-2}-q(1+\abs{q}^2)}{(1+\abs{q}^2)^2}\rho(z)^{-2}\eul^{2\ii z^2}&0\end{pmatrix},&\arg z=\frac{5\pi}{4},\\ 
\begin{pmatrix}0&-\frac{\hat r^*(1+\hat r\hat r^*)\delta_1^2-\bar q(1+\abs{q}^2)}{(1+\abs{q}^2)^2}\rho(z)^2\eul^{-2\ii z^2}\\0&0\end{pmatrix},&\arg z=\frac{7\pi}{4}, 
\end{cases}
\]
equations \eqref{delta-one-near-mu} and \eqref{r-near-mu} imply that
\[
\abs{v^{(\mu 2)}-v^X}\leq C\abs{k-\mu}(1+\abs{\ln\abs{k-\mu}})\eul^{-ct\abs{k-\mu}^2},\quad k\in\C{X},
\]
where $c>0$. But
\[
\hat v^{(5)}-v^{\mu}=\begin{cases}
G_-(k)^{-1}(v^{(\mu 2)}-v^X)G_+(k),&k\in\C{X},\\
0,&k\in\gamma_{(\bar\beta,\beta)}\cap D_{\epsilon}(\mu),
\end{cases}
\]
where the function $G(k)\coloneqq\delta_0(t)^{-\sigma_3}\tilde\delta(k)^{\sigma_3}B(k)^{-1}\eul^{\ii h(k)\sigma_3}$ and its inverse are bounded for $k\in\Sigma^{(5)}\cap D_{\epsilon}(\mu)$, so we find
\[
\abs{\hat v^{(5)}-v^{\mu}}\leq C\abs{k-\mu}(1+\abs{\ln\abs{k-\mu}})\eul^{-ct\abs{k-\mu}^2},\quad k\in\C{X}.
\]
Thus
\[
\norm{\hat v^{(5)}-v^{\mu}}_{L^1(\C{X})}\leq C\int_0^{\epsilon}u(1+\abs{\ln u})\eul^{-ctu^2}\dd u\leq Ct^{-1}\ln t,\quad t\geq 0,
\]
and
\[
\norm{\hat v^{(5)}-v^{\mu}}_{L^{\infty}(\C{X})}\leq C\sup_{0\leq u\leq \epsilon}u(1+\abs{\ln u})\eul^{-ctu^2}\leq Ct^{-1/2}\ln t,\quad t\geq 0,
\]
which gives \eqref{v5-vmu-norms}.

Note that
\[
m^{\model}(m^{\mu})^{-1}-I=Y_{\mu}m^X(q,z(k))Y_{\mu}^{-1}-I.
\]
Since $\inf_{k\in\partial D_{\epsilon}(\mu)}\abs{\psi_{\mu}(k)}>0$, the variable $z=\ii\sqrt{t}(k-\mu)\psi_{\mu}(k)$ goes to infinity as $t\to\infty$ if $k\in\partial D_{\epsilon}(\mu)$. Thus equation \eqref{mX-at-z-infty} yields
\[
m^X(q,z(k))=I+\frac{m_1^X}{\sqrt{t}(k-\mu)\psi_{\mu}(k)}+\ord(t^{-1}),\quad t\to\infty,\quad k\in\partial D_{\epsilon}(\mu),\]
uniformly with respect to $k\in\partial D_{\epsilon}(\mu)$. Consequently,
\begin{equation}  \label{mmu-mmodel-t-infty}
m^{\model}(m^{\mu})^{-1}-I=\frac{Y_{\mu}m_1^XY_{\mu}^{-1}}{\sqrt{t}(k-\mu)\psi_{\mu}(k)}+\ord(t^{-1}),\quad t\to\infty,\quad k\in\partial D_{\epsilon}(\mu),
\end{equation}
uniformly with respect to $k\in\partial D_{\epsilon}(\mu)$. Since $Y_{\mu}^{\pm1}$ and $\psi_{\mu}^{-1}$ are bounded for $k\in\partial D_{\epsilon}(\mu)$, this proves \eqref{mmu-mmodel-sup-t-infty}. Using that the functions $Y_{\mu}^{\pm1}$ and $\psi_{\mu}^{-1}$ are analytic in $D_{\epsilon}(\mu)$, equation \eqref{mmu-mmodel-1-t-infty} follows from \eqref{mmu-mmodel-t-infty} and Cauchy's formula.
\end{proof}

\section{Final steps}    \label{sec:final}
\subsection{The approximate solution}

Define local solutions $m^{\bar\alpha}$ and $m^{\bar\beta}$ for $k\in D_{\epsilon}(\bar\alpha)$ and $k\in D_{\epsilon}(\bar\beta)$, respectively, by
\[
m^{\bar\alpha}(x,t,k)\coloneqq\sigma_3\sigma_1\overline{m^{\alpha}(x,t,\bar k)}\sigma_1\sigma_3,\qquad m^{\bar\beta}(x,t,k)\coloneqq\sigma_3\sigma_1\overline{m^{\beta}(x,t,\bar k)}\sigma_1\sigma_3.
\]
Let $\C{D}\subset\D{C}$ denote the union of the five open disks in \eqref{disks}, and $\partial\C{D}$ its boundary:
\[
\C{D}=D_\epsilon(\bar\alpha)\cup D_\epsilon(\bar\beta)\cup D_\epsilon(\mu)\cup D_\epsilon(\beta)\cup D_\epsilon(\alpha).
\]
Let $\C{X}$ denote the curved cross centered at $\mu$ defined in \eqref{cross}. The approximate solution $m^{\appr}$ defined by
\[
m^{\appr}\coloneqq\begin{cases}
m^{\alpha},&k\in D_{\epsilon}(\alpha),\\
m^{\beta},&k\in D_{\epsilon}(\beta),\\
m^{\bar\alpha},&k\in D_{\epsilon}(\bar\alpha),\\
m^{\bar\beta},&k\in D_{\epsilon}(\bar\beta),\\
m^{\mu},&k\in D_{\epsilon}(\mu),\\
m^{\model},&\text{elsewhere}
\end{cases}
\]
satisfies a RH problem with jump across the contour $\Sigma^{\appr}=\Sigma^{\model}\cup\partial\C{D}\cup\C{Y}\cup\C{Y}^*\cup\C{Z}\cup\C{Z}^*\cup\C{X}$ shown in Figure~\ref{fig:contour-approximate}, where $\C{Y}$, $\C{Y}^*$, $\C{Z}$, and $\C{Z}^*$ denote curved crosses centered at $\alpha$, $\bar\alpha$, $\beta$, and $\bar\beta$, respectively.
\begin{figure}[ht]
\centering\includegraphics[scale=.8]{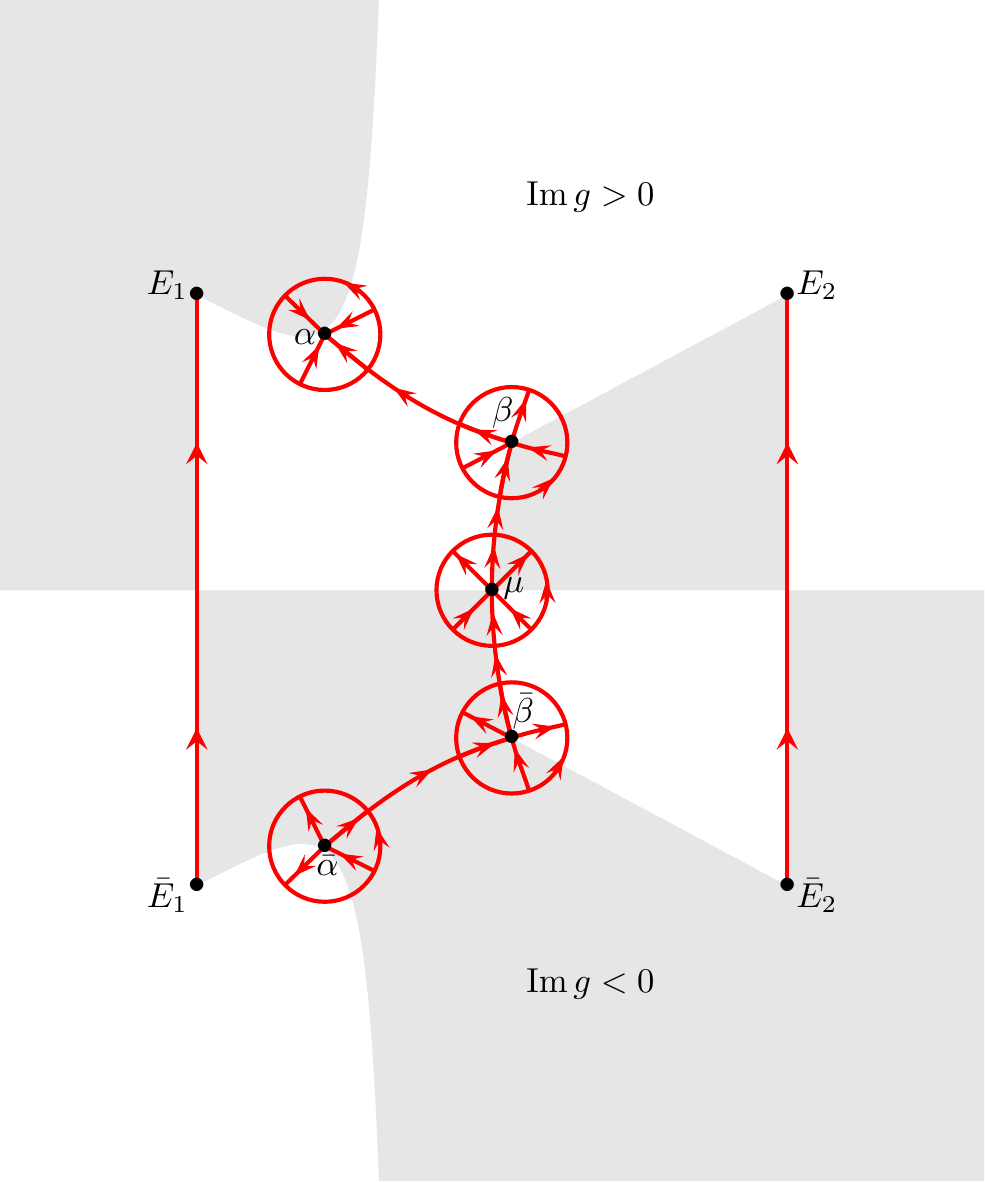}
\caption{The jump contour $\Sigma^{\appr}$.} 
\label{fig:contour-approximate}
\end{figure}

\subsection{The solution $\BS{m^{\diff}}$}

\begin{figure}[ht]
\centering\includegraphics[scale=.8]{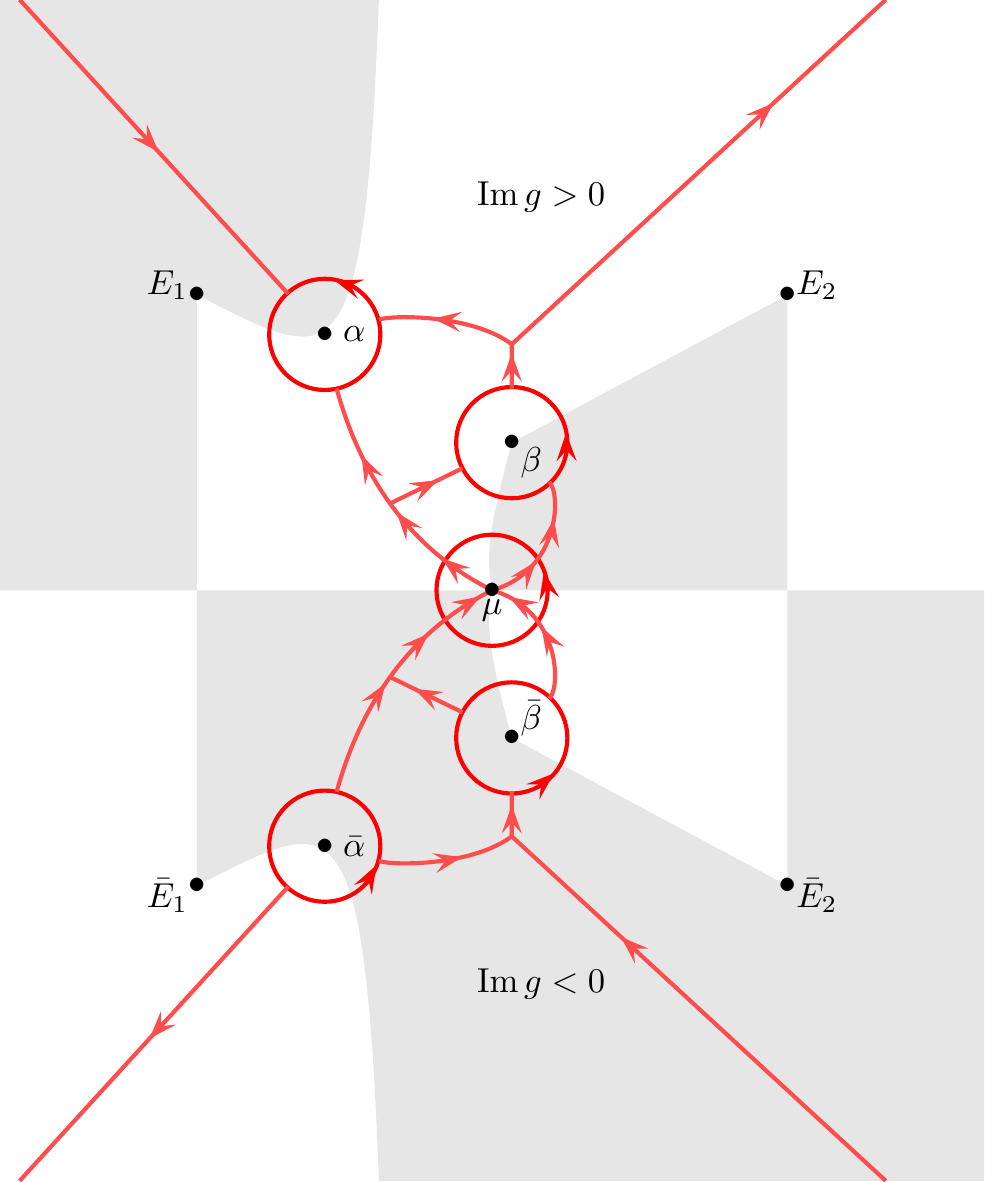}
\caption{The jump contour $\Sigma^{\diff}$.} 
\label{fig:contour-hat}
\end{figure}

We will show that the function $m^{\diff}(x,t,k)$ defined by
\[
m^{\diff}\coloneqq\hat m^{(5)}(m^{\appr})^{-1}
\]
is such that $m^{\diff}-I$ is small for large $t$ and is solution of the RH problem
\begin{equation}  \label{rhp-mhat}
\begin{cases}
m^{\diff}(x,t,\,\cdot\,)\in I+\dot E^2(\D{C}\setminus\Sigma^{\diff}),&\\
m^{\diff}_+(x,t,k)=m^{\diff}_-(x,t,k)v^{\diff}(x,t,k)&\text{for a.e. }k\in\Sigma^{\diff},
\end{cases}
\end{equation}
where the contour $\Sigma^{\diff}=\left(\Sigma^{(5)}\setminus(\Sigma^{\model}\cup\bar{\C{D}})\right)\cup\partial\C{D}\cup\C{X}$ is displayed in Figure~\ref{fig:contour-hat} and the jump matrix $v^{\diff}$ is given by
\[
v^{\diff}\coloneqq\begin{cases}
m^{\model}\hat v^{(5)}(m^{\model})^{-1},&k\in\Sigma^{\diff}\setminus\bar{\C{D}},\\
m^{\model}(m^{\alpha})^{-1},&k\in\partial D_{\epsilon}(\alpha),\\
m^{\model}(m^{\beta})^{-1},&k\in\partial D_{\epsilon}(\beta),\\
m^{\model}(m^{\bar\alpha})^{-1},&k\in\partial D_{\epsilon}(\bar\alpha),\\
m^{\model}(m^{\bar\beta})^{-1},&k\in\partial D_{\epsilon}(\bar\beta),\\
m^{\model}(m^{\mu})^{-1},&k\in\partial D_{\epsilon}(\mu),\\
m_-^{\mu}\hat v^{(5)}(m_+^{\mu})^{-1}&k\in\C{X}.
\end{cases}
\]
Let $\hat w=v^{\diff}-I$. Away from the five critical points, $\hat v^{(5)}-I$ is exponentially small on $\Sigma^{(5)}\setminus\Sigma^{\model}$ as $t\to\infty$. Hence
\begin{subequations}   \label{what-t-infty}
\begin{equation}  \label{what-t-infty-1st}
\norm{\hat w}_{(L^1\cap L^2\cap L^{\infty})(\Sigma^{\diff}\setminus\bar{\C{D}})}=\ord(\eul^{-ct}),\quad t\to\infty,
\end{equation}
where $c\equiv c(\xi)>0$. On the other hand, equations \eqref{malpha-mmodel-t-infty} and \eqref{mbeta-mmodel-t-infty} imply that 
\begin{equation}   \label{what-t-infty-2nd}
\norm{\hat w}_{L^{\infty}(\partial D_{\epsilon}(\alpha)\cup\partial D_{\epsilon}(\beta)\cup\partial D_{\epsilon}(\bar\alpha)\cup\partial D_{\epsilon}(\bar\beta))}=\ord(t^{-N}),\quad t\to\infty,
\end{equation}
for any $N\geq 1$. According to equation \eqref{mmu-mmodel-sup-t-infty},
\begin{equation}\label{what-t-infty-3rd}
\norm{\hat w}_{L^{\infty}(\partial D_{\epsilon}(\mu))}=\ord(t^{-1/2}),\quad t\to\infty,
\end{equation}
and for $k\in\C{X}$, we have 
\[
\hat w=m_-^{\mu}(\hat v^{(5)}-v^{\mu})(m_+^{\mu})^{-1},
\]
so Lemma~\ref{mmu-properties} yields
\begin{equation}  \label{what-t-infty-4th}
\begin{cases}
\norm{\hat w}_{L^1(\C{X})}=\ord(t^{-1}\ln t),&\\
\norm{\hat w}_{L^2(\C{X})}=\ord(t^{-3/4}\ln t),&\\
\norm{\hat w}_{L^{\infty}(\C{X})}=\ord(t^{-1/2}\ln t),&
\end{cases}t\to\infty.
\end{equation}
\end{subequations}
Equations~\eqref{what-t-infty} show that
\begin{equation}
\begin{cases}
\norm{\hat w}_{(L^1\cap L^2)(\Sigma^{\diff})}=\ord(t^{-1/2}),&\\
\norm{\hat w}_{L^{\infty}(\Sigma^{\diff})}=\ord(t^{-1/2}\ln t),&
\end{cases}t\to\infty.
\end{equation}
Further on, we denote by $\hat{\C{C}}$ the Cauchy operator associated with $\Sigma^{\diff}$:
\[
(\hat{\C{C}}f)(k)=\frac{1}{2\pi\ii}\int_{\Sigma^{\diff}}\frac{f(s)}{s-k}\,\dd s,\quad k\in\D{C}\setminus\Sigma^{\diff},
\]
and by $\hat{\C{C}}_{\pm}f$ the nontangential boundary values of $\hat{\C{C}}f$ from the $\pm$ sides of $\Sigma^{\diff}$. We then define the operator $\hat{\C{C}}_{\hat w}\colon L^2(\Sigma^{\diff})\to L^2(\Sigma^{\diff})$ by $\hat{\C{C}}_{\hat w}f=\hat{\C{C}}_-(f\hat w)$ (see, e.g.,\cites{Le17,Le18}). Hence
\begin{equation}
\norm{\hat{\C{C}}_{\hat w}}_{\C{B}(L^2(\Sigma^{\diff}))}\leq C\norm{\hat w}_{L^{\infty}(\Sigma^{\diff})}=\ord(t^{-1/2}\ln t),\quad t\to\infty.
\end{equation}
In particular, $I-\hat{\C{C}}_{\hat w(x,t,\,\cdot\,)}\in\C{B}(L^2(\Sigma^{\diff}))$ is invertible for all large enough $t$. We define $\hat\mu(x,t,k)\in I+L^2(\Sigma^{\diff})$ for large $t$ by
\begin{equation}
\hat\mu\coloneqq I+(I-\hat{\C{C}}_{\hat w})^{-1}\hat{\C{C}}_{\hat w}.
\end{equation}
Standard estimates using the Neumann series show that
\[
\norm{\hat\mu-I}_{L^2(\Sigma^{\diff})}\leq C\frac{\norm{\hat w}_{L^2(\Sigma^{\diff})}}{1-\norm{\hat{\C{C}}_{\hat w}}_{\C{B}(L^2(\Sigma^{\diff}))}}.
\]
Thus,
\begin{equation}   \label{muhat-t-infty}
\norm{\hat\mu(x,t,\,\cdot\,)-I}_{L^2(\Sigma^{\diff})}=\ord(t^{-1/2}),\quad t\to\infty.
\end{equation}
It follows that there exists a unique solution $m^{\diff}\in I+\dot E^2(\hat{\D{C}}\setminus\Sigma^{\diff})$ of the RH problem
\eqref{rhp-mhat} for all sufficiently large $t$. This solution is given by
\begin{equation}
m^{\diff}(x,t,k)=I+\hat{\C{C}}(\hat\mu\hat w)=I+\frac{1}{2\pi\ii}\int_{\Sigma^{\diff}}\hat\mu(x,t,s)\hat w(x,t,s)\frac{\dd s}{s-k}.
\end{equation}

\subsection{Asymptotics of $\BS{m^{\diff}}$}

The exponential decay of $\hat w$ as $k\to\infty$, shows that the following limit exists uniformly with respect to $\arg k\in\croch{0,2\pi}$:
\begin{equation}   \label{mhat-k-infty}
\lim_{k\to\infty}k(m^{\diff}(x,t,k)-I)=-\frac{1}{2\pi\ii}\int_{\Sigma^{\diff}}\hat\mu(x,t,s)\hat w(x,t,s)\dd s.
\end{equation}
By \eqref{what-t-infty-1st} and \eqref{muhat-t-infty},
\begin{align*}  
\int_{\Sigma'}\hat\mu(x,t,s)\hat w(x,t,s)\dd s 
&=\int_{\Sigma'}\hat w(x,t,s)\dd s+\int_{\Sigma'}(\hat\mu(x,t,s)-I)\hat w(x,t,s)\dd s\\
&=\ord(\norm{\hat w}_{L^1(\Sigma')})+\ord(\norm{\hat\mu-I}_{L^2(\Sigma')}\norm{\hat w}_{L^2(\Sigma')})\\
&=\ord(\eul^{-ct}),\quad t\to\infty,
\end{align*}
where $\Sigma'\coloneqq\Sigma^{\diff}\setminus\bar{\C{D}}$. Hence the contribution to the integral in \eqref{mhat-k-infty} from $\Sigma^{\diff}\setminus\bar{\C{D}}$ is $\ord(\eul^{-ct})$. Similarly, the contribution from $\partial D_{\epsilon}(\alpha)\cup\partial D_{\epsilon}(\beta)\cup\partial D_{\epsilon}(\bar\alpha)\cup\partial D_{\epsilon}(\bar\beta)$ is $\ord(t^{-N})$ for any $N\geq 1$. By \eqref{mmu-mmodel-1-t-infty}, \eqref{what-t-infty-3rd}, and \eqref{muhat-t-infty}, the contribution from $\partial D_{\epsilon}(\mu)$ to the right-hand side of \eqref{mhat-k-infty} is
\begin{align*}
&-\frac{1}{2\pi\ii}\int_{\partial D_{\epsilon}(\mu)}\hat w(x,t,k)\dd k-\frac{1}{2\pi\ii}\int_{\partial D_{\epsilon}(\mu)}(\hat\mu(x,t,k)-I)\hat w(x,t,k)\dd k\\
&=-\frac{1}{2\pi\ii}\int_{\partial D_{\epsilon}(\mu)}\left(m^{\model}(m^{\mu})^{-1}-I\right)\dd k+\ord(\norm{\hat\mu-I}_{L^2(\partial D_{\epsilon}(\mu))}\norm{\hat w}_{L^2(\partial D_{\epsilon}(\mu))})\\
&=-\frac{Y_{\mu}(x,t,\mu)m_1^XY_{\mu}(x,t,\mu)^{-1}}{\sqrt{t}\psi_{\mu}(\mu)}+\ord(t^{-1}),\quad t\to\infty.
\end{align*}
Finally, by \eqref{what-t-infty-4th} and \eqref{muhat-t-infty}, the contribution from $\C{X}$ to the right-hand side of \eqref{mhat-k-infty} is
\[
\ord(\norm{\hat w}_{L^1(\C{X})})+\ord(\norm{\hat\mu-I}_{L^2(\C{X})}\norm{\hat w}_{L^2(\C{X})})=\ord(t^{-1}\ln t),\quad t\to\infty.
\]
Collecting the above contributions, we find from \eqref{mhat-k-infty} that
\begin{equation}   \label{mhat-t-infty}
\lim_{k\to\infty}k(m^{\diff}(x,t,k)-I)=-\frac{Y_{\mu}(x,t,\mu)m_1^XY_{\mu}(x,t,\mu)^{-1}}{\sqrt{t}\psi_{\mu}(\mu)}+\ord(t^{-1}\ln t),\quad t\to\infty.
\end{equation}

\subsection{Asymptotics of $\BS{q}$}

Recalling the many transformations of Section~\ref{sec:transfos}, we find
\[
\hat m=\eul^{\ii tg^{(0)}\sigma_3}\eul^{\ii h(\infty)\sigma_3}m^{\diff}m^{\model}\eul^{-\ii h\sigma_3}\delta^{\sigma_3}\eul^{-\ii t(g(k)-\theta(k))\sigma_3}
\]
for all large $k$ in $U_2$. It follows that
\begin{align*}
\lim_{k\to\infty}k(\hat m(x,t,k)-I) 
&=\eul^{\ii tg^{(0)}\sigma_3}\eul^{\ii h(\infty)\sigma_3}\lim_{k\to\infty}k(m^{\model}-I+(m^{\diff}-I)m^{\model})\eul^{-\ii h(\infty)\sigma_3}\eul^{-\ii tg^{(0)}\sigma_3}\\
&=\eul^{\ii(tg^{(0)}+h(\infty))\hat\sigma_3}\left(\lim_{k\to\infty}k(m^{\model}-I)+\lim_{k\to\infty}k(m^{\diff}-I)\right).
\end{align*}
Hence
\begin{align*}
q(x,t)&=2\ii\lim_{k\to\infty}(\hat m(x,t,k))_{12}\\
&=2\ii\eul^{2\ii(tg^{(0)}+h(\infty))}\left(\lim_{k\to\infty}k(m^{\model}(x,t,k))_{12}+\lim_{k\to\infty}k(m^{\diff}(x,t,k))_{12}\right).
\end{align*}
In view of \eqref{mmodel-at-infty} and \eqref{mhat-t-infty}, this yields \eqref{main-asymptotics} and completes the proof of Theorem~\ref{main-thm}.
\appendix
\section{Exact solution in terms of parabolic cylinder functions}\label{sec:A}

Let $X$ denote the cross $X\coloneqq X_1\cup\dots\cup X_4\subset\D{C}$, where the rays 
\begin{alignat}{2}  \label{Xrays-def}
&X_1\coloneqq\accol{s\eul^{\frac{\ii\pi}{4}}\mid 0\leq s<\infty},&\quad&X_2\coloneqq\accol{s\eul^{\frac{3\ii\pi}{4}}\mid 0\leq s<\infty},\notag\\
&X_3\coloneqq\accol{s\eul^{-\frac{3\ii\pi}{4}}\mid 0\leq s<\infty},&&X_4\coloneqq\accol{s\eul^{-\frac{\ii\pi}{4}}\mid 0\leq s<\infty}
\end{alignat}
are oriented toward the origin as in Figure~\ref{fig:contour-X}. 
\begin{figure}[ht]
\centering\includegraphics[scale=.9]{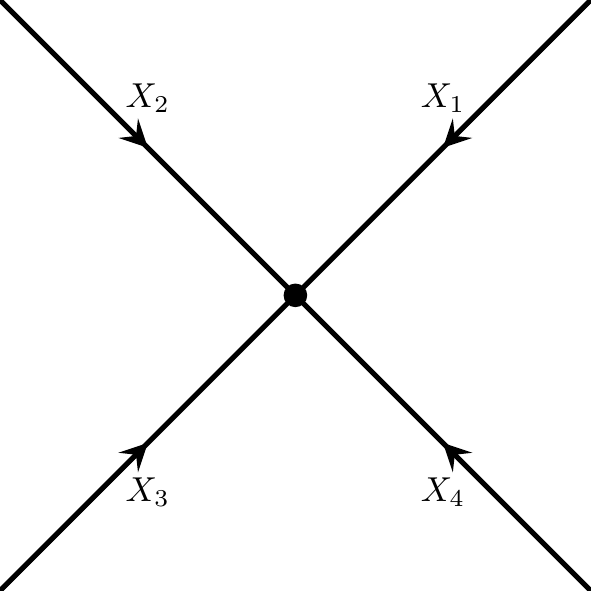}
\caption{The contour $X=X_1\cup\dots\cup X_4$.} 
\label{fig:contour-X}
\end{figure}

Define the function $\nu\colon\D{C}\to\lbrack 0,\infty)$ by $\nu(q)\coloneqq\frac{1}{2\pi}\ln(1+\abs{q}^2)$. Let $\rho(q,z)\coloneqq\eul^{\ii\nu(q)\ln_{-\pi/2}z}$, i.e., $\rho(q,z)=z^{\ii\nu(q)}$ with the branch cut along the negative imaginary axis.

We consider the following family of RH problems parametrized by $q\in\D{C}$:
\begin{equation}  \label{rhp-A}
\begin{cases}
m^X(q,\,\cdot\,)\in I+\dot E^2(\D{C}\setminus X),&\\
m_+^X(q,z)=m_-^X(q,z)v^X(q,z)&\text{for a.e. }z\in X,
\end{cases}
\end{equation}
where the jump matrix $v^X(q,z)$ is defined by 
\begin{equation}    \label{jump-A}
v^X(q,z)\coloneqq\begin{cases}
\begin{pmatrix}1&0\\q\rho(q,z)^{-2}\eul^{2\ii z^2}&1\end{pmatrix},&k\in X_1,\\
\begin{pmatrix}1&\bar q\rho(q,z)^2\eul^{-2\ii z^2}\\0&1\end{pmatrix},&k\in X_2,\\
\begin{pmatrix}1&0\\-\frac{q}{1+\abs{q}^2}\rho(q,z)^{-2}\eul^{2\ii z^2}&1\end{pmatrix},&k\in X_3,\\
\begin{pmatrix}1&-\frac{\bar q}{1+\abs{q}^2}\rho(q,z)^2\eul^{-2\ii z^2}\\0&1\end{pmatrix},&k\in X_4.
\end{cases}
\end{equation}
The matrix $v^X$ has entries that oscillate rapidly as $z\to 0$ and $v^X$ is not continuous at $z=0$; however $v^X(q,\,\cdot\,)-I\in L^2(X)\cap L^{\infty}(X)$. The RH problem \eqref{rhp-A} can be solved explicitly in terms of parabolic cylinder functions \cite{I81}.

\begin{theorem}   \label{thm-A}
The RH problem \eqref{rhp-A} has a unique solution $m^X(q,z)$ for each $q\in\D{C}$. This solution satisfies
\begin{equation}   \label{mX-at-z-infty}
m^X(q,z)=I+\frac{\ii}{z}
\begin{pmatrix}
0&-\eul^{-\pi\nu}\beta^X(q)\\\eul^{\pi\nu}\overline{\beta^X(q)}&0\end{pmatrix}+\ord\left(\frac{1}{z^2}\right),\quad z\to\infty,\quad q\in\D{C},
\end{equation}
where the error term is uniform with respect to $\arg z\in\croch{0,2\pi}$ and $q$ in compact subsets of $\D{C}$, and the function $\beta^X(q)$ is defined by
\begin{equation}    \label{betaX-def}
\beta^X(q)\coloneqq\frac{\sqrt{\nu(q)}}{2}\eul^{\ii\left(-\frac{3\pi}{4}-2\nu(q)\ln 2-\arg q+\arg\Gamma(\ii\nu(q))\right)},\quad q\in\D{C}.
\end{equation}
Moreover, for each compact subset $K\subset\D{C}$,
\begin{equation}   \label{mX-q-K-bounded}
\sup_{q\in K}\sup_{z\in\D{C}\setminus X}\abs{m^X(q,z)}<\infty.  
\end{equation}
\end{theorem}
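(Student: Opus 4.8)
The statement is the classical model Riemann--Hilbert problem solved by Its \cite{I81}, and the plan is to follow that analysis. \emph{Uniqueness.} First I would dispose of uniqueness: on each ray $X_j$ the jump matrix $v^X(q,z)$ from \eqref{jump-A} is unipotent, so $\det v^X\equiv 1$ and $\det m^X$ has no jump across $X$; since $m^X\in I+\dot E^2(\D{C}\setminus X)$ it is analytic at $z=\infty$ with $\det m^X\to 1$, and the mild singularity at $z=0$ does not obstruct analyticity of the determinant, whence $\det m^X\equiv 1$. If $m_1^X$ and $m_2^X$ both solve \eqref{rhp-A}, then $m_1^X(m_2^X)^{-1}$ is analytic across $X$, bounded on $\D{C}\setminus\accol{0}$, and tends to $I$ at infinity, so it equals $I$; this proves uniqueness for every $q\in\D{C}$.

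\emph{Existence and the explicit formula.} The idea is to remove the $z$-dependence from the jumps by conjugation: set $\Psi(q,z)\coloneqq m^X(q,z)\,\rho(q,z)^{\sigma_3}\eul^{-\ii z^2\sigma_3}$, where $\rho(q,z)=z^{\ii\nu(q)}$ has its cut along $\ii\,\D{R}_-$ as in the definition preceding \eqref{rhp-A}. Using \eqref{jump-A} together with the continuity of $\rho$ and $\eul^{-\ii z^2\sigma_3}$ across each $X_j$, one checks that $\Psi$ has piecewise constant jumps across $X\cup\ii\,\D{R}_-$ --- constant triangular matrices built from $q$ and $q/(1+\abs{q}^2)$ on the four rays of $X$, and the constant diagonal matrix $\eul^{\mp 2\pi\nu\sigma_3}$ across $\ii\,\D{R}_-$ (which lies in the interior of the sector between $X_3$ and $X_4$) --- while $\Psi\sim\rho^{\sigma_3}\eul^{-\ii z^2\sigma_3}$ as $z\to\infty$. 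Therefore $\frac{d\Psi}{dz}\Psi^{-1}$ is single-valued and analytic on $\D{C}\setminus\accol{0}$ and, by the large-$z$ behaviour, of the form $-2\ii z\sigma_3+B(q)+\frac{\ii\nu(q)}{z}\sigma_3$ with $B(q)$ a constant off-diagonal matrix; that is, $\Psi$ solves a linear ODE of parabolic cylinder (Weber) type. Its columns are expressible through parabolic cylinder functions $D_a(\zeta)$ with $\zeta$ a fixed multiple of $z$ and $a\in\accol{\ii\nu,\,-\ii\nu-1}$ in the various sectors cut by $X$, and imposing the four constant jump relations of $\Psi$ is precisely the Stokes/connection problem for the $D_a$. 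This pins down $B(q)$ and the connection matrices, and a careful reading off of the connection formulas yields $B(q)$ in the form for which $m^X(q,z)=\Psi(q,z)\rho(q,z)^{-\sigma_3}\eul^{\ii z^2\sigma_3}$ has the expansion \eqref{mX-at-z-infty} with $m_1^X$ given by \eqref{m1X-def} and $\beta^X(q)$ as in \eqref{betaX-def} (so that $\abs{\beta^X(q)}^2=\nu(q)$ and $\arg\beta^X(q)$ collects the contributions of $\Gamma(\ii\nu)$, $\arg q$, $\ln 2$, and the $\eul^{-3\pi\ii/4}$ normalisation). One then verifies directly that this $m^X$ indeed solves \eqref{rhp-A}.

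\emph{Asymptotics and boundedness.} The asymptotic formula \eqref{mX-at-z-infty} follows either by inserting $m^X=I+m_1^X z^{-1}+\ord(z^{-2})$ into the ODE for $\Psi$ and matching the $\ord(z^{-1})$ coefficient, or directly from the large-argument asymptotics of $D_a$, which are uniform in $\arg z\in\croch{0,2\pi}$ and in $q$ in compact subsets of $\D{C}$; this also gives the uniformity claimed in the theorem. For \eqref{mX-q-K-bounded}, the explicit parabolic cylinder representation shows that $m^X(q,z)$ is jointly continuous on $K\times(\D{C}\setminus X)$ for any compact $K\subset\D{C}$; it stays bounded as $z\to 0$ because $D_a$ is entire and the only possibly singular factor, $\rho(q,z)^{\pm\sigma_3}=z^{\pm\ii\nu(q)\sigma_3}$, has entries of modulus $\eul^{\mp\nu(q)\arg z}\le\eul^{2\pi\nu(q)}$, bounded uniformly for $q\in K$; and $m^X\to I$ uniformly as $z\to\infty$. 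Compactness of $K$ then yields the stated uniform bound.

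\emph{Main obstacle.} The only substantial step is the middle one: carrying out the reduction with the correct branch choices in all four sectors and extracting the precise constant $\beta^X(q)$ from the connection formulas for parabolic cylinder functions --- the classical, delicate but routine computation of \cite{I81}. Everything else is bookkeeping.
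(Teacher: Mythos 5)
Your proposal follows essentially the same route as the paper: after the $\det v^X=1$ uniqueness argument, you conjugate by $\rho(q,z)^{\sigma_3}\eul^{-\ii z^2\sigma_3}$ to obtain piecewise-constant jumps, reduce to the parabolic cylinder (Weber) equation, and extract $\beta^X(q)$ from the connection formulas — which is exactly how the paper proceeds, except that it additionally merges the contours onto $\ii\,\D{R}$ via a piecewise-constant factor, writes $\psi$ explicitly in terms of $D_{\pm\ii\nu}$, and verifies the constant jump by evaluating at $z=0$, relegating your ODE/Stokes derivation to a remark. So the outline is correct and matches the paper's argument, the only caveat being that your asserted form of $\partial_z\Psi\,\Psi^{-1}$ (in particular the $\ii\nu z^{-1}\sigma_3$ residue at the origin) is motivational rather than proved, which is harmless since you, like the paper, ultimately verify the explicit parabolic cylinder solution directly.
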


\begin{proof}
Uniqueness follows because $\det v^X=1$. Define $m^{(X1)}$ by 
\begin{equation}  \label{mX1-def}
m^{(X1)}(q,z)\coloneqq m^X(q,z)\rho(q,z)^{\sigma_3}\eul^{-\ii z^2\sigma_3}.
\end{equation}
Using that $\rho(q,\,\cdot\,)$ has a branch cut along the negative imaginary axis, we see that $m^X$ satisfies the jump condition in \eqref{rhp-A} iff $m^{(X1)}$ satisfies
\[
m_+^{(X1)}(q,z)=m_-^{(X1)}(q,z)v^{(X1)}(q,z)\quad\text{for a.e. }z\in X \cup\ii\,\D{R}_-,
\]
where
\[
v^{(X1)}\coloneqq\begin{cases}
\begin{pmatrix}1&0\\q&1\end{pmatrix},&z\in X_1,\\
\begin{pmatrix}1&\bar q\\0&1\end{pmatrix},&z\in X_2,\\
\begin{pmatrix}1&0\\-\frac{q}{1+\abs{q}^2}&1\end{pmatrix},&z\in X_3,\\
\left(\frac{\rho_+(q,z)}{\rho_-(q,z)}\right)^{\sigma_3}=\begin{pmatrix}\frac{1}{1+\abs{q}^2}&0\\0&1+\abs{q}^2\end{pmatrix},&z\in\ii\,\D{R}_-,\\
\begin{pmatrix}1&-\frac{\bar q}{1+\abs{q}^2}\\0&1\end{pmatrix},&z\in X_4,
\end{cases}
\]
with all contours oriented toward the origin. We next merge the contours $X_1$ and $X_2$ along $\ii\,\D{R}_+$ and the contours $X_3$ and $X_4$ along $\ii\,\D{R}_-$. Thus we let
\begin{equation}   \label{psi-def}
\psi(q,z)=m^{(X1)}(q,z)B(q,z),
\end{equation}
where
\[
B(z)\coloneqq\begin{cases}
\begin{pmatrix}1&0\\q&1\end{pmatrix},&\arg z\in\left(\frac{\pi}{4},\frac{\pi}{2}\right),\\
\begin{pmatrix}1&-\bar q\\0&1\end{pmatrix},&\arg z\in\left(\frac{\pi}{2},\frac{3\pi}{4}\right),\\
\begin{pmatrix}1&0\\-\frac{q}{1+\abs{q}^2}&1\end{pmatrix},&\arg z\in\left(\frac{5\pi}{4},\frac{3\pi}{2}\right),\\
\begin{pmatrix}1&\frac{\bar q}{1+\abs{q}^2}\\0&1\end{pmatrix},&\arg z\in\left(\frac{3\pi}{2},\frac{7\pi}{4}\right),\\
\,I,&\text{else}.
\end{cases}
\]
Then $m^X$ satisfies the jump condition in \eqref{rhp-A} iff $\psi$ satisfies
\begin{equation}   \label{psi-jump}
\psi_+(q,z)=\psi_-(q,z)v^{\psi}(q)\quad\text{for a.e. }z\in\ii\,\D{R},
\end{equation}
where the constant matrix $v^{\psi}$ is defined by
\begin{equation}  \label{vpsi-def}
v^{\psi}(q)\coloneqq\begin{pmatrix}1+\abs{q}^2&\bar q\\q&1\end{pmatrix},\quad k\in\ii\,\D{R},
\end{equation}
and the contour $\ii\,\D{R}$ is oriented downward.

Let
\begin{equation}  \label{betaX-def-bis}
\beta^X\coloneqq -\frac{\eul^{\frac{\pi\ii}{4}}2^{-2\ii\nu}\eul^{\frac{\pi\nu}{2}}\sqrt{\pi}}{q\sqrt{2}\,\Gamma(-\ii\nu)}.
\end{equation}
Then $\beta^X$ can be written as in \eqref{betaX-def}. Define a sectionally analytic function $\psi(q,z)$ by
\begin{equation}  \label{psi-def-bis}
\psi(q,z)\coloneqq\begin{pmatrix}
\psi_{11}(q,z)&\frac{\left(\frac{\dd}{\dd z}-2\ii z\right)\psi_{22}(q,z)}{4\eul^{\pi\nu}\overline{\beta^X(q)}}\\
\frac{\left(\frac{\dd}{\dd z}+2\ii z\right)\psi_{11}(q,z)}{4\eul^{-\pi\nu}\beta^X(q)}&\psi_{22}(q,z)\end{pmatrix},\quad q\in\D{C},\quad z\in\D{C}\setminus\ii\,\D{R},
\end{equation}
where the functions $\psi_{11}$ and $\psi_{22}$ are defined by
\begin{subequations}  \label{psi-diag}
\begin{align}  \label{psi-11}
\psi_{11}(q,z)&\coloneqq\begin{cases}
2^{-\ii\nu}\eul^{-\frac{3\pi\nu}{4}}D_{\ii\nu}(2\eul^{-\frac{3\pi\ii}{4}}z),&\Re z<0,\\
2^{-\ii\nu}\eul^{\frac{\pi\nu}{4}}D_{\ii\nu}(2\eul^{\frac{\pi\ii}{4}}z),&\Re z>0,
\end{cases}\\  \label{psi-22}
\psi_{22}(q,z)&\coloneqq\begin{cases}
2^{\ii\nu}\eul^{\frac{5\pi\nu}{4}}D_{-\ii\nu}(2\eul^{\frac{3\pi\ii}{4}}z),&\Re z<0,\\
2^{\ii\nu}\eul^{\frac{\pi\nu}{4}}D_{-\ii\nu}(2\eul^{-\frac{\pi\ii}{4}}z),&\Re z>0,
\end{cases}
\end{align}
\end{subequations}      
and $D_a(z)$ denotes the parabolic cylinder function.

Since $D_a(z)$ is an entire function of both $a$ and $z$, $\psi(q,z)$ is analytic in the left and right halves of the complex $z$-plane with a jump across the imaginary axis. The function $\psi$ satisfies
\[
\partial_z\psi+2\ii z\sigma_3\psi=4\begin{pmatrix}0&\beta^X\eul^{-\pi\nu}\\\bar\beta^X\eul^{\pi\nu}&0\end{pmatrix}\psi,\quad q\in\D{C},\quad z\in\D{C}\setminus\ii\,\D{R}.
\]
Since $\psi_+$ and $\psi_-$ solve the same second order ODE, there exists a function $v^{\psi}$ independent of $z$ such that \eqref{psi-jump} holds. Setting $z=0$ and using that
\[
D_{\ii\nu}(0)=\frac{\sqrt{\pi}\,2^{\frac{\ii\nu}{2}}}{\Gamma\left(\frac{1}{2}(1-\ii\nu)\right)},\quad D_{\ii\nu}'(0)=-\frac{\sqrt{\pi}\,2^{\frac{1}{2}(1+\ii\nu)}}{\Gamma\left(-\frac{\ii\nu}{2}\right)}\,,
\]
we find
\[
v^{\psi}(q)=\psi_-(q,0)^{-1}\psi_+(q,0)=\begin{pmatrix}1+\abs{q}^2&\bar q\\q&1\end{pmatrix}.
\]
Hence $\psi$ satisfies \eqref{psi-jump}. This shows that $m^X$ satisfies the jump condition in \eqref{rhp-A}.

For each $\delta>0$, the parabolic cylinder function satisfies the asymptotic formula \cite{Ol10}
\begin{align*}
D_a(z)&=z^a\eul^{-\frac{z^2}{4}}\left(1-\frac{a(a-1)}{2z^2}+\ord(z^{-4})\right)\\
&\quad -\frac{\sqrt{2\pi}\eul^{\frac{z^2}{4}}z^{-a-1}}{\Gamma(-a)}\left(1+\frac{(a+1)(a+2)}{2z^2}+\ord(z^{-4})\right)\\
&\qquad\times\begin{cases}
0,&\arg z\in\croch{-\frac{3\pi}{4}+\delta,\frac{3\pi}{4}-\delta},\\
\eul^{\ii\pi a},&\arg z\in\croch{\frac{\pi}{4}+\delta,\frac{5\pi}{4}-\delta},\\
\eul^{-\ii\pi a},&\arg z\in\croch{-\frac{5\pi}{4}+\delta,-\frac{\pi}{4}-\delta},
\end{cases}\quad z\to\infty,\quad a\in\D{C},
\end{align*}
where the error terms are uniform with respect to $a$ in compact subsets and $\arg z$ in
the given ranges. Using this formula and the identity
\[
\frac{\dd}{\dd z}D_a(z)=\frac{z}{2}D_a(z)-D_{a+1}(z),
\]
the asymptotic equation \eqref{mX-at-z-infty} follows from a tedious but straightforward computation. The boundedness in \eqref{mX-q-K-bounded} is a consequence of \eqref{mX-at-z-infty} and the definition \eqref{psi-def-bis} of $\psi(q,z)$.
\end{proof}

\begin{remark}
The definition \eqref{psi-def-bis} of $\psi$ can be motivated as follows. Since $(\partial_z+2\ii z\sigma_3)\psi$ and $\psi$ have the same jump across $\ii\,\D{R}$, the function $(\partial_z\psi+2\ii z\sigma_3\psi)\psi^{-1}$ is entire. Suppose $m^X=1+m_1^X(q)z^{-1}+\ord(z^{-2})$ as $z\to\infty$, where the matrix $m_1^X(q)$ is independent of $z$. Then we expect
\begin{equation}   \label{psi-at-z-infty}
\psi=\left(I+\frac{m_1^X}{z}+\ord(z^{-2})\right)\rho^{\sigma_3}\eul^{-\ii z^2\sigma_3},\quad z\to\infty,
\end{equation}
which suggests that
\begin{equation}\label{psi-asymptot-eq}
(\partial_z\psi+2\ii z\sigma_3\psi)\psi^{-1}=2\ii\croch{\sigma_3,m_1^X}+\ord(z^{-1}),\quad z\to\infty.
\end{equation}
Strictly speaking, due to the factor $B(q,z)$ in \eqref{psi-def}, equation \eqref{psi-at-z-infty} is not valid for $z$ close to $\ii\,\D{R}$. Equation \eqref{psi-asymptot-eq} implies that $(\partial_z\psi+2\ii z\sigma_3\psi)\psi^{-1}$ is bounded; hence a constant. Thus
\[
\partial_z\psi+2\ii z\sigma_3\psi=2\ii\croch{\sigma_3,m_1^X}\psi.
\]
Letting
\[
\beta_{12}=4\ii(m_1^X)_{12},\qquad\beta_{21}=-4\ii(m_1^X)_{21},
\]
we find that the $(11)$ and $(22)$ entries of $\psi$ satisfy the equations
\[
\begin{cases}
\partial_z^2\psi_{11}+(4z^2+2\ii-\beta_{12}\beta_{21})\psi_{11}=0,&\\ \partial_z^2\psi_{22}+(4z^2-2\ii-\beta_{12}\beta_{21})\psi_{22}=0,&
\end{cases}z\in\D{C}\setminus\ii\,\D{R}.
\] 
Introducing the new variable $\zeta$ by $\zeta\coloneqq 2\eul^{-\frac{3\pi\ii}{4}}z$, we find that $f(\zeta)\coloneqq\psi_{11}(z)$ satisfies the parabolic cylinder equation
\[
\partial_{\zeta}^2f+\left(\frac{1}{2}-\frac{\zeta^2}{4}+a\right)f=0
\]
for $a=\frac{\ii}{4}\beta_{12}\beta_{21}$. This means that
\[
\psi_{11}(z)=\begin{cases}
c_1D_a(2\eul^{-\frac{3\pi\ii}{4}}z)+c_2D_a(2\eul^{\frac{\pi\ii}{4}}z),&\Re z>0,\\
c_3D_a(2\eul^{-\frac{3\pi\ii}{4}}z)+c_4D_a(2\eul^{\frac{\pi\ii}{4}}z),&\Re z<0,
\end{cases}
\]
for some constants $\accol{c_j}_1^4$. Since
\[
D_a(\zeta)=\zeta^a\eul^{-\frac{\zeta^2}{4}}\left(1+\ord(\zeta^{-2})\right),\quad\zeta\to\infty,\quad\abs{\arg\zeta}<\frac{3\pi}{4}-\delta,
\]
the sought-after asymptotics \eqref{psi-at-z-infty} of $\psi$ can be obtained by choosing
\[
a=\ii\nu,\quad c_1=0,\quad c_2=2^{-\ii\nu}\eul^{\frac{\pi\nu}{4}},\quad c_3=2^{-\ii\nu}\eul^{-\frac{3\pi\nu}{4}},\quad c_4=0.
\]
This yields the expression \eqref{psi-11} for $\psi_{11}$; the expression \eqref{psi-22} for $\psi_{22}$ is derived in a similar way. The functions $\psi_{12}$ and $\psi_{21}$ can be obtained from the equations
\[
\psi_{21}=\beta_{12}^{-1}(\psi_{11}'(z)+2\ii z\psi_{11}(z)),\qquad\psi_{12}=\beta_{21}^{-1}(\psi_{22}'(z)-2\ii z\psi_{22}(z)).
\]
Using that $\det\psi=1$, the $(21)$ and $(12)$ entries of the jump condition $(\psi_-)^{-1}\psi_+=v^{\psi}$ evaluated at $z=0$ then yield
\[
v_{21}^{\psi}(q)=-\frac{\sqrt{2\pi}\eul^{\frac{\pi\ii}{4}} 2^{-2\ii\nu+1}\eul^{-\frac{\pi\nu}{2}}}{\beta_{12}\Gamma(-\ii\nu)},\qquad v_{12}^{\psi}(q)=-\frac{\sqrt{2\pi}\eul^{-\frac{\pi\ii}{4}} 2^{2\ii\nu+1}\eul^{\frac{3\pi\nu}{2}}}{\beta_{21}\Gamma(\ii\nu)}.
\]
Thus $v^{\psi}$ has the form \eqref{vpsi-def} provided that $\beta_{12}=4\eul^{-\pi\nu}\beta^X$ and $\beta_{21}=4\eul^{\pi\nu}\bar\beta^X$ with $\beta^X$ given by \eqref{betaX-def-bis}. This motivates the form of equation \eqref{psi-def-bis}.
\end{remark}

\section{Exact solution in terms of Airy functions}\label{sec:B}

Let $Y\coloneqq Y_1\cup\dots\cup Y_4\subset\D{C}$ denote the union of the four rays
\begin{alignat}{2}  \label{Yrays-def}
&Y_1\coloneqq\accol{s\mid 0\leq s<\infty},&\qquad&Y_2\coloneqq\accol{s\eul^{\frac{2\ii\pi}{3}}\mid 0\leq s<\infty},\notag\\
&Y_3\coloneqq\accol{-s\mid 0\leq s<\infty},&&Y_4\coloneqq\accol{s\eul^{-\frac{2\ii\pi}{3}}\mid 0\leq s<\infty},
\end{alignat}
oriented toward the origin as in Figure~\ref{fig:rays-Y}.
\begin{figure}[ht]
\centering\includegraphics[scale=1]{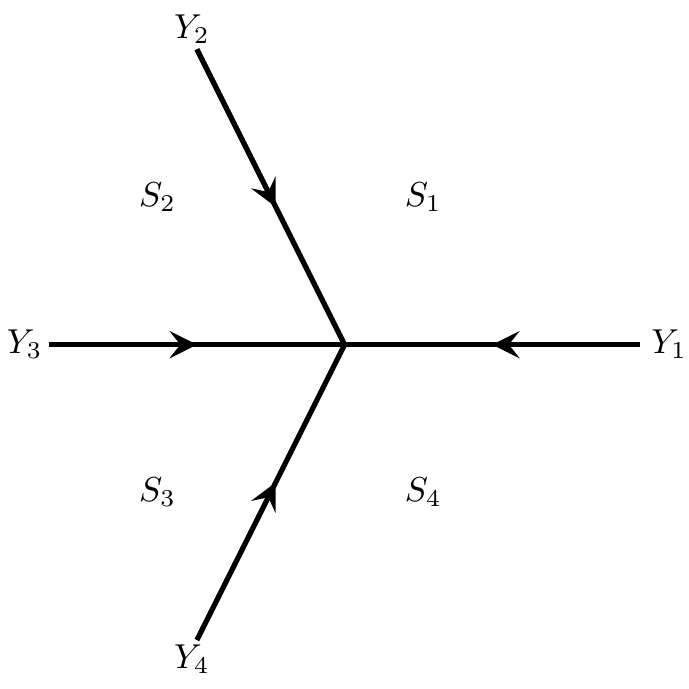}
\caption{The sectors $S_j$, $j=1,\dots,4$.} 
\label{fig:rays-Y}
\end{figure}
Define the open sectors $\accol{S_j}_1^4$ by 
\begin{alignat*}{2}
&S_1\coloneqq\accol{\arg k\in(0,\tfrac{2\pi}{3})},&\qquad&S_2\coloneqq\accol{\arg k\in(\tfrac{2\pi}{3},\pi)},\\
&S_3\coloneqq\accol{\arg k\in(\pi,\tfrac{4\pi}{3})},&&S_4\coloneqq\accol{\arg k\in(\tfrac{4\pi}{3},2\pi)}.
\end{alignat*}
Let $\omega\coloneqq\eul^{\frac{2\pi\ii}{3}}$. Define the function $m^{\Airy}(\zeta)$ for $\zeta\in\D{C}\setminus Y$ by
\begin{equation}  \label{mAiry-def}
m^{\Airy}(\zeta)\coloneqq\Psi(\zeta)\times\begin{cases}
\,\eul^{\frac{2}{3}\zeta^{3/2}\sigma_3},&\zeta\in S_1\cup S_4,\\
\begin{pmatrix}1&0\\-1&1\end{pmatrix}\eul^{\frac{2}{3}\zeta^{3/2}\sigma_3},&\zeta\in S_2,\\
\begin{pmatrix}1&0\\1&1\end{pmatrix}\eul^{\frac{2}{3}\zeta^{3/2}\sigma_3},&\zeta\in S_3,
\end{cases}
\end{equation}
where
\[
\Psi(\zeta)\coloneqq\begin{cases}
\begin{pmatrix}\Airy(\zeta)&\Airy(\omega^2\zeta)\\\Airy'(\zeta)&\omega^2\Airy'(\omega^2\zeta)\end{pmatrix}\eul^{-\frac{\pi\ii}{6}\sigma_3},&\zeta\in\D{C}^+,\\ 
\begin{pmatrix}\Airy(\zeta)&-\omega^2\Airy(\omega\zeta)\\\Airy'(\zeta)&-\Airy'(\omega\zeta)\end{pmatrix}\eul^{-\frac{\pi\ii}{6}\sigma_3},&\zeta\in\D{C}^-.
\end{cases}
\]
Note that $\det m^{\Airy}(\zeta)=\eul^{\pi\ii/6}/(2\pi)$ is constant and nonzero. For each integer $N\geq 0$, define the asymptotic approximations $m_{\asympt,N}^{\Airy}(\zeta)$ and $m_{\asympt,N}^{\Airy,\inv}(\zeta)$ of $m^{\Airy}(\zeta)$ and $m^{\Airy}(\zeta)^{-1}$, respectively, by
\begin{subequations}  \label{mAiry-approx-def}
\begin{alignat}{2}
m_{\asympt,N}^{\Airy}(\zeta)
&\coloneqq\frac{\eul^{\frac{\ii\pi}{12}}}{2\sqrt{\pi}}\sum_{k=0}^N\frac{1}{\left(\frac{2}{3}\zeta^{3/2}\right)^k}\,\zeta^{-\frac{1}{4}\sigma_3}\!\begin{pmatrix}(-1)^ku_k&u_k\\-(-1)^k\nu_k&\nu_k\end{pmatrix}\eul^{-\frac{\pi\ii}{4}\sigma_3},&\quad&\zeta\in\D{C}\setminus Y,\\
\label{mAiry-approx-defb}
m_{\asympt,N}^{\Airy,\inv}(\zeta)
&\coloneqq\sqrt{\pi}\eul^{-\frac{\ii\pi}{12}}\sum_{k=0}^N\frac{1}{\left(\frac{2}{3}\zeta^{3/2}\right)^k}\eul^{\frac{\pi\ii}{4}\sigma_3}\begin{pmatrix}\nu_k & -u_k\\(-1)^k\nu_k &(-1)^k u_k\end{pmatrix}\zeta^{\frac{1}{4}\sigma_3},&&\zeta\in\D{C}\setminus Y,
\end{alignat}
\end{subequations}
where the real constants $\accol{u_j,\nu_j}_0^{\infty}$ are defined by $u_0\coloneqq\nu_0\coloneqq 1$ and
\[
u_k\coloneqq\frac{(2k+1)(2k+3)\dots(6k-1)}{(216)^kk!}\,,\quad\nu_k\coloneqq\frac{6k+1}{1-6k}u_k,\quad k=1,2,\dots
\]
For $N=0$, we have
\[
m_{\asympt,0}^{\Airy}(\zeta)=m_{\asympt,0}^{\Airy,\inv}(\zeta)^{-1}=\frac{\eul^{\frac{\ii\pi}{12}}}{2\sqrt{\pi}}\begin{pmatrix}\zeta^{-\frac{1}{4}}&0\\0&\zeta^{\frac{1}{4}}\end{pmatrix}\begin{pmatrix}1&1\\-1&1\end{pmatrix}\eul^{-\frac{\pi\ii}{4}\sigma_3}.
\]

\begin{theorem}   \label{thm-B}
\emph{(a)} The function $m^{\Airy}(\zeta)$ defined in \eqref{mAiry-def} is analytic for $\zeta\in\D{C}\setminus Y$ and satisfies the jump condition
\begin{equation}  \label{mAiry-jump}
m_+^{\Airy}(\zeta)=m_-^{\Airy}(\zeta)v^{\Airy}(\zeta),\quad\zeta\in Y\setminus\accol{0},
\end{equation}
where the jump matrix $v^{\Airy}$ is defined by
\[
v^{\Airy}(\zeta)\coloneqq\begin{cases}
\begin{pmatrix}1&-\eul^{-\frac{4}{3}\zeta^{3/2}}\\0&1\end{pmatrix},&\zeta\in Y_1,\\
\begin{pmatrix}1&0\\\eul^{\frac{4}{3}\zeta^{3/2}}&1\end{pmatrix},&\zeta\in Y_2\cup Y_4,\\
\begin{pmatrix}0&1\\-1&0\end{pmatrix},&\zeta\in Y_3.
\end{cases}
\]

\emph{(b)}
For each integer $N\geq 0$, the functions $m_{\asympt,N}^{\Airy}(\zeta)$ and $m_{\asympt,N}^{\Airy,\inv}(\zeta)$ are analytic for $\zeta\in\D{C}\setminus(-\infty,0\rbrack$ and satisfy the following jump relations on the negative real axis:
\begin{subequations}  \label{mAiry-approx-jump}
\begin{alignat}{2}
m_{\asympt,N+}^{\Airy}(\zeta)&=m_{\asympt,N-}^{\Airy}(\zeta)\begin{pmatrix}0&1\\-1&0\end{pmatrix},&\quad&\zeta<0,\\
\label{mAiry-approx-jumpb}
m_{\asympt,N+}^{\Airy,\inv}(\zeta)&=\begin{pmatrix}0&-1\\1&0\end{pmatrix}m_{\asympt,N-}^{\Airy,\inv}(\zeta),&&\zeta<0.
\end{alignat}
\end{subequations}

\emph{(c)}
The functions $m_{\asympt,N}^{\Airy}$ and $m_{\asympt,N}^{\Airy,\inv}$ approximate $m^{\Airy}$ and its inverse as $\zeta\to\infty$ in the sense that
\begin{subequations}   \label{mAiry-approx-mAiry}
\begin{alignat}{2}
m^{\Airy}(\zeta)^{-1} m_{\asympt,N}^{\Airy}(\zeta)&=I+\ord(\zeta^{-\frac{3(N+1)}{2}}),&\quad&\zeta\to\infty,\\
\label{mAiry-approx-mAiryb}
m_{\asympt,N}^{\Airy,\inv}(\zeta)m^{\Airy}(\zeta)&=I+\ord(\zeta^{-\frac{3(N+1)}{2}}),&&\zeta\to\infty,
\end{alignat}
\end{subequations}
where the error terms are uniform with respect to $\arg\zeta\in\croch{0,2\pi}$.
\end{theorem}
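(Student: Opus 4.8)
The plan is to follow the classical construction of the Airy parametrix (cf.\ \cite{DZ93}): each of the three parts reduces to well-known identities for $\Airy(\zeta)$ and $\Airy'(\zeta)$, the only genuine work being to keep track of the branches of $\zeta^{3/2}$ and $\zeta^{1/4}$ and of the sectors in which the rotated functions $\Airy(\omega^{\pm1}\zeta)$ are evaluated.

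For part (a), I would first record the three facts that drive everything: (i) $\Airy(\zeta)$, $\Airy(\omega\zeta)$, $\Airy(\omega^2\zeta)$ all solve $y''=\zeta y$; (ii) the connection formula $\Airy(\zeta)+\omega\,\Airy(\omega\zeta)+\omega^2\Airy(\omega^2\zeta)=0$ together with its derivative; (iii) the Wronskian normalization of the Airy function, which fixes $\det\Psi$ and hence the constant value $\det m^{\Airy}(\zeta)=\eul^{\pi\ii/6}/(2\pi)$ recorded after \eqref{mAiry-def}. Since $\Psi$ is analytic in $\D{C}^\pm$ and the matrices multiplying it in \eqref{mAiry-def} are analytic and constant on each of the open sectors $S_1,\dots,S_4$, the function $m^{\Airy}$ is automatically analytic on $\D{C}\setminus Y$, and it only remains to identify the jump across each of the four rays. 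On $Y_2$ and $Y_4$, which lie in the interiors of $\D{C}^+$ and $\D{C}^-$ respectively, $\Psi$ is continuous and the discontinuity is purely the change of multiplier in \eqref{mAiry-def}; multiplying out the two triangular multipliers reproduces the triangular $v^{\Airy}$ on those rays. On $Y_1$ and $Y_3$, which lie on $\D{R}$, one combines the change of multiplier with the jump of $\Psi$ across the real axis; this jump is computed from (ii) and the symmetry $\overline{\Airy(\bar\zeta)}=\Airy(\zeta)$, and after conjugation by the exponential multipliers it collapses to the upper-triangular (resp.\ anti-diagonal) form of $v^{\Airy}$ on $Y_1$ (resp.\ $Y_3$). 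This is \eqref{mAiry-jump}.

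For part (b), I would note that $m_{\asympt,N}^{\Airy}(\zeta)$ and $m_{\asympt,N}^{\Airy,\inv}(\zeta)$ are assembled from the two scalar multivalued functions $\zeta^{\mp1/4}$ and $\bigl(\frac{2}{3}\zeta^{3/2}\bigr)^{-k}$ together with \emph{constant} matrices (the arrays built from $u_k,\nu_k$ and the fixed factors $\eul^{\pm\frac{\pi\ii}{4}\sigma_3}$), so analyticity on $\D{C}\setminus(-\infty,0\rbrack$ is immediate. Across $(-\infty,0)$ one has $(\zeta^{3/2})_+=-(\zeta^{3/2})_-$, so $\bigl(\frac{2}{3}\zeta^{3/2}\bigr)^{-k}$ is multiplied by $(-1)^k$, while $\zeta^{\pm1/4}$ is multiplied by $\eul^{\pm\frac{\pi\ii}{2}}$; substituting these two elementary jumps into \eqref{mAiry-approx-def} and carrying out the resulting $2\times 2$ matrix algebra — which amounts to checking that the sign pattern $(-1)^k$ in the explicit matrices is exactly the one produced by left- or right-multiplication by $\bigl(\begin{smallmatrix}0&1\\-1&0\end{smallmatrix}\bigr)$ — yields \eqref{mAiry-approx-jump}. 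No asymptotics are needed here.

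For part (c), I would invoke the Poincaré expansions
\[
\Airy(\zeta)\sim\frac{\eul^{-\frac{2}{3}\zeta^{3/2}}}{2\sqrt{\pi}\,\zeta^{1/4}}\sum_{k\geq 0}\frac{(-1)^ku_k}{\bigl(\frac{2}{3}\zeta^{3/2}\bigr)^k},\qquad
\Airy'(\zeta)\sim-\frac{\zeta^{1/4}\eul^{-\frac{2}{3}\zeta^{3/2}}}{2\sqrt{\pi}}\sum_{k\geq 0}\frac{(-1)^k\nu_k}{\bigl(\frac{2}{3}\zeta^{3/2}\bigr)^k},
\]
valid as $\zeta\to\infty$ with $\abs{\arg\zeta}<\pi-\delta$, together with the corresponding expansions for $\Airy(\omega^{\pm1}\zeta)$ and $\Airy'(\omega^{\pm1}\zeta)$ (in which $-\frac{2}{3}\zeta^{3/2}$ turns into $+\frac{2}{3}\zeta^{3/2}$, so these are the dominant solutions in the sectors where they occur). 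Substituting into \eqref{mAiry-def} sector by sector, the triangular factors are exactly what absorbs the dominant contributions of $\Airy(\omega^{\pm1}\zeta)$, and one finds that $m^{\Airy}(\zeta)$ agrees with $m_{\asympt,N}^{\Airy}(\zeta)$ up to $\ord(\zeta^{-3(N+1)/2})$, which is \eqref{mAiry-approx-mAiry}; the inverse estimate \eqref{mAiry-approx-mAiryb} then follows either from the same computation applied to $(m^{\Airy})^{-1}$ (again an Airy-built matrix, via the Wronskian) or by inverting the first relation and using $\det m^{\Airy}=\const$. The step I expect to be the main obstacle is the \emph{uniformity} of the error term up to the rays: at the Stokes lines $\arg\zeta\in\{\tfrac{2\pi}{3},\pi,\tfrac{4\pi}{3}\}$ and at $Y_1,\dots,Y_4$ a single Poincaré expansion is not valid on both sides, so there the recessive solution has to be estimated together with the jump relation \eqref{mAiry-jump} rather than expanded naively — the same bookkeeping as in part (a). Once this patching is carried out, the error estimates are uniform in $\arg\zeta\in\croch{0,2\pi}$, and everything else is elementary matrix algebra with standard special-function identities.
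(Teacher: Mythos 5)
Your proposal is correct and follows essentially the same route as the paper: part (a) is verified exactly as you say, using the connection identity $\Airy(\zeta)+\omega\Airy(\omega\zeta)+\omega^2\Airy(\omega^2\zeta)=0$ (its derivative gives the second row; the Schwarz symmetry you list is not actually needed), and part (b) is the same elementary branch bookkeeping $(\zeta^{3/2})_+=-(\zeta^{3/2})_-$, $(\zeta^{1/4})_\pm=\eul^{\pm\pi\ii/4}\abs{\zeta}^{1/4}$ followed by direct matrix multiplication. The one place where you diverge from the paper is the uniformity issue in part (c) that you flag at the end. The paper does not patch across Stokes lines using the jump relation: it invokes a \emph{second} standard expansion of the Airy function, the oscillatory (sine/cosine) expansion \eqref{Airy-2nd-asymptot-expans} valid for $\arg z\in\croch{\frac{\pi}{3}+\delta,\frac{5\pi}{3}-\delta}$, whose sector of validity overlaps with that of the exponential expansion \eqref{Airy-1st-asymptot-expans}; together the two cover every argument that occurs among the entries $\Airy(\zeta)$, $\Airy(\omega^{\pm1}\zeta)$ on each closed sector $\bar S_j$, so the estimates \eqref{mAiry-approx-mAiry} are obtained uniformly up to and including the rays by sector-by-sector substitution, first on $\bar S_1$ (where the truncated exponential expansions \emph{define} $m^{\Airy}_{\asympt,N}$) and then on $\bar S_2\cup\bar S_3\cup\bar S_4$. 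Your proposed fix is equivalent in substance—the oscillatory expansion is precisely the combination of the dominant and recessive exponential expansions via the connection formula—but the tool is the connection identity (or, equivalently, the second expansion), not the RH jump \eqref{mAiry-jump}: the jump only relates boundary values on the rays $Y_j$, whereas the loss of validity of a single Poincar\'e expansion occurs on open neighborhoods of the Stokes directions inside the sectors, so a statement purely about boundary values cannot by itself restore uniformity there. If you replace your patching step by the expansion \eqref{Airy-2nd-asymptot-expans} (or by rewriting the offending entry through the connection formula as a sum of two exponential-type expansions), your argument coincides with the paper's.
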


\begin{proof}
The analyticity of $m^{\Airy}$ is a direct consequence of the Airy function $\Airy(\zeta)$ being entire. The jump condition \eqref{mAiry-jump} can be verified by means of the identity
\[
\Airy(\zeta)+\omega\Airy(\omega\zeta)+\omega^2\Airy(\omega^2\zeta)=0,\quad\zeta\in\D{C}.
\]
On the other hand, the analyticity of $m_{\asympt,N}^{\Airy}(\zeta)$ and  $m_{\asympt,N}^{\Airy,\inv}(\zeta)$ is immediate from \eqref{mAiry-approx-def}. Using that $(\zeta^{3/2})_{\pm}=\mp\ii\abs{\zeta}^{3/2}$ and $(\zeta^{1/4})_{\pm}=\eul^{\pm\frac{\pi\ii}{4}}\abs{\zeta}^{1/4}$ for $\zeta\in(-\infty,0)$, a straightforward computation gives \eqref{mAiry-approx-jump}. This proves (a) and (b).

In order to prove (c), we note that, for each small number $\delta>0$, the Airy function satisfies the following asymptotic expansions uniformly in the stated sectors, see \cite{Ol10}:
\begin{equation}  \label{Airy-1st-asymptot-expans}
\begin{cases}
\Airy(z)=\frac{\eul^{-\frac{2}{3}z^{3/2}}}{2\sqrt{\pi}z^{1/4}}\sum\limits_{k=0}^{\infty}\frac{(-1)^ku_k}{\left(\frac{2}{3}z^{3/2}\right)^k},&\\[1mm]
\Airy'(z)=-\frac{z^{1/4}\eul^{-\frac{2}{3}z^{3/2}}}{2\sqrt{\pi}}\sum\limits_{k=0}^{\infty}\frac{(-1)^k(6k+1)u_k}{(1-6k)\left(\frac{2}{3}z^{3/2}\right)^k},&
\end{cases}\qquad z\to\infty,\quad\abs{\arg z}\leq\pi-\delta,
\end{equation}
and
\begin{align}  \label{Airy-2nd-asymptot-expans}
&\begin{cases}
\Airy(z)=\frac{(-z)^{-1/4}}{\sqrt{\pi}}\sum\limits_{k=0}^{\infty}(-1)^k\left(\frac{u_{2k}\cos\left(\frac{2}{3}(-z)^{3/2}-\frac{\pi}{4}\right)}{\left(\frac{2}{3}(-z)^{3/2}\right)^{2k}}+\frac{u_{2k+1}\sin\left(\frac{2}{3}(-z)^{3/2}-\frac{\pi}{4}\right)}{\left(\frac{2}{3}(-z)^{3/2}\right)^{2k+1}}\right),&\\[1mm]
\Airy'(z)=\frac{(-z)^{1/4}}{\sqrt{\pi}}\sum\limits_{k=0}^{\infty}(-1)^k\left(\frac{\nu_{2k}\sin\left(\frac{2}{3}(-z)^{3/2}-\frac{\pi}{4}\right)}{\left(\frac{2}{3}(-z)^{3/2}\right)^{2k}}-\frac{\nu_{2k+1}\cos\left(\frac{2}{3}(-z)^{3/2}-\frac{\pi}{4}\right)}{\left(\frac{2}{3}(-z)^{3/2}\right)^{2k+1}}\right),&
\end{cases}\notag\\
&\hspace{76mm} z\to\infty,\quad\frac{\pi}{3}+\delta\leq\arg z\leq \frac{5\pi}{3}-\delta.
\end{align}
The function $m_{\asympt,N}^{\Airy}(\zeta)$ is defined by the expression obtained by substituting the asymptotic sums in \eqref{Airy-1st-asymptot-expans} from $k=0$ to $k=N$ into the definition of $m^{\Airy}(\zeta)$ for $\zeta\in S_1$. Similarly, $m_{\asympt,N}^{\Airy,\inv}(\zeta)$ is defined by the expression obtained by substituting the asymptotic sums in \eqref{Airy-1st-asymptot-expans} from $k=0$ to $k=N$ into the following expression for $m^{\Airy}(\zeta)^{-1}$ which is valid for $\zeta\in S_1$:
\[
m^{\Airy}(\zeta)^{-1}=2\pi\ii\omega^2\eul^{-\frac{2}{3}\zeta^{3/2} \sigma_3}\eul^{\frac{\pi\ii}{6}\sigma_3} 
\begin{pmatrix}\omega^2 \Airy'(\omega^2\zeta)&-\Airy(\omega^2\zeta)\\-\Airy'(\zeta) & \Airy(\zeta)\end{pmatrix}.
\]
It follows that the estimates in \eqref{mAiry-approx-mAiry} hold as $\zeta\to\infty$ in $\bar S_1$. Long but straightforward computations using \eqref{Airy-1st-asymptot-expans} and \eqref{Airy-2nd-asymptot-expans} show that the estimates in \eqref{mAiry-approx-mAiry} hold as $\zeta\to\infty$ in $\bar S_2\cup\bar S_3\cup\bar S_4$ as well.
\end{proof}

\section{Endpoint behavior of a Cauchy integral}\label{sec:C}

Let $\gamma\colon\croch{\alpha,\beta}\to\D{C}$ be a smooth simple contour from $a=\gamma(\alpha)$ to $b=\gamma(\beta)$ with $a\neq b$. By a slight abuse of notation, we let $\gamma$ denote both the map $\croch{\alpha,\beta}\to\D{C}$ and its image $\gamma\coloneqq\gamma(\croch{\alpha,\beta})$ as a subset of $\D{C}$.

\begin{lemma}   \label{lem-C}
Let $f\colon\gamma\to\D{C}$ be such that $t\mapsto f(\gamma(t))$ is $C^1$ on $\croch{\alpha,\beta}$. Suppose $g(k)$ is a $C^1$-function of $k\in\D{C}$ and define the function $h(k)$ by
\[
h(k)\coloneqq\frac{g(k)}{2\pi\ii}\int_{\gamma}\frac{f(s)\dd s}{s-k},\quad k\in\D{C}\setminus\gamma.
\]
Then the functions $\eul^{\ii h(k)}$ and $\eul^{-\ii h(k)}$ are bounded as $k\in\D{C}\setminus\gamma$ approaches $b$ if and only if the product $g(b)f(b)$ is purely imaginary.
\end{lemma}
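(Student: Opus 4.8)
The plan is to analyze the local behavior of the Cauchy integral near the endpoint $b=\gamma(\beta)$ by separating the singular contribution of the endpoint from a remainder that stays bounded. First I would integrate by parts along $\gamma$: writing $\int_\gamma \frac{f(s)}{s-k}\,\dd s = -\int_\gamma f(s)\,\dd_s\ln(s-k)$ and integrating by parts, one gets a boundary term $-f(b)\ln(b-k) + f(a)\ln(a-k)$ plus an integral $\int_\gamma \ln(s-k)\,\dd f(s)$, where the logarithm is taken with a branch cut chosen along (a continuation of) $\gamma$ so that $s\mapsto\ln(s-k)$ is continuous on $\gamma$ for each fixed $k\notin\gamma$. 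The term involving $\ln(a-k)$ is analytic and bounded near $b$ (since $a\neq b$), and the remaining integral $\int_\gamma \ln(s-k)\,\dd f(s)$ is bounded as $k\to b$ because $\ln$ is locally integrable and $t\mapsto f(\gamma(t))$ is $C^1$; a standard estimate gives $\lvert\int_\gamma\ln(s-k)\,\dd f(s)\rvert \le C(1+\lvert\ln\lvert k-b\rvert\rvert)$, and multiplying by the bounded function $g(k)$ this contributes at most $C\lvert k-b\rvert^{o(1)}$-type growth inside the exponent — actually one must be slightly more careful here, see below.

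The upshot of the integration by parts is the representation
\[
h(k) = -\frac{g(k)f(b)}{2\pi\ii}\ln(b-k) + r(k), \quad k\in\D{C}\setminus\gamma,
\]
where $r(k)$ satisfies $\lvert r(k)\rvert \le C(1+\lvert\ln\lvert k-b\rvert\rvert)$ near $b$. The key point is that the \emph{dominant} singular contribution to $\eul^{\pm\ii h(k)}$ as $k\to b$ comes from the term $\eul^{\mp\ii\frac{g(k)f(b)}{2\pi\ii}\ln(b-k)} = (b-k)^{\mp\frac{g(b)f(b)}{2\pi}}\cdot(\text{bounded, since }g\text{ is }C^1)$. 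This factor is bounded (in both the $+$ and $-$ versions simultaneously, for $k$ on either side of $\gamma$) if and only if the real part of the exponent $\mp\frac{g(b)f(b)}{2\pi}$ is zero in both signs, i.e. if and only if $g(b)f(b)$ is purely imaginary. Conversely, if $g(b)f(b)$ has nonzero real part, then one of $\eul^{\ii h}$, $\eul^{-\ii h}$ blows up like a negative power of $\lvert k-b\rvert$ along radial approach, overwhelming the at-most-logarithmic contribution from $r(k)$ in the exponent. To make the necessity direction rigorous one chooses a radial path $k = b + \varepsilon e^{\ii\theta_0}$ with $\theta_0$ chosen so that $k\notin\gamma$, computes $\ln(b-k) = \ln\varepsilon + \ii(\theta_0+\pi)$, and observes that the $r(k)$ contribution to $\Re(\pm\ii h(k))$ is $O(\ln(1/\varepsilon))$ while the leading term is $(\text{const}\neq 0)\cdot\ln(1/\varepsilon)$ times the real part of $g(b)f(b)/(2\pi)$, with the two competing only through constants — so for the appropriate sign the leading term wins.

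The main obstacle is the bookkeeping of the remainder $r(k)$: a priori $r(k)$ grows like $\ln\lvert k-b\rvert$, so $\eul^{\pm\ii r(k)}$ need \emph{not} be bounded, and one has to be careful that this logarithmic growth in the exponent does not itself produce unboundedness. The resolution is that the logarithmic part of $r(k)$ has \emph{purely imaginary} coefficient as well — this is what the hypothesis that $g$ is $C^1$ (so $g(k) = g(b) + O(\lvert k-b\rvert)$, contributing only a bounded factor when multiplied against $\ln(b-k)$) and a more careful grouping of terms buys us. Concretely, the cleanest route is to absorb all logarithmically-singular pieces into a single term of the form $c(k)\ln(b-k)$ with $c(k)$ continuous up to $b$ and $c(b) = -g(b)f(b)/(2\pi\ii)$, and to show the genuinely-remaining piece $\tilde r(k)$ is actually \emph{bounded} (not just logarithmically growing) near $b$; this follows because $\int_\gamma(\ln(s-k)-\ln(b-k))\,\dd f(s)$ is bounded as $k\to b$, the integrand difference being $\ln\frac{s-k}{b-k}$ which is integrable against $\dd f$ uniformly in $k$. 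Once that is established, $\eul^{\pm\ii h(k)} = (b-k)^{\mp c(b)+o(1)}\cdot(\text{bounded})$, and the claim follows. The quoted application of ``formula (29.5) from Muskhelishvili'' elsewhere in the paper (e.g.\ in the proof of Lemma~\ref{h-properties}) is exactly this type of endpoint analysis, so I would either cite that reference or carry out the self-contained argument above.
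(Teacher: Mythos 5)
Your overall strategy is the same as the paper's: extract the endpoint singularity $\tfrac{f(b)}{2\pi\ii}\ln(k-b)$ from the Cauchy integral, use $g\in C^1$ to write $g(k)f(b)\ln(k-b)=g(b)f(b)\ln(k-b)+(\text{bounded})$, and read off boundedness of $\eul^{\pm\ii h}$ from the real part of the coefficient of the logarithm; the paper obtains the bounded remainder by quoting Muskhelishvili's endpoint formula (Appendix A2 of \cite{Mu92}), whereas you re-derive it by integration by parts. The genuine problem is in your remainder bookkeeping. The ``careful grouping'' you propose as the cleanest route --- that $\int_{\gamma}\bigl(\ln(s-k)-\ln(b-k)\bigr)\,\dd f(s)$ is bounded as $k\to b$ --- is false in general: this integral equals $\int_{\gamma}\ln(s-k)\,\dd f(s)-(f(b)-f(a))\ln(b-k)$, whose second term diverges logarithmically unless $f(a)=f(b)$ (test with $\gamma=[0,1]$, $f(s)=s$, $k\to 1$). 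As written, that step would also shift the coefficient $c(b)$ away from $-g(b)f(b)/(2\pi\ii)$, spoiling the final criterion.

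The repair is simpler than your patch: the unmodified integral $\int_{\gamma}\ln(s-k)\,\dd f(s)$ is already uniformly bounded for $k$ near $b$, since $\dd f$ has bounded density along $\gamma$ and $t\mapsto\abs{\ln\abs{\gamma(t)-k}}$ is uniformly integrable (for a smooth simple arc, the set of $t$ with $\abs{\gamma(t)-k}<\rho$ has length $O(\rho)$, so the logarithmic singularity integrates uniformly). Hence your initial estimate $\abs{r(k)}\leq C(1+\abs{\ln\abs{k-b}})$ was overly pessimistic: $r(k)$ is in fact bounded, so $\eul^{\pm\ii r}$ is bounded and bounded away from zero, and the ``if and only if'' follows exactly as you indicate (your radial-approach argument for necessity is fine and is what makes the paper's terse ``iff'' rigorous). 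Alternatively, simply citing Muskhelishvili's formula for the endpoint behavior, as the paper does, removes the need for the integration by parts entirely.
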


\begin{proof}
We know (see \cite{Mu92}*{Appendix A2}) that the endpoint behavior of the Cauchy integral (here near $k=b$) is given by
\[
\frac{1}{2\pi\ii}\int_a^b\frac{f(s)}{s-k}\,\dd s=\frac{f(b)}{2\pi\ii}\ln(k-b)+\Phi_b(k),
\]
where $\Phi_b(k)$ is bounded near $k=b$. We thus have $h(k)=\frac{g(k)}{2\pi\ii}f(b)\ln(k-b)+\Psi_b(k)$ where $\Psi_b(k)$ is bounded near $k=b$. Moreover,
\[
\frac{g(k)}{2\pi\ii}f(b)\ln(k-b)=\frac{g(k)-g(b)}{2\pi\ii}f(b)\ln(k-b)+\frac{g(b)}{2\pi\ii}f(b)\ln(k-b),
\]
where the first term on the right-hand side is clearly bounded as $k \to b$. The term $\frac{g(b)}{2\pi\ii}f(b)\ln(k-b)$ is unbounded as $k\to b$, but its contribution to $\eul^{\pm\ii h(k)}$ is bounded iff the product $g(b)f(b)$ is purely imaginary.
\end{proof}

\begin{acknowledgements*}
J.~Lenells acknowledges support from the G\"oran Gustafsson Foundation, the Ruth and Nils-Erik Stenb\"ack Foundation, the Swedish Research Council, Grant No.~2015-05430, and the European Research Council, Grant Agreement No.~682537.
\end{acknowledgements*}
\begin{bibdiv}
\begin{biblist}
\bib{AEL16}{article}{
   author={Andreiev, Kyrylo},
   author={Egorova, Iryna},
   author={Lange, Till Luc},
   author={Teschl, Gerald},
   title={Rarefaction waves of the Korteweg--de Vries equation via nonlinear
   steepest descent},
   journal={J. Differential Equations},
   volume={261},
   date={2016},
   number={10},
   pages={5371--5410},
}
\bib{BM19}{article}{
   author={Bertola, M.},
   author={Minakov, A.},
   title={Laguerre polynomials and transitional asymptotics of the modified
   Korteweg--de Vries equation for step-like initial data},
   journal={Anal. Math. Phys.},
   volume={9},
   date={2019},
   number={4},
   pages={1761--1818},
}
\bib{Bik89}{article}{
   author={Bikbaev, R. F.},
   title={Structure of a shock wave in the theory of the Korteweg-de Vries
   equation},
   journal={Phys. Lett. A},
   volume={141},
   date={1989},
   number={5-6},
   pages={289--293},
}
\bib{Bio18}{article}{
   author={Biondini, Gino},
   title={Riemann problems and dispersive shocks in self-focusing media},
   journal={Phys. Rev. E},
   volume={98},
   date={2018},
   number={5},
   pages={052220, 7},
}
\bib{BFP16}{article}{
   author={Biondini, Gino},
   author={Fagerstrom, Emily},
   author={Prinari, Barbara},
   title={Inverse scattering transform for the defocusing nonlinear
   Schr\"{o}dinger equation with fully asymmetric non-zero boundary conditions},
   journal={Phys. D},
   volume={333},
   date={2016},
   pages={117--136},
}
\bib{BK14}{article}{
   author={Biondini, Gino},
   author={Kova\v{c}i\v{c}, Gregor},
   title={Inverse scattering transform for the focusing nonlinear
   Schr\"{o}dinger equation with nonzero boundary conditions},
   journal={J. Math. Phys.},
   volume={55},
   date={2014},
   number={3},
   pages={031506, 22},
}
\bib{BM17}{article}{
   author={Biondini, Gino},
   author={Mantzavinos, Dionyssios},
   title={Long-time asymptotics for the focusing nonlinear Schr\"odinger
   equation with nonzero boundary conditions at infinity and asymptotic
   stage of modulational instability},
   journal={Comm. Pure Appl. Math.},
   volume={70},
   date={2017},
   number={12},
   pages={2300--2365},
}
\bib{BP14}{article}{
   author={Biondini, Gino},
   author={Prinari, Barbara},
   title={On the spectrum of the Dirac operator and the existence of
   discrete eigenvalues for the defocusing nonlinear Schr\"{o}dinger equation},
   journal={Stud. Appl. Math.},
   volume={132},
   date={2014},
   number={2},
   pages={138--159},
}
\bib{BKS11}{article}{
   author={Boutet de Monvel, Anne},
   author={Kotlyarov, Vladimir P.},
   author={Shepelsky, Dmitry},
   title={Focusing NLS equation: long-time dynamics of step-like initial
   data},
   journal={Int. Math. Res. Not. IMRN},
   date={2011},
   number={7},
   pages={1613--1653},
}
\bib{BLS20a}{article}{
   author={Boutet de Monvel, Anne},
   author={Lenells, Jonatan},
   author={Shepelsky, Dmitry},
   title={The focusing NLS equation with step-like oscillating
   background: scenarios of long-time asymptotics},
   journal={Commun. Math. Phys.},
   volume={383},
   date={2021},
   number={2},
   pages={893--952},
}
\bib{BLS20c}{article}{
   author={Boutet de Monvel, Anne},
   author={Lenells, Jonatan},
   author={Shepelsky, Dmitry},
   title={The focusing NLS equation with step-like oscillating
   background: asymptotics in a transition zone},
   date={2020},
   eprint={https://arXiv.org/abs/2006.01137},
}
\bib{BV07}{article}{
   author={Buckingham, Robert},
   author={Venakides, Stephanos},
   title={Long-time asymptotics of the nonlinear Schr\"odinger
   equation shock problem},
   journal={Comm. Pure Appl. Math.},
   volume={60},
   date={2007},
   number={9},
   pages={1349--1414},
}
\bib{BF62}{article}{
   author={Buslaev, V.},
   author={Fomin, V.},
   title={An inverse scattering problem for the one-dimensional Schr\"{o}dinger
   equation on the entire axis},
   language={Russian, with English summary},
   journal={Vestnik Leningrad. Univ.},
   volume={17},
   date={1962},
   number={1},
   pages={56--64},
}
\bib{CK85}{article}{
   author={Cohen, Amy},
   author={Kappeler, Thomas},
   title={Scattering and inverse scattering for steplike potentials in the
   Schr\"{o}dinger equation},
   journal={Indiana Univ. Math. J.},
   volume={34},
   date={1985},
   number={1},
   pages={127--180},
}
\bib{DKZ96}{article}{
   author={Deift, Percy},
   author={Kamvissis, Spyridon},
   author={Kriecherbauer, Thomas},
   author={Zhou, Xin},
   title={The Toda rarefaction problem},
   journal={Comm. Pure Appl. Math.},
   volume={49},
   date={1996},
   number={1},
   pages={35--83},
}
\bib{DVZ94}{article}{
   author={Deift, P.},
   author={Venakides, S.},
   author={Zhou, X.},
   title={The collisionless shock region for the long-time behavior of
   solutions of the KdV equation},
   journal={Comm. Pure Appl. Math.},
   volume={47},
   date={1994},
   number={2},
   pages={199--206},
}
\bib{DVZ97}{article}{
   author={Deift, P.},
   author={Venakides, S.},
   author={Zhou, X.},
   title={New results in small dispersion KdV by an extension of the
   steepest descent method for Riemann-Hilbert problems},
   journal={Internat. Math. Res. Notices},
   date={1997},
   number={6},
   pages={286--299},
}
\bib{DZ93}{article}{
   author={Deift, P.},
   author={Zhou, X.},
   title={A steepest descent method for oscillatory Riemann-Hilbert
   problems. Asymptotics for the MKdV equation},
   journal={Ann. of Math. (2)},
   volume={137},
   date={1993},
   number={2},
   pages={295--368},
}
\bib{DPV13}{article}{
   author={Demontis, F.},
   author={Prinari, B.},
   author={van der Mee, C.},
   author={Vitale, F.},
   title={The inverse scattering transform for the defocusing nonlinear
   Schr\"{o}dinger equations with nonzero boundary conditions},
   journal={Stud. Appl. Math.},
   volume={131},
   date={2013},
   number={1},
   pages={1--40},
}
\bib{DPV14}{article}{
   author={Demontis, F.},
   author={Prinari, B.},
   author={van der Mee, C.},
   author={Vitale, F.},
   title={The inverse scattering transform for the focusing nonlinear
   Schr\"{o}dinger equation with asymmetric boundary conditions},
   journal={J. Math. Phys.},
   volume={55},
   date={2014},
   number={10},
   pages={101505, 40},
}
\bib{EGK13}{article}{
   author={Egorova, Iryna},
   author={Gladka, Zoya},
   author={Kotlyarov, Volodymyr},
   author={Teschl, Gerald},
   title={Long-time asymptotics for the Korteweg--de Vries equation with
   step-like initial data},
   journal={Nonlinearity},
   volume={26},
   date={2013},
   number={7},
   pages={1839--1864},
}
\bib{EMT18}{article}{
   author={Egorova, Iryna},
   author={Michor, Johanna},
   author={Teschl, Gerald},
   title={Long-time asymptotics for the Toda shock problem: non-overlapping
   spectra},
   journal={Zh. Mat. Fiz. Anal. Geom.},
   volume={14},
   date={2018},
   number={4},
   pages={406--451},
}
\bib{FK92}{book}{
   author={Farkas, Hershel M.},
   author={Kra, Irwin},
   title={Riemann surfaces},
   series={Graduate Texts in Mathematics},
   volume={71},
   edition={2},
   publisher={Springer-Verlag, New York},
   date={1992},
   pages={xvi+363},
}
\bib{F73}{book}{
   author={Fay, John D.},
   title={Theta functions on Riemann surfaces},
   series={Lecture Notes in Mathematics, Vol. 352},
   publisher={Springer-Verlag, Berlin-New York},
   date={1973},
   pages={iv+137},
}
\bib{GM20}{article}{
   author={Grava, Tamara},
   author={Minakov, Alexander},
   title={On the long-time asymptotic behavior of the modified Korteweg--de
   Vries equation with step-like initial data},
   journal={SIAM J. Math. Anal.},
   volume={52},
   date={2020},
   number={6},
   pages={5892--5993},
}
\bib{GP73}{article}{
   author={Gurevich, A. V.},
   author={Pitaevski\u{\i}, L. P.},
   title={Decay of initial discontinuity in the Korteweg--de Vries equation},
   journal={JETP Letters},
   volume={17},
   date={1973},
   pages={193--195},
}
\bib{GP74}{article}{
   author={Gurevich, A. V.},
   author={Pitaevski\u{\i}, L. P.},
   title={Nonstationary structure of a collisionless shock wave},
   journal={Soviet Physics JETP},
   volume={38},
   date={1974},
   pages={291--297},
}
\bib{I81}{article}{
   author={Its, A. R.},
   title={Asymptotic behavior of the solutions to the nonlinear
   Schr\"odinger equation, and isomonodromic deformations of systems
   of linear differential equations},
   language={Russian},
   journal={Dokl. Akad. Nauk SSSR},
   volume={261},
   date={1981},
   number={1},
   pages={14--18},
   translation={
      journal={Soviet Math. Dokl.},
      volume={24},
      date={1981},
      pages={452--456},},
}
\bib{Khr75}{article}{
   author={Khruslov, E. Ya.},
   title={Decay of initial steplike discontinuity in the Korteweg--de Vries equation},
   journal={JETP Letters},
   volume={21},
   date={1975},
   pages={217--218},
}
\bib{Khr76}{article}{
   author={Khruslov, E. Ya.},
   title={Asymptotic behavior of the solution of the Cauchy problem for the
   Korteweg-de Vries equation with steplike initial data},
   language={Russian},
   journal={Math. USSR-Sb.},
   volume={28},
   date={1976},
   number={2},
   pages={229--248},
}
\bib{KM19}{article}{
   author={Kotlyarov, Vladimir},
   author={Minakov, Alexander},
   title={Dispersive shock wave, generalized Laguerre polynomials, and
   asymptotic solitons of the focusing nonlinear Schr\"{o}dinger equation},
   journal={J. Math. Phys.},
   volume={60},
   date={2019},
   number={12},
   pages={123501, 31},
}
\bib{LN08}{article}{
   author={Leach, J. A.},
   author={Needham, D. J.},
   title={The large-time development of the solution to an initial-value
   problem for the Korteweg-de Vries equation. I. Initial data has a
   discontinuous expansive step},
   journal={Nonlinearity},
   volume={21},
   date={2008},
   number={10},
   pages={2391--2408},
}
\bib{LN14}{article}{
   author={Leach, J. A.},
   author={Needham, D. J.},
   title={The large-time development of the solution to an initial-value
   problem for the Korteweg--de Vries equation. II. Initial data has a
   discontinuous compressive step},
   journal={Mathematika},
   volume={60},
   date={2014},
   number={2},
   pages={391--414},
}
\bib{Le17}{article}{
   author={Lenells, Jonatan},
   title={The Nonlinear Steepest Descent Method for Riemann--Hilbert
   Problems of Low Regularity},
   journal={Indiana Math. J.},
   volume={66},
   date={2017},
   number={4},
   pages={1287--1332},
}
\bib{Le18}{article}{
   author={Lenells, Jonatan},
   title={Matrix Riemann--Hilbert problems with jumps across Carleson
   contours},
   journal={Monatsh. Math.},
   volume={186},
   date={2018},
   number={1},
   pages={111--152},
}
\bib{Mi16}{article}{
   author={Minakov, Alexander},
   title={Asymptotics of step-like solutions for the Camassa-Holm equation},
   journal={J. Differential Equations},
   volume={261},
   date={2016},
   number={11},
   pages={6055--6098},
}
\bib{Mu92}{book}{
   author={Muskhelishvili, N. I.},
   title={Singular integral equations},
   note={Boundary problems of function theory and their application to
   mathematical physics; Translated from the second (1946) Russian
   edition and with a preface by J. R. M. Radok;
   Corrected reprint of the 1953 English translation},
   publisher={Dover Publications, Inc., New York},
   date={1992},
   pages={447},
}
\bib{No05}{article}{
   author={Novokshenov, V. Yu.},
   title={Time asymptotics for soliton equations in problems with step
   initial conditions},
   journal={J. Math. Sci. (N.Y.)},
   volume={125},
   date={2005},
   number={5},
   pages={717--749},
}
\bib{Ol10}{collection}{
   title={NIST handbook of mathematical functions},
   editor={Olver, Frank W. J.},
   editor={Lozier, Daniel W.},
   editor={Boisvert, Ronald F.},
   editor={Clark, Charles W.},
   publisher={Cambridge University Press, Cambridge},
   date={2010},
   pages={xvi+951},
}
\bib{Ven86}{article}{
   author={Venakides, Stephanos},
   title={Long time asymptotics of the Korteweg-de Vries equation},
   journal={Trans. Amer. Math. Soc.},
   volume={293},
   date={1986},
   number={1},
   pages={411--419},
}
\end{biblist}
\end{bibdiv}
\end{document}